\newtheorem{theorem}{Theorem}[section]
\newtheorem{proposition}[theorem]{Proposition}
\newtheorem{lemma}[theorem]{Lemma}
\newtheorem{corollary}[theorem]{Corollary}
\newtheorem{remark}[theorem]{Remark}
\newtheorem{conjecture}[theorem]{Conjecture}
\theoremstyle{definition}
\newtheorem{definition}[theorem]{Definition}
\newtheorem{claim}[theorem]{Claim}
\newcommand{\eps}{\varepsilon}
\title[Ancient low entropy flows]{Ancient low entropy flows, mean convex neighborhoods, and uniqueness}
\author{Kyeongsu Choi, Robert Haslhofer, Or Hershkovits}
\begin{document}

\maketitle

\begin{abstract}
In this article, we prove the mean convex neighborhood conjecture for the mean curvature flow of surfaces in $\mathbb{R}^3$. Namely, if the flow has a spherical or cylindrical singularity at a space-time point $X=(x,t)$, then there exists a positive $\varepsilon=\varepsilon(X)>0$ such that the flow is mean convex in a space-time neighborhood of size $\varepsilon$ around $X$. The major difficulty is to promote the infinitesimal information about the singularity to a conclusion of macroscopic size. In fact, we prove a more general classification result for all ancient low entropy flows that arise as potential limit flows near $X$. Namely, we prove that any ancient, unit-regular, cyclic, integral Brakke flow in $\mathbb{R}^3$ with entropy at most $\sqrt{2\pi/e}+\delta$ is either a flat plane, a round shrinking sphere, a round shrinking cylinder, a translating bowl soliton, or an ancient oval. As an application, we prove the uniqueness conjecture for mean curvature flow through spherical or cylindrical singularities. In particular, assuming Ilmanen's multiplicity one conjecture, we conclude that for embedded two-spheres the mean curvature flow through singularities is well-posed.\end{abstract}

\tableofcontents

\section{Introduction}

A family of surfaces $M_t\subset \mathbb{R}^3$ moves by mean curvature flow if the normal velocity at each point is given by the mean curvature vector,\footnote{The equation of course also makes sense in higher dimension and co-dimension and in other ambient manifolds. In this paper, however, we focus on evolving surfaces in $\mathbb{R}^3$.}
\begin{equation}\label{eq_mcf}
(\partial_t x)^\perp = \mathbf{H}(x)\qquad \qquad(x\in M_t).
\end{equation}

Given any smooth initial surface $M\subset \mathbb{R}^3$, say closed and embedded, by classical theory \cite{Huisken_convex,HuiskenPolden} there exists a unique smooth solution $\mathcal{M}=\{M_t\}_{t\in [0,T)}$ with initial condition $M$ defined on a maximal time interval $[0,T)$. The first singular time $T<\infty$ is characterized by the fact that the curvature blows up, i.e.
\begin{equation}
\lim_{t\nearrow T}\max_{x\in M_t}|A|(x,t)=\infty,
\end{equation}
where $|A|$ denotes the norm of the second fundamental form. The main task in the study of mean curvature flow, both from the theoretical point of view and also in order to facilitate the most striking applications, is then to understand the structure of singularities, to find ways to continue the flow beyond the first singular time, and to analyze its properties.\\

In the mean convex case, i.e. when the mean curvature vector at every point on the surface points inwards, there is a highly developed theory. On the one hand, the flow can be continued smoothly as a surgical solution as constructed by Brendle-Huisken \cite{BH_surgery} and Haslhofer-Kleiner \cite{HaslhoferKleiner_surgery}. This in turn facilitates topological and geometric applications, see e.g. \cite{BuzanoHaslhoferHershkovits,HaslhoferKetover}. On the other hand, the flow can also be continued uniquely as a weak (generalized) solution. Weak solutions can be described either as level set solutions as in Evans-Spruck \cite{EvansSpruck} and Chen-Giga-Goto \cite{CGG}, or in the framework of geometric measure theory using Brakke solutions \cite{Brakke_book}.\footnote{In the mean convex setting these notions of weak solutions are essentially equivalent \cite{White_size,MetzgerSchulze}.} By the deep structure theory of White \cite{White_size,White_nature} (see also \cite{HaslhoferKleiner_meanconvex}) the space-time dimension of the singular set is at most one, and all blowup limits are smooth and convex. In fact, by a result of Colding-Minicozzi \cite{CM_singular_set} the space-time singular set is contained in finitely many compact embedded Lipschitz curves together with a countable set of point singularities. Moreover, the recent work of Brendle-Choi \cite{BC} and Angenent-Daskalopoulos-Sesum \cite{ADS2} provides a short list of all potential blowup limits (singularity models) in the flow of mean convex surfaces: the round shrinking sphere, the round shrinking cylinder, the translating bowl soliton \cite{AltschulerWu}, and the ancient ovals \cite{White_size,HaslhoferHershkovits_ancient}.\\

In stark contrast to the above, when the initial surface is not mean convex, the theory is much more rudimentary. This is, to some extent, an unavoidable feature of the equation. In particular, as already pointed out in the pioneering work of Brakke \cite{Brakke_book} and Evans-Spruck \cite{EvansSpruck} there is the phenomenon of non-uniqueness or fattening.  Angenent-Ilmanen-Chopp \cite{AIC} and Ilmanen-White \cite{White_ICM} gave examples of smooth embedded surfaces $M\subset\mathbb{R}^3$ whose level set flow $F_t(M)$ develops a non-empty interior at some positive time. In particular, $F_t(M)$ does not look at all like a two-dimensional evolving surface. These examples also illustrate, in a striking way, the non-uniqueness of (enhanced) Brakke flows.\\

In the present paper, we make some progress towards decreasing the gap between the theory in the mean convex case and the theory in the general case without curvature assumptions. Most importantly, in Theorem \ref{thm_mean_convex_nbd_intro} (see also Theorem \ref{cor_mean_convex_nbd_first}), we prove the mean convex neighborhood conjecture. As an application, we prove the nonfattening conjecture for mean curvature flow through cylindrical or spherical singularities (see Theorem \ref{thm_nonfattening_intro}). In particular, assuming Ilmanen's multiplicity one conjecture, we conclude that for embedded two-spheres the mean curvature flow through singularities is well-posed (see Theorem \ref{thm_uniqueness_two_spheres}). The mean convex neighborhood conjecture is in turn a consequence of a  general classification result (Theorem \ref{thm_ancient_low_entropy}) for ancient low entropy flows that arise as potential limit flows near spherical or cylindrical singularities.\\

The rest of this introduction is organized as follows. In Section \ref{sec_intro_classification}, we describe our general classification result for ancient low entropy flows. In Section \ref{sec_intro_mean_convex}, we state our results establishing the mean convex neighborhood conjecture and the uniqueness conjectures. In Section \ref{sec_outline}, we give an outline of the proofs.

\bigskip

\subsection{Classification of ancient low entropy flows}\label{sec_intro_classification}

Let $M\subset\mathbb{R}^3$ be a surface. The entropy, introduced by Colding-Minicozzi \cite{CM_generic}, is defined as the supremum of the Gaussian area over all centers and all scales, namely
\begin{equation}\label{eq_ent_smooth}
\textrm{Ent}[M]=\sup_{y\in\mathbb{R}^3,\lambda>0} \int_{M} \frac{1}{4\pi\lambda} e^{-\tfrac{|x-y|^2}{4\lambda}} dA(x).
\end{equation}
The entropy measures, in a certain sense, the complexity of the surface. For example, the values for a plane, sphere and cylinder are
\begin{equation}
\textrm{Ent}[\mathbb{R}^2]=1,\quad \textrm{Ent}[\mathbb{S}^2]=\frac{4}{e}\sim 1.47,\quad \textrm{Ent}[\mathbb{S}^1\times \mathbb{R}]=\sqrt{\frac{2\pi}{e}} \sim 1.52.
\end{equation}
If $\mathcal{M}=\{M_t\}_{t\in I}$ evolves by mean curvature flow, then $t\mapsto \textrm{Ent}[M_t]$ is nonincreasing by Husiken's monotonicity formula \cite{Huisken_monotonicity}, hence
\begin{equation}
\textrm{Ent}[\mathcal M] :=\sup_{t\in I} \textrm{Ent}[M_t] = \lim_{t\to \inf(I)}  \textrm{Ent}[M_t].
\end{equation}
For example, the entropy of a flat plane $\mathcal{P}$, a round shrinking sphere $\mathcal{S}$, a round shrinking cylinder $\mathcal{Z}$, a translating bowl soliton $\mathcal{B}$, and an ancient oval $\mathcal{O}$, are given by
\begin{align}
&\textrm{Ent}[\mathcal P]=1,\quad \textrm{Ent}[\mathcal{S}]=\frac{4}{e}\sim 1.47\nonumber\\
& \textrm{Ent}[\mathcal Z]=\textrm{Ent}[\mathcal B]=\textrm{Ent}[\mathcal O]=\sqrt{\frac{2\pi}{e}} \sim 1.52.
\end{align}

By a beautiful classification result of Bernstein-Wang \cite{BW} any self-similarly shrinking mean curvature flow in $\mathbb{R}^3$ with entropy at most $\sqrt{2\pi/e}$ is either a flat plane, round shrinking sphere, or round shrinking cylinder. By a recent uniqueness result of Hershkovits \cite{Hershkovits_translators} any self-similarly translating flow in $\mathbb{R}^3$ with entropy at most $\sqrt{2\pi/e}$ is a bowl soliton. These results in turn build on a pioneering paper by Colding-Ilmanen-Minicozzi-White \cite{CIMW} that was the genesis of the study of low entropy flows.\\

For the study of singularities, it is important that these concepts are also available in the non-smooth setting. Singular surfaces in Euclidean space are described most easily by two-rectifiable Radon measures, which generalize the area measure of smooth two-dimensional surfaces, see e.g. \cite{Simon_GMT,Ilmanen_book}. Recall that a two-rectifiable Radon measure $\mu$ in $\mathbb{R}^3$ is a Radon measure that has a two-dimensional tangent plane of positive multiplicity at almost every point. The entropy of $\mu$ is defined as
\begin{equation}
\textrm{Ent}[\mu]=\sup_{y\in\mathbb{R}^3,\lambda>0} \int \frac{1}{4\pi\lambda} e^{-\tfrac{|x-y|^2}{4\lambda}} d\mu(x).
\end{equation}
As in \cite{Brakke_book,Ilmanen_book} we consider Brakke flows $\mathcal M = \{\mu_t\}_{t\in I}$ that are given by a family of Radon measures in $\mathbb{R}^3$ that is two-rectifiable for almost all times and satisfies
\begin{equation}\label{eq_Brakke_inequality}
\frac{d}{dt} \int \varphi \, d\mu_t \leq \int \left( -\varphi {\bf H}^2 + \left(\nabla\varphi\right)^\perp \cdot {\bf H} \right)\, d\mu_t
\end{equation}
for all nonnegative test functions $\varphi$, see Section \ref{sec_Brakke_flows} for details. The class of all Brakke flows, as originally defined in \cite{Brakke_book}, is too large for most practical purposes. For example, by the very nature of the definition via the inequality \eqref{eq_Brakke_inequality}, Brakke flows can suddenly vanish without any cause. Also, given that the initial surface is embedded, it is useful to keep track of its inside and outside. For these and other reasons the definition of Brakke flows has been refined over the years by Ilmanen \cite{Ilmanen_book}, White \cite{White_regularity,White_Currents}, and their collaborators. For our purpose it is most appropriate to consider only Brakke flows that are integral (which prevents non-integer multiplicity), unit-regular (which to a certain extent prevents sudden vanishing), and cyclic (which partly keeps track of the inside and outside), see Section \ref{sec_Brakke_flows} for definitions and details. In particular, all Brakke flows starting at any closed embedded surface $M\subset\mathbb{R}^3$ that are constructed via Ilmanen's elliptic regularization \cite{Ilmanen_book} are integral, unit regular, and cyclic, and all these properties are preserved under passing to limits of sequences of Brakke flows.
The entropy of a Brakke flow is defined by
\begin{equation}
\textrm{Ent}[\mathcal M] =\sup_{t\in I} \textrm{Ent}[\mu_t].
\end{equation}
Finally, let us recall that an \emph{ancient} solution of a parabolic PDE, such as the mean curvature flow, is a solution that is defined for all $t\in (-\infty,T)$, where $T\leq \infty$. Ancient solutions are on the one hand some of the most interesting solutions by themselves, and on the other hand crucial for the analysis of singularities. In particular, every blowup limit is an ancient solution. A systematic study of ancient solutions of geometric flows has been pursued over the last decade by Daskalopoulos, Sesum, and their collaborators, see \cite{Daskalopoulos_ICM} for a nice overview of the main problems and key results.\\

We consider the following class of Brakke flows:

\begin{definition}[ancient low entropy flows]\label{def_low_entropy_flow}
The class of \emph{ancient low entropy flows} consists of all ancient, unit-regular, cyclic, integral Brakke flows $\mathcal{M}=\{\mu_t\}_{t\in (-\infty,T_E(\mathcal{M})]}$ in $\mathbb{R}^3$ with
\begin{equation}
\textrm{Ent}[\mathcal M]\leq \sqrt{\frac{2\pi}{e}},
\end{equation}
where $T_E(\mathcal{M})\leq \infty$ denotes the extinction time.
\end{definition}

Our main theorem provides a complete classification:

\begin{theorem}[classification of ancient low entropy flows]\label{thm_ancient_low_entropy}
Any ancient low entropy flow in $\mathbb{R}^3$ is either
\begin{itemize}
\item a flat plane, or
\item a round shrinking sphere, or
\item a round shrinking cylinder, or
\item a translating bowl soliton, or
\item an ancient oval.
\end{itemize}
\end{theorem}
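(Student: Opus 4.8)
The plan is to reduce the classification, via Huisken's monotonicity formula, to an analysis organized by the tangent flow at $-\infty$. Passing to the parabolically rescaled flows and letting the scale tend to infinity, Huisken monotonicity guarantees subconvergence to a selfshrinking flow $\mathcal{M}^{-\infty}$ with $\textrm{Ent}[\mathcal{M}^{-\infty}]\le\textrm{Ent}[\mathcal M]\le\sqrt{2\pi/e}<2$; the entropy being below $2$ forces multiplicity one, so by Bernstein--Wang $\mathcal{M}^{-\infty}$ is a multiplicity-one plane, round shrinking sphere, or round shrinking cylinder. If it is a plane, White's local regularity theorem applied near $-\infty$ gives uniform curvature bounds after rescaling, and a standard argument then forces $\mathcal M$ to be a static plane. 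If it is a sphere, then $\mathcal M$ is compact and, for $t\to-\infty$, the rescaled surface $(-t)^{-1/2}M_t$ is $C^2$-close to a round sphere; hence $M_t$ is convex for all sufficiently negative $t$, and since convexity is preserved under the forward flow, $\mathcal M$ is a compact convex ancient solution, which by the classification of such solutions (X.-J. Wang) and the fact that its tangent flow at $-\infty$ is spherical rather than cylindrical must be the round shrinking sphere. From now on I may therefore assume the tangent flow at $-\infty$ is the round shrinking cylinder, and the goal is to show $\mathcal M$ is then the cylinder, the bowl soliton, or an ancient oval.

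In this case the heart of the argument is a \emph{fine neck analysis} in the spirit of Angenent--Daskalopoulos--Sesum. Passing to the renormalized flow $\bar M_\tau$, one writes $\bar M_\tau$ as a graph $u(\theta,y,\tau)$ over the shrinking cylinder on any fixed compact $y$-interval and decomposes $u$ into eigenmodes of the associated Ornstein--Uhlenbeck-type linearized operator: the unstable modes ($1$, $y$, and the rotational modes $\cos\theta,\sin\theta$) are killed by choosing the extinction time and by spatial recentering, the neutral eigenspace is spanned by $y^2-2$, and the rest is stable. Feeding this into the Merle--Zaag ODE lemma yields a dichotomy: either $u\equiv 0$ and $\mathcal M$ is the round shrinking cylinder, or the neutral mode strictly dominates, $u(\theta,y,\tau)$ equals to leading order a negative-definite multiple of $y^2$ (of size $\sim|\tau|^{-1}$ in the renormalized picture), and matching inner/outer estimates show the cylindrical part has length of order $\sqrt{|\tau|}$ before bending inward into the cap region(s). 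In particular this shows that for all sufficiently negative $t$ the flow is mean convex, indeed satisfies $\lambda_1\ge 0$, throughout the neck region.

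The next and hardest step is to promote this asymptotic convexity to \emph{global convexity} of $\mathcal M$. The neck asymptotics controls the geometry outside a cap region around each tip; to control the cap I would carry out a point-selection and blowup analysis there, identifying the blowup limit — again an ancient low entropy flow, but noncompact and non-neck — as a convex translator via the uniqueness of the bowl soliton (Hershkovits, Brendle--Choi), which in particular yields convexity near the tips. Having established that $\lambda_1$ is nonnegative in every spatial and temporal asymptotic region, I would then run a global maximum-principle argument over all of spacetime: a strong (Omori--Yau-type) maximum principle for the evolution of the second fundamental form, whose reaction term has the right sign once $\lambda_1\ge 0$, and which uses the asymptotic convexity in the neck and cap regions as a control at infinity, should force $\lambda_1\ge 0$ everywhere; a preliminary step proving $H>0$ everywhere from the evolution equation $\partial_t H=\Delta H+|A|^2H$ streamlines this. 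Carrying out this globalization — reconciling the delicate neck asymptotics with the cap geometry and excluding any interior loss of convexity — is, I expect, the principal obstacle of the whole proof.

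Once $\mathcal M$ is known to be convex (hence smooth and, as is the case for ancient low entropy flows, noncollapsed), the classification follows from known rigidity theorems. If $\mathcal M$ fails to be strictly convex somewhere, the strong maximum principle for $\lambda_1$ shows it splits off a line, so $\mathcal M=\gamma_t\times\mathbb R$ for an ancient convex curve shortening flow; by the Daskalopoulos--Hamilton--Sesum classification $\gamma_t$ is the shrinking circle or the Angenent oval, and since $\textrm{Ent}[\text{Angenent oval}\times\mathbb R]>\sqrt{2\pi/e}$, $\mathcal M$ must be the round shrinking cylinder. If $\mathcal M$ is noncompact and strictly convex, Brendle--Choi's uniqueness theorem for noncompact convex ancient solutions in $\mathbb R^3$ forces $\mathcal M$ to be the translating bowl soliton. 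If $\mathcal M$ is compact, then by Wang's dichotomy it is either the round shrinking sphere or has cylindrical tangent flow at $-\infty$; the former is excluded here, and in the latter case the uniqueness of the ancient oval among compact convex ancient solutions (Angenent--Daskalopoulos--Sesum, Brendle--Choi) identifies $\mathcal M$ as an ancient oval. This exhausts all cases and completes the proof.
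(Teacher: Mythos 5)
There is a genuine gap, and it sits at the center of your argument: the claim that in the cylindrical case the unstable modes of the linearized operator ``are killed by choosing the extinction time and by spatial recentering,'' so that Merle--Zaag leaves only the dichotomy $u\equiv 0$ versus neutral-mode dominance. The unstable eigenspace on the cylinder is spanned by $1$ (eigenvalue $1$) and by $z,\cos\theta,\sin\theta$ (eigenvalue $\tfrac12$). Time-shifting kills the constant mode and transverse spatial recentering kills $\cos\theta,\sin\theta$, but the $z$-mode corresponds to no symmetry of the cylinder (the cylinder is translation invariant along its axis, so shifting in $z$ changes nothing), and it cannot be normalized away. It is genuinely realized: on the neck of the translating bowl soliton the graph function satisfies $u=\bar a\,z\,e^{\tau/2}+o(e^{\tau/2})$ with $\bar a\neq 0$ proportional to the reciprocal of the translation speed. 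Your dichotomy would force every non-cylindrical solution into the neutral-mode (hence compact) case and thereby exclude the bowl soliton from your own conclusion; your final paragraph reintroduces the bowl via Brendle--Choi, but nothing in your mode analysis produces a noncompact, non-cylindrical $\mathcal M$ to which that theorem could apply. The correct trichotomy (cylinder / plus-mode dominant / neutral-mode dominant) is exactly what the paper establishes, and the plus-mode case is where almost all of the work lies: one must show the constant mode cannot dominate, prove a fine neck theorem with a slope $\bar a$ independent of the base point, deduce from it a global curvature bound and cap-size control (ruling out escape to infinity, cusps, and extra ends), prove rotational symmetry via the Brendle--Choi neck improvement plus a moving-plane argument, and finally show $H=\langle\nu,e_3\rangle$ to identify the solution as a translator. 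None of this is present in, or replaceable by, your outline.

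Two secondary issues. First, the neutral eigenspace is three-dimensional ($z^2-2$, $z\cos\theta$, $z\sin\theta$), not spanned by $z^2-2$ alone; showing the axis-tilting modes $z\cos\theta,z\sin\theta$ are subdominant requires analyzing a coupled quadratic ODE system and is not automatic. Second, your proposed globalization of convexity by an Omori--Yau maximum principle for $\lambda_1$ presupposes smoothness, bounded curvature, and controlled behavior at spatial infinity, none of which are available a priori for an ancient low entropy Brakke flow; the paper instead first proves these coarse structural facts and never argues by propagating $\lambda_1\ge 0$ from infinity inward. In the compact case your strategy (blowup at the tips gives bowls, maximum principle in between, then Angenent--Daskalopoulos--Sesum) does match the paper's; it is the noncompact case that your proposal does not actually reach.
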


To explain the scope of our main classification result (Theorem \ref{thm_ancient_low_entropy}) let us start by pointing out that the assumptions are essentially sharp.\footnote{The entropy assumption can be relaxed, see Corollary \ref{cor_ancient_low_entropy} below.} If one removes the assumption of being unit-regular the flow can suddenly disappear, e.g. one could have a solution that looks like a translating bowl soliton until some time, but is not eternal. If one removes the assumption of being cyclic, then one can have a static or quasistatic configuration of three half-planes meeting at $120$-degree angles, which have entropy $3/2$. If one increases the entropy too much one can get other ancient solutions, e.g. the selfsimilarly shrinking Angenent torus \cite{Angenent_torus} has entropy bigger than $\sqrt{2\pi/e}$ but less than $2$. Finally, removing the integer multipicity assumption would give rise to a zoo of solutions, in particular the infinite collection of shrinkers constructed by Kapouleas-Kleene-Moller and Nguyen \cite{KKM,Nguyen}.\\

Let us discuss some of the most important prior classification results for solutions of the mean curvature flow of surfaces. Wang proved uniqueness of the bowl soliton among entire convex translating graphs \cite{Wang_convex}. This was improved recently by Spruck-Xiao \cite{SpruckXiao}, who showed that every entire mean convex translating graph is in fact convex.\footnote{In the setting of noncollapsed solutions it was known before that every ancient noncollapsed mean convex solution of the mean curvature flow is in fact convex \cite{HaslhoferKleiner_meanconvex}.} Without assuming that the translator is graphical, Martin, Savas-Halilaj and Smoczyk proved a uniqueness result for the bowl soliton under rather strong asymptotic assumptions \cite{MSHS}. This was improved by Haslhofer \cite{Haslhofer_bowl}, who showed uniqueness of the bowl among translators that are convex and noncollapsed, and more recently by Hershkovits \cite{Hershkovits_translators}, who proved uniqueness among translators, not necessarily convex, that are asymptotic to a cylinder. Most closely related to the present article are the recent result by Brendle-Choi \cite{BC}, which proves uniqueness of the bowl among noncompact ancient solutions that are noncollapsed and convex, and the recent result by Angenent-Daskalopoulos-Sesum \cite{ADS2}, which proves uniqueness of the ancient ovals among compact ancient solutions that are noncollapsed and convex. Other highly important results that play a direct role in the present paper are the classification of low entropy shrinkers by Bernstein-Wang \cite{BW}, and the classification of genus zero shrinkers by Brendle \cite{Brendle_sphere}.\\

All prior classification results assume either convexity or self-similarity (or both), and the story is similar for other equations such as the Ricci flow, see e.g. \cite{Brendle_Bryant}. Our classification result seems to be the first one which assumes neither self-similarity nor convexity. To wit, convexity is not an assumption but a consequence:

\begin{corollary}[convexity]\label{cor_convex}
Any ancient low entropy flow in $\mathbb{R}^3$ is convex.
\end{corollary}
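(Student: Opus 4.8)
The plan is to derive convexity (Corollary~\ref{cor_convex}) as an immediate consequence of the classification theorem (Theorem~\ref{thm_ancient_low_entropy}) together with the known geometry of the five model solutions. Since Theorem~\ref{thm_ancient_low_entropy} has already reduced any ancient low entropy flow $\mathcal{M}$ to one of five explicit possibilities, the only thing left to check is that each of these five models is convex. This is a finite verification: the flat plane has vanishing second fundamental form, hence is (weakly) convex; the round shrinking sphere $\mathcal{S}$ and round shrinking cylinder $\mathcal{Z}$ are convex at every time of their existence; the translating bowl soliton $\mathcal{B}$ is convex by its construction in \cite{AltschulerWu} (indeed it is the unique rotationally symmetric convex translator); and the ancient oval $\mathcal{O}$ is convex by construction, being obtained as a limit of convex solutions (see \cite{White_size,HaslhoferHershkovits_ancient,ADS2}).

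Concretely, I would phrase the proof as follows. By Theorem~\ref{thm_ancient_low_entropy}, $\mathcal{M}$ is one of $\mathcal{P}, \mathcal{S}, \mathcal{Z}, \mathcal{B}, \mathcal{O}$. Each of these is a smooth flow whose time slices are convex hypersurfaces: for $\mathcal{P}$ this is trivial; for $\mathcal{S}$ and $\mathcal{Z}$ it is classical; for $\mathcal{B}$ it follows from \cite{AltschulerWu}; and for $\mathcal{O}$ it follows from the defining property established in \cite{White_size,HaslhoferHershkovits_ancient} (or the rigidity result of \cite{ADS2}). Hence $\mathcal{M}$ is convex. One should remark that in the degenerate case of a multiplicity-higher flat plane (if permitted by the entropy bound --- in fact a plane with multiplicity $k$ has entropy $k$, so only $k=1$ is allowed), convexity still holds vacuously since the second fundamental form vanishes.

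There is no real obstacle here --- the entire content has been absorbed into Theorem~\ref{thm_ancient_low_entropy}, and Corollary~\ref{cor_convex} is a corollary in the literal sense. The only minor care needed is to make precise in what sense ``convex'' is meant: for a (possibly non-static) Brakke flow supported on a smooth hypersurface, convexity should be understood slicewise, i.e.\ each $M_t$ bounds a convex region, equivalently $A \geq 0$ with respect to the inward normal; with this reading all five models qualify. If one wanted a proof that did not invoke the full force of Theorem~\ref{thm_ancient_low_entropy}, one could instead try to run a direct maximum-principle or strong-maximum-principle argument on the smallest principal curvature along the flow --- but given that the classification is already available, such an approach would be strictly more work and is unnecessary. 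Thus the short deductive argument above is the natural one, and I expect the writeup to be only a few lines long.
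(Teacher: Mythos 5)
Your proposal is correct and matches the paper's intended argument: Corollary~\ref{cor_convex} is deduced immediately from Theorem~\ref{thm_ancient_low_entropy} by observing that each of the five model solutions (plane, sphere, cylinder, bowl soliton, ancient oval) has convex time slices. The paper treats this as an immediate consequence of the classification and gives no further argument, so your finite verification of the five cases is exactly the right (and only) content.
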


More generally, a key feature of our classification theorem (Theorem \ref{thm_ancient_low_entropy}) is that besides the entropy bound we assume almost nothing, essentially only that the solution is ancient. In particular, a priori, an ancient low entropy flow could be quite singular and could have pathological behaviour caused by spatial infinity, e.g. there could be topological changes caused by ``neckpinches at infinity", or there could be ``contracting cusps" (see e.g. \cite{Topping_cusps}), or the flow could ``escape to spatial infinity" (see e.g. \cite{IsenbergWu}).\\

The entropy assumption in our main classification result (Theorem \ref{thm_ancient_low_entropy}) can be relaxed. To state the sharp result, let
\begin{multline}
\Upsilon:=\inf\{ \textrm{Ent}[S] \, |\, \textrm{$S$ is a smooth embedded shrinker in $\mathbb{R}^3$,}\\
\textrm{which is not a plane, sphere of cylinder} \}.
\end{multline}
By Bernstein-Wang \cite{BW}, the difference $\delta:=\Upsilon - \sqrt{2\pi/e}>0$ is strictly positive. Theorem \ref{thm_ancient_low_entropy} immediately implies the following corollary.
\begin{corollary}[sharp ancient low entropy classification]\label{cor_ancient_low_entropy}
Any ancient, unit-regular, cyclic, integral Brakke flows $\mathcal{M}$ in $\mathbb{R}^3$ with
\begin{equation}
\textrm{Ent}[\mathcal M]< \Upsilon
\end{equation}
is either a flat plane, a round shrinking sphere, a round shrinking cylinder, a translating bowl soliton, or an ancient oval.
\end{corollary}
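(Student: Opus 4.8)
The plan is to deduce Corollary~\ref{cor_ancient_low_entropy} from Theorem~\ref{thm_ancient_low_entropy} by showing that the apparently weaker hypothesis $\mathrm{Ent}[\mathcal M]<\Upsilon$ in fact forces $\mathrm{Ent}[\mathcal M]\le\sqrt{2\pi/e}$; once this is known, $\mathcal M$ is an ancient low entropy flow in the sense of Definition~\ref{def_low_entropy_flow} and Theorem~\ref{thm_ancient_low_entropy} applies verbatim. The entire content is therefore to rule out the window $\sqrt{2\pi/e}<\mathrm{Ent}[\mathcal M]<\Upsilon$.

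To this end I would first record that, since $t\mapsto\mathrm{Ent}[\mu_t]$ is nonincreasing and the entropy is scale invariant, Huisken's monotonicity formula \cite{Huisken_monotonicity} gives
\[
\mathrm{Ent}[\mathcal M]=\lim_{t\to-\infty}\mathrm{Ent}[\mu_t]=\sup_{X_0}\,\Theta_{X_0}(-\infty),
\]
where $X_0$ ranges over spacetime points in the support of $\mathcal M$ and $\Theta_{X_0}(-\infty):=\lim_{r\to\infty}\Theta(\mathcal M,X_0,r)$ is the Gaussian density of $\mathcal M$ based at $X_0$ at infinite scale: the bound $\Theta(\mathcal M,X_0,r)\le\mathrm{Ent}[\mu_{t_0-r^2}]$ gives ``$\ge$'', while applying Huisken monotonicity to any Gaussian integral entering $\mathrm{Ent}[\mu_t]$, centered at the corresponding spacetime point, and sending the scale to infinity gives ``$\le$''. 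Next, fixing $X_0$ that (nearly) realizes this supremum, by Brakke compactness a subsequence of parabolic rescalings of $\mathcal M$ at $X_0$ converges to an ancient Brakke flow $\mathcal S$, which by the equality case of Huisken's monotonicity formula is a self-shrinker, and which, being a limit of rescalings of $\mathcal M$, is again unit-regular, cyclic and integral, with $\mathrm{Ent}[\mathcal S]=\Theta_{X_0}(-\infty)\le\mathrm{Ent}[\mathcal M]<\Upsilon<2$ (here one uses that the entropy of a self-shrinker is attained at its shrinking center, by Colding--Minicozzi \cite{CM_generic}).

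Now $\mathcal S$ is a unit-regular, cyclic, integral self-shrinking Brakke flow in $\mathbb{R}^3$ with entropy strictly less than $2$; cyclicity excludes $120$-degree triple junctions among its tangent cones and the entropy bound excludes higher multiplicity, so the only blowups of $\mathcal S$ are multiplicity-one planes and hence, by the regularity theory for self-shrinkers of low entropy (cf.\ Bernstein--Wang \cite{BW}), $\mathcal S$ is a smooth, embedded, multiplicity-one self-shrinker. By the very definition of $\Upsilon$ and $\mathrm{Ent}[\mathcal S]<\Upsilon$, the shrinker $\mathcal S$ must be a plane, a round sphere, or a round cylinder, so $\mathrm{Ent}[\mathcal S]\in\{1,\,4/e,\,\sqrt{2\pi/e}\}$; taking the supremum over $X_0$ yields $\mathrm{Ent}[\mathcal M]\le\sqrt{2\pi/e}$, and Theorem~\ref{thm_ancient_low_entropy} concludes. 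I expect the only delicate point to be this last regularity step: the definition of $\Upsilon$ only ``sees'' smooth embedded shrinkers, so it is essential that the asymptotic shrinker $\mathcal S$ be shown to be of this type, and this is precisely where the structural hypotheses (integral, unit-regular, cyclic) and the bound $\mathrm{Ent}<2$ are indispensable. An equivalent, more bookkeeping-heavy route is to inspect the proof of Theorem~\ref{thm_ancient_low_entropy} directly: the bound $\le\sqrt{2\pi/e}$ enters only through the Bernstein--Wang classification of the tangent flow at $-\infty$, and $\Upsilon$ is defined exactly so that this classification survives under the weaker bound $<\Upsilon$.
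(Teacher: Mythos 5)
Your main argument is correct and is exactly the deduction the paper intends (the paper gives no proof beyond ``immediately implies''): since $\Upsilon<2$, the blowdown at any spacetime point is a smooth embedded multiplicity-one shrinker of entropy $<\Upsilon$, hence a plane, sphere or cylinder by the definition of $\Upsilon$, which forces $\mathrm{Ent}[\mathcal M]\le\sqrt{2\pi/e}$ and puts $\mathcal M$ in the class covered by Theorem~\ref{thm_ancient_low_entropy}. One caveat: your closing aside that the bound $\le\sqrt{2\pi/e}$ ``enters only through the Bernstein--Wang classification'' is not accurate --- it is also used, e.g., in the calibration comparison $\int_{\bar M_\tau}e^{-|x|^2/4}\le\int_\Sigma e^{-|x|^2/4}$ of Proposition~\ref{Gaussian density analysis}, so the ``inspect the proof directly'' route would not go through under the weaker hypothesis; only your first route works.
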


The proof of Theorem \ref{thm_ancient_low_entropy} involves, not surprisingly, many steps. For an overview of the main steps and ideas, please see Section \ref{sec_outline}.

\bigskip

\subsection{Mean convex neighborhoods and uniqueness}\label{sec_intro_mean_convex}

The main conjecture towards reducing the gap between the theory in the mean convex case and the general case, is the following:

\begin{conjecture}[mean convex neighborhood conjecture\footnote{See e.g. Problem 4 on Ilmanen's problem list \cite{Ilmanen_problems}, Conjecture 10.2 in the survey of Colding-Minicozzi-Pedersen \cite{CMP}, and the paragraph between Theorem 3.5 and Remark 3.6 in Hershkovits-White \cite{HershkovitsWhite}.}]\label{conj_mean_convex}
If the mean curvature flow of closed embedded surfaces has a spherical or cylindrical singularity at $(x, t)$, then there is a space-time neighborhood of $(x, t)$ in which the flow is mean convex.
\end{conjecture}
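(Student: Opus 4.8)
The plan is to derive the mean convex neighborhood conjecture (Conjecture~\ref{conj_mean_convex}) from the classification of ancient low entropy flows by a blow-up and compactness argument, arguing by contradiction. Let $X=(x,t)$ be the spherical or cylindrical singularity. The first observation is that the conclusion is \emph{local in entropy}: by Huisken's monotonicity formula \cite{Huisken_monotonicity} the Gaussian density $\Theta(X)$ equals the entropy of the tangent flow at $X$, which by the classification of low entropy shrinkers of Bernstein--Wang \cite{BW} is a multiplicity one round sphere or cylinder, so $\Theta(X)\le \sqrt{2\pi/e}<\Upsilon$. Since $X'\mapsto\Theta(X')$ is upper semicontinuous, there is a parabolic neighborhood $\hat N$ of $X$ on which $\Theta(\cdot)<\Upsilon$. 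Consequently, every limit flow obtained by parabolically rescaling $\mathcal M$ around a sequence of space-time points in $\hat N$ converging to $X$, with scales $\lambda_i\to\infty$, is ancient (the initial time is pushed to $-\infty$), integral, unit-regular and cyclic (these properties hold for flows from elliptic regularization and pass to limits), and has entropy $\le\Theta(X)<\Upsilon$ (again by the monotonicity formula). By Corollary~\ref{cor_ancient_low_entropy} every such limit flow is one of the five models; in particular it is weakly mean convex, and in fact $H>0$ unless it is a static plane, by the strong maximum principle applied to $\partial_t H=\Delta H+|A|^2H$ on an ancient solution.

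It remains to upgrade ``all limit flows near $X$ are weakly mean convex'' to ``the flow is mean convex near $X$''. Consider first times $s<t$, where the flow is still smooth near $X$. If mean convexity failed on every neighborhood of $X$, then, since $H\ge 0$ near $X$ together with the strong maximum principle would force a neighborhood of $X$ to be a static plane (hence smooth at $X$, a contradiction), there must be a sequence of smooth points $X_i\to X$ with $H(X_i)<0$. A point-selection argument — choosing the $X_i$ so that the curvature is suitably maximal and concentrated at the base point, in the spirit of White's local regularity and standard point-picking methods — together with White's local regularity theorem and unit-regularity produces a nontrivial limit flow $\mathcal M_\infty$ with $|A|(\mathbf 0)\neq 0$ and $H(\mathbf 0)\le 0$; thus $\mathcal M_\infty$ is neither a plane nor any of the other four models, contradicting the previous paragraph. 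For times $s\ge t$ one argues the same way with base points at post-singular times in $\hat N$; alternatively, once one-sided mean convexity of the smooth flow up to time $t$ is established, the weak flow in a smaller neighborhood of $X$ coincides with a mean convex flow, and the structure theory of White \cite{White_size,White_nature} applies directly.

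The crux is the point-selection step. The difficulty is to guarantee that the rescaled limit flow genuinely witnesses the failure of mean convexity — that it is nontrivial and, in particular, not a flat plane, so that the contradiction actually bites — while simultaneously keeping the base scale $\lambda_i^{-1}$ small enough that the rescaling centers lie in $\hat N$ and large enough that curvature stays bounded on a fixed parabolic ball. The key geometric input here is that $\Theta(X)$ being exactly the density of a multiplicity one sphere or cylinder forces the flow, at scales comparable to the pinching scale $\sqrt{t-s}$, to be graphically close to the mean convex tangent flow and to carry no extra low-curvature sheets near $x$; hence any violation of mean convexity near $X$ must occur at a definite larger scale, and it is precisely at that scale that one extracts the contradicting limit flow. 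All remaining ingredients — Huisken monotonicity, upper semicontinuity of Gaussian density, White's local regularity theorem, the strong maximum principle, and the structure theory for mean convex flows — are by now standard.
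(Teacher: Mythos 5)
Your overall strategy --- blow up near $X$, use the classification of ancient low entropy flows to conclude that every limit flow is one of the five models with $H>0$ unless planar, and derive a contradiction --- is the same as the paper's, and your first paragraph (entropy localization via upper semicontinuity of the Gaussian density and Huisken monotonicity) matches Proposition~\ref{basic_reg}. But the two places where you wave your hands are exactly where the real work lies. First, the point-selection step that you flag as ``the crux'' is left unexecuted, and the heuristic you offer (curvature ``suitably maximal and concentrated'') is not what is needed. The paper's mechanism is simpler and sharper: take regular points $X_i\to X_0$ with $H(X_i)=0$ and rescale by the \emph{regularity scale} $R(X_i)$, which tends to $0$ because $X_0$ is singular. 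The limit is an ancient low entropy flow; if it is non-flat it is a sphere, cylinder, bowl or oval, all of which have $H>0$ everywhere, contradicting $H=0$ at the space-time origin of the limit, while if it is a flat plane the local regularity theorem forces $R(X_i)$ to be much larger than itself for large $i$ --- a contradiction. No maximizing point-picking is required, and the dichotomy ``flat vs.\ non-flat'' is what guarantees the limit genuinely witnesses the failure.

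The more serious gap is that $H\neq 0$ at every regular point does \emph{not} yield mean convexity. The conclusion one needs is that the mean curvature vector points \emph{into} $K_t$ consistently on the whole space-time connected component reaching $X_0$, i.e.\ that the sign of $H$ is constant. Since the flow in the parabolic neighborhood is singular (certainly at and after time $t$, and possibly before $t$ when $X$ is not a first-time singularity --- your reduction to ``times $s<t$, where the flow is still smooth near $X$'' is only valid at the first singular time), the regular set can a priori be disconnected by the singular set, with $H>0$ on one piece and $H<0$ on another, and nothing in your argument rules this out. The paper devotes Lemma~\ref{fixed_sign_smooth} to precisely this: a continuity-in-time argument that excludes new components entering through the boundary, mergers of components carrying opposite signs of $H$, and mixed boundary mergers, using the density bound $\Theta<2$ and the quantitative connectivity statement of Proposition~\ref{basic_reg}(v). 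Relatedly, the forward-in-time half of the statement requires first constructing a unit-regular, cyclic, integral Brakke flow whose support is the outer flow and which has multiplicity one near $X_0$ (via \cite{HershkovitsWhite}, restarting at a time where the Gaussian ratio is below $2$), and then converting positivity of $H$ at regular points into the nested inclusion $K_{t_2}\cap B(x_0,\varepsilon)\subseteq K_{t_1}\setminus M_{t_1}$; your appeal to ``the structure theory of White'' does not substitute for these steps.
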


Let us explain some background. Let $\mathcal{M}=\{M_t\}$ be a Brakke flow starting at a closed embedded surface $M\subset\mathbb{R}^3$, say constructed via Ilmanen's elliptic regularization \cite{Ilmanen_book}. Given any space-time point $X=(x,t)\in \mathcal{M}$ and any factor $\lambda>0$, we denote by $\mathcal{M}_{X,\lambda}$ the Brakke flow which is obtained from $\mathcal M$ by translating $X$ to the space-time origin and parabolically rescaling by $\lambda^{-1}$. By Huisken's monotonicity formula \cite{Huisken_monotonicity} and Ilmanen's compactness theorem \cite{Ilmanen_book} for any sequence $\lambda_i\to 0$ one can find a subsequence $\lambda_{i'}$ such that $\mathcal{M}_{X,\lambda_{i'}}$ converges in the sense of Brakke flows to a limit $\hat{\mathcal{M}}_X$, called a tangent flow at $X$, and any tangent flow is backwardly selfsimilar. The only backwardly selfsimilar solutions of positive mean curvature are the round shrinking sphere and the round shrinking cylinder \cite{Huisken_shrinker,White_nature,CM_generic}. One precise formulation of the conjecture says that if some (and thus every \cite{CIM}) tangent flow $\hat{\mathcal{M}}_X$ is either a round shrinking sphere or a round shrinking cylinder of multiplicity one, then there exists some $\varepsilon=\varepsilon(X)>0$ such that the flow $\mathcal M$ is, possibly after flipping the orientation, mean convex in the parabolic ball
\begin{equation}
P(X,\varepsilon)= \{ (x',t')\in \mathbb{R}^3\times \mathbb{R} \, : \, |x'-x|<\varepsilon, \, t-\varepsilon^2<t'\leq t \}.
\end{equation}

\bigskip

The major challenge in proving the mean convex neighborhood conjecture is to upgrade the infinitesimal information about the mean convex tangent flow to a conclusion at some macroscopic scale $\varepsilon>0$. Specifically, there are certain singularities that are not fully captured by just looking at the tangent flows. Indeed, it has been clear since the work of White \cite{White_size,White_nature}, and reinforced in the work of Perelman \cite{Perelman1,Perelman2}, Bamler-Kleiner \cite{BamlerKleiner} and Hershkovits-White \cite{HershkovitsWhite}, that in order to fully capture all singularities and regions of high curvature one needs to understand all limit flows and not just all tangent flows. Given $X\in\mathcal{M}$ a limit flow is any subsequential limit of  $\mathcal{M}_{X_i,\lambda_i}$ where $X_i\to X$ and $\lambda_i\to 0$. For example, in the case of the degenerate neckpinch \cite{AV_degenerate_neckpinch} one needs to carefully choose a varying sequence of space-time points $X_i$ that ``follow the tip" in order to see the translating bowl soliton in the limit, see \cite{Hamilton_Harnack}. Potentially there could be very complex limit flows that are not self-similar, see e.g. \cite[Problem 6]{Ilmanen_problems} or \cite[Conjecture 3]{White_nature}.\\

The mean-convex neighborhood conjecture (Conjecture \ref{conj_mean_convex}) has been known to be true in some important special cases.
In the rotationally symmetric setting this is by the attracting axis theorem of Altschuler-Angenent-Giga \cite{AAG}. More recently, Colding-Ilmanen-Minicozzi \cite{CIM} proved the conjecture in the case of selfsimilarly shrinking limit flows, and Hershkovits \cite{Hershkovits_translators} proved the conjecture in the case of selfsimilarly translating limit flows. Without assuming selfsimilarity, in a very interesting recent paper based on hard PDE techniques, Gang Zhou \cite{Gang} verified the mean convex neighborhood conjecture in certain regimes. However, these regimes exclude by assumption some of the main challenges of the conjecture, e.g. the potential scenario of a degenerate neckpinch with a non-convex cap.\\

In Theorem \ref{thm_mean_convex_nbd_intro}, we establish the mean convex neighborhood conjecture for the mean curvature flow of surfaces in $\mathbb{R}^3$ at all times.\footnote{Of course our results carry over immediately to other ambient manifolds.} Since it is somewhat easier to state, let us first give the result at the first singular time.

\begin{theorem}[mean convex neighborhoods at the first singular time]\label{cor_mean_convex_nbd_first}
Let $\mathcal M=\{M_t\}_{t\in [0,T)}$ be a mean curvature flow of closed embedded surfaces in $\mathbb{R}^3$, where $T$ is the first singular time. If some tangent flow at $X=(x,T)$ is a sphere or cylinder with multilicity one, then there exists an $\varepsilon=\varepsilon(X)>0$ such that, possibly after flipping the orientation, the flow $\mathcal M$ is mean convex in the parabolic ball $P(X,\varepsilon)$. Moreover, any limit flow at $X$ is either a round shrinking sphere, a round shrinking cylinder, or a translating bowl soliton.\footnote{It is easy to see that ancient ovals cannot arise as limit flows at the first singular time. It is unknown, whether or not ancient ovals can arise as limit flows at subsequent singularities. This is related to potential accumulations of neckpinches, see e.g. \cite{CM_ArnoldThom}.}
\end{theorem}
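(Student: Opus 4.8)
\emph{Proof proposal.} Given Theorem \ref{thm_ancient_low_entropy}, the plan is to deduce the statement by showing that \emph{every limit flow at $X$ is an ancient low entropy flow}, and then to upgrade the resulting infinitesimal mean convexity to a macroscopic neighborhood by a blow-up argument. Recall that a limit flow at $X=(x,T)$ is a subsequential Brakke limit of parabolic rescalings $\mathcal{M}_{X_i,\lambda_i}$ with $X_i=(x_i,t_i)\to X$ and $\lambda_i\to 0$. First I would verify the defining properties of Definition \ref{def_low_entropy_flow} for such a limit flow $\mathcal{M}'$. Since $T>0$, $t_i\to T$ and $\lambda_i\to 0$, the rescaled flows are defined on time intervals $(-t_i/\lambda_i^2,(T-t_i)/\lambda_i^2)$ whose left endpoints tend to $-\infty$, so $\mathcal{M}'$ is ancient. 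For $t<T$ the flow $\{M_t\}$ is smooth, hence trivially integral of multiplicity one, unit-regular and cyclic, and all three properties persist under Brakke limits. For the entropy bound I would combine Huisken's monotonicity formula \cite{Huisken_monotonicity} with the upper semicontinuity of the Gaussian density: given $\epsilon>0$ choose $R$ with $\Theta(\mathcal{M},X,R)<\Theta(\mathcal{M},X)+\epsilon$, and then $\rho<R$ so small that $\Theta(\mathcal{M},X',R)<\Theta(\mathcal{M},X)+\epsilon$ for all $X'\in P(X,\rho)$; monotonicity in the scale upgrades this to $\Theta(\mathcal{M},X',r)<\Theta(\mathcal{M},X)+\epsilon$ for all such $X'$ and all $r\le R$. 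Since $X_i\to X$ and $\lambda_i\to 0$, each fixed spacetime ball in $\mathcal{M}'$ only records Gaussian densities of $\mathcal{M}$ at centers in $P(X,\rho)$ and scales $\le R$; passing to the limit yields $\mathrm{Ent}[\mathcal{M}']\le\Theta(\mathcal{M},X)$. As the tangent flow at $X$ is a multiplicity one sphere or cylinder, $\Theta(\mathcal{M},X)$ equals $4/e$ or $\sqrt{2\pi/e}$, so $\mathrm{Ent}[\mathcal{M}']\le\sqrt{2\pi/e}$ and $\mathcal{M}'$ is an ancient low entropy flow.

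By Theorem \ref{thm_ancient_low_entropy}, every limit flow at $X$ is therefore a flat plane, a round shrinking sphere, a round shrinking cylinder, a translating bowl soliton, or an ancient oval. I would next rule out the flat plane and the oval. A multiplicity one plane as a limit flow at $X$ is impossible: by White's local regularity theorem \cite{White_regularity} a bounded-curvature sheet cannot concentrate at the singular point $X$ without forcing its tangent flow to contain a plane, contradicting that the tangent flow at $X$ is a multiplicity one sphere or cylinder. For the oval, when the tangent flow at $X$ is a sphere one has $\Theta(\mathcal{M},X)=4/e<\sqrt{2\pi/e}$, and the entropy estimate above already excludes the cylinder, the bowl and the oval; when it is a cylinder, observe that an ancient oval is compact and becomes extinct in finite time, so an oval limit flow at $X$ would produce, for $t$ slightly below $T$, a tiny closed convex connected component of $M_t$ which, traced forward, becomes singular at some time $\le T$ --- and, if at $T$, with a spherical blow-up. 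This contradicts either that $T$ is the first singular time or that the tangent flow at $X$ is a cylinder. Hence every limit flow at $X$ is a round shrinking sphere, a round shrinking cylinder, or a translating bowl soliton, which is the ``moreover'' assertion.

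It remains to promote this to mean convexity in a fixed parabolic ball $P(X,\varepsilon)$. Suppose, for contradiction, that $\mathcal{M}$ is mean convex with respect to neither orientation in $P(X,\varepsilon)$ for any $\varepsilon>0$; since $\mathcal{M}$ is smooth on $[0,T)$, this produces points $X_i=(x_i,t_i)\to X$ with $t_i<T$ and $H(X_i)<0$. The key point is that $|A|(X_i)\to\infty$: by the analysis of the previous paragraph, a bounded-curvature sheet cannot accumulate at $X$ (its limit would contribute a plane to the tangent flow), so the bad points cannot have bounded second fundamental form. A point-selection argument of Hamilton--Perelman type (as in, e.g., White \cite{White_nature} or Haslhofer--Kleiner \cite{HaslhoferKleiner_meanconvex}) then lets one replace $X_i$ by nearby bad points of almost maximal curvature and rescale by $\lambda_i:=|A|(X_i)^{-1}\to 0$, so that $\mathcal{M}_{X_i,\lambda_i}$ subconverges to a smooth ancient flow $\mathcal{M}'$ which is a limit flow at $X$ with $|A|=1$ at the spacetime origin --- in particular non-flat --- and with $H\le 0$ there. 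By the previous paragraph $\mathcal{M}'$ is a round shrinking sphere, round shrinking cylinder, or translating bowl soliton, each of which is strictly mean convex everywhere, contradicting $H\le 0$ at the origin. Hence $\mathcal{M}$ is mean convex in $P(X,\varepsilon)$ for some $\varepsilon>0$, possibly after flipping the orientation. Once Theorem \ref{thm_ancient_low_entropy} is granted, the main obstacle lies exactly in this last step: one must ensure that the blow-up through the hypothetical non-mean-convex points is a genuine, non-flat limit flow at $X$ --- equivalently, that such points cannot be trapped at a bounded curvature scale near $X$ --- and it is here, rather than in the (standard) entropy estimate or the (easy) exclusion of the oval, that the hypothesis on the tangent flow at $X$ and the uniqueness of cylindrical and spherical tangent flows \cite{CIM} are really used.
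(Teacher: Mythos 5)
Your overall strategy --- show that every limit flow at $X$ is an ancient low entropy flow, invoke Theorem \ref{thm_ancient_low_entropy}, and then blow up at hypothetical non-mean-convex points --- is exactly the paper's (compare the proof of Proposition \ref{basic_reg}(iv)), and your first paragraph (ancientness, unit-regularity, cyclicity, and the entropy bound via monotonicity and upper semicontinuity of the Gaussian density) is correct and coincides with the paper's argument for \eqref{entropy_bound_to_show}. There are, however, two genuine problems in the execution.

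First, your exclusion of the flat plane from the list of limit flows is wrong as stated. A static multiplicity-one plane \emph{does} arise as a limit flow at a cylindrical singularity: already for the round shrinking cylinder, rescaling about surface points $(p_i,t_i)$ with $t_i\to 0$ by factors $\lambda_i\ll\sqrt{-t_i}$ yields a static plane, while the tangent flow at the space-time origin is still the cylinder. So a bounded-curvature sheet accumulating at $X$ does \emph{not} force the tangent flow to contain a plane --- under the tangent-flow rescaling such a sheet simply escapes to spatial infinity. The ``moreover'' clause implicitly discards these trivial static limits; what actually matters for the proof is only that the \emph{particular} blow-up performed in the last step is non-flat, and that must come from the normalization of the blow-up, not from a general non-existence of planar limit flows.

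Second, and more seriously, the blow-up in your last paragraph is not correctly normalized. You claim $|A|(X_i)\to\infty$ at the bad points, but (a) your justification is the flawed plane argument above, and (b) the pointwise curvature is the wrong quantity: one can have $H(X_i)\le 0$ with $|A|(X_i)$ bounded while the \emph{regularity scale} $R(X_i)$ degenerates (a low-curvature point adjacent to a forming singularity), in which case $\lambda_i=|A|(X_i)^{-1}\not\to 0$ and your rescaling produces nothing. The correct statement, and the one the paper proves, is $R(X_i)\to 0$: a uniform lower bound $R(X_i)\ge c$, propagated slightly forward in time by interior estimates, would make $X=(x,T)$ itself a regular point, contradicting the hypothesis that $X$ is singular. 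Rescaling by $R(X_i)$ then makes point selection unnecessary: Ilmanen's compactness \cite{Ilmanen_book} gives a Brakke limit which is an ancient low entropy flow; the normalization $R=1$ at the origin excludes the plane via the local regularity theorem \cite{White_regularity}; and the classification together with unit multiplicity upgrades the convergence to smooth convergence near the origin, so $H\le 0$ passes to the limit and contradicts the strict mean convexity of the sphere, cylinder and bowl. As written, your point selection ``among bad points'' does not control the curvature of nearby \emph{good} points, so smooth convergence at the selected centers is not guaranteed, while selecting among all points may land you at a point where $H>0$, losing the contradiction.
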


Theorem \ref{cor_mean_convex_nbd_first} follows, via a short argument by contradiction, from our classification of ancient low entropy flows (Theorem \ref{thm_ancient_low_entropy}).\\

Before stating the solution of the mean convex neighborhoods conjecture at subsequent singularities, let us explain better what it actually means to continue the mean curvature flow through singularities. We start by recalling some facts that are explained in more detail in \cite{Ilmanen_lectures,HershkovitsWhite}. For any closed set $K\subset\mathbb{R}^3$, its level set flow $F_t(K)$ is the maximal family of closed sets starting at $K$ that satisfies the avoidance principle. Now, given any closed embedded surface $M\subset \mathbb{R}^3$ there are at least three quite reasonable ways to evolve it through singularities, namely the \emph{level set flow} $F_t(M)$, the \emph{outer flow} $M_t$ and the \emph{inner flow} $M_t'$. The latter two are defined as follows. Let $K$ be the compact domain bounded by $M$, and let $K':=\overline{\mathbb{R}^3\setminus K}$. Denote the corresponding level set flows by
\begin{equation}
K_t=F_t(K),\quad \textrm{ and } \quad K_t' = F_t(K').
\end{equation}
Let $\mathcal{K}$ and $\mathcal{K}'$ be their space-time tracks, namely
\begin{align}
\mathcal{K}&= \{(x,t)\in \mathbb{R}^3\times \mathbb{R}_+\, | \, x\in K_t \}\\
\mathcal{K}'&= \{(x,t)\in \mathbb{R}^3\times \mathbb{R}_+\, | \, x\in K_t' \}.\nonumber
\end{align}
The outer flow and inner flow are then defined by
\begin{align}
M_t&=\{x\in \mathbb{R}^3\, | \, (x,t)\in \partial \mathcal{K} \}\\
M_t'&=\{x\in \mathbb{R}^3\, | \, (x,t)\in \partial \mathcal{K}' \}.\nonumber
\end{align}
As long as the evolution is smooth, all three flows are of course the same.\\

Let $T$ be the first singular time, and for $x\in M_T$ and $\lambda>0$, denote by $\mathcal{K}_{X,\lambda}$ and $\mathcal{K}_{X,\lambda}'$ the flows that are obained from $\mathcal{K}$ and $\mathcal{K'}$, respectively, by shifting $X=(x,T)$ to the origin, and parabolically rescaling by $\lambda^{-1}$.
Observe that the assumption of Theorem \ref{cor_mean_convex_nbd_first} is equivalent to the assumption that for $\lambda\to 0$ either $\mathcal{K}_{X,\lambda}$ or $\mathcal{K}_{X,\lambda}'$ converges smoothly with multiplicity one to a round shrinking solid cylinder or round shrinking solid ball. Also observe that the conclusion of Theorem \ref{cor_mean_convex_nbd_first} is equivalent to the assertion that either
\begin{equation}
\quad K_{t_2}\cap B(x,\varepsilon)\subseteq K_{t_1}\setminus M_{t_1}\quad \textrm{ or } \quad K_{t_2}'\cap B(x,\varepsilon)\subseteq K_{t_1}'\setminus M_{t_1}'
\end{equation} 
for all $T-\varepsilon^2< t_1< t_2 \leq T$. This reformulation has the advantage that it generalizes not just beyond the first singular time, but even beyond any potential fattening-time (see equation \eqref{eq_fattening_time} below). We prove:

\begin{theorem}[mean convex neighborhoods at all times]\label{thm_mean_convex_nbd_intro}
Assume $X=(x,t)$ is a space-time point such that $\mathcal{K}_{X,\lambda}$ converges for $\lambda\to 0$ smoothly with multiplicity one to a round shrinking solid cylinder or a round shrinking solid ball. Then there exists an $\varepsilon=\varepsilon(X)>0$ such that
\begin{equation}
\quad K_{t_2}\cap B(x,\varepsilon)\subseteq K_{t_1}\setminus M_{t_1} 
\end{equation} 
for all $T-\varepsilon^2< t_1< t_2 < T+\varepsilon^2$. Similarly, if $\mathcal{K}_{X,\lambda}'$ converges for $\lambda\to 0$ smoothly with multiplicity one to a round shrinking solid cylinder or a round shrinking solid ball, then for some  $\varepsilon=\varepsilon(X)>0$ we have
\begin{equation}
\quad K_{t_2}'\cap B(x,\varepsilon)\subseteq K_{t_1}'\setminus M_{t_1}'
\end{equation}
for all $T-\varepsilon^2< t_1< t_2 < T+\varepsilon^2$. 
Furthermore, in both cases, any limit flow at $X$ is either a round shrinking sphere, a round shrinking cylinder, a translating bowl soliton, or an ancient oval.
\end{theorem}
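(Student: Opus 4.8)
The proof rests on the classification of ancient low entropy flows, Theorem~\ref{thm_ancient_low_entropy}, which I take as given; the plan is to argue by contradiction and blow up, working throughout with the solid regions $\mathcal K$, $\mathcal K'$ rather than with the surfaces, so that ``mean convex'' (the region shrinks strictly) retains a meaning at singular points. Since the two assertions are symmetric under $K\leftrightarrow K'$, I only discuss the outer flow. Recall, as noted after Theorem~\ref{cor_mean_convex_nbd_first}, that the hypothesis says $\mathcal M$ is, at all small scales, a multiplicity-one perturbation of a round shrinking cylinder or sphere on parabolic regions shrinking onto $X$; the task is to promote this infinitesimal control to control on a fixed-size parabolic neighborhood.

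First I would reduce the theorem to the inclusion $K_{t_2}\cap B(x,\varepsilon)\subseteq K_{t_1}\setminus M_{t_1}$ for $t-\varepsilon^2<t_1<t_2\le t$: granting this, the extension to $t_2<t+\varepsilon^2$ follows from White's structure theory for mean convex flows \cite{White_size} (on a possibly smaller neighborhood the evolution is exactly that of a mean convex flow, mean convexity is preserved going forward, and the outer flow coincides there with the level set flow), the exterior being decoupled over the short time interval by the avoidance principle together with comparison against shrinking balls. To prove the reduced statement, suppose it fails: there are scales $\varepsilon_i\to 0$, times $t-\varepsilon_i^2<t_1^i<t_2^i\le t$, and points $y_i\in B(x,\varepsilon_i)$ with $y_i\in K_{t_2^i}\cap M_{t_1^i}$ (or $y_i\in K_{t_2^i}\setminus K_{t_1^i}$). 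On the controlled parabolic region around $X$ the flow is $C^2$-close to a strictly mean convex model, so such a near-violation cannot occur there; the bad points therefore lie in the uncontrolled region, and a point-selection argument — choosing for each $i$ a worst bad point $X_i=(y_i,t_1^i)$ and the natural scale $\lambda_i$ at which the violation is visible, arranged so that $\mathcal M_{X_i,\lambda_i}$ has bounded, non-degenerate geometry at the origin — produces, via Huisken's monotonicity formula \cite{Huisken_monotonicity} and Ilmanen's compactness theorem \cite{Ilmanen_book}, a subsequential limit flow $\mathcal M_\infty$ at $X$.

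Now $\mathcal M_\infty$ is ancient, being a parabolic blow-up limit; it is unit-regular, cyclic and integral, since $\mathcal M$ comes from Ilmanen's elliptic regularization and these properties pass to Brakke limits; and, by Huisken monotonicity and upper semicontinuity of the Gaussian density, $\textrm{Ent}[\mathcal M_\infty]\le\Theta_{\mathcal M}(X)=\textrm{Ent}[\hat{\mathcal M}_X]\le\sqrt{2\pi/e}$, where $\hat{\mathcal M}_X$ is the multiplicity-one cylindrical or spherical tangent flow supplied by the hypothesis. Hence $\mathcal M_\infty$ is an ancient low entropy flow, and Theorem~\ref{thm_ancient_low_entropy} forces it to be a plane, a round shrinking sphere, a round shrinking cylinder, a translating bowl, or an ancient oval. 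The four non-flat models are strictly convex by Corollary~\ref{cor_convex}, hence strictly mean convex, so their solid regions shrink strictly into themselves, which excludes the bad point. To exclude the plane — the real crux — one uses the proximity to the cylinder or sphere on the controlled region together with White's local regularity theorem \cite{White_regularity}: any bad point must carry definite curvature and density, so the normalization built into the point-selection forces $\mathcal M_\infty$ to be non-flat. This contradiction proves the reduced statement, hence both inclusions; the ``furthermore'' clause follows the same way, any limit flow at $X$ being an ancient low entropy flow with density $>1$ at its space-time origin, hence one of the four non-flat models.

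The main obstacle — precisely the one flagged in the introduction — is making this point-selection work: one must ensure that $\mathcal M_\infty$ is simultaneously (i) a bona fide multiplicity-one ancient integral Brakke flow (no higher multiplicity, no sudden vanishing, not the flat plane) and (ii) still exhibits a failure of mean convexity at the origin, all while the bad points may a priori drift towards parabolic or spatial infinity relative to $X$. Equivalently, the infinitesimal cylindricity at $X$ must be leveraged, through a first-time/first-scale analysis and barrier comparisons, both to confine any macroscopic failure and to detect it in the blow-up; this, together with the bookkeeping that forces one to pass to $\mathcal K$ and $\mathcal K'$, is where the work lies. The subsequent extension past the singular time via White's mean convex structure theory is comparatively routine.
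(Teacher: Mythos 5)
Your high-level ingredients (blow-up, compactness, the classification theorem, low entropy of limit flows) are the right ones, but the core of your argument has a genuine gap, and it sits exactly where you yourself flag ``the real crux.'' You propose to blow up directly at a failure of the set-theoretic inclusion $K_{t_2}\cap B(x,\varepsilon)\subseteq K_{t_1}\setminus M_{t_1}$ and to exclude the resulting limit by noting that the non-flat models shrink strictly. The difficulty is that a failure of this inclusion at a single space-time point is not a scale-invariant, quantitative condition: there is no ``natural scale $\lambda_i$ at which the violation is visible'' in general (e.g.\ $t_2^i-t_1^i$ may be arbitrarily small relative to any geometric scale at $(y_i,t_1^i)$), and Brakke flow convergence is too weak to transfer the strict-shrinking property of a limit back to the approximating flows near the bad point. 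The paper avoids this entirely by blowing up at a \emph{pointwise, local} condition instead: it shows (Proposition \ref{basic_reg}(iv)) that $H\neq 0$ at every regular point near $X_0$, by rescaling by the regularity scale at a hypothetical sequence of regular points with $H=0$ and invoking Theorem \ref{thm_ancient_low_entropy}; the regularity-scale normalization is what rules out the flat plane and makes the convergence smooth enough to contradict $H=0$. Your proposal never reduces to such a condition, so the contradiction is not actually derived.

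Two further steps present in the paper are absent from your outline and are not routine. First, $H\neq 0$ at regular points does not yield mean convexity: one must show $H$ has the \emph{correct sign} throughout the space-time connected component reaching $X_0$, and the paper's Lemma \ref{fixed_sign_smooth} spends a delicate continuity-in-time argument ruling out sign changes caused by new components entering, mergers of components of opposite sign, and mergers with boundary components. Second, pointwise $H>0$ at regular points must still be converted into the set inclusion at singular points; the paper does this via a first-touching-time distance argument (Claim \ref{weak-containment}) in which the touching point is automatically regular because its tangent flow lies in a half-space. Finally, before any of this one needs a unit-regular, cyclic, integral Brakke flow whose \emph{support equals the outer flow} and which has density $<2$ near $X_0$ — this requires restarting the flow via \cite{HershkovitsWhite} at a time close to $t_0$, since the hypothesis of the theorem is purely about the sets $\mathcal{K}_{X,\lambda}$ and not about a pre-existing Brakke flow. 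Your forward-in-time extension via White's mean convex structure theory is a plausible alternative route but is not needed in the paper's argument, which establishes $H>0$ on a two-sided parabolic neighborhood directly.
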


\bigskip

Note that Theorem \ref{thm_mean_convex_nbd_intro} establishes the strongest version of the mean convex neighborhood conjecture, where the parabolic neighborhood is backward and forward in time. The proof of Theorem \ref{thm_mean_convex_nbd_intro} is again based on our classification of ancient low entropy flows (Theorem \ref{thm_ancient_low_entropy}). During the course of the proof,  we also construct a unit-regular, cyclic, integral Brakke flow whose support is $M_t$ or $M_t'$, respectively, and which has density less than $2$ in $P(X,\delta)$, for some $\delta>0$. This highlights another advantage of our formulation of Theorem \ref{thm_mean_convex_nbd_intro}: its applicability even without assuming a priori that a Brakke flow starting from $M$ has multiplicity one almost everywhere.\\

Let us now discuss the applications concerning uniqueness of mean curvature flow through singularities. We recall from Ilmanen-White \cite{White_ICM} that there are examples of singularities with curvature of mixed signs, which cause non-uniqueness, i.e. fattening. The main conjecture regarding uniqueness at cylindrical or spherical singularities can be phrased as follows:

\begin{conjecture}[{nonfattening conjecture\footnote{The conjecture has been indicated first in White's ICM lecture \cite{White_ICM}, see also the discussion in the recent work of Hershkovits-White \cite{HershkovitsWhite} (indeed, the nonfattening conjecture is a combination of \cite[Conj. 1.2]{HershkovitsWhite} and the ``very interesting open problem'' described in the paragraph between Theorem 3.5 and Remark 3.6 in \cite{HershkovitsWhite}).}}]\label{conj_nonfattening}
If all singularities of the level set flow are either cylindrical or spherical then the level set flow does not fatten.
\end{conjecture}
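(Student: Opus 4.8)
The plan is to deduce the nonfattening conjecture from our mean convex neighborhood theorem (Theorem \ref{thm_mean_convex_nbd_intro}) by a first-time argument: were the level set flow to fatten, the fattening would have to emerge from a spherical or cylindrical singularity, and there it is excluded by the resulting local mean convexity. Recall (see \cite{White_ICM,Ilmanen_lectures,HershkovitsWhite}) that $F_t(M)$ fattens precisely when $\mathrm{int}(F_t(M))\neq\emptyset$ for some $t>0$, and that as long as the flow has not fattened the outer flow $\{M_t\}$ coincides with the inner flow $\{M_t'\}$ and is the unique weak flow starting from $M$, realized by a unit-regular, cyclic, integral Brakke flow \cite{Ilmanen_book}. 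Set $T_{\mathrm{fat}}:=\inf\{t>0:\ \mathrm{int}(F_t(M))\neq\emptyset\}$; we must show $T_{\mathrm{fat}}=\infty$, so suppose $T_{\mathrm{fat}}<\infty$. Pick fattening times $t_i\to T_{\mathrm{fat}}$ and points $x_i\in\mathrm{int}(F_{t_i}(M))$. Since the level set flow of a compact surface remains in a fixed compact set, after passing to a subsequence $x_i\to x$; by Hausdorff continuity of the level set flow, $x\in F_{T_{\mathrm{fat}}}(M)$, and we set $X:=(x,T_{\mathrm{fat}})$.

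I would then argue that $X$ is a singular point of the flow: if $\{M_t\}$ were smooth in a space-time neighborhood of $X$, the inner and outer flows would agree with the smooth flow there, hence with each other, contradicting the presence of the fattening points $(x_i,t_i)\to X$; this uses only the standard fact that a smooth mean curvature flow is locally non-fattening (Evans-Spruck \cite{EvansSpruck}, White). By the hypothesis of the conjecture, $X$ is therefore a spherical or cylindrical singularity, so every tangent flow at $X$ is a round shrinking sphere or cylinder of multiplicity one; equivalently (as noted before Theorem \ref{thm_mean_convex_nbd_intro}), either $\mathcal{K}_{X,\lambda}$ or $\mathcal{K}_{X,\lambda}'$ converges as $\lambda\to 0$, smoothly and with multiplicity one, to a round shrinking solid ball or a round shrinking solid cylinder. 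Applying Theorem \ref{thm_mean_convex_nbd_intro} at $X$ now yields $\varepsilon=\varepsilon(X)>0$ so that, say for the outer flow, $K_{t_2}\cap B(x,\varepsilon)\subseteq K_{t_1}\setminus M_{t_1}$ whenever $T_{\mathrm{fat}}-\varepsilon^2<t_1<t_2<T_{\mathrm{fat}}+\varepsilon^2$. The crucial feature is that this parabolic neighborhood is two-sided in time, so this monotone nesting constrains the flow strictly after $T_{\mathrm{fat}}$ as well; arguing as in the localized form of White's theorem that mean convex, i.e.\ monotonically nested, level set flows do not fatten, one obtains $\mathrm{int}(F_t(M)\cap B(x,\varepsilon))=\emptyset$ for all $t\in(T_{\mathrm{fat}}-\varepsilon^2,T_{\mathrm{fat}}+\varepsilon^2)$. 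This contradicts the existence of the fattening points $(x_i,t_i)$ with $x_i\to x$ and $t_i\to T_{\mathrm{fat}}$. Hence $T_{\mathrm{fat}}=\infty$ and the level set flow never fattens.

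The step I expect to be the main obstacle is the reduction itself: making rigorous that fattening, if it occurs, first emerges at a genuine singular point of the well-defined pre-fattening flow — so that the hypothesis on tangent flows applies at $X$ — and verifying with care that the two-sided mean convex neighborhood supplied by Theorem \ref{thm_mean_convex_nbd_intro} really does exclude fattening in $B(x,\varepsilon)$ through time $T_{\mathrm{fat}}$ and slightly beyond, which in particular requires relating the statement for $\mathcal{K}$ to the one for $\mathcal{K}'$ so that both the inner and outer flows are controlled near $X$. The remaining ingredients — the avoidance principle, Hausdorff continuity of level set flows, the equivalence of weak mean convexity with monotone nesting, and the non-fattening of smooth and of mean convex level set flows — are by now standard \cite{EvansSpruck,White_ICM,HershkovitsWhite}; the genuinely new input, namely local mean convexity at arbitrary spherical and cylindrical singularities, is exactly what Theorem \ref{thm_mean_convex_nbd_intro}, and ultimately the classification of ancient low entropy flows (Theorem \ref{thm_ancient_low_entropy}), provides.
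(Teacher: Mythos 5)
Your overall strategy -- reduce to a first singular time and invoke the two-sided mean convex neighborhood -- is in the right spirit, but as written it has two genuine gaps, and the second one is exactly the point where the paper instead cites a nontrivial external theorem rather than arguing directly. First, your setup asserts that ``as long as the flow has not fattened the outer flow $\{M_t\}$ coincides with the inner flow $\{M_t'\}$ and is the unique weak flow starting from $M$.'' This is not known: it is precisely the open question of whether $T_{\textrm{fat}}=T_{\textrm{disc}}$ (see \cite[Ex.~2.5]{HershkovitsWhite} and the discussion before Theorem \ref{thm_nonfattening_intro}). In general one only has $T_{\textrm{disc}}\leq T_{\textrm{fat}}$, so before the first fattening time the inner flow, outer flow and level set flow may a priori already disagree, and your identification of a well-defined pre-fattening flow whose singularities are governed by the hypothesis breaks down. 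The correct quantity to run the first-time argument on is the discrepancy time $T_{\textrm{disc}}$, not $T_{\textrm{fat}}$; this is why Theorem \ref{thm_nonfattening_intro} is phrased as ``$T\leq T_{\textrm{disc}}$ implies $T<T_{\textrm{disc}}$.''

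Second, the step you describe as ``arguing as in the localized form of White's theorem that mean convex level set flows do not fatten'' is not a routine localization of a standard fact. Deducing local nondiscrepancy (and hence nonfattening) from the existence of mean convex neighborhoods around all singularities at time $T$ -- including the issue you flag yourself, namely that Theorem \ref{thm_mean_convex_nbd_intro} gives monotone nesting for $\mathcal{K}$ \emph{or} for $\mathcal{K}'$ at each singular point, while fattening concerns $K_t\cap K_t'$ -- is exactly the main theorem of Hershkovits--White \cite{HershkovitsWhite}, whose proof occupies a substantial part of that paper. The paper's own proof of Theorem \ref{thm_nonfattening_intro} is therefore deliberately short: it checks that at a cylindrical or spherical singular point of the (pre-discrepancy) flow either $\mathcal{K}_{X,\lambda}$ or $\mathcal{K}_{X,\lambda}'$ has the required smooth multiplicity-one tangent, applies Theorem \ref{thm_mean_convex_nbd_intro} to produce the mean convex neighborhoods, and then quotes the Hershkovits--White theorem as a black box. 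If you replace your two hand-waved steps by (a) working with $T_{\textrm{disc}}$ and (b) citing \cite{HershkovitsWhite} for the passage from mean convex neighborhoods to nondiscrepancy, your argument collapses to the paper's.
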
 

We recall that the \emph{fattening time} is defined by
\begin{equation}\label{eq_fattening_time}
T_{\textrm{fat}}= \inf \{ t > 0 \, | \, \textrm{$F_t(M)$ has non-empty interior} \},
\end{equation}
and the \emph{discrepancy time} is defined by
\begin{equation}
T_{\textrm{disc}}= \inf \{ t > 0 \, | \, \textrm{$M_t$, $M_t'$, and $F_t(M)$ are not all equal} \},
\end{equation}
see \cite{HershkovitsWhite}. It follows directly from the definitions that
\begin{equation}
T_{\textrm{fat}}\geq T_{\textrm{disc}}.
\end{equation}
It is unknown whether or not $T_{\textrm{fat}}= T_{\textrm{disc}}$ holds, see \cite[Ex. 2.5]{HershkovitsWhite}. In any case, if one proves the (potentially) stronger result that there is no discrepancy, then one obtains nonfattening as a consequence. We prove:

\begin{theorem}[nonfattening]\label{thm_nonfattening_intro}
Suppose that $0<T\leq T_{\textrm{disc}}$, and suppose that all the backward singularities of the outer flow $\{M_t\}$ at time $T$ are cylindrical or spherical. Then $T< T_{\textrm{disc}}$. In particular, the level set flow does not fatten as long as all singularities are cylindrical or spherical.
\end{theorem}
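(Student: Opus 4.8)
The plan is to argue by contradiction, exploiting the structure theory we have developed. Suppose $0 < T \leq T_{\textrm{disc}}$ and all backward singularities of the outer flow $\{M_t\}$ at time $T$ are cylindrical or spherical, but that $T = T_{\textrm{disc}}$, i.e. that the discrepancy occurs exactly at time $T$. First I would set up the geometry: since $T \leq T_{\textrm{disc}}$, on the time interval $[0,T)$ the inner flow, outer flow, and level set flow all coincide, so $F_t(M)$ is a well-defined evolving surface without interior for $t < T$, and $\partial\mathcal{K}$ is a nice space-time hypersurface there. The key point is that every backward singularity of $\{M_t\}$ at time $T$ is, by hypothesis, modeled on a multiplicity-one sphere or cylinder, so we are exactly in the situation covered by Theorem \ref{thm_mean_convex_nbd_intro}: around each such singular point $X = (x, T)$ there is a parabolic neighborhood $P(X,\varepsilon)$ in which the outer flow is mean convex (i.e. satisfies the one-sided inclusion $K_{t_2} \cap B(x,\varepsilon) \subseteq K_{t_1} \setminus M_{t_1}$), and moreover the outer flow near $X$ is the support of a unit-regular, cyclic, integral Brakke flow of density less than $2$ in $P(X,\delta)$.

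The second step is to promote this local mean convexity to a statement that forces inner flow, outer flow, and level set flow to agree past time $T$ near each such point. The mechanism is the one-sided minimization / mean convex continuation: once the flow is mean convex in $P(X,\varepsilon)$, the domains $K_t$ are nested (for $t_1 < t_2$, $K_{t_2} \cap B(x,\varepsilon) \subseteq \operatorname{int} K_{t_1}$), and by the now-standard theory of mean convex level set flow (White's structure theory, and the equivalence of weak notions in the mean convex setting cited in the introduction) the level set flow through such a neighborhood is non-fattening and the outer and inner flows coincide there. Concretely: a mean convex flow sweeps out its space-time region monotonically, so $\partial\mathcal{K}$ and $\partial\mathcal{K}'$ cannot separate inside $P(X,\varepsilon)$ — there is no room for an interior to open up. This handles a neighborhood of each backward singularity at time $T$.

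The third step is to handle the complement of the singular set. By Brakke's local regularity theorem (via the unit-regularity built into the flow) together with White's stratification, the points of $M_T$ that are \emph{not} backward-singular are regular points where the flow extends smoothly across time $T$; near such points there is trivially no discrepancy and no fattening for a short time. Covering $M_T$ by the finitely-many — or at worst compact — mean convex neighborhoods around singular points plus the open set of regular points, and using a compactness/covering argument to extract a uniform $\varepsilon_0 > 0$, I would conclude that on $[0, T + \varepsilon_0^2)$ the inner, outer, and level set flows still all coincide, contradicting $T = T_{\textrm{disc}}$. Hence $T < T_{\textrm{disc}}$. The final assertion — that the level set flow does not fatten as long as all singularities are cylindrical or spherical — follows since $T_{\textrm{fat}} \geq T_{\textrm{disc}}$, so discrepancy (hence fattening) can never be triggered under the stated hypothesis.

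I expect the main obstacle to be the \emph{globalization} step: Theorem \ref{thm_mean_convex_nbd_intro} is inherently local (it produces an $\varepsilon$ depending on the point $X$), and one must rule out pathological accumulation of singular points along $M_T$ and obtain a uniform spatial scale below which the three flows provably agree. This requires knowing that the singular set at time $T$ is well-controlled (compact, and covered by finitely many such neighborhoods) and that the mean convex continuation arguments patch consistently across overlapping neighborhoods — in particular that the orientation-flip in Theorem \ref{thm_mean_convex_nbd_intro} is globally coherent on connected pieces of the regular part of $M_T$. A secondary subtlety is that one is working past the first singular time, so one cannot simply invoke the classical smooth theory on $[0,T)$; one must genuinely use that $T \leq T_{\textrm{disc}}$ to know the flow is unambiguous up to time $T$ before applying the neighborhood theorem.
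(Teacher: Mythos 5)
Your overall strategy is the paper's: use $T\leq T_{\textrm{disc}}$ to identify the three flows up to time $T$, apply Theorem \ref{thm_mean_convex_nbd_intro} at each backward singularity, and then globalize to conclude nondiscrepancy past $T$. The difference lies in how the globalization is handled. What you correctly flag as ``the main obstacle'' --- patching the local mean convex neighborhoods with the regular set, extracting a uniform scale, controlling accumulation of singular points, and showing that local mean convexity really does force $\partial\mathcal{K}$ and $\partial\mathcal{K}'$ to stay together --- is precisely the content of the main theorem of Hershkovits--White \cite{HershkovitsWhite}, which establishes $T<T_{\textrm{disc}}$ \emph{assuming} the existence of mean convex neighborhoods around all singularities a priori. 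The paper does not reprove this; its entire proof is the two-step combination of that theorem with Theorem \ref{thm_mean_convex_nbd_intro}. As written, your proposal leaves the globalization as an acknowledged but unresolved sketch, so it is not yet a complete proof; the fix is a citation rather than new mathematics, but without it the argument has a genuine hole, since the globalization is the main theorem of a separate paper and not a routine covering argument.

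One smaller point worth making explicit: the hypothesis of Theorem \ref{thm_mean_convex_nbd_intro} is that $\mathcal{K}_{X,\lambda}$ (or $\mathcal{K}'_{X,\lambda}$) converges smoothly with multiplicity one to a round shrinking \emph{solid} cylinder or ball, whereas the hypothesis of the nonfattening theorem is that the backward singularities of the outer flow $\{M_t\}$ at time $T$ are cylindrical or spherical. The paper's first paragraph is devoted to this translation: because $T\leq T_{\textrm{disc}}$, the outer flow agrees with the level set flow and the inner flow on $[0,T]$, so a multiplicity one cylindrical or spherical tangent flow of $\{M_t\}$ at $(x_0,T)$ yields the required smooth multiplicity one convergence for either $\mathcal{K}$ or $\mathcal{K}'$. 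You invoke $T\leq T_{\textrm{disc}}$ for the coincidence of the flows on $[0,T)$, and you note at the end that this coincidence is needed, but you should state this translation of hypotheses explicitly before applying the mean convex neighborhood theorem.
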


Theorem \ref{thm_nonfattening_intro} follows immediately by combining the main theorem of Hershkovits-White \cite{HershkovitsWhite}, which establishes nondiscrepancy assuming the existence of mean convex neighborhoods around all singularities a priori, and Theorem \ref{thm_mean_convex_nbd_intro}, which proves the existence of such mean convex neighborhoods.\\

Our final application in this paper concerns the well-posedness problem for the mean curvature flow of embedded two-spheres through singularities. 
The main conjecture in this regard is the following.

\begin{conjecture}[evolution of embedded two-spheres\footnote{See e.g. White's ICM lecture \cite{White_ICM}, and the introduction of Bamler-Kleiner \cite{BamlerKleiner}.}]\label{conj_uniqueness_two_spheres}
The mean curvature flow of embedded two-spheres through singularities is unique.
\end{conjecture}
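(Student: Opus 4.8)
We can establish the conjecture in its conditional form: assuming Ilmanen's multiplicity one conjecture, the mean curvature flow of an embedded two-sphere through singularities is well-posed (this is Theorem \ref{thm_uniqueness_two_spheres}). The plan is to reduce the assertion to Theorem \ref{thm_nonfattening_intro}. Let $M\subset\mathbb{R}^3$ be a smooth closed embedded two-sphere and let $\{M_t\}$ be the outer flow starting at $M$. First I would prove that, granting the multiplicity one conjecture, every singularity of this flow is spherical or cylindrical. Once that is known, Theorem \ref{thm_mean_convex_nbd_intro} provides mean convex neighborhoods around all singularities, and Theorem \ref{thm_nonfattening_intro} --- together with the non-discrepancy theorem of Hershkovits--White \cite{HershkovitsWhite} that it invokes --- forces the inner flow, the outer flow, and the level set flow of $M$ to agree for all time; adjoining continuous dependence on the initial surface then upgrades uniqueness to well-posedness.

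The key step is the claim that every tangent flow, at every singular point, is a round shrinking sphere or round shrinking cylinder of multiplicity one. I would argue as follows. By the regularity theory for mean curvature flow of surfaces in $\mathbb{R}^3$, every tangent flow is a smooth self-shrinker with integer multiplicity; under the multiplicity one conjecture it has multiplicity one, and, arising as a blow-up of the embedded flow, it is smoothly embedded. The crucial point is that this shrinker has genus zero: the genus of a tangent flow to a surface flow does not exceed the genus of the initial surface --- genus monotonicity, which again uses the multiplicity one convergence so that the blow-up is a genuine limit of surfaces --- and $\mathrm{genus}(M)=0$. Brendle's classification of genus zero shrinkers \cite{Brendle_sphere} then shows that the only smoothly embedded shrinkers of genus zero in $\mathbb{R}^3$ are the plane, the round sphere, and the round cylinder; the plane is ruled out because the Gaussian density at a singular point is strictly larger than $1$. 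Hence every tangent flow is a multiplicity one sphere or cylinder, and by \cite{CIM} the tangent flow is moreover unique at each such point, so the singularity is unambiguously spherical or cylindrical.

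To conclude, suppose for contradiction that the discrepancy time $T_{\mathrm{disc}}$ of $M$ is finite. By the previous step, all backward singularities of the outer flow at time $T_{\mathrm{disc}}$ are spherical or cylindrical, so Theorem \ref{thm_nonfattening_intro} gives $T_{\mathrm{disc}}<T_{\mathrm{disc}}$, a contradiction; therefore $T_{\mathrm{disc}}=\infty$ and $M_t=M_t'=F_t(M)$ for all $t\geq 0$, so the level set flow of $M$ never fattens and is its unique weak evolution. For well-posedness I would observe that continuous dependence is essentially automatic once fattening is excluded: if embedded two-spheres $M^i$ converge to $M$ --- say as $C^0$-graphs over $M$, or in Hausdorff distance together with the enclosed regions --- then by the avoidance principle the level set flows $F_t(M^i)$ are trapped between the level set flows of inner and outer perturbations of $M$, which converge to $F_t(M)$ because the latter has empty interior; and the associated unit-regular, cyclic, integral Brakke flows converge by Ilmanen's compactness theorem.

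By this point the analytic heart of the matter is behind us --- it is Theorem \ref{thm_ancient_low_entropy} and its consequence Theorem \ref{thm_mean_convex_nbd_intro} that do the real work, and the two-sphere statement is an assembly of these with Brendle's shrinker classification, genus monotonicity, and the non-discrepancy theorem. The one genuinely delicate passage, and the reason the result must stay conditional, is the deduction of the genus-zero (hence spherical or cylindrical) property of \emph{all} tangent flows from the multiplicity one conjecture: without it one cannot a priori rule out a tangent flow that is a higher-multiplicity plane, or a positive-genus shrinker hidden behind a multiplicity jump, and at such a singularity neither Theorem \ref{thm_mean_convex_nbd_intro} nor the non-discrepancy theorem applies. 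Removing this hypothesis would require proving (a case of) the multiplicity one conjecture, which lies outside the scope of the present methods.
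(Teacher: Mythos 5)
Your overall route is the same as the paper's: assume the multiplicity one conjecture, show every tangent flow is a multiplicity one sphere or cylinder via Ilmanen's smoothness of tangent flows for surfaces, genus zero, and Brendle's classification, and then conclude via Theorem \ref{thm_nonfattening_intro}. (The continuous-dependence discussion at the end is extra; the paper only proves uniqueness.) However, there is a genuine gap in your ``key step'' as stated. Ilmanen's argument that tangent flows of a surface flow are smooth self-shrinkers is not a free-standing regularity theorem: it needs the flow to be smooth for almost every earlier time, so that the local Gauss--Bonnet/monotonicity argument yields $L^2$ bounds on $|A|$ that pass to the blow-up. That a.e.\ smoothness is exactly what you do not yet know at an arbitrary singular time --- it holds only as long as all earlier singularities are already known to be spherical or cylindrical (so that the flow is nonfattening and the singular set is small). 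Asserting ``every tangent flow is a smooth self-shrinker'' at every singular point, before knowing anything about earlier times, is therefore circular.

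The paper closes this loop with a first-bad-time bootstrap that your proposal is missing: let $T$ be the infimum of times at which some tangent flow is not spherical or cylindrical. Then $T\leq T_{\mathrm{disc}}$ and the flow is a.e.\ smooth on $[0,T]$, so Ilmanen's argument (plus the multiplicity one conjecture) applies \emph{at time $T$}; ruling out static/quasistatic cones by unit-regularity and positive-genus shrinkers by a mod-$2$ intersection number argument (the paper's substitute for your ``genus monotonicity,'' using White's theorem that smooth time slices have genus zero), Brendle's theorem forces the time-$T$ tangent flows to be spheres or cylinders after all. Crucially, one then needs Theorem \ref{thm_mean_convex_nbd_intro} together with a compactness argument to extend the good behavior to an interval $[0,T+\delta)$, contradicting the definition of $T$. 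Your contradiction argument runs only at the level of $T_{\mathrm{disc}}$ and never establishes the a.e.\ smoothness hypothesis needed to launch the key step, nor the open-ness in time of the ``only nice singularities'' property. With the bootstrap inserted, your argument becomes the paper's proof.
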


One reason why Conjecture \ref{conj_uniqueness_two_spheres} is highly intriguing, is that the mean curvature flow of embedded two-spheres can be viewed as (somewhat more approachable) cousin of the Ricci flow of four-spheres, whose analysis seems to be out of reach with current technology.\footnote{When comparing mean curvature flow and Ricci flow one has to multiply the dimension by two. For example, convergence to a round limit holds for curve-shortening flow and two-dimensional Ricci flow, respectively. Also, Huisken's monotonicity formula combined with Gauss-Bonnet gives local bounds for the $L^2$-norm of the second fundamental form, see \cite{Ilmanen_monotonicity}, which is scale invariant in dimension two, while on the other hand Perelman's monotonicity formula combined with Gauss-Bonnet gives local bounds for the $L^2$-norm of the Riemann tensor, see \cite{HaslhoferMuller}, which is scale invariant in dimension four.} A solution of Conjecture \ref{conj_uniqueness_two_spheres} would also facilitate several topological and geometric applications. E.g. there is a proposal of Yau for a flow proof of the Smale conjecture \cite{Rubinstein} (see also the work of Bamler-Kleiner \cite{BamlerKleiner} and Buzano-Haslhofer-Hershkovits \cite{BuzanoHaslhoferHershkovits}). Moreover, a well-posed flow of embedded-two spheres would also be useful for the Lusternik-Schnirelman problem of finding four embedded minimal two-spheres in three-spheres equipped with an arbitrary Riemannian metric (the first such minimal two-spheres has been found by Simon-Smith \cite{SimonSmith} in 1983, and a second one has been found recently by Haslhofer-Ketover \cite{HaslhoferKetover} using combined efforts of mean curvature flow and min-max theory).\\

Some very important recent progress towards Conjecture \ref{conj_uniqueness_two_spheres} has been made by Brendle \cite{Brendle_sphere}, who proved that the only nontrivial shrinkers of genus zero are the round shrinking sphere and the round shrinking cylinder. Combining Brendle's uniqueness result with our solution of the nonfattening conjecture (Theorem \ref{thm_nonfattening_intro}), we can prove Conjecture \ref{conj_uniqueness_two_spheres} modulo (a special case of) Ilmanen's multiplicity one conjecture:

\begin{conjecture}[multiplicity one conjecture for two-spheres\footnote{Multiplicity one was called an hypothesis in Brakke's pioneering monograph \cite{Brakke_book}, and promoted to a conjecture in Ilmanen's paper \cite{Ilmanen_monotonicity}, where he proved that for surfaces one can always find smooth tangent flows; see also the approach by Ecker \cite{Ecker}.}]\label{conj_mult_one} For the mean curvature flow starting at any closed embedded two-sphere $M$,\footnote{To be concrete, we mean by that a unit-regular, cyclic, integral Brakke flow, whose support is the outer flow, and whose initial data is $\mathcal{H}^{n}\lfloor M$. A posteriori, this will turn out to imply that the (nonfattening) matching motion has only multiplicity one tangents.} all tangent flows have multiplicity one.
\end{conjecture}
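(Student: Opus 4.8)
\emph{Proof strategy.} The plan is to reduce the conjecture to a purely infinitesimal statement about tangent flows and then to exclude higher multiplicity by a sheet-separation argument. First I would invoke Ilmanen's local Gauss--Bonnet estimate for flows of surfaces \cite{Ilmanen_monotonicity}, which bounds $\int |A|^2$ locally in space-time; this forces every tangent flow $\hat{\mathcal M}_X$ to be an integer multiple $k\,\Sigma$ of a smooth, properly embedded self-shrinker $\Sigma$, and moreover --- since the flow starts from a genus-zero surface and the local $L^2$ bound controls the genus along the flow --- the shrinker $\Sigma$ has genus zero. By Brendle's classification of genus-zero shrinkers \cite{Brendle_sphere}, $\Sigma$ is a flat plane, a round sphere, or a round cylinder. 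Hence the entire content of Conjecture \ref{conj_mult_one} is the assertion that $k=1$ in each of these three cases. Note that for $k\geq 2$ all three candidates $k\,\mathbb{R}^2$, $k\,\mathbb{S}^2$, $k\,(\mathbb{S}^1\times\mathbb{R})$ have entropy $\geq 2$, so they fall outside the scope of the ancient low entropy classification (Theorem \ref{thm_ancient_low_entropy}); a genuinely new argument is needed to exclude them.

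For the cylinder and sphere cases I would argue by contradiction in rescaled, self-similar coordinates centered at the singular point $X$, following the dynamical/\L{}ojasiewicz philosophy of Colding--Minicozzi \cite{CIM, CM_generic} and Angenent--Daskalopoulos--Sesum \cite{ADS2}. Suppose the tangent flow at $X$ is $k\,\Sigma$ with $\Sigma$ a round cylinder or sphere and $k\geq 2$. The first step --- and, I expect, the hardest --- is to upgrade the varifold convergence $\tilde M_\tau \to k\,\Sigma$ to the statement that, over a fixed ball and for all sufficiently negative rescaled times $\tau$, $\tilde M_\tau$ is a disjoint union of exactly $k$ graphs over $\Sigma$; by embeddedness these are linearly ordered, $u_1(\cdot,\tau) < \cdots < u_k(\cdot,\tau)$, and each $u_j \to 0$ in $C^\infty_{\mathrm{loc}}$. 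Granting this, each consecutive difference $w_j := u_{j+1}-u_j > 0$ satisfies a linear parabolic equation whose principal part is the Jacobi operator $L_\Sigma = \Delta_\Sigma - \tfrac12\, x\!\cdot\!\nabla + |A_\Sigma|^2 + \tfrac12$ plus lower-order terms that decay as $\tau \to -\infty$. A Merle--Zaag-type ODE alternative for the projections of $w_j$ onto the unstable, neutral, and stable eigenspaces of $L_\Sigma$ then forces $w_j$ to be asymptotic as $\tau \to -\infty$ to a nonzero, nonnegative element of the unstable-or-neutral subspace --- spanned on the cylinder by the translation, rotation, and dilation modes. I would derive a contradiction from this: a genuine unstable mode separates the sheets at a definite exponential rate, incompatible with varifold convergence to $k\,\Sigma$, while the only neutral possibility, the dilation mode, would make $u_1$ (or $u_k$) change sign relative to $\Sigma$, contradicting the one-sided approach of the outermost sheet to $\Sigma$.

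The plane case $k\,\mathbb{R}^2$ with $k\geq 2$ I would treat by a separate barrier argument. Such a tangent flow at $X$ has Gaussian density $\geq 2$; after the same upgrade to a decomposition into $k$ disjoint ordered graphs over $\mathbb{R}^2$ (here Brakke regularity applied at the scale on which consecutive sheets are separated, where each is nearly flat, is the relevant tool), two consecutive sheets bound a thin slab-like region that pinches as $\tau \to -\infty$; inserting a shrinking round sphere of radius comparable to the slab width, which cannot escape before the two sheets would have had to touch, contradicts the avoidance principle.

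The main obstacle is precisely the ``upgrade'' step used in all three cases: passing from varifold convergence $\tilde M_\tau \to k\,\Sigma$ to a clean decomposition into $k$ ordered sheets, valid uniformly down to the separation scale of consecutive sheets. This is delicate because White's local regularity theorem applies only near multiplicity-one models, so near a multiplicity-$k$ shrinker one must argue at smaller scales where each individual sheet \emph{is} nearly a multiplicity-one model --- while simultaneously controlling what happens between the sheets, where new singularities could in principle form that need not be cylindrical or spherical. I expect this forces an inductive bootstrap in which the decomposition, a classification of all limit flows in a neighborhood of $X$, and a quantitative ``no intermediate-scale pinching'' estimate are proved together, with the ancient low entropy classification (Theorem \ref{thm_ancient_low_entropy}), the mean convex neighborhood theorem (Theorem \ref{thm_mean_convex_nbd_intro}), and Brendle's shrinker classification \cite{Brendle_sphere} as the principal inputs.
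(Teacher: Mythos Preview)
The statement you are attempting to prove is Conjecture~\ref{conj_mult_one}, which the paper leaves \emph{open}: it is stated as a hypothesis, and Theorem~\ref{thm_uniqueness_two_spheres} is explicitly conditional on it. There is no proof in the paper to compare your proposal against. What you have written is therefore not a reconstruction of a known argument but an outline for attacking a problem that was open at the time of writing.

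As an outline, your proposal is coherent in places but contains a genuine gap. In the cylinder/sphere case, your contradiction in the \emph{unstable} branch of the Merle--Zaag alternative does not work as stated. If the sheet separation $w_j>0$ is dominated as $\tau\to-\infty$ by the positive eigenfunction $1$ (eigenvalue $1$ on the cylinder), then $w_j\sim c\,e^{\tau}$ with $c>0$, which decays to zero as $\tau\to-\infty$ and is perfectly compatible both with $w_j>0$ and with varifold convergence $\tilde M_\tau\to k\,\Sigma$. There is no exponential \emph{separation} backward in time; the sheets coalesce at the expected rate. Your sign argument does rule out the neutral modes $z^2-2$, $z\cos\theta$, $z\sin\theta$ (all change sign), and likewise the sign-changing unstable modes $z$, $\cos\theta$, $\sin\theta$; but the constant mode survives, and that is exactly the scenario in which two nearby parallel cylinders or spheres approach the same limit. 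Excluding this case requires a genuinely global argument that uses connectedness of $M_t$ (it is a two-sphere) to force the sheets to join somewhere and then to propagate information back into the graphical region --- this is the core difficulty, and your outline does not supply it.

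You are right that the ``upgrade'' from varifold convergence to an ordered $k$-sheet decomposition is the principal obstacle; your honest identification of this step as unproved is appropriate. But note also that Theorem~\ref{thm_ancient_low_entropy} and Theorem~\ref{thm_mean_convex_nbd_intro} cannot be used as black boxes here: both require entropy below $2$ (equivalently, a multiplicity-one tangent), which is precisely what you are trying to establish, so invoking them in your final bootstrap is circular unless you first localize to a single sheet where the density drops below $2$ --- and that localization is again the upgrade step.
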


\begin{theorem}[evolution of embedded two-spheres]\label{thm_uniqueness_two_spheres}
Assuming the multiplicity one conjecture (Conjecture \ref{conj_mult_one}), mean curvature flow of embedded two-spheres through singularities is unique.
\end{theorem}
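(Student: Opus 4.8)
The plan is to obtain Theorem \ref{thm_uniqueness_two_spheres} by combining our nonfattening result (Theorem \ref{thm_nonfattening_intro}), Brendle's classification of genus zero shrinkers \cite{Brendle_sphere}, and the multiplicity one conjecture (Conjecture \ref{conj_mult_one}). Let $M\subset\mathbb{R}^3$ be a closed embedded two-sphere, and denote by $\{M_t\}$, $\{M_t'\}$ and $F_t(M)$ the outer, inner, and level set flows. Unwinding the definitions, ``uniqueness through singularities'' will amount to showing that (i) these three evolutions coincide for all $t\geq 0$, i.e.\ $T_{\textrm{disc}}=\infty$, which in particular rules out fattening, and (ii) the associated unit-regular, cyclic, integral Brakke flow has multiplicity one, hence equals $\mathcal{H}^2\lfloor M_t$, which is canonically determined by $M$. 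So the whole content is to establish $T_{\textrm{disc}}=\infty$ together with multiplicity one almost everywhere.

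The key point is that a flow starting at a genus zero surface has only cylindrical or spherical singularities. Indeed, by genus monotonicity --- which for surfaces in $\mathbb{R}^3$ rests on Ilmanen's fact \cite{Ilmanen_monotonicity} that tangent flows are smooth --- every tangent flow of the matching motion on $(0,T_{\textrm{disc}})$ is, as a varifold, a smooth embedded self-shrinker of genus zero, and hence by Brendle's theorem \cite{Brendle_sphere} a flat plane, round shrinking sphere, or round shrinking cylinder, each with some integer multiplicity $k$. By the multiplicity one conjecture (Conjecture \ref{conj_mult_one}) we have $k=1$; a multiplicity one flat plane is a regular point by White's local regularity theorem \cite{White_regularity}, so every genuine backward singularity of $\{M_t\}$ at every time $T\leq T_{\textrm{disc}}$ has a round shrinking sphere or round shrinking cylinder as tangent flow. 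I would then invoke Theorem \ref{thm_nonfattening_intro}: if $T_{\textrm{disc}}<\infty$, applying it with $T=T_{\textrm{disc}}$ forces $T_{\textrm{disc}}<T_{\textrm{disc}}$, a contradiction. Hence $T_{\textrm{disc}}=\infty$, so $M_t=M_t'=F_t(M)$ for all $t\geq 0$ and the level set flow never fattens.

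It then remains to upgrade this to multiplicity one. Near each spherical or cylindrical singularity, Theorem \ref{thm_mean_convex_nbd_intro} provides a unit-regular, cyclic, integral Brakke flow with support $M_t$ and density below $2$, hence of multiplicity one in a space-time neighborhood of the singularity; at regular points multiplicity one is automatic by unit-regularity, and by Conjecture \ref{conj_mult_one} all tangent flows have multiplicity one. Therefore the Brakke flow associated to $M$ coincides with $\mathcal{H}^2\lfloor M_t$ for almost every $t$, and being supported on $M_t=M_t'=F_t(M)$ it is uniquely determined by $M$. Together with $T_{\textrm{disc}}=\infty$ this yields the asserted well-posedness.

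The main obstacle --- the one place where real work is hidden beyond assembling the cited results --- will be the bootstrap implicit in ``genus monotonicity on $(0,T_{\textrm{disc}})$'': a priori the genus is controlled only up to the first singular time, and to keep applying Brendle's classification at later singular times one must know that the matching motion remains, at regular times, a union of embedded genus zero surfaces after passing through each singularity, even if singular times accumulate. This is exactly what the mean convex neighborhoods of Theorem \ref{thm_mean_convex_nbd_intro} secure: near every cylindrical or spherical singularity the flow is mean convex, so White's structure theory applies and the local picture is a neckpinch or a vanishing round point, neither of which raises the total genus, while the global propagation is handled by Theorem \ref{thm_nonfattening_intro} and the Hershkovits-White nondiscrepancy machinery \cite{HershkovitsWhite}. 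All of the genuinely hard analysis has by then already been carried out in Theorem \ref{thm_ancient_low_entropy} and its consequence Theorem \ref{thm_mean_convex_nbd_intro}.
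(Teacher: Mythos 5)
Your proposal is correct and assembles exactly the ingredients the paper uses: Ilmanen's smoothness of tangent flows, the multiplicity one conjecture, Brendle's genus zero classification, unit-regularity to exclude static and quasistatic cones, the mean convex neighborhood theorem, and finally Theorem \ref{thm_nonfattening_intro} applied at $T=T_{\textrm{disc}}$. The one place where the paper proceeds differently --- and more cleanly --- is precisely the step you flag as the main obstacle. Instead of propagating genus zero through singularities by a local analysis of neckpinches and round points, the paper (a) quotes White's theorem \cite{White_topology} that every \emph{smooth} time slice of the outer flow of a two-sphere has genus zero, a property of the weak flow itself that requires no bootstrap, and (b) argues by contradiction at the first time $T$ at which some tangent flow fails to be spherical or cylindrical: before $T$ the flow is smooth at almost every time, so Ilmanen's argument produces a smooth multiplicity one tangent flow at time $T$; this shrinker cannot contain two loops with odd intersection number, since such loops would already be visible in some smooth slice $M_{t'}$, $t'<T$, of genus zero, so Brendle's classification applies; and then Theorem \ref{thm_mean_convex_nbd_intro} together with compactness extends the good behavior to $[0,T+\delta)$, contradicting the choice of $T$. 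Your direct formulation ``every tangent flow on $(0,T_{\textrm{disc}})$ is a smooth embedded genus zero shrinker'' is what this contradiction argument ultimately delivers, but stated up front it presupposes part of the conclusion; the first-bad-time device combined with White's genus bound for the weak flow is what breaks the circularity you correctly identified, and is the detail you would need to add to make the argument airtight.
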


\bigskip

Let us conclude this section by sketching a heuristic picture for the mean curvature flow of embedded surfaces of higher genus: By Ilmanen's strict genus reduction conjecture \cite[Conjecture 13]{Ilmanen_problems} one expects only a controlled number (bounded by the genus of the initial surface) of singularities modelled on asymptotically conical shrinkers of higher genus, see in particular the important work of Brendle \cite{Brendle_sphere} and Wang \cite{Wang_shrinker}. At these singularities the evolution can be non-unique, and one has to decide how to flow out of the conical singularities, either by hand or via stability or via a stochastic selection principle. Our solution of the nonfattening conjecture (Theorem \ref{thm_nonfattening_intro}) suggests that the evolution is determined completely by these finitely many choices.

\bigskip

\subsection{Outline of the proofs}\label{sec_outline}

For the outline of the proof of the classification theorem for ancient low entropy flows (Theorem \ref{thm_ancient_low_entropy}), let $\mathcal{M}$ be an ancient low entropy flow (see Definition \ref{def_low_entropy_flow}) which is not a flat plane, round shrinking sphere, or round shrinking cylinder. The task is to prove that $\mathcal M$ is either a translating bowl soliton or an ancient oval.\\

\noindent In Section \ref{sec_coarse_properties}, we start by establishing several coarse properties:

First, we prove a partial regularity result (Theorem \ref{partial_regularity}), which shows that $\mathcal{M}$ is smooth except for at most countably many spherical singularities. This is mostly based on ideas from Bernstein-Wang \cite{BW}, and uses the assumptions that our Brakke flow is integral, unit-regular and cyclic.

Next, given any space-time point $X=(x,t)\in \mathcal M$ we consider the flow on backwards parabolic annuli around $X$ with radii $r_j=2^j$. We prove a rough neck theorem (Theorem \ref{thm_finding_necks}), which shows that there exists a scale $Z(X)=2^{J(X)}$, the \emph{cylindrical scale}, at which the flow starts to look $\varepsilon$-cylindrical, and a controlled number $N=N(\varepsilon)<\infty$, such that the flow looks $\varepsilon$-cylindrical at all scales $j\geq J(X)+N$. The proof uses the Bernstein-Wang classification of low entropy shrinkers \cite{BW}, Huisken's monotonicity formula \cite{Huisken_monotonicity}, and ideas from quantitative differentiation, see e.g. \cite{CHN_stratification}.

Third, we prove the vanishing asymptotic slope theorem (Theorem \ref{thm_asymptotic_slope}), which shows that at spatial infinity, the surfaces $M_t$ open up slower than any cone of positive angle. The proof is based on a blowdown argument, which shows that the cylindrical scale grows sublinearly. The result facilitates barrier arguments in later sections.\\

\noindent In Section \ref{sec_fine_neck_analysis}, which is the longest section of the proof, we carry out a fine neck analysis.  Given any point $X=(x_0,t_0)\in \mathcal M$, we consider the rescaled flow
\begin{equation}
\bar M^{X}_\tau = e^{\frac{\tau}{2}} \, \left( M_{-e^{-\tau}} - x_0\right),
\end{equation}
where $\tau=-\log (t_0-t)$. The surfaces $\bar M^{X}_\tau$ can be written as the graph of a function $u_{X}(\cdot,\tau)$ with small norm over a cylindrical domain of length $\rho(\tau)$, where $\rho(\tau)\to \infty$ as $\tau\to -\infty$. By Colding-Minicozzi \cite{CM_uniqueness}, we can assume that the axis of each cylinder is in $z$-direction. The goal is to derive very sharp asymptotic estimates for the function $u_{X}(\cdot,\tau)$.

To get started, in Section \ref{sec_fine_neck_setup} we set up the fine neck analysis similarly as in Angenent-Daskalopoulos-Sesum \cite{ADS} and Brendle-Choi \cite{BC}. The analysis is governed by the linear operator
\begin{equation}
\mathcal{L} = \Delta  - \tfrac{1}{2} x^{\textrm{tan}} \cdot \nabla +1
\end{equation}
on the round cylinder. The $\mathcal L$-operator already played a fundamental role in the work on generic mean curvature flow by Colding-Minicozzi \cite{CM_generic}. It has four positive eigenfunctions ($1$, $z$, $\sin \theta$, $\cos \theta$), three zero-eigenfunctions ($z^2-2$, $z\cos\theta$, $z\sin\theta$), and countably many negative eigenfunctions. Using the ODE-lemma from Merle-Zaag \cite{MZ}, we see that for $\tau\to -\infty$ either the plus mode is dominant or the neutral mode is dominant.

In Section \ref{sec_plus_mode}, we analyze the case where the plus mode is dominant. Our key result in that section is the fine neck theorem (Theorem \ref{thm Neck asymptotic}). It says that there exists a constant $a=a(\mathcal M)\neq 0$, independent of the base point $X$, such that, after re-centering, we have the asymptotic expansion
\begin{equation}\label{fine_neck_est}
u_{X}(x,\tau)= a z e^{\tau/2} + o(e^{\tau/2})
\end{equation}
whenever $|x|\leq 100$ and $\tau \ll \log Z(X)$. Assuming $a=1/\sqrt{2}$ without loss of generality, the estimate \eqref{fine_neck_est} says that the area of the cross sections of the fine necks increases at unit rate if one moves in positive $z$-direction.\footnote{It is an instructive exercise to check that the translating bowl soliton indeed satisfies the fine neck estimate, where $a$ is proportional to the reciprocal of the speed of the translating soliton.} To prove the fine neck theorem, we first use barrier arguments and the results from Section \ref{sec_coarse_properties} to show that the cylindrical radius $\rho(\tau)$ grows exponentially as $\tau\to -\infty$. We then project onto the plus mode (after multiplying by a cutoff function) and consider the evolution equations for the coefficients of
\begin{equation}
P_+ \hat{u}_X = a_X z + b_X \cos\theta + c_X \sin \theta + d_X.
\end{equation}
Carefully analyzing these evolution equations and estimating all the error terms, after re-centering, we obtain the estimate \eqref{fine_neck_est}.

In Section \ref{sec_fine_neutral}, we analyze the case where the neutral mode is dominant. Our main result in that section is the inwards quadratic neck theorem (Theorem \ref{thm_rotation}), which gives an inwards quadratic bending of the central neck. The result is related to the main result of Angenent-Daskalopoulos-Sesum \cite{ADS}, but we assume neither convexity nor rotational symmetry. Roughly speaking, the method of our proof of the inwards quadratic neck theorem is as follows. We consider the expansion
\begin{equation}
\hat{u}_X = \alpha_1 (z^2 - 2) + \alpha_2 z \cos \theta + \alpha_3 z \sin \theta + o(|\vec \alpha|).
\end{equation}
We show that $\alpha_2,\alpha_3$ are rapidly decaying, by using the Lojasiewicz inequality from Colding-Minicozzi and the Brendle-Choi neck improvement theorem. We then derive a differential inequality for $\alpha_1$. Analyzing this differential inequality we show that there is some positive constant $A>0$ such that for $\tau\to -\infty$ we have
\begin{equation}
\alpha_1(\tau) = \frac{-A+o(1)}{|\tau|}. 
\end{equation}
The proof is quite technical, since we have to estimate the optimal graphical radius, estimate the error terms, and analyze the differential inequality, and all those steps are interrelated. Finally, combining the inwards quadratic neck theorem with a barrier argument, we see that in the neutral mode case the solution must be compact (Corollary \ref{thm_compact}).\\

In Section \ref{sec_cap_size}, we assume that the plus mode is dominant.

In Section \ref{sec_curv_bound}, we prove a global curvature estimate (Theorem \ref{curv_bound}), which says that the flow is eternal with uniformly bounded curvature. In particular, this rules out potential pathologies caused by spatial infinity. Roughly speaking, the idea is that by the fine neck estimate \eqref{fine_neck_est}, the necks open up at spatial infinity, and thus the curvature decays to zero at spatial infinity. The actual proof is somewhat more involved, since we have to relate the cylindrical scale and the regularity scale. This gives a curvature bound on compact time intervals. Together with a local type I estimate, which follows from a simple blowdown argument, this can be upgraded to a global curvature bound. In particular, the solution is eternal.

In Section \ref{sec_cap_size_asymptotics}, we estimate the cap size and analyze the asymptotics. To begin with, we consider the ``height of the tip" function
\begin{equation}
\psi(t):= \inf_{x\in M_t} x_3.
\end{equation}
This is a strictly increasing function and the infimum is attained at some point $p_t\in M_t$. In the cap size control theorem (Theorem \ref{thm_asympt_par}), we prove that there is a uniform constant $C=C(\mathcal M)<\infty$ such that every point in $M_t\setminus B_C(p_t)$ lies on a fine neck. We also show that $M_t\setminus B_C(p_t)$ is the graph of a function $r$ in cylindrical coordinates around the $z$-axis satisfying  
\begin{equation}\label{expansion_cylindrical_intro}
r(t,z,\theta)=\sqrt{2(z-\psi(t))}+o(\sqrt{z-\psi(t)}),
\end{equation}
and that the height of the tip function $\psi$ satisfies
\begin{equation}
\psi(t)= t + o(|t|).
\end{equation}
Roughly speaking, the idea is that the fine neck estimate \eqref{fine_neck_est} can be integrated to get the shape of a parabola. In order to do this, we first prove, via a blowdown argument, that the tip does not slow down too much, which gives us enough ``fast points" in space-time around which the fine neck estimate can be applied.\\

In Section \ref{sec_rot_symm}, we still assume that the plus mode is dominant.

First, in the fine asymptotics theorem (Theorem \ref{thm_neck_asympt}) we prove that, after shifting $\mathcal M$ in space-time, the function $r$ from \eqref{expansion_cylindrical_intro} becomes rotationally symmetric at a very fast rate, namely
\begin{equation}
|\partial_\theta r| = O(1/r^{100})
\end{equation}
uniformly in time. The proof is based on the Brendle-Choi neck improvement theorem \cite{BC} and some careful estimates controlling how the fine necks align with each other. In particular, the estimate uniformly bounds the motion of the tip in the $xy$-plane. Combining the fine asymptotics theorem with a parabolic version of the moving plane method, we show that the solution is rotationally symmetric (Theorem \ref{thm_rot_symm}).\\

In Section \ref{sec_classification_completion}, we complete the classification of ancient low entropy flows.

In Theorem \ref{thm_class_noncompact}, we show that if the plus mode is dominant, then the solution must be a bowl soliton. To this end, we analyze the rotationally symmetric solution from Section \ref{sec_rot_symm}. Using elementary geometric arguments we show that the height function $z:M_t\to \mathbb{R}$ does not have local maxima, and similarly that the radius function $r=r(z,t)$ does not have local maxima. Thus, the function $f=\langle \nu, e_3\rangle$ is positive. Together with the fine asymptotic theorem and the maximum principle, we conclude that $H=f$. Hence, the solution is a mean convex noncollapsed translating soliton, which by the uniqueness result for translators from Haslhofer \cite{Haslhofer_bowl} yields that the solution is a bowl soliton.

Finally, in Theorem \ref{thm_class_neutral} we show that if the neutral mode is dominant, then the solution must be an ancient oval. The idea is that via blowup around the tips we can find eternal low entropy flows, which by the above must be bowl solitons. We also have a central cylinder. Using the maximum principle, we show that the solution is mean convex and noncollapsed also in the region in between. Using the result of Angenent-Daskalopoulos-Sesum \cite{ADS2}, we can then conclude that the solution is an ancient oval.\\

\bigskip

Finally, let us outline the proof of the mean convex neighborhood conjecture (Theorem \ref{thm_mean_convex_nbd_intro}). Under the assumptions of Theorem \ref{thm_mean_convex_nbd_intro}, we first construct a unit-regular, cyclic, integral Brakke flow $\mathcal{M}=\{\mu_t\}_{t\geq t_0-\delta}$ whose support is equal to the outer flow $\{M_t\}_{t\geq t_0-\delta}$. Moreover, we can arrange that $\mathcal{M}$ has a multiplicity one cylindrical tangent flow at $X_0$, and that $\mathcal{M}$ is smooth in $P(X_0,\delta)$ at almost every time, with only spherical and cylindrical singularities. The latter properties follow from upper semicontinuity of Huisken's density and the basic regularity theory from Section \ref{sec_coarse_properties}.

We then show that, possibly after decreasing $\delta$, the mean curvature does not vanish at any regular point in $P(X_0,\delta)$. If this did not hold, then we could pass to a limit flow which on the one hand had a point with vanishing mean curvature, but on the other hand had strictly positive mean curvature by our classification result of ancient low entropy flows (Theorem \ref{thm_ancient_low_entropy}).

Next, we show that, after decreasing $\delta$ again, the space-time connected component that reaches $X_0$ has positive mean curvature at all regular points. The proof is based on a continuity argument in time and uses that the singular set is small.

Finally, arguing as in \cite{HershkovitsWhite} we show that the mean curvature is positive also on the nonsmooth points in the sense that the surface moves everywhere strictly inwards. This concludes the outline of the proof.

\bigskip

\subsection*{Acknowledgments}
KC has been supported by NSF Grant DMS-1811267 and KIAS Individual Grant MG078901. RH has been supported by an NSERC Discovery Grant (RGPIN-2016-04331) and a Sloan Research Fellowship. OH has been supported by an AMS-Simons Travel Grant.
We thank Brian White for sharing his knowledge and insights about the mean curvature flow with us, as well as for many useful discussions regarding the problem.

\section{Preliminaries}\label{sec_prelim}

\subsection{Brakke flows}\label{sec_Brakke_flows}
As in Ilmanen \cite[Def. 6.2, 6.3]{Ilmanen_book} a two-dimensional \emph{Brakke flow} in $\mathbb{R}^{3}$ is a family of Radon measures $\mathcal M = \{\mu_t\}_{t\in I}$ in $\mathbb{R}^{3}$ that is two-rectifiable for almost all times and satisfies
\begin{equation}
\frac{d}{dt} \int \varphi \, d\mu_t \leq \int \left( -\varphi {\bf H}^2 + (\nabla\varphi)^{\perp} \cdot {\bf H} \right)\, d\mu_t
\end{equation}
for all test functions $\varphi\in C^1_c(\mathbb{R}^{3},\mathbb{R}_+)$. Here, $\tfrac{d}{dt}$ denotes the limsup of difference quotients, $\perp$ denotes the normal projection, and $\bf{H}$ denotes the mean curvature vector of the associated varifold $V_{\mu_t}$, which is defined via the first variation formula and exists almost everywhere at almost all times. The integral on the right hand side is interpreted as $-\infty$ whenever it does not make sense literally.\\

All Brakke flows $\mathcal M = \{\mu_t\}_{t\in I}$ that we encounter in the present paper have the following three additional good properties of being:
\begin{itemize}
\item \emph{integral} (c.f. \cite{Brakke_book,Ilmanen_book}): $\mu_t$ is integer two-rectifiable for almost all $t$.
\item \emph{unit-regular} (c.f. \cite{White_regularity,SchulzeWhite}): Every spacetime point of Gaussian density one is a regular point, i.e. for all $X=(x,t)\in \mathcal{M}$ with $\Theta_{X}=1$, there exists an $\varepsilon=\varepsilon(X)>0$ such that $t'\mapsto\mathrm{spt}(\mu_{t'})\cap B(x,\varepsilon)$ is a smooth mean curvature flow for $t'\in [t-\varepsilon^2,t+\varepsilon^2]$.
\item \emph{cyclic} (c.f. \cite{White_Currents}): For almost all $t$ the associated $\mathbb{Z}_2$ flat chain $[V_{\mu_t}]$ is cyclic. Here, the $\mathbb{Z}_2$ flat chain $[V]$ associated to an integral varifold $V$ is called cyclic if $\partial [V]=0$.
\end{itemize}

By Allard's closure theorem \cite[Thm. 6.4]{Allard} and a result of White \cite[Thm. 3.3]{White_Currents}, respectively, being integral and cyclic is preserved under varifold convergence with locally bounded first variation.  Limits of sequences of Brakke flows can be taken via Ilmanen's compactness theorem \cite[Sec. 7]{Ilmanen_book}. 
As a consequence of the above quoted results (see also White \cite[Thm. 4.2, Rmk. 4.4]{White_Currents}), being integral and cyclic is preserved under limits of Brakke flows. By the local regularity theorem \cite{White_regularity} (see also Schulze-White \cite[Thm. 4.2]{SchulzeWhite}), being unit-regular is also preserved under limits of Brakke flows. In particular, Brakke flows starting at any closed embedded surface $M\subset\mathbb{R}^3$ that are constructed via Ilmanen's elliptic regularization \cite{Ilmanen_book} are integral, unit-regular, and cyclic. We also recall from Ilmanen \cite[Sec. 10]{Ilmanen_book} that $M_t:=\mathrm{spt}(\mu_t)$ satisfies the avoidance principle.

\bigskip

\subsection{Monotonicity formula and tangent flows}\label{sec_prel_monotonicity}

Let $\mathcal{M}=\{\mu_t \}_{t\in I}$ be a two-dimensional unit-regular, integral Brakke flow in $\mathbb{R}^3$, say with bounded area ratios.\footnote{In particular, any flow with finite entropy has bounded area ratios.} Given a space-time point $X_0=(x_0,t_0)\in \mathcal{M}$, let
\begin{equation}
\rho_{X_0}(x,t)=\frac{1}{4\pi(t_0-t)} e^{-\frac{|x-x_0|^2}{4(t_0-t)}} \qquad (t<t_0).
\end{equation}
By Huisken's monotonicity formula \cite{Huisken_monotonicity} (see also \cite{Ilmanen_monotonicity}) we have
\begin{equation}\label{eq_huisken_mon}
\frac{d}{dt} \int \rho_{X_0} \, d\mu_t \leq -\int \left|{\bf H}-\frac{(x-x_0)^\perp}{2(t-t_0)}\right|^2 \rho_{X_0}\, d\mu_t.
\end{equation}
The Gaussian density of $\mathcal M$ at $X_0$ is defined by
\begin{equation}
\Theta_{X_0}(\mathcal M)=\lim_{t\nearrow t_0} \int \rho_{X_0}(x,t) \, d\mu_t(x).
\end{equation}
It follows from the monotonicity formula that the Gaussian density is a well defined real number $\geq 1$ and that the map
\begin{equation}
(x,t)\to \Theta_{(x,t)}(\mathcal M)
\end{equation}
is upper-semicontinous. Moreover, by the local regularity theorem for the mean curvature flow \cite{Brakke_book,White_regularity} there exists a universal constant $\varepsilon_0>0$ such that any $X\in\mathcal{M}$ with $\Theta_X \leq 1+\varepsilon_0$ is a regular point.\\

Given $X\in\mathcal{M}$ and $\lambda_i\to 0$, let $\mathcal{M}^i$ be the Brakke flow which is obtained from $\mathcal{M}$ by translating $X$ to the space-time origin and parabolically rescaling by $\lambda_i^{-1}$. By the compactness theorem for Brakke flows \cite{Ilmanen_book} one can pass to a subsequential limit $\hat{\mathcal{M}}_X$, which is called a tangent flow at $X$. By the monotonicity formula every tangent flow is backwardly selfsimilar, i.e. $\hat{\mathcal{M}}_X\cap \{t\leq 0\}$ is invariant under parabolic dilation $\mathcal{D}_\lambda(x,t)=(\lambda x,\lambda^2 t)$. If $\mathcal{M}$ is ancient, then for any $\lambda_i\to \infty$ one can also pass along a subsequence to a backwardly selfsimilar limit $\check{\mathcal{M}}$, which is called a tangent flow at infinity.

\bigskip

\section{Coarse properties of ancient low entropy flows}\label{sec_coarse_properties}

\subsection{Partial regularity}\label{sec_partial_reg}

The goal of this section is to prove a partial regularity result for ancient low entropy flows. To this end, we start with the following lemma.

\begin{lemma}[low entropy cyclic minimal cones]\label{lemma_cones}
Let $\mu$ be an integer two-rectifiable Radon measure in $\mathbb{R}^3$ with $\textrm{Ent}[\mu]<2$. If the associated varifold $V_\mu$ is a cyclic minimal cone, then $\mu=\mathcal{H}^2\lfloor P$ for some flat plane $P$.
\end{lemma}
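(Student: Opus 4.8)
The plan is to argue that a cyclic minimal cone in $\mathbb{R}^3$ with small enough entropy must in fact be a single plane with multiplicity one, ruling out higher multiplicity and genuine cones. First I would recall what a two-dimensional minimal cone in $\mathbb{R}^3$ looks like: by the classical structure theory (e.g. Taylor's classification of soap-film singularities, or simply the fact that the link is a geodesic network in $S^2$), the support of $V_\mu$ is a union of sectors bounded by rays through the origin, and the density at the origin controls the entropy. Since $\mu$ is a minimal cone, it is in particular stationary, hence its entropy equals its density at $0$ (the Gaussian integral is scale-invariant and maximized at the vertex), which in turn equals $\frac{1}{2}\mathcal{H}^1(\Gamma)/\pi \cdot$ (something) where $\Gamma\subset S^2$ is the link, counted with multiplicity. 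The constraint $\textrm{Ent}[\mu]<2$ then forces the link to have total length (with multiplicity) strictly less than $4\pi$, i.e. strictly less than two great circles' worth.

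The key step is to use the cyclic hypothesis to eliminate the remaining candidates. The link $\Gamma$ is a stationary geodesic network in $S^2$: a union of geodesic arcs meeting only at triple junctions with $120^\circ$ angles (Allard-type regularity for one-dimensional stationary varifolds). The associated $\mathbb{Z}_2$ flat chain $[V_\mu]$ being cyclic, $\partial[V_\mu]=0$, translates into a corresponding cycle condition on $\Gamma$ viewed as a $1$-chain in $S^2$. Now I would enumerate: a single great circle with multiplicity one gives a plane and has entropy $1$; a single great circle with multiplicity $\geq 2$ has entropy $\geq 2$, excluded; any configuration with a genuine triple junction has total link length $\geq$ that of the minimal such network — and crucially, a single triod in $S^2$ (three half-great-circles from the north pole to three points on the equator, or three meridian arcs) is \emph{not} a cycle as a $\mathbb{Z}_2$ chain (it has boundary consisting of the three endpoints), so by the cyclic condition any configuration with triple junctions must contain enough arcs to close up, which pushes the total length past $4\pi$. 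One can make this quantitative: the smallest cyclic geodesic network in $S^2$ that is not a single great circle has length at least $4\pi$ (for instance, three great circles through two antipodal points give the ``tennis-ball'' cone with entropy exactly $3/2$ — wait, that one \emph{is} relevant, so I need to check: the three-half-plane cone $Y\times\mathbb{R}$ has link a geodesic triod, which is \emph{not} cyclic, consistent with the lemma; configurations that \emph{are} cyclic and have junctions, like the cone over the edges of a regular tetrahedron or cube projected to $S^2$, have density, hence entropy, $\geq 2$). So the dichotomy is: either $\Gamma$ is smooth (no junctions) and cyclic, forcing a single great circle of multiplicity one by the entropy bound, hence $\mu=\mathcal{H}^2\lfloor P$; or $\Gamma$ has junctions, and then either it fails to be cyclic or its entropy is $\geq 2$.

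I would organize the write-up as: (i) reduce to analyzing the link $\Gamma$, a stationary integral $1$-varifold in $S^2$, with $\mathrm{length}(\Gamma)<4\pi$ (using entropy $=$ density and the entropy bound) and with $\partial[\Gamma]=0$ in $\mathbb{Z}_2$; (ii) invoke regularity for stationary $1$-varifolds in surfaces to get that $\Gamma$ is a union of geodesic arcs with only $120^\circ$ triple junctions; (iii) a combinatorial/geometric argument in $S^2$ showing that a nonempty cyclic such $\Gamma$ with a junction has length $\geq 4\pi$, so under our bound $\Gamma$ is junction-free; (iv) a junction-free stationary $1$-varifold in $S^2$ is a union of great circles, and the cyclic plus entropy bounds force exactly one great circle with multiplicity one; (v) conclude $\mu=\mathcal{H}^2\lfloor P$.

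The main obstacle I anticipate is step (iii): carefully ruling out, or bounding the length of, cyclic geodesic networks with triple junctions in $S^2$. The issue is that the cyclic (no-boundary) condition is exactly what kills the $120^\circ$ triod $Y$ (which has density $3/2<2$), but one must verify that \emph{any} way of adding arcs to a triod-containing network to cancel its $\mathbb{Z}_2$ boundary forces the total length up past the entropy-$2$ threshold — this is where the interplay between the topology of the chain and the $120^\circ$-angle condition has to be exploited, likely via a first-variation/length-estimate argument (each closed geodesic in $S^2$ has length $\geq 2\pi$, and a network with $k$ junctions needs enough edges of definite length). If a fully self-contained combinatorial argument is cumbersome, an alternative is to cite the Bernstein--Wang low-entropy results or known classifications of low-density $2$-dimensional minimal cones directly; but the cleanest route is the link analysis above.
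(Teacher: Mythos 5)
Your overall strategy --- reduce to the link, use the entropy bound to control density, and use cyclicity to kill junctions --- is the right one and is essentially the paper's, but the step you yourself flag as the main obstacle, step (iii), is where your write-up has a gap, and the global length-counting argument you propose for it is both harder than necessary and not the way the cyclic hypothesis actually enters. The point is that cyclicity localizes: being cyclic is preserved under weak limits, so every tangent cone of $V_\mu$ at a point $x\neq o$ is again a cyclic minimal cone, and since it splits off a line it is a static configuration of $k$ half-planes meeting along a common line. Cyclicity forces $k$ to be even (equivalently, in your language, the $\mathbb{Z}_2$ boundary of the link is supported exactly on its odd-degree vertices, so $\partial[\Gamma]=0$ forbids triple junctions outright --- there is nothing to ``cancel by adding arcs'', since adding arcs at a vertex changes its degree and destroys the $120^\circ$ junction structure). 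The entropy bound $<2$ gives $k/2<2$, i.e.\ $k\leq 3$, hence $k=2$, and a static pair of half-planes is a multiplicity-one plane. Allard's regularity theorem then makes every $x\neq o$ a regular point, so the link is a single smooth embedded multiplicity-one closed geodesic in $S^2$, i.e.\ a great circle, and $\mu=\mathcal{H}^2\lfloor P$. In short: no classification of stationary geodesic networks and no global length estimate is needed; the even-degree condition from cyclicity plus the pointwise density bound $<2$ already force the link to be junction-free. (Two side remarks: your example of the cone over the tetrahedral $1$-skeleton is not cyclic either, since its vertices have degree three; and a priori the regularity theory allows junctions of any balanced valence, so it is the density bound, not Taylor-type regularity, that caps the degree at three.)
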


\begin{proof}
The proof is along the lines of \cite[Lem. 4.1]{BW}. If $x\in \mathrm{spt}(\mu)-\{ o\}$, since being cyclic is preserved under weak limits, any tangent cone $C$ at $x$ is a cyclic minimal cone which splits off a line. Such a minimal cone consists of a static configuration of even number of half-planes meeting along a common line. The entropy assumption implies that there are only two of those half-planes, and since the configuration is static, it follows that $C$ is a multiplicity one plane. Hence, by Allard's regularity theorem \cite{Allard}, $x$ is a regular point, and so the only potential singularity is at the vertex $o$. As the link of $\textrm{spt}(\mu)$ is a smooth multiplicity one geodesic in $S^2$, it is a great circle, and so $\mu$ is indeed the Hausdorff measure of a flat multiplicity one plane.      
\end{proof}

\begin{theorem}[partial regularity]\label{partial_regularity}
Let $\mathcal M$ be an ancient low entropy flow. Then there is either (1) a cylindrical singularity in which case $\mathcal M$ must be a round shrinking cylinder, or (2) no singularity, or (3) an at most countable number of spherical singularities.\footnote{At this point of the paper we cannot exclude the possibility of more than one spherical singularity, since we cannot exclude yet the potential scenario that several compact connected components form via ``neckpinches at infinity" and ``contracting cusps at infinity".}
\end{theorem}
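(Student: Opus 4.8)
\medskip

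\emph{Proof proposal.} The plan is to classify all tangent flows at singular points of $\mathcal M$ using the entropy bound together with Lemma \ref{lemma_cones}, and then to promote this infinitesimal information to the stated global trichotomy. If $X=(x_0,t_0)\in\mathcal M$ is a singular point, then by the local regularity theorem \cite{White_regularity} its Gaussian density satisfies $\Theta_X>1+\varepsilon_0$, so any tangent flow $\hat{\mathcal M}_X$ at $X$ is nontrivial. Since being integral, unit-regular and cyclic is preserved under taking limits of Brakke flows, $\hat{\mathcal M}_X$ is again an integral, unit-regular, cyclic flow, and it is backwardly selfsimilar with $\textrm{Ent}[\hat{\mathcal M}_X]\leq\textrm{Ent}[\mathcal M]\leq\sqrt{2\pi/e}$. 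Hence its time $-1$ slice is a self-shrinking integral, cyclic varifold of entropy $<2$. Blowing up this shrinker at any of its points produces a cyclic minimal cone of entropy $<2$, which by Lemma \ref{lemma_cones} is a multiplicity-one plane; therefore the shrinker has density one at each of its points, so by Allard's theorem \cite{Allard} it is a smooth embedded self-shrinker, and by the Bernstein-Wang classification of low entropy shrinkers \cite{BW} it is a plane, a round sphere, or a round cylinder, necessarily with multiplicity one (multiplicity two would give entropy at least $2$). Since $X$ is singular, the plane is excluded. Thus every tangent flow at a singular point of $\mathcal M$ is a multiplicity-one round shrinking sphere or round shrinking cylinder.

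Next I would distinguish two cases. Suppose first that some singular point $X=(x_0,t_0)$ admits a cylindrical tangent flow; then $\Theta_X=\sqrt{2\pi/e}$. For every $t<t_0$ we have $\int\rho_X(\cdot,t)\,d\mu_t\leq\textrm{Ent}[\mu_t]\leq\sqrt{2\pi/e}$, while by \eqref{eq_huisken_mon} this quantity is nonincreasing in $t$ and converges to $\Theta_X=\sqrt{2\pi/e}$ as $t\nearrow t_0$; being a nonincreasing function bounded above by its own limit, it must equal $\sqrt{2\pi/e}$ identically on $(-\infty,t_0)$. The equality case of \eqref{eq_huisken_mon} then forces $\mathbf H=\tfrac{(x-x_0)^\perp}{2(t-t_0)}$ throughout $\mathcal M\cap\{t<t_0\}$, so $\mathcal M\cap\{t<t_0\}$ is a self-shrinker centered at $x_0$ shrinking at time $t_0$. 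By the previous paragraph this shrinker is a plane, sphere, or cylinder; a plane has no singularity and a sphere would produce a spherical tangent flow at $X$, so it must be the round shrinking cylinder, and any additional component would make $\int\rho_X\,d\mu_t$ strictly exceed $\sqrt{2\pi/e}$. Hence $\mathcal M$ is a round shrinking cylinder, which is alternative (1).

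Otherwise every singular point $X=(x_0,t_0)$ has a multiplicity-one spherical tangent flow. I would then apply White's local regularity theorem \cite{White_regularity} to the rescaled flows $\mathcal M_{X,\lambda}$, which converge smoothly with multiplicity one to a compact round shrinking sphere on compact subsets of space-time away from the extinction point of that sphere: this shows that $\mathcal M$ is a smooth mean curvature flow in a punctured backward parabolic neighborhood of $X$, and, since the limiting sphere shrinks away, a clearing-out and avoidance argument shows that $\mathcal M$ has empty support near $x_0$ for times slightly larger than $t_0$. Consequently $X$ is isolated in the space-time singular set $\mathrm{Sing}(\mathcal M)$, which is therefore a discrete, hence at most countable, subset of $\mathbb R^3\times\mathbb R$, all of whose points are spherical singularities; this yields alternative (2) or (3).

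The main obstacle is the passage from the tangent-flow classification to the macroscopic conclusion. In the cylindrical case the crux is the rigidity argument for Huisken's monotonicity over the \emph{entire} interval $(-\infty,t_0)$ together with the exclusion of stray components; in the spherical case it is the careful application of the local regularity theorem to blowup sequences, in particular establishing that a compact spherical singularity is genuinely isolated in space-time (that there is no flow near $x_0$ at slightly later times), which requires a clearing-out argument rather than merely upper semicontinuity of the density.
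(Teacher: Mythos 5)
Your proposal is correct and its core architecture coincides with the paper's: tangent flows are classified by combining Lemma \ref{lemma_cones} with Allard's theorem and the Bernstein--Wang classification, planar tangent flows give regular points, and a cylindrical tangent flow forces the whole flow to be a round shrinking cylinder via the equality case of Huisken's monotonicity formula (your explicit argument -- the density ratio is nonincreasing, bounded above by the entropy, and converges to $\sqrt{2\pi/e}$, hence constant -- is exactly what the paper compresses into one sentence). The only genuine divergence is the countability of spherical singularities. The paper observes that spherical singularities are strict local maxima of the Gaussian density map $X\mapsto\Theta_X(\mathcal M)$ and invokes the general fact that the set of strict local maxima of a function on a separable space is countable; this is purely backward-in-time and needs no analysis of the flow after the extinction of the spherical component. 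You instead prove that a multiplicity-one spherical singularity is isolated in the space-time singular set, which requires, in addition to the smooth backward convergence given by the local regularity theorem, a forward-in-time clearing-out/avoidance argument to rule out other pieces of the flow re-entering a neighborhood of $x_0$ just after $t_0$. Your route proves a slightly stronger statement (discreteness, not just countability, of the spherical singular set) at the cost of that extra forward-in-time step, which you correctly flag as the delicate point and which does go through by enclosing the nearly round component in a shrinking barrier sphere; the paper's route is shorter and entirely soft. Either argument is acceptable here.
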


\begin{proof}
Let $\check{\mathcal{M}}$ be a tangent flow at infinity (see Section \ref{sec_prel_monotonicity}). By preservation under weak limits, $\check{\mathcal{M}}$ is integral, unit regular, and cyclic. Using Lemma \ref{lemma_cones} it follows that any tangent cone to $\check{M}_{-1}$ is a flat multiplicity one plane.
Thus, by Allard's regularity theorem \cite{Allard}, $\check{M}_{-1}$ is smooth.
Hence, by the classification of low entropy shrinkers by Bernstein-Wang \cite{BW}, $\check{\mathcal{M}}$ is either a flat plane, a round shrinking sphere, or a round shrinking cylinder (all of multiplicity one). The same reasoning implies that any tangent flow $\hat{\mathcal{M}}_X$ at any space time point $X\in\mathcal{M}$ is either a flat plane, a round shrinking sphere, or a round shrinking cylinder (all of multiplicity one).\\
If a tangent flow at $X$ is a flat multiplicity one plane, then $X$ is a smooth point by the local regularity theorem \cite{Brakke_book,White_regularity}.
If there is some $X\in\mathcal{M}$ with a cylindrical tangent flow, then $\mathcal M$ is a round shrinking cylinder by the equality case of Huisken's monotonicity formula (see Section \ref{sec_prel_monotonicity}). Finally, spherical tangent flows can happen at most countably many times since the set of strict local
maxima of the map $X\to \Theta_X(\mathcal M)$ is countable, see e.g. \cite{White_stratification}. This proves the theorem.
\end{proof}

By Theorem \ref{partial_regularity} any ancient low entropy flow $\mathcal M=\{\mu_t \}_{t\in (-\infty, T_E(\mathcal M)]}$ is smooth except for at most countably many spherical singularities until it becomes extinct. Hence, recalling also that higher multiplicities are ruled out by the low entropy assumption, it is safe from now on to conflate the compact sets $M_t=\textrm{spt}(\mu_t)$ and the Radon-measures $\mu_t$ in the notation.

\begin{corollary}[extinction time]
If $\mathcal{M}$ is an ancient low entropy flow then exactly one of the following happens:\footnote{At this stage of the paper we cannot yet exclude the scenarios that the solution ``becomes extinct in more than one point" or that the solution ``escapes to infinity".}
\begin{enumerate}
\item[(i)] $T_E(\mathcal{M})=\infty$, i.e. the flow $\mathcal M$ is eternal, or
\item[(ii)] $T_E(\mathcal{M})<\infty$ and $\mathcal{M}$ is a round shrinking cylinder, or
\item[(iii)] $T_E(\mathcal{M})<\infty$ and $\mathcal{M}$ becomes extinct in at most countably many round points, or
\item[(iv)] $T_E(\mathcal{M})<\infty$ and for every $R<\infty$ there exist $T(R)<T_E(\mathcal{M})$ such that $B(0,R)\cap M_t=\emptyset$ for every $t\in(T(R),T_E(\mathcal{M})]$.
\end{enumerate}
\end{corollary}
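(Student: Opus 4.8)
The plan is to run a case analysis governed by whether $T_E:=T_E(\mathcal M)$ is finite, whether some space-time point of $\mathcal M$ has a cylindrical tangent flow, and how $M_t$ behaves as $t\nearrow T_E$. If $T_E=\infty$ we are in case (i). So assume $T_E<\infty$. If some space-time point of $\mathcal M$ has a cylindrical tangent flow, then Theorem \ref{partial_regularity} forces $\mathcal M$ to be a round shrinking cylinder, which indeed has finite extinction time; this is case (ii). Hence from now on I also assume that no point of $\mathcal M$ has a cylindrical tangent flow. Then, by Theorem \ref{partial_regularity} together with the fact that the low entropy hypothesis rules out higher multiplicities, every tangent flow of $\mathcal M$ is a multiplicity-one plane or round shrinking sphere; in particular $\Theta_X(\mathcal M)\in\{1,4/e\}$ for every $X\in\mathcal M$, and $\mathcal M$ is smooth away from its space-time set of spherical singularities.

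Next I would introduce the extinction set
\[
\Sigma:=\{\,x\in\mathbb R^3\ :\ \exists\,(x_i,t_i)\in\mathcal M\ \text{with}\ x_i\to x,\ t_i\nearrow T_E\,\}
\]
and show that $\Sigma=\emptyset$ corresponds exactly to case (iv). If $\Sigma=\emptyset$, then for every $x\in\mathbb R^3$ there are $\delta_x>0$ and $\tau_x<T_E$ with $M_t\cap B(x,\delta_x)=\emptyset$ for all $t\in(\tau_x,T_E)$; covering $\overline{B(0,R)}$ by finitely many such balls and letting $T(R)$ be the largest of the relevant $\tau_x$ gives $M_t\cap B(0,R)=\emptyset$ for $t\in(T(R),T_E)$, and also at $t=T_E$, since any $x\in M_{T_E}$ would satisfy $\Theta_{(x,T_E)}\geq1$, forcing $\mu_t\neq0$ near $x$ as $t\nearrow T_E$ and hence $x\in\Sigma$. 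Conversely, case (iv) plainly gives $\Sigma=\emptyset$. It then remains to show that $\Sigma\neq\emptyset$ forces case (iii).

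For this, I would fix $x\in\Sigma$ and note that $(x,T_E)$ lies in the space-time support of $\mathcal M$, so its Gaussian density $\Theta_{(x,T_E)}$ is a well-defined number $\geq1$. It cannot equal $1$, for then $(x,T_E)$ would be a regular point by unit-regularity, so the flow would be a non-empty smooth mean curvature flow near $x$ for $t$ slightly larger than $T_E$, contradicting that $T_E$ is the extinction time. Hence $\Theta_{(x,T_E)}=4/e$ and the tangent flow at $(x,T_E)$ is a multiplicity-one round shrinking sphere. Invoking the standard fact that near such a tangent flow the rescaled flows converge smoothly with multiplicity one in a fixed parabolic neighborhood (the round sphere being a smooth, compact, rigid shrinker), I get that for $t$ slightly below $T_E$ the flow near $x$ is a smooth round cap shrinking to the single point $x$, so every other space-time point in a small parabolic ball around $(x,T_E)$ has Gaussian density $0$ or $1$. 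Thus $(x,T_E)$ is a strict local maximum of $X\mapsto\Theta_X(\mathcal M)$, and since the set of strict local maxima of a real function on separable space-time is countable (cf. the proof of Theorem \ref{partial_regularity} and \cite{White_stratification}), $\Sigma$ is a non-empty countable set of round points, which is case (iii). Finally, the four cases are pairwise disjoint, since (i) is singled out by $T_E=\infty$, while among the finite-extinction cases $\Sigma$ equals a line in (ii), is non-empty and countable in (iii), and is empty in (iv).

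I expect the main obstacle to be the passage from $\Theta_{(x,T_E)}=4/e$ to countability of $\Sigma$: this rests on the smooth multiplicity-one convergence of the rescaled flows near a round-sphere tangent flow, equivalently on spherical singularities being isolated strict local maxima of the Gaussian density. A secondary technical point is ensuring the conclusion of case (iv) holds up to and including $t=T_E$, which is exactly where unit-regularity and the lower density bound are used to rule out $M_{T_E}\neq\emptyset$.
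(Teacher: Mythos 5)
Your proposal is correct and follows essentially the same route as the paper: if $T_E<\infty$ and (iv) fails, upper semicontinuity of Huisken's density produces a point $X=(x_0,T_E)$ with $\Theta_X\geq 1$, and the trichotomy of tangent flows from the partial regularity theorem (plane excluded by unit-regularity, cylinder giving (ii) via the rigidity case of the monotonicity formula, sphere giving (iii) via countability of strict local maxima of the density) finishes the argument. Your extra bookkeeping with the extinction set $\Sigma$ and the verification that (iv) holds up to and including $t=T_E$ are correct elaborations of what the paper leaves implicit.
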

\begin{proof}
Assume $T_E(\mathcal{M})<\infty$, and suppose (iv) does not hold. Then by upper semi-continuity of Huisken's density (see Section \ref{sec_prel_monotonicity}), there exist a point $X=(x_0,T_E(\mathcal{M)})$ which $\Theta_X\geq 1$. If the tangent flow at $X$ is a plane, then this contradicts the definition of $T_E(\mathcal{M})$ by unit regularity. If the tangent flow at $X$ is a cylinder, then we are in case (ii). If the tangent flow at $X$ is a round shrinking sphere, then we are in case (iii). This proves the corollary.
\end{proof}

\bigskip

\subsection{Finding necks back in time}\label{sec_necks_back_in_time}

Let $\mathcal M$ be an ancient low entropy flow. Given a point $X=(x,t)\in \mathcal{M}$ and a scale $r>0$, we consider the flow
\begin{equation}
\mathcal{M}_{X,r}=\mathcal{D}_{1/r}(\mathcal M -X),
\end{equation}
which is obtained from $\mathcal M$ by translating $X$ to the space-time origin and parabolically rescaling by $1/r$. Here, $\mathcal{D}_\lambda (x,t) = (\lambda x,\lambda^2 t)$.

\begin{definition}
Fix $\varepsilon>0$. We say that $\mathcal M$ is \emph{$\varepsilon$-cylindrical around $X$ at scale $r$}, if $\mathcal{M}_{X,r}$ is $\varepsilon$-close in $C^{\lfloor1/\varepsilon \rfloor}$ in $B(0,1/\varepsilon)\times [-1,-2]$ to the evolution of a round shrinking cylinder with radius $r(t)=\sqrt{-2t}$ and axis through the origin. The notions \emph{$\varepsilon$-planar} and \emph{$\varepsilon$-spherical} are defined similarly.
\end{definition}

Given any point $X=(x,t)\in\mathcal{M}$, we analyze the solution around $X$ at the diadic scales $r_j=2^j$, where $j\in \mathbb{Z}$. 

\begin{theorem}\label{thm_finding_necks}
For any small enough $\varepsilon>0$, there is a positive integer $N=N(\varepsilon)<\infty$ with the following significance. If $\mathcal M$ is an ancient low entropy flow, which is not a round shrinking sphere, round shrinking cylinder or a flat plane, then for every $X\in \mathcal{M}$ there exists an integer $J(X)\in\mathbb{Z}$ such that
\begin{equation}\label{eq_thm_quant1}
\textrm{$\mathcal M$ is not $\varepsilon$-cylindrical around $X$ at scale $r_j$ for all $j<J(X)$},
\end{equation}
and
\begin{equation}\label{eq_thm_quant2}
\textrm{$\mathcal M$ is $\tfrac{\varepsilon}{2}$-cylindrical around $X$ at scale $r_j$ for all $j\geq J(X)+N$}.
\end{equation}
\end{theorem}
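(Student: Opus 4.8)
The plan is to let $J(X)$ be the smallest dyadic scale at which $\mathcal M$ already looks $\varepsilon$-cylindrical around $X$, to check that this is a well-defined integer, and then to bound --- via a quantitative-differentiation argument based on Huisken's monotonicity formula \cite{Huisken_monotonicity} and the Bernstein--Wang classification \cite{BW} --- the number of further dyadic steps after which the flow can no longer leave the cylindrical regime. Throughout I would fix $\varepsilon$ small enough that on $B(0,1/\varepsilon)\times[-1,-2]$ no two of the three models (multiplicity-one plane, round shrinking sphere, round shrinking cylinder) are mutually $\varepsilon$-close, and that the entropy gaps $\sqrt{2\pi/e}-1$ and $\sqrt{2\pi/e}-4/e$ exceed the density error caused by $\varepsilon$-closeness to a model. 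Writing $\Theta_X(r)$ for the Gaussian density ratio of $\mathcal M$ at $X$ at scale $r$, Huisken's monotonicity formula makes $r\mapsto\Theta_X(r)$ non-decreasing with values in $[1,\sqrt{2\pi/e}]$ (by the entropy bound), with limit $\Theta_X(\mathcal M)\geq 1$ as $r\to 0$ and limit $\Theta_X^{\infty}$, equal to the density at the origin of a tangent flow at infinity $\check{\mathcal M}$, as $r\to\infty$.

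First I would show that $J(X):=\min\{j:\mathcal M\text{ is }\varepsilon\text{-cylindrical around }X\text{ at scale }r_j\}$ is a well-defined integer. It is bounded below: by the argument in the proof of Theorem \ref{partial_regularity} the tangent flow $\hat{\mathcal M}_X$ is a plane, sphere or cylinder, and it is not a cylinder since $\mathcal M$ is not a round shrinking cylinder, so $\Theta_X(\mathcal M)\in\{1,4/e\}$, every rescaled limit at small scales is the corresponding model, and $\mathcal M$ is not $\varepsilon$-cylindrical around $X$ at all sufficiently negative scales. To see the set is non-empty I would rule out a planar or spherical $\check{\mathcal M}$: if $\check{\mathcal M}$ were a plane then $\Theta_X^{\infty}=1$, which together with monotonicity forces $\Theta_X\equiv 1$, hence equality in Huisken's monotonicity formula at every scale and center, hence $\mathcal M$ is a static plane; if $\check{\mathcal M}$ were a round shrinking sphere then $\mathcal M$ is compact and ancient, so it develops a singularity at its finite extinction time which, by Theorem \ref{partial_regularity} and compactness (a cylindrical extinction being impossible), is a round shrinking sphere of density $4/e$, and since $\Theta_X^{\infty}=4/e$ bounds $\Theta$ from above while this extinction density bounds it from below, the rigidity case of Huisken's monotonicity formula forces $\mathcal M$ to be a round shrinking sphere --- both contradicting our hypothesis. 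Hence $\check{\mathcal M}$ is a round shrinking cylinder, $\Theta_X^{\infty}=\sqrt{2\pi/e}$, every rescaled limit at large scales has density $\sqrt{2\pi/e}$ at the origin, so by \cite{BW} it is a round shrinking cylinder through the origin, and $\mathcal M$ is $\varepsilon$-cylindrical around $X$ at all sufficiently large scales. With $J(X)$ so defined, \eqref{eq_thm_quant1} is immediate from minimality.

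For \eqref{eq_thm_quant2} I would use quantitative differentiation (cf. \cite{CHN_stratification}). By the effective, compactness-based form of the equality case of Huisken's monotonicity formula there is $\eta=\eta(\varepsilon)>0$ so that whenever $\Theta_X(r_{j+1})-\Theta_X(r_j)<\eta$ the rescaled flow $\mathcal M_{X,r_j}$ is $\tfrac{\varepsilon}{4}$-close on $B(0,4/\varepsilon)\times[-4,-1]$ to a self-shrinking Brakke flow, which, being integral, unit-regular, cyclic and of entropy $\leq\sqrt{2\pi/e}$, is a plane, sphere or cylinder by \cite{BW}. Since $\sum_j(\Theta_X(r_{j+1})-\Theta_X(r_j))=\Theta_X^{\infty}-\Theta_X(\mathcal M)\leq\sqrt{2\pi/e}-1$, at most $N_0(\varepsilon):=(\sqrt{2\pi/e}-1)/\eta$ scales $j$ are bad, i.e. fail to be $\tfrac{\varepsilon}{4}$-close to a model. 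If $\mathcal M$ is $\varepsilon$-cylindrical around $X$ at scale $r_{j_0}$, then $\Theta_X(r_j)$ remains within the cylinder's density error of $\sqrt{2\pi/e}$ for every $j\geq j_0$ by monotonicity, so at every good scale $j\geq j_0$ the model produced above must be a cylinder (the plane and the sphere are excluded by the entropy gaps), giving $\tfrac{\varepsilon}{2}$-cylindricality at scale $r_j$. Among $\{j_0,\dots,j_0+N_0(\varepsilon)\}$ some scale is good, and beyond it the only possible obstruction at scales $\geq j_0$ is a bad scale sandwiched between cylindrical scales; a short barrier argument --- trapping $\mathcal M$ between an inner and an outer shrinking-cylinder barrier via the avoidance principle and using interior estimates for the flow --- propagates $\tfrac{\varepsilon}{4}$-cylindricality across such a scale with controlled loss, so after shrinking $\varepsilon$ a little, every scale $j\geq J(X)+N$ with $N:=N_0(\varepsilon)+1$ is $\tfrac{\varepsilon}{2}$-cylindrical, which is \eqref{eq_thm_quant2}.

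The main obstacle I expect is precisely this last propagation: because the closeness windows at consecutive dyadic scales $r_j$ and $r_{j+1}$ do not overlap in time, one cannot rule out a momentary departure from the cylindrical regime by a soft density argument alone and must instead invoke a pseudolocality/barrier estimate for the mean curvature flow near a shrinking cylinder. A secondary delicate point is the exclusion of planar and spherical tangent flows at infinity in the non-emptiness step, which again relies on the rigidity case of Huisken's monotonicity formula together with the Bernstein--Wang entropy gap.
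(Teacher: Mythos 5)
Your treatment of $J(X)$ — defining it as the smallest $\varepsilon$-cylindrical dyadic scale, ruling out a cylindrical tangent flow at $X$ and a planar or spherical tangent flow at infinity via the rigidity case of Huisken's monotonicity formula together with Bernstein--Wang — is exactly the paper's argument, and \eqref{eq_thm_quant1} follows as you say. The gap is in your proof of \eqref{eq_thm_quant2}. Your pigeonhole bounds the \emph{number} of bad scales by $N_0(\varepsilon)$ but says nothing about their \emph{location}: a priori they can sit anywhere in $\{j\geq J(X)+N\}$, and can even occur as a cluster of up to $N_0(\varepsilon)$ consecutive scales. The barrier/pseudolocality fix you propose does not close this: each crossing of a bad scale degrades the closeness by a multiplicative constant, so a cluster would turn $\varepsilon/4$ into $C^{N_0(\varepsilon)}\varepsilon/4$, and the bookkeeping ``after shrinking $\varepsilon$ a little'' is circular, since $J(X)$ and the target quality $\varepsilon/2$ in the conclusion are tied to the same $\varepsilon$.

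What you are missing — and how the paper closes the argument — is that beyond a controlled scale there are \emph{no} bad scales at all, so no propagation is needed. Being $\varepsilon$-cylindrical at scale $r_{J(X)}$ already forces $\Theta_X(r_{J(X)})\geq \sqrt{2\pi/e}-\tfrac{1}{100}$, so the total density variation over $j\geq J(X)$ is at most $\tfrac1{100}$, not $\sqrt{2\pi/e}-1$. Now run the pigeonhole a second time with a finer parameter $\eta'\ll \eta$, chosen so that an increment below $\eta'$ forces the rescaled flow so close to the (necessarily cylindrical) shrinker that the Gaussian density itself satisfies $\Theta_X(r_{j})\geq \sqrt{2\pi/e}-\eta$. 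Among $\{J(X),\dots,J(X)+N\}$ with $N\sim 1/(100\eta')$ some scale $j^*$ has this property; since $\Theta_X(r_j)$ is monotone with limit exactly $\sqrt{2\pi/e}$, every increment $\Theta_X(r_{j+1})-\Theta_X(r_j)$ with $j\geq j^*$ is then at most $\eta$, i.e.\ \emph{every} scale $j\geq J(X)+N$ is good and hence $\tfrac{\varepsilon}{2}$-cylindrical by the argument you already gave. This is precisely the paper's ``using again monotonicity and quantitative rigidity'' step, and it replaces your propagation step entirely.
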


\begin{proof}
In the following argument we will frequently use the local regularity theorem for the mean curvature flow (see \cite{Brakke_book,White_regularity}) without explicitly mentioning it.
Given $\varepsilon>0$, for any $X=(x,t)\in \mathcal{M}$ we define
\begin{equation}\label{def_of_z}
J(X):=\inf \{ j\in \mathbb{Z}\, | \textrm{ $\mathcal M$ is $\varepsilon$-cylindrical around $X$ at scale $r_j$} \}.
\end{equation}
Recall that by Huisken's monotonicity formula \cite{Huisken_monotonicity,Ilmanen_monotonicity} the quantity
\begin{equation}
\Theta_{(x,t)} (r)=\int_{M_{t-r^2}} \frac{1}{4\pi r^2}  e^ {-\frac{|x-y|^2}{4r^2}}\,   dA(y)
\end{equation}
is monotone, with equality only in the self-similarly shrinking case. Recall also that by Bernstein-Wang \cite{BW}, the only self-similarly shrinking solutions with low entropy are the flat plane, the round shrinking sphere and the round shrinking cylinder. 

Since $\mathcal{M}$ is non-flat and not a round shrinking sphere, its tangent flow at infinity $\check{\mathcal{M}}$ (see Section \ref{sec_prel_monotonicity}) must be a round shrinking cylinder. Hence, $J(X)<\infty$. Similarly, if $\mathcal{M}$ had a cylindrical tangent flow $\hat{\mathcal{M}}_X$, then by the equality case of Huisken's monotonicity formula $\mathcal M$ would be a round shrinking cylinder. Hence, $J(X)> -\infty$. Therefore,  $J(X)\in\mathbb{Z}$ and the statement \eqref{eq_thm_quant1} holds true by the definition from equation \eqref{def_of_z}.

To prove \eqref{eq_thm_quant2}, note first that (recalling that our fixed $\varepsilon$ is small) we have
\begin{equation}\label{density_zero}
\Theta_{X} \left(r_{J(X)}\right) \geq \mathrm{Ent}[S^1]-\tfrac{1}{100},
\end{equation}
and
\begin{equation}\label{density_infinity}
\lim_{j\to \infty} \Theta_{X} (r_j) =\mathrm{Ent}[S^1].
\end{equation}
Next, observe that by the equality case of Huisken's monotonicity formula, if $\Theta_{X}(r_{j+1})-\Theta_{X}(r_{j-1})=0$ then $\mathcal M$ is $0$-selfsimilar around $X$ at scale $r_j$. This can be made quantitative (c.f. Cheeger-Haslhofer-Naber \cite{CHN_stratification}). Namely, there exists a $\delta=\delta(\varepsilon)>0$ such that if
\begin{equation}\label{density_drop}
\Theta_{X}(r_{j+1})-\Theta_{X}(r_{j-1})\leq\delta,
\end{equation}
then
\begin{equation}
\textrm{$\mathcal M$ is $\tfrac{\varepsilon}{2}$-selfsimilar around $X$ at scale $r_j$.}
 \end{equation}
 For $j\geq J(X)$ in our context $\tfrac{\varepsilon}{2}$-selfsimilar simply means $\tfrac{\varepsilon}{2}$-cylindrical.

Finally, using again monotonicity and quantitative rigidity we see that after going from scale $J(X)$ to scale $J(X)+N$, where $N=N(\varepsilon)<\infty$, we have
\begin{equation}\label{density_zero}
\Theta_{X} \left(r_{J(X)+N(\varepsilon)}\right) \geq \mathrm{Ent}[S^1]-\delta.
\end{equation}
Combining the above facts, we conclude that \eqref{eq_thm_quant2} holds. This finishes the proof of the theorem. 
\end{proof}

\bigskip

We fix a small enough parameter $\varepsilon>0$ quantifying the quality of the necks for the rest of the paper.

\begin{definition}
The \emph{cylindrical scale} of $X\in\mathcal{M}$ is defined by
\begin{equation}
Z(X)=2^{J(X)}.
\end{equation}
\end{definition}

\begin{remark}
Recall that the regularity scale $R(X)$ is defined as the maximal radius $r$ such that $|A|\leq 1/r$ in the parabolic ball $P(X,r)$. It follows from the definition of regularity scale and the low entropy assumption that there exists a positive integer $N=N(\varepsilon)<\infty$ such that
\begin{equation}\label{eq_thm_quant4}
\textrm{$\mathcal M$ is ${\varepsilon}$-planar around $X$ at scale $r_j$ for all $j\leq \log_2 R(X)-N$}.
\end{equation}
\end{remark}

\bigskip

\subsection{Asymptotic slope}\label{sec_coarse}

Let $\mathcal M$ be an ancient low entropy flow, which is not a round shrinking sphere, round shrinking cylinder or a flat plane. Recall from above that $\mathcal M$ has an asymptotic cylinder $\check{\mathcal{M}}$ for $t\to -\infty$. By Colding-Minicozzi \cite{CM_uniqueness} the axis is unique. We can assume without loss of generality that the axis is in $x_3$-direction.

\begin{proposition}[asymptotic cylindrical scale]\label{prop_reg_growth}
For every $\delta>0$ there exists $\Lambda=\Lambda(\delta)<\infty$ such that if $(p_0,t_0)\in \mathcal M$ then
\begin{equation}
Z(p,t_0)\leq \delta |p-p_0|
\end{equation}
for all $p\in M_{t_0}$ with $|p-p_0|\geq \Lambda Z(p_0,t_0)$.
\end{proposition}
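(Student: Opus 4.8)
The plan is to argue by contradiction via a blowdown, reducing everything to the rigidity statement that a cylindrical tangent flow forces the entire flow to be a round shrinking cylinder (the equality case of Huisken's monotonicity, already used in Theorem \ref{partial_regularity}). Suppose the proposition fails: then there is some $\delta_0>0$ so that for each $i$ one can find a base point $(p_0^i,t_0^i)\in\mathcal{M}$ and a point $p^i\in M_{t_0^i}$ with $d_i:=|p^i-p_0^i|\ge i\,Z(p_0^i,t_0^i)$ but $Z(p^i,t_0^i)>\delta_0 d_i$. First I would rescale, setting $\mathcal{M}^i:=\mathcal{D}_{1/d_i}(\mathcal{M}-(p_0^i,t_0^i))$; since the notion of being $\varepsilon$-cylindrical at a scale is parabolic-scaling invariant, the cylindrical scale $Z_{\mathcal{M}^i}$ of $\mathcal{M}^i$ at the origin equals $Z(p_0^i,t_0^i)/d_i\le 1/i$, while the point $q^i:=(p^i-p_0^i)/d_i$ lies at Euclidean distance $1$ from the origin with $Z_{\mathcal{M}^i}(q^i,0)=Z(p^i,t_0^i)/d_i>\delta_0$. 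As entropy is scaling- and translation-invariant and each $\mathcal{M}^i$ is ancient (defined on $(-\infty,(T_E(\mathcal{M})-t_0^i)/d_i^2]\supseteq(-\infty,0]$), Ilmanen's compactness theorem lets us pass to a subsequence converging to an ancient low entropy flow $\mathcal{M}^\infty$, with $q^i\to q^\infty$, $|q^\infty|=1$.

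Next I would identify the limit. Because $Z_{\mathcal{M}^i}(0,0)\to 0$, the density estimate from the proof of Theorem \ref{thm_finding_necks} together with Huisken's monotonicity gives $\Theta_{(0,0)}(\mathcal{M}^i)(s)\in[\textrm{Ent}[S^1]-\tfrac1{100},\,\textrm{Ent}[S^1]]$ for every $s\ge Z_{\mathcal{M}^i}(0,0)$; letting $i\to\infty$ (the uniform area ratio bounds from the entropy bound let us pass the Gaussian weight to the limit) yields $\Theta_{(0,0)}(\mathcal{M}^\infty)\ge\textrm{Ent}[S^1]-\tfrac1{100}>\tfrac4e=\textrm{Ent}[S^2]$. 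As in Theorem \ref{partial_regularity}, any tangent flow of $\mathcal{M}^\infty$ at the origin is a plane, sphere, or cylinder, so it must be a round shrinking cylinder, and by the equality case of Huisken's monotonicity $\mathcal{M}^\infty$ is itself a round shrinking cylinder. Since such a flow has a cylindrical (rather than planar) tangent flow only at its extinction time and at a point of its axis, $\mathcal{M}^\infty$ must be a self-similarly shrinking cylinder that is extinct exactly at time $0$ and whose axis $\ell$ passes through the origin.

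Then I would locate $q^\infty$ and conclude. By upper semicontinuity of the Gaussian density, $\Theta_{(q^\infty,0)}(\mathcal{M}^\infty)\ge\limsup_i\Theta_{(q^i,0)}(\mathcal{M}^i)\ge 1$; but the cylinder $\mathcal{M}^\infty$, being extinct at time $0$, has positive density at $(q^\infty,0)$ only if $q^\infty\in\ell$, so $q^\infty\in\ell$. Translating $(q^\infty,0)$ to the origin again produces a self-similarly shrinking cylinder with axis through the origin, extinct at time $0$, which is exactly cylindrical at every scale, whence $Z_{\mathcal{M}^\infty}(q^\infty,0)=0$. Now fix any dyadic scale $s=2^k>0$: the space-time region in which one tests $\varepsilon$-cylindricality of $\mathcal{M}^i$ around $(q^i,0)$ at scale $s$ is contained, for $i$ large, in a fixed compact subset of $\{t<0\}$, where $\mathcal{M}^\infty$ is smooth and $\mathcal{M}^i\to\mathcal{M}^\infty$ smoothly (local regularity theorem, with multiplicity one forced by the entropy bound). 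Hence $\mathcal{M}^i$ is $\varepsilon$-cylindrical around $(q^i,0)$ at scale $s$, so $Z_{\mathcal{M}^i}(q^i,0)\le s$ for $i$ large; letting $k\to-\infty$ gives $\limsup_i Z_{\mathcal{M}^i}(q^i,0)=0$, contradicting $Z_{\mathcal{M}^i}(q^i,0)>\delta_0$. This contradiction proves the proposition.

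I expect the main obstacle to be making these limit interchanges rigorous, namely controlling the behavior of the cylindrical scale and of Huisken's density under the blowdown. The most delicate configuration is precisely the degenerate one identified above, where the limit cylinder sits at its extinction time so that the origin has vanishing cylindrical scale; there the contradiction cannot be read off at the origin and must instead be extracted from the off-axis point $q^\infty$, using density upper semicontinuity together with the geometry of shrinking cylinders near their extinction time.
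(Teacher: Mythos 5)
Your proof is correct and follows essentially the same route as the paper's: rescale by the distance $|p^i-p_0^i|$, pass to a blowdown limit which by the equality case of Huisken's monotonicity must be a round shrinking cylinder extinct at time $0$, and derive a contradiction with the assumed lower bound $Z(p^i,t_0^i)>\delta_0|p^i-p_0^i|$. The paper states this contradiction in one line; you have simply supplied the details (locating $q^\infty$ on the axis via upper semicontinuity of the density, and showing the cylindrical scale at $q^i$ tends to zero by smooth convergence at fixed scales), which is exactly what the paper's terse argument implicitly relies on.
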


\begin{proof}
Assume without loss of generality that $(p_0,t_0)=(0,0)$ and $Z(0,0)=1$. If the assertion fails, then there is a sequence $p_i\in M_{0}$ with $|p_i|\geq i$, but 
\begin{equation}\label{reg_point}
Z(p_i,0)\geq \delta |p_i|
\end{equation}
for some $\delta>0$.
Let $\mathcal M^i$ be the flow which is obtained by parabolically rescaling by $1/|p_i|$ around $(0,0)$ and pass to a limit $\mathcal M^\infty$.
The limit $\mathcal M^\infty$ is an ancient low entropy flow, which has a cylindrical singularity at $(0,0)$. Hence, by the equality case of Huisken's monotonicity formula $\mathcal M^\infty$ is a round shrinking cylinder that becomes extinct at time $T=0$, in contradiction with \eqref{reg_point}. This proves the proposition.
\end{proof}

We normalize $\mathcal M$ such that $X_0=(0,0)\in\mathcal{M}$ and $Z(X_0)\leq 1$.

\begin{theorem}[asymptotic slope]\label{thm_asymptotic_slope}
For every $\delta>0$ there exists $\Lambda=\Lambda(\delta)<\infty$ such that
\begin{equation}
\frac{\sup\{ \sqrt{x_1^2 + x_2^2} | (x_1,x_2,x_3)\in M_t\}}{|x_3|} < \delta
\end{equation}
whenever $t\leq -10$ and $|(x_1,x_2,x_3)|\geq \Lambda \sqrt{-t}$.
\end{theorem}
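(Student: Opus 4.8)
The plan is to argue by contradiction via a blowdown, using Proposition \ref{prop_reg_growth} (asymptotic cylindrical scale) as the main input, and then promoting a single slope violation at one time to a genuine non-cylindrical tangent flow at infinity, contradicting the classification of low entropy shrinkers.

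First I would reduce the two-sided statement about $\sup\{\sqrt{x_1^2+x_2^2}\}$ over the whole slice $M_t$ to a pointwise statement along the lines of Proposition \ref{prop_reg_growth}. Concretely, suppose the theorem fails: then there is $\delta_0>0$ and a sequence $(x^i, t_i)\in\mathcal M$ with $t_i\le -10$, $|x^i|\ge i\sqrt{-t_i}$, yet $\sqrt{(x_1^i)^2+(x_2^i)^2} \ge \delta_0 |x_3^i|$. Since the transverse size is at least a fixed fraction of $|x_3^i|$, we get that $|x^i|$ and $\sqrt{-t_i}$ still differ by an unbounded factor. I would parabolically rescale $\mathcal M$ around $X_0=(0,0)$ by $1/|x^i|$ (or around $(0,t_i)$ by $1/\sqrt{-t_i}$, whichever makes the bookkeeping cleaner). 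Because $Z(X_0)\le 1$ is fixed while $|x^i|\to\infty$, Proposition \ref{prop_reg_growth} gives $Z(p,0)\le \delta|p|$ for all $p\in M_0$ with $|p|\ge \Lambda(\delta)$; the same rescaling argument applies at any later negative time by translating the base point. The upshot is that after rescaling by $|x^i|^{-1}$, the cylindrical scale at the rescaled base point tends to $0$, so the rescaled flows $\mathcal M^i$ subconverge (by Huisken monotonicity and Ilmanen compactness) to a limit $\mathcal M^\infty$ which is an ancient low entropy flow with a cylindrical singularity at the space-time origin. By the equality case of Huisken's monotonicity formula (as in the proof of Theorem \ref{partial_regularity} and Proposition \ref{prop_reg_growth}), $\mathcal M^\infty$ is a round shrinking cylinder with axis the $x_3$-axis, extinct at $t=0$.

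Now I would extract the contradiction from the geometry of the rescaled points. The points $x^i/|x^i|$ lie on the unit sphere and, after passing to a subsequence, converge to some unit vector $v$; moreover the rescaled flow $\mathcal M^i$ at the (rescaled) time $t_i/|x^i|^2 \to 0$ passes through the point $x^i/|x^i|\to v$. Since $\mathcal M^\infty$ is the round shrinking cylinder $\{x_1^2+x_2^2 = -2t\}$ which is extinct at $t=0$, at times slightly before $0$ its slices have transverse radius $\to 0$; but the limit point $v$ must lie on the axis, i.e. $v=(0,0,\pm 1)$, forcing $\sqrt{(x_1^i)^2+(x_2^i)^2}/|x^i| \to 0$. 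This directly contradicts $\sqrt{(x_1^i)^2+(x_2^i)^2}\ge \delta_0|x_3^i|$, which (together with $|x^i|^2 = (x_1^i)^2+(x_2^i)^2+(x_3^i)^2$) forces $\sqrt{(x_1^i)^2+(x_2^i)^2}/|x^i|$ to stay bounded below by a positive constant depending only on $\delta_0$. This completes the contradiction and proves the theorem, with $\Lambda(\delta)$ obtained from the $\Lambda$ of Proposition \ref{prop_reg_growth} by a standard diagonal/compactness extraction.

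The main obstacle I anticipate is the bookkeeping in the blowdown: one has to be careful that the sequence of base points and the times $t_i$ interact correctly under rescaling so that the limit flow really has its singularity at the space-time origin rather than somewhere off to the side or at a different time, and that the point $x^i/|x^i|$ survives in the limit as a point of the rescaled flow at a controlled time (near $0$). This requires using the avoidance principle or a Brakke-flow clearing-out lemma to ensure the limit slices near $t=0^-$ are genuinely small and not, say, an empty measure at the relevant scale; Proposition \ref{prop_reg_growth} combined with the finite extinction of the limit cylinder handles this, but the quantitative matching of scales ($Z(p_0,t_0)$ versus $|p-p_0|$ versus $\sqrt{-t}$) is where the argument must be written carefully. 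A secondary subtlety is that the theorem is stated uniformly over all $t\le -10$, so one really wants the contradiction sequence to allow $t_i\to-\infty$ as well; rescaling around $(0,t_i)$ by $\sqrt{-t_i}$ rather than around $(0,0)$, and invoking Proposition \ref{prop_reg_growth} with base point $(0,t_i)$, handles this case uniformly.
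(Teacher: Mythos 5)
Your proposal is correct, but it reaches the conclusion by a genuinely different mechanism than the paper. The paper's proof is in two steps: first, for $t\leq T_1(\delta)$ it combines Proposition \ref{prop_reg_growth} (the far point lies on a neck of scale $\ll |p|$) with an embeddedness argument --- if the point were too far off-axis, that neck, followed backward in time, would have to intersect the central neck of the asymptotic cylinder --- and second, it propagates the resulting cone-plus-cylinder containment forward from $T_2(\delta)$ to $t=-10$ using large spheres as barriers. You instead run a single blowdown around $(0,0)$ at scale $|x^i|$ and observe that the contradiction hypothesis $\sqrt{(x_1^i)^2+(x_2^i)^2}\geq \delta_0|x_3^i|$ pins the rescaled points at a definite transverse distance from the axis, while the hypothesis $|x^i|\geq i\sqrt{-t_i}$ forces the rescaled times to $0^-$; upper semicontinuity of Gaussian density (equivalently, clearing out) then says the blowdown cylinder reaches an off-axis point at its extinction time, which is false. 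This is a legitimately softer argument: it needs neither the neck-intersection step nor the barrier step, and it handles the uniformity over all $t\leq -10$ (including $t_i\to-\infty$) in one stroke, precisely because the rescaling factor is $|x^i|$ rather than $\sqrt{-t_i}$. The only ingredients you need beyond what the paper has already established at that point are the uniqueness of the axis of the tangent flow at infinity (Colding--Minicozzi, invoked just before the theorem) and the standard semicontinuity of density under Brakke flow convergence, both available. One small correction: your fallback of rescaling around $(0,t_i)$ by $\sqrt{-t_i}$ for the case $t_i\to-\infty$ would send the points $x^i$ off to spatial infinity (since $|x^i|/\sqrt{-t_i}\geq i$) and so does not work as stated --- but it is also unnecessary, since your primary rescaling by $1/|x^i|$ already covers that case.
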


\begin{proof}
We first claim that for every $\delta>0$ there exist $\Lambda_1=\Lambda_1(\delta)<\infty$ and $T_1=T_1(\delta)>-\infty$ such that every $(p,t)\in \mathcal M$ with $t\leq T_1$ and $\frac{|p|}{\sqrt{-t}}\geq \Lambda_1$ satisfies
\begin{equation}\label{eq_show_first}
x_1^2+x_2^2\leq \frac{\delta^2}{2} x_3^2.
\end{equation}
Indeed, if $(p,t)\in \mathcal M$ is any point with $t\leq -10$ and $\frac{|p|}{\sqrt{-t}}\gg 1$, then by Proposition \ref{prop_reg_growth} it has cylindrical scale $Z(p,t)\ll |p|$. If \eqref{eq_show_first} was violated, then going back further in time this neck would intersect the central neck of our asymptotic cylinder; a contradiction.

Consequently, for every $\delta>0$ there exist $T_2=T_2(\delta)> -\infty$ and $L(\delta)<\infty$ such that
\begin{equation}
\frac{1}{\sqrt{-t}}M_t\subset \left\{ |z|\leq L, x_1^2+x_2^2\leq 3 \right\} \cup \left\{ |x_3|\geq L, x_1^2+x_2^2\leq \frac{\delta^2}{2} x_3^2 \right\}
\end{equation}
for all $t\leq T_2$. Using suitable large spheres as barriers, this implies the assertion.
\end{proof}

\begin{corollary}[barrier for the rescaled flow]\label{cor_barrier}
There exists an even smooth function $\varphi :\mathbb{R}\to \mathbb{R}_+$ with $\lim_{z\to \pm \infty}\varphi'(z)=0$ such that the rescaled mean curvature flow
$\bar M^{X}_\tau = e^{\frac{\tau}{2}}( M_{-e^{-\tau}} - x_0)$,
where $\tau=-\log (t_0-t)$, satisfies
\begin{equation}
\bar{M}^{X}_\tau \subset \left\{ x_1^2+x_2^2 \leq \varphi(x_3)^2 \right\}
\end{equation}
for $\tau\leq \mathcal{T}(Z(X))$.
\end{corollary}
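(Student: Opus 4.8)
The plan is to deduce the existence of the barrier function $\varphi$ directly from the asymptotic slope estimate (Theorem \ref{thm_asymptotic_slope}) together with the control on a compact region coming from the coarse structure of the flow. First I would fix the normalization $X_0 = (0,0) \in \mathcal{M}$ with $Z(X_0) \le 1$ as in the statement, and observe that under the rescaling $\bar M^{X_0}_\tau = e^{\tau/2}(M_{-e^{-\tau}})$ the condition "$t \le -10$ and $|(x_1,x_2,x_3)| \ge \Lambda\sqrt{-t}$" translates into "$\tau \le -\log 10$ and (in rescaled coordinates) $|x| \ge \Lambda$." Theorem \ref{thm_asymptotic_slope} then says: for every $\delta > 0$ there is $\Lambda(\delta)$ so that the rescaled flow $\bar M^{X_0}_\tau$, for $\tau \le \mathcal{T}_0(\delta) := -\log 10$, is contained in the region $\{ x_1^2 + x_2^2 < \delta^2 x_3^2 \}$ once $|x_3| \ge \Lambda(\delta)$ (after also absorbing the bounded-$x_3$ part using that there the radial extent is bounded, exactly as in the last display of the proof of Theorem \ref{thm_asymptotic_slope}).

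The construction of $\varphi$ is then a soft diagonal/interpolation argument. Pick a sequence $\delta_k \downarrow 0$, let $\Lambda_k = \Lambda(\delta_k)$ (which we may take increasing), and let $\mathcal{T}_k = \mathcal{T}(\delta_k) \le -\log 10$ be a decreasing sequence of times such that for $\tau \le \mathcal{T}_k$ the inclusion $\bar M^{X_0}_\tau \subset \{ x_1^2 + x_2^2 \le \delta_k^2 x_3^2 \}$ holds whenever $|x_3| \ge \Lambda_k$. I would then define $\varphi$ to be an even smooth function which on $[\Lambda_k, \Lambda_{k+1}]$ is squeezed between (a smoothing of) $\delta_k |z|$ and $\delta_{k+1}|z|$, and which on the compact core $[-\Lambda_1, \Lambda_1]$ is taken to be a large constant dominating $\sup_{\tau \le \mathcal{T}_1} \sup_{\bar M^{X_0}_\tau \cap \{|x_3| \le \Lambda_1\}} \sqrt{x_1^2 + x_2^2}$ (finite by the barrier/containment in the bounded region, cf.\ the proof of Theorem \ref{thm_asymptotic_slope}). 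Smoothness and evenness are arranged by standard mollification at the breakpoints $\Lambda_k$, keeping the function monotone in $|z|$ past the core; since $\varphi(z)/|z|$ is trapped between $\delta_k$ and $\delta_{k+1}$ for $|z| \in [\Lambda_k, \Lambda_{k+1}]$, we automatically get $\varphi'(z) \to 0$ as $z \to \pm\infty$ (after also arranging the derivative bound across the mollified joints, which costs nothing since the $\delta_k$ already tend to zero). Finally, for the general base point $X = (x_0, t_0) \in \mathcal{M}$ the same $\varphi$ works on the time interval $\tau \le \mathcal{T}(Z(X))$: one uses that translating by $x_0$ and using the comparison of cylindrical scales along the flow (the flow looks like the asymptotic cylinder centered near the origin for very negative times) reduces the containment for $\bar M^X_\tau$ to the one already established for $\bar M^{X_0}_\tau$, up to a harmless constant that is absorbed into the choice of core constant, at the expense of only starting at a more negative time $\mathcal{T}(Z(X))$.

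The main obstacle is bookkeeping rather than conceptual: one must make sure that the single function $\varphi$ serves all base points $X$ simultaneously, which is why the dependence on $Z(X)$ enters only through the starting time $\mathcal{T}(Z(X))$ and not through the shape of $\varphi$. This is legitimate because Theorem \ref{thm_asymptotic_slope} is stated with constants depending only on $\delta$ (after the normalization $Z(X_0) \le 1$), and Proposition \ref{prop_reg_growth} gives uniform control of the cylindrical scale at faraway points; the only place the individual cylindrical scale $Z(X)$ is needed is to guarantee that, that far back in time, the rescaled flow based at $X$ has already entered the "asymptotically cylindrical, slowly opening" regime. I would therefore make explicit a short lemma that for $\tau \le \mathcal{T}(Z(X))$ the rescaled surface $\bar M^X_\tau$ is within the $\delta$-cone of the $x_3$-axis outside radius $\Lambda(\delta)$, uniformly over $X$, by combining the uniqueness of the asymptotic axis (Colding--Minicozzi \cite{CM_uniqueness}) with Theorem \ref{thm_asymptotic_slope}; granting that lemma, the rest is the elementary interpolation above.
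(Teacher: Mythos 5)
Your argument is exactly the intended one: the paper states this corollary without proof as an immediate consequence of Theorem \ref{thm_asymptotic_slope}, and your dyadic interpolation of cones of slope $\delta_k \downarrow 0$ outside radii $\Lambda_k$, glued to a bounded core and mollified, is the natural way to make that implication explicit. You also correctly isolate the only delicate point (uniformity over the base point $X$, with $\mathcal{T}$ depending on $X$ only through $Z(X)$) and resolve it in the way the paper's normalization intends.
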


By the corollary, any potential ends must be in direction $x_3\to \pm \infty$. 

\bigskip

\section{Fine neck analysis}\label{sec_fine_neck_analysis}

\subsection{Setting up the fine neck analysis}\label{sec_fine_neck_setup}

Let $\mathcal M$ be a noncompact ancient low entropy flow in $\mathbb{R}^3$, which is not a round shrinking sphere, round shrinking cylinder or a flat plane. Given any point $X_0=(x_0,t_0)\in \mathcal M$, we consider the rescaled flow
\begin{equation}
\bar M^{X_0}_\tau = e^{\frac{\tau}{2}} \, \left( M_{-e^{-\tau}} - x_0\right),
\end{equation}
where $\tau=-\log (t_0-t)$. By Theorem \ref{thm_finding_necks} and Colding-Minicozzi \cite{CM_uniqueness}, the rescaled flow converges for $\tau\to -\infty$ to the cylinder $\Sigma = \{x_1^2+x_2^2=2\}$. Moreover, the convergence is uniform in $X_0$ once we normalize such that $Z(X_0)\leq 1$. Hence, we can find universal functions $\sigma(\tau)>0$ and $\rho(\tau)>0$ with
\begin{equation}\label{univ_fns}
\lim_{\tau \to -\infty} \sigma(\tau)=0,\quad\lim_{\tau \to -\infty} \rho(\tau)=\infty, \quad \textrm{and} -\rho(\tau) \leq \rho'(\tau) \leq 0,
\end{equation}
such that $\bar M^{X_0}_\tau$ is the graph of a function $u(\cdot,\tau)$ over $\Sigma \cap B_{2\rho(\tau)}(0)$, namely 
\begin{equation}
\{x + u(x,\tau) \nu_\Sigma(x): x \in \Sigma \cap B_{2\rho(\tau)}(0)\} \subset \bar{M}^{{X_0}}_\tau,
\end{equation} 
where $\nu_\Sigma$ denotes the outward pointing unit normal to $\Sigma$, and
\begin{equation}\label{small_graph}
\|u(\cdot,\tau)\|_{C^4(\Sigma \cap B_{2\rho(\tau)}(0))} \leq \sigma(\tau) \, \rho(\tau)^{-1}.
\end{equation}
We will now set up a fine neck analysis following \cite{ADS} and \cite{BC}.

\bigskip

We denote by $C<\infty$ and $\mathcal{T}>-\infty$ constants that can change from line to line and can depend on various other quantities (such as the function $\rho$ from above, the neck parameter $\varepsilon$ from Section \ref{sec_necks_back_in_time}, etc), but are independent of the point $X_0$ with $Z(X_0)\leq 1$. We also fix a nonnegative smooth cutoff function $\varphi$ satisfying $\varphi(z)=1$ for $|z| \leq \frac{1}{2}$ and $\varphi(z)=0$ for $|z| \geq 1$, and set
\begin{equation}
\hat{u}(x,\tau)=u(x,\tau)\varphi\left(\frac{x_3}{\rho(\tau)}\right).
\end{equation}

\bigskip

We recall from Angenent-Daskalopoulos-Sesum that there are shrinkers
\begin{align}
\Sigma_a &= \{ \textrm{surface of revolution with profile } r=u_a(z), 0\leq z \leq a\},\\
\tilde{\Sigma}_b &= \{ \textrm{surface of revolution with profile } r=\tilde{u}_b(z), 0\leq z <\infty\}\nonumber
\end{align}
as illustrated in \cite[Fig. 1]{ADS}, see also \cite{KM}. We will refer to these shrinkers as ADS-shrinkers and KM-shrinkers, respectively. The parameter $a$ captures where the curves $u_a$ meet the $z$-axis, namely $u_a(a)=0$, and the parameter $b$ is the asymptotic slope of the curves $u_b$, namely $\lim_{z\to \infty} u_b'(z)=b$. A detailed description of these shrinkers can be found in \cite[Sec. 8]{ADS}. In particular, the shrinkers can be used for barrier arguments as well as for calibration arguments. To describe the latter, we fix suitable parameters $a_0,b_0,L_0<\infty$ and consider the region $F^+\subset \mathbb{R}^3$ bounded by $\Sigma_{a_0}$, $\Sigma_{b_0}$ and $\{z= L_0\}$. The region $F^+$ is foliated by $\{\Sigma_a\}_{a\geq a_0}$, $\Sigma$ and $\{\tilde{\Sigma}_b \}_{b\leq b_0}$. Denoting by $\nu_{\textrm{fol}}$ the outward unit normal of this family, by \cite[Lemma 4.10]{ADS} we have that
\begin{equation}\label{eq_calibration}
\textrm{div}(e^{-x^2/4} \nu_{\textrm{fol}})=0,
\end{equation}
i.e. the shrinker family forms a calibration for the Gaussian area. Similarly, the region $F^-=\{ (x_1,x_2,-x_3)\, |\,  x\in F^+\}$ is calibrated. Let $F=F^-\cup F^+$.

\begin{proposition}\label{calibrated region}
There exists $\mathcal{T}>-\infty$ independent of $X_0$ such that\footnote{Recall that we assume that $Z(X_0)\leq 1$.}
\begin{align}
\bar{M}_{\tau} \cap \{|x_3|  \geq L_0\} \subset F
\end{align}
for all $\tau\leq \mathcal{T}$.
\end{proposition}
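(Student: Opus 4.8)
The plan is to use the ADS-shrinkers and KM-shrinkers as barriers, together with the asymptotic slope estimate from Theorem \ref{thm_asymptotic_slope} (or equivalently Corollary \ref{cor_barrier}), to trap the ends of the rescaled flow inside the calibrated region $F$. The point is that $F^+$ is, by construction, the region bounded by $\Sigma_{a_0}$ (an innermost barrier, which is a shrinker closing off on the $z$-axis at $z=a_0$), by $\tilde\Sigma_{b_0}$ (an outermost barrier, a shrinker with asymptotic slope $b_0$), and by the slice $\{z=L_0\}$; so to prove $\bar M_\tau\cap\{|x_3|\geq L_0\}\subset F$ it suffices to show that, for $\tau$ sufficiently negative (uniformly in $X_0$ with $Z(X_0)\leq 1$), the portion of $\bar M_\tau$ with $|x_3|\geq L_0$ lies on the correct side of each of these two shrinker-barriers.

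First I would recall that a self-shrinker $\Sigma_a$ (resp. $\tilde\Sigma_b$), viewed as a static solution of the rescaled mean curvature flow, is a barrier for the rescaled flow by the avoidance principle (Section \ref{sec_Brakke_flows}), which is preserved under the parabolic rescaling defining $\bar M_\tau$. So the strategy is a standard ``barrier gets in the way'' argument: if at some first rescaled time $\tau_1$ the surface $\bar M_{\tau_1}$ touched $\Sigma_{a_0}$ from outside (in the region $|x_3|\geq L_0$), the strong maximum principle would force $\bar M_\tau\equiv \Sigma_{a_0}$, contradicting that $\bar M_\tau\to \Sigma$ as $\tau\to-\infty$ (the round cylinder, which does not agree with $\Sigma_{a_0}$). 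The analogous statement traps $\bar M_\tau$ inside $\tilde\Sigma_{b_0}$ from the outside. What remains is to verify the initial/asymptotic configuration: that for $\tau$ sufficiently negative, $\bar M_\tau\cap\{|x_3|\geq L_0\}$ starts out on the correct side of both $\Sigma_{a_0}$ and $\tilde\Sigma_{b_0}$.

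For the inner barrier $\Sigma_{a_0}$: since $\bar M_\tau$ converges smoothly to the round cylinder $\Sigma=\{x_1^2+x_2^2=2\}$ on compact subsets (by \eqref{small_graph}), and $\Sigma_{a_0}$ lies strictly inside $\Sigma$ (with $a_0,L_0$ chosen so that $\Sigma_{a_0}$ is well inside the cylinder on $\{|x_3|\le L_0\}$ and closes off before reaching further out), for $\tau\le\mathcal T$ the slice $\bar M_\tau\cap\{|x_3|=L_0\}$ lies outside $\Sigma_{a_0}$; combined with the fact that $\Sigma_{a_0}$ does not extend beyond $|x_3|=a_0< L_0$ anyway, there is simply no way for $\bar M_\tau\cap\{|x_3|\ge L_0\}$ to be inside $\Sigma_{a_0}$, which only lives in $\{|x_3|<a_0\}$. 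For the outer barrier $\tilde\Sigma_{b_0}$: here I invoke Corollary \ref{cor_barrier}, which gives the containment $\bar M^{X}_\tau\subset\{x_1^2+x_2^2\le\varphi(x_3)^2\}$ for $\tau\le\mathcal T(Z(X))$, with $\varphi$ even, smooth, and $\varphi'(z)\to 0$ as $z\to\pm\infty$ — i.e. the ends of $\bar M_\tau$ open up slower than any fixed positive slope, in particular slower than the asymptotic slope $b_0$ of $\tilde\Sigma_{b_0}$. Since $\tilde\Sigma_{b_0}$ opens at linear rate $b_0>0$ while $\varphi$ has sublinear growth, for $|x_3|$ large $\{x_1^2+x_2^2\le\varphi(x_3)^2\}$ lies strictly inside $\tilde\Sigma_{b_0}$; for $|x_3|$ in the bounded range $[L_0,L_1]$ one uses the smooth convergence to the cylinder again (choosing $\tilde\Sigma_{b_0}$'s parameters so it stays outside the radius-$\sqrt 2$ cylinder on that range), so the whole portion $\bar M_\tau\cap\{|x_3|\ge L_0\}$ is inside $\tilde\Sigma_{b_0}$ for $\tau$ sufficiently negative. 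The same arguments applied to $F^-$ by the reflection $x_3\mapsto -x_3$ handle the other end. Taking $\mathcal T$ to be the minimum of the finitely many thresholds produced above — each uniform in $X_0$ because the convergence $\bar M^{X_0}_\tau\to\Sigma$ and Corollary \ref{cor_barrier} are uniform once $Z(X_0)\le 1$ — gives the claim.

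The main obstacle, and the point requiring the most care, is matching the barriers in the \emph{bounded} transition region $\{L_0\le|x_3|\le L_1\}$ where one cannot rely on asymptotics: one must choose the parameters $a_0,b_0,L_0$ of the foliation (as in \cite[Sec. 8]{ADS}) so that $\Sigma_{a_0}$ and $\tilde\Sigma_{b_0}$ sandwich a neighborhood of the round cylinder $\Sigma$ on that slab, and then use that $\bar M_\tau$ is $C^4$-close to $\Sigma$ there. This is really a statement about the geometry of the ADS foliation rather than about our flow, so it should follow from the properties of the shrinkers recorded in \cite{ADS}; but getting the inequalities to line up simultaneously at the inner barrier, the outer barrier, and the cut $\{z=L_0\}$ is the delicate bookkeeping step.
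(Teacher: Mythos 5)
Your overall strategy (outer containment via the vanishing asymptotic slope / Corollary \ref{cor_barrier} plus the graphical closeness to the cylinder on the expanding region, inner containment via the ADS-shrinkers) is the same as the paper's, and your treatment of the outer barrier $\tilde{\Sigma}_{b_0}$ --- sublinear growth of $\varphi$ versus the linear opening rate $b_0$ at infinity, and closeness to the radius-$\sqrt{2}$ cylinder on the bounded transition slab --- is correct.

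However, your inner-barrier step rests on a wrong picture of the region $F^+$. You assert that $a_0<L_0$, so that $\Sigma_{a_0}$ ``closes off before reaching'' $\{|x_3|\geq L_0\}$ and hence nothing in $\{|x_3|\geq L_0\}$ could lie inside it. This cannot be the intended configuration: $F^+$ is by definition bounded by $\Sigma_{a_0}$, $\tilde{\Sigma}_{b_0}$ \emph{and} $\{z=L_0\}$, and for $\Sigma_{a_0}$ to form part of the boundary of a region contained in $\{z\geq L_0\}$ one needs $a_0>L_0$ (in fact $a_0\gg L_0$, as in \cite[Sec. 8]{ADS} and \cite{BC}). With the correct geometry, the inner containment is not vacuous: one must check that $\bar M_\tau\cap\{L_0\leq |x_3|\leq a_0\}$ stays outside $\Sigma_{a_0}$. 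This is where the graphical estimate \eqref{small_graph} is actually needed on the inner side: on $\{|x_3|\leq 2\rho(\tau)\}$ the radius of $\bar M_\tau$ is within $\sigma(\tau)\rho(\tau)^{-1}$ of $\sqrt{2}$, while $u_{a_0}(z)\leq u_{a_0}(L_0)<\sqrt{2}$ uniformly for $z\in[L_0,a_0]$ (the profile $u_{a_0}$ is decreasing), so for $\tau$ sufficiently negative the surface cannot cross $\Sigma_{a_0}\cap\{|x_3|\geq L_0\}$; for $|x_3|>a_0$ there is no inner constraint. This is exactly the one-line argument the paper uses, and it replaces both your ``$\Sigma_{a_0}$ lives only in $\{|x_3|<a_0<L_0\}$'' claim and the first-touching/strong-maximum-principle machinery, which is not needed here.
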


\begin{proof}
By Corollary \ref{cor_barrier} (barrier for the rescaled flow), $\bar M_{\tau}\cap \{x_3 \geq L_0\}$ is contained in the region bounded by $\tilde{\Sigma}_{b_0}$ and $\{x_3= L_0\}$ for sufficiently negative $\tau$. Moreover, since $\bar{M}_{\tau}$ is a graph with small norm over a long cylinder, it can not cross $\Sigma_{a_0}\cap\{ x_3\geq L_0\}$. This implies the assertion.
\end{proof}

Denote by $\Delta_\tau$ the region bounded by $\bar{M}_\tau$ and $\Sigma$.

\begin{proposition}[{c.f. \cite[Prop. 2.2]{BC}}]\label{prop_calibration}
\label{calibraion}
For all $L \in [L_0,\rho(\tau)]$ and $\tau \leq \mathcal{T}$ we have the Gaussian area estimate
\begin{multline}
\int_{\bar{M}_\tau \cap \{ |x_3| \geq L  \}} \, e^{-\frac{|x|^2}{4}} - \int_{\Sigma \cap \{|x_3| \geq L  \}} \, e^{-\frac{|x|^2}{4}}\\
\geq - \int_{\Delta_\tau\cap \{|x_3|=L\} }  \, e^{-\frac{|x|^2}{4}}\, | N \cdot \nu_{\text{fol}}|.
\end{multline}
\end{proposition}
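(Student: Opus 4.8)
The plan is to use the calibration property \eqref{eq_calibration} of the ADS/KM foliation together with the fact that $\bar M_\tau$ agrees with (or lies inside) the calibrated region $F$ for $|x_3|\geq L_0$ and $\tau\leq\mathcal T$, established in Proposition \ref{calibrated region}. Write $\Omega_{\tau,L}\subset\mathbb{R}^3$ for the region enclosed between $\bar M_\tau$ and $\Sigma$ in the slab $\{|x_3|\geq L\}$; since both $\bar M_\tau\cap\{|x_3|\geq L\}$ and $\Sigma\cap\{|x_3|\geq L\}$ lie in $F$ and are graphs of small norm over the cylinder, $\Omega_{\tau,L}$ is a well-defined region contained in $F$ whose boundary decomposes into three pieces: the part of $\bar M_\tau$, the part of $\Sigma$, and the ``cap'' $\Delta_\tau\cap\{|x_3|=L\}$ lying in the hyperplane $\{x_3=L\}$ (and the symmetric piece at $\{x_3=-L\}$).

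The key step is to apply the divergence theorem to the vector field $V=e^{-|x|^2/4}\,\nu_{\textrm{fol}}$ over $\Omega_{\tau,L}$. By \eqref{eq_calibration} we have $\operatorname{div} V=0$ inside $F$, hence
\begin{equation}
0=\int_{\partial \Omega_{\tau,L}} V\cdot N\, ,
\end{equation}
where $N$ is the outward unit normal of $\Omega_{\tau,L}$. On the $\bar M_\tau$ portion of the boundary, $N$ is (up to sign) the unit normal $\nu_{\bar M_\tau}$, and $V\cdot N = e^{-|x|^2/4}\,\nu_{\textrm{fol}}\cdot N \le e^{-|x|^2/4}$ since $\nu_{\textrm{fol}}$ and $N$ are unit vectors; on the $\Sigma$ portion, $\nu_{\textrm{fol}}=\nu_\Sigma$ exactly (the cylinder is a leaf of the foliation), and the outward normal of $\Omega_{\tau,L}$ along $\Sigma$ points oppositely, giving $V\cdot N=-e^{-|x|^2/4}$; on the cap $\{|x_3|=L\}$ we bound $|V\cdot N|\le e^{-|x|^2/4}\,|\nu_{\textrm{fol}}\cdot N|$, and $N$ there is $\pm e_3$. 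Rearranging the three boundary contributions yields
\begin{equation}
\int_{\bar M_\tau\cap\{|x_3|\ge L\}}e^{-|x|^2/4} - \int_{\Sigma\cap\{|x_3|\ge L\}}e^{-|x|^2/4} \;\ge\; -\int_{\Delta_\tau\cap\{|x_3|=L\}}e^{-|x|^2/4}\,|N\cdot\nu_{\textrm{fol}}|,
\end{equation}
which is the claimed inequality (the orientation bookkeeping must be done so that the sign works out, exactly as in \cite[Prop. 2.2]{BC}).

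The main obstacle is the justification of the divergence theorem on the possibly-unbounded region $\Omega_{\tau,L}$, and more precisely the verification that $\Omega_{\tau,L}$ is a region of finite perimeter on whose boundary the decomposition above is valid. Here one uses the barrier bound from Corollary \ref{cor_barrier} (together with Proposition \ref{calibrated region}): for $\tau\le\mathcal T$ the surface $\bar M_\tau$ in the slab $\{|x_3|\ge L_0\}$ is trapped between $\tilde\Sigma_{b_0}$ and $\Sigma_{a_0}$, so it opens up sublinearly and in particular $\Omega_{\tau,L}$ is contained in a region of Gaussian-finite volume; the Gaussian weight $e^{-|x|^2/4}$ decays superexponentially, which makes all the integrals finite and kills any boundary-at-infinity term. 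One should also note that $\bar M_\tau$ is smooth in this region (being a small graph over the cylinder), so no issue with singularities of the Brakke flow arises here. With these finiteness and regularity facts in hand, the computation is the standard calibration argument and the remaining work is purely the sign/orientation check.
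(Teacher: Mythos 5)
Your proposal is correct and follows essentially the same calibration argument as the paper: integrate $\operatorname{div}(e^{-|x|^2/4}\nu_{\textrm{fol}})=0$ over the region between $\bar M_\tau$ and $\Sigma$ in $\{|x_3|\geq L\}$, using Proposition \ref{calibrated region} to place that region inside the calibrated set $F$, and read off the inequality from the three boundary contributions. The only cosmetic difference is in handling non-compactness: the paper truncates with $\{|x|\leq R\}$ and bounds the extra boundary term by $CR^2e^{-R^2/4}$ via the entropy (bounded area ratio) bound before letting $R\to\infty$, which is the precise version of your appeal to Gaussian decay.
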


\begin{proof}
Consider the region $\Delta_\tau\cap \{ |x_3|\geq L\} \cap \{ |x|\leq R \}$. Integrating \eqref{eq_calibration} over this region, and using the divergence theorem and Proposition \ref{calibrated region}, we obtain
\begin{multline}
\int_{\bar{M}_\tau \cap \{ |x_3| \geq L  \}} \, e^{-\frac{|x|^2}{4}} - \int_{\Sigma \cap \{|x_3| \geq L  \}} \, e^{-\frac{|x|^2}{4}}\\
 \geq - \int_{\Delta_\tau\cap \{|x_3|=L\} }  \, e^{-\frac{|x|^2}{4}}\, | N \cdot \nu_{\text{fol}}|
 - \int_{\Delta_\tau\cap \{|x|=R\} }  \, e^{-\frac{|x|^2}{4}}\, | N \cdot \nu_{\text{fol}}| ,
\end{multline}
where $N$ denotes the unit normal of the boundary. By the entropy bound we have that
\begin{equation}
\int_{\Delta_\tau\cap \{|x|=R\} }  \, e^{-\frac{|x|^2}{4}}\, | N \cdot \nu_{\text{fol}}|\leq C R^2 e^{-\frac{R^2}{4}},
\end{equation}
and passing $R\to \infty$ the assertion follows.
\end{proof}

The next proposition shows that closeness to the cylinder in the region $\{|x_3|\leq\frac{L}{2}\}$ implies closeness to the cylinder in the larger region $\{|x_3|\leq L\}$:

\begin{proposition}[{c.f. \cite[Prop. 2.3]{BC}, \cite[Lem. 4.7]{ADS}}]\label{Gaussian density analysis}
The graph function $u$ satisfies the integral estimates
\begin{equation}
\int_{\Sigma \cap \{|x_3| \leq L\}} e^{-\frac{|x|^2}{4}} \, |\nabla u(x,\tau)|^2 \leq C \int_{\Sigma \cap \{|x_3| \leq \frac{L}{2}\}} e^{-\frac{|x|^2}{4}} \, u(x,\tau)^2
\end{equation}
and 
\begin{equation}
\int_{\Sigma \cap \{\frac{L}{2} \leq |x_3| \leq L\}} e^{-\frac{|x|^2}{4}} \, u(x,\tau)^2 \leq CL^{-2} \int_{\Sigma \cap \{|x_3| \leq \frac{L}{2}\}} e^{-\frac{|x|^2}{4}} \, u(x,\tau)^2
\end{equation}
for all $L \in [L_0,\rho(\tau)]$ and $\tau \leq \mathcal{T}$, where $C<\infty$ is a numerical constant.
\end{proposition}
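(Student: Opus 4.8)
The plan is to follow the argument of Angenent--Daskalopoulos--Sesum \cite[Lem.~4.7]{ADS} and Brendle--Choi \cite[Prop.~2.3]{BC}, whose essential inputs are the calibration inequality of Proposition~\ref{prop_calibration} and Huisken's monotonicity formula. First I would rewrite the Gaussian area of $\bar M_\tau$ in graphical coordinates over $\Sigma$: using that $\langle x,\nu_\Sigma\rangle\equiv\sqrt 2$ and $|x|^2=2+x_3^2$ along $\Sigma=\{x_1^2+x_2^2=2\}$, one obtains, for every $0\le A<B\le\rho(\tau)$, the Taylor expansion
\begin{equation*}
\int_{\bar M_\tau\cap\{A\le|x_3|\le B\}}e^{-|x|^2/4}
=\int_{\Sigma\cap\{A\le|x_3|\le B\}}e^{-|x|^2/4}\Big(1+\tfrac12|\nabla u|^2-\tfrac12u^2+E\Big),
\end{equation*}
with error $|E|\le C(|u|+|\nabla u|)(u^2+|\nabla u|^2)\le C\sigma(\tau)(u^2+|\nabla u|^2)$ by \eqref{small_graph}. (The quadratic form $\tfrac12|\nabla u|^2-\tfrac12u^2$ is, up to normalization, $-\tfrac12\,u\,\mathcal L u$ integrated against the Gaussian weight, reflecting that $\mathcal L$ is the Jacobi operator of the Gaussian area at the cylinder $\Sigma$.)

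Next I would combine this expansion with Proposition~\ref{prop_calibration} and monotonicity. By Huisken's monotonicity formula $\int_{\bar M_\tau}e^{-|x|^2/4}$ is non-increasing in $\tau$, and since $\bar M_\tau\to\Sigma$ with tails controlled by Corollary~\ref{cor_barrier}, it converges to $\int_\Sigma e^{-|x|^2/4}$; hence the total Gaussian deficit $\eta(\tau):=\int_\Sigma e^{-|x|^2/4}-\int_{\bar M_\tau}e^{-|x|^2/4}$ is nonnegative. Writing $\eta(\tau)$ as the sum of the deficits over $\{|x_3|\le L\}$ and over $\{|x_3|\ge L\}$, and bounding the latter from below via Proposition~\ref{prop_calibration}, one arrives at the localized deficit estimate
\begin{equation*}
\int_{\Sigma\cap\{|x_3|\le L\}}e^{-|x|^2/4}\Big(\tfrac12|\nabla u|^2-\tfrac12u^2+E\Big)\le\int_{\Delta_\tau\cap\{|x_3|=L\}}e^{-|x|^2/4}\,|N\cdot\nu_{\mathrm{fol}}|.
\end{equation*}
Since $\Delta_\tau\cap\{x_3=L\}$ is an annular region of width $\asymp|u(\cdot,L,\tau)|$ and $|N\cdot\nu_{\mathrm{fol}}|\le1$, the right-hand side is $\le Ce^{-L^2/4}\big(\oint u(\cdot,L,\tau)^2\,d\theta\big)^{1/2}$, which by a trace/interpolation inequality in $x_3$ together with the exponential decay of the Gaussian weight is in turn bounded by a small multiple of $\big(\int_{\Sigma\cap\{L/2\le|x_3|\le L\}}e^{-|x|^2/4}(u^2+|\nabla u|^2)\big)^{1/2}$.

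From here I would extract the two assertions. The decay estimate comes from plugging the gradient control just obtained into the weighted Poincar\'e inequality on the half-cylinders $\{\pm x_3\ge L\}$, whose Poincar\'e constant with respect to the weight $e^{-x_3^2/4}$ is $O(L^{-2})$; the reverse-Poincar\'e estimate then follows by absorbing the error term $E$ (using $\sigma(\tau)\to0$) together with the annular and boundary contributions. To handle the coupling between the two estimates one runs them together by induction on the dyadic scales $L,2L,4L,\dots$, using that the boundary term at scale $2^kL$ is exponentially small in $2^kL$ and summable. I expect the main obstacle to be precisely this error absorption: because the cylinder is an \emph{unstable} shrinker, the quadratic form controlled above is not coercive, so the reverse-Poincar\'e bound cannot be read off directly---one must couple the deficit estimate with a Caccioppoli-type integration by parts involving the rescaled mean curvature flow equation satisfied by $u$ and track the boundary, cubic, and time-derivative terms carefully through the iteration, which is exactly the bookkeeping carried out in \cite[Lem.~4.7]{ADS} and \cite[Prop.~2.2--2.3]{BC}.
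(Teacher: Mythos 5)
Your proposal is correct and follows essentially the same route as the paper, which records the total Gaussian area bound $\int_{\bar M_\tau}e^{-|x|^2/4}\leq\int_\Sigma e^{-|x|^2/4}$ (obtained there in one line from the low entropy assumption, equivalent to your monotonicity-plus-convergence argument) and then invokes the calibration estimate of Proposition \ref{prop_calibration} together with the argument of \cite[Prop. 2.3]{BC}. One clarification on your final paragraph: no parabolic Caccioppoli argument involving the evolution equation is needed here --- the non-coercivity of the second variation is handled purely by the $O(L^{-2})$ weighted Poincar\'e inequality on the outer annulus, which allows the gradient term to be absorbed back into the left-hand side once $L\geq L_0$ is large.
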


\begin{proof}
By the low entropy assumption we have
\begin{equation}
\int_{\bar{M}_\tau} e^{-\frac{|x|^2}{4}} \leq \int_\Sigma e^{-\frac{|x|^2}{4}}.
\end{equation}
Using this and Proposition \ref{prop_calibration}, the rest of the proof is as in \cite[proof of Prop. 2.3]{BC}.
\end{proof}

Recall that $\bar{M}_\tau$ is expressed as graph of a function $u(x,\tau)$ over $\Sigma\cap B_{2\rho(\tau)}(0)$ satisfying the estimate \eqref{small_graph}. Using that $\bar{M}_\tau$ moves by rescaled mean curvature flow one obtains:

\begin{lemma}[{c.f. \cite[Lem. 2.4]{BC}}]\label{Error u-PDE}
The function $u(x,\tau)$ satisfies 
\begin{equation}
\partial_\tau u = \mathcal{L} u  + E,
\end{equation}
where $\mathcal L$ is the linear operator on $\Sigma$ defined by
\begin{equation}\label{def_oper_ell}
\mathcal{L} f = \Delta f - \frac{1}{2} \, \langle x^{\text{\rm tan}},\nabla f \rangle + f,
\end{equation}
and where the error term can be estimated by
\begin{equation}\label{error-C0 norm}
|E| \leq C\sigma(\tau)\rho^{-1}(\tau)( |u| + |\nabla u|)
\end{equation}
for $\tau \leq \mathcal{T}$.
\end{lemma}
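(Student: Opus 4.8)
The plan is to derive the equation $\partial_\tau u = \mathcal{L}u + E$ directly from the fact that $\bar M_\tau$ moves by the rescaled mean curvature flow, by writing the rescaled flow equation in the graphical gauge over the fixed cylinder $\Sigma = \{x_1^2+x_2^2 = 2\}$ and then extracting the linear part. Recall that under the parabolic rescaling $\bar M_\tau = e^{\tau/2}(M_{-e^{-\tau}}-x_0)$ a surface moving by mean curvature flow evolves by the \emph{rescaled} mean curvature flow, whose normal velocity is $\mathbf{H} + \tfrac{1}{2} x^\perp$ (equivalently, $\bar M_\tau$ is a rescaled MCF iff the normal speed equals $\mathbf{H} - \tfrac{1}{2}\langle x,\nu\rangle$ with the appropriate sign convention). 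The cylinder $\Sigma$ is a static solution of the rescaled flow, since it is a self-shrinker. So the first step is to parametrize $\bar M_\tau$ near $\Sigma$ as the normal graph $x \mapsto x + u(x,\tau)\nu_\Sigma(x)$ for $x \in \Sigma\cap B_{2\rho(\tau)}(0)$, which is possible with the smallness \eqref{small_graph} guaranteed by the setup, and then write out the scalar PDE that $u$ satisfies.

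First I would recall the standard computation (as in \cite{ADS}, \cite{BC}) that expresses the rescaled mean curvature flow of a normal graph over a self-shrinker $\Sigma$ in the form
\begin{equation*}
\partial_\tau u = \mathcal{L}u + Q(u,\nabla u,\nabla^2 u),
\end{equation*}
where $\mathcal{L}$ is the Jacobi (linearized) operator of the Gaussian-weighted area functional at $\Sigma$, and $Q$ collects all the terms that are at least quadratic in $(u,\nabla u,\nabla^2 u)$. For the round cylinder of radius $\sqrt 2$ in $\mathbb{R}^3$ this linearization is exactly $\mathcal{L}f = \Delta_\Sigma f - \tfrac12 \langle x^{\mathrm{tan}},\nabla f\rangle + f$, with the ``$+1$'' coming from the curvature term $|A_\Sigma|^2 + \tfrac12 = \tfrac12 + \tfrac12 = 1$ of the cylinder; this is where the defining formula \eqref{def_oper_ell} comes from. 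The key point that needs to be checked is that the remainder $Q$ is genuinely quadratic, so that it can be bounded by $|Q| \le C(|u|+|\nabla u|+|\nabla^2 u|)(|u|+|\nabla u|)$ with $C$ depending only on the geometry of $\Sigma$ and a uniform bound on the $C^2$ norm of $u$ (which holds by \eqref{small_graph}).

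Next I would improve this schematic bound to the stated estimate \eqref{error-C0 norm}, namely $|E| \le C\sigma(\tau)\rho(\tau)^{-1}(|u|+|\nabla u|)$. The point is that \eqref{small_graph} gives $\|u\|_{C^4} \le \sigma(\tau)\rho(\tau)^{-1}$, so every factor of $u$, $\nabla u$, or $\nabla^2 u$ appearing \emph{beyond the first} in a term of $Q$ can be absorbed into the small prefactor $\sigma(\tau)\rho(\tau)^{-1}$. Concretely: $Q$ is a sum of terms each of which is (a smooth bounded function of $u$ and $\nabla u$) times a product of at least two factors from $\{u,\nabla u,\nabla^2 u\}$; bounding all but one such factor in $C^0$ by $\sigma(\tau)\rho(\tau)^{-1}$, and bounding the remaining $\nabla^2 u$ factor (if present) also by $\sigma(\tau)\rho(\tau)^{-1}$ times $1$, leaves at most one factor of $|u|$ or $|\nabla u|$ unbounded, which yields \eqref{error-C0 norm}. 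This is where some care is needed: one has to make sure that when the leftover factor is a second derivative $\nabla^2 u$ it is still controlled, so one uses the $C^4$ (rather than merely $C^2$) bound to pay for one extra derivative; this matches the hypotheses recorded in \eqref{small_graph}. Since the full computation is routine and carried out in \cite[Lem. 2.4]{BC} in essentially this setting, I would present the derivation of the PDE, identify $\mathcal{L}$ and the structure of $Q$, and then cite \cite{BC} for the bookkeeping of the error estimate, noting only the (trivial) modifications needed here.

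The main obstacle — such as it is — is purely computational bookkeeping: organizing the nonlinear terms in the graphical rescaled-MCF equation so that one sees transparently that each is quadratic and that the ``extra'' factors can be traded against $\sigma(\tau)\rho(\tau)^{-1}$, and in particular keeping track of the fact that the graph is only defined on the expanding ball $B_{2\rho(\tau)}(0)$ so the estimate is understood to hold there. There is no conceptual difficulty: $\Sigma$ is a smooth self-shrinker, $u$ is uniformly small in $C^4$, and the rescaled flow equation is quasilinear with smooth coefficients, so the linearization and remainder structure are standard. I would therefore keep this proof short, essentially reducing it to the cited computation in \cite{BC} after recording the form of $\mathcal{L}$.
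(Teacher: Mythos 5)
Your proposal is correct and follows the same route as the paper, which simply records the linearization $\mathcal{L}f=\Delta f-\tfrac12\langle x^{\mathrm{tan}},\nabla f\rangle+f$ over the cylinder and defers the bookkeeping to the computation in \cite[Lem.\ 2.4]{BC}; your observation that each term of the remainder carries at least one extra factor controlled by $\|u\|_{C^4}\leq\sigma(\tau)\rho(\tau)^{-1}$ is exactly the point. The only detail worth noting is that the one term with a $z$-prefactor (coming from $-\tfrac12 z\partial_z w$) only enters at cubic order, where the factor $|z|\leq 2\rho(\tau)$ is absorbed by the extra power of $\sigma(\tau)\rho(\tau)^{-1}$, so your schematic bound survives.
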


\begin{proof}
The proof is similar to \cite[proof of Lem. 2.4]{BC}.
\end{proof}  

Denote by $\mathcal{H}$ the Hilbert space of all functions $f$ on $\Sigma$ such that 
\begin{equation}\label{def_norm}
\|f\|_{\mathcal{H}}^2 = \int_\Sigma \frac{1}{4\pi} e^{-\frac{|x|^2}{4}} \, f^2 < \infty.
\end{equation}
 
\begin{lemma}[{c.f. \cite[Lem. 2.5]{BC}}]
\label{Error hat.u-PDE 1}
The function $\hat{u}(x,\tau) = u(x,\tau) \, \varphi \big ( \frac{x_3}{\rho(\tau)} \big )$ satisfies 
\begin{equation}
\partial_\tau \hat{u} = \mathcal{L} \hat{u}  + \hat{E},
\end{equation}
where
\begin{equation}
\|\hat{E}\|_{\mathcal{H}} \leq C\rho^{-1} \, \|\hat{u}\|_{\mathcal{H}}
\end{equation}
for $\tau \leq \mathcal{T}$.
\end{lemma}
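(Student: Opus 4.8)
The plan is to derive the evolution equation for $\hat u$ from the one for $u$ in Lemma \ref{Error u-PDE}, keeping careful track of the commutator terms produced by the cutoff $\varphi(x_3/\rho(\tau))$, and then to bound all the resulting error contributions in the weighted $L^2$-norm $\|\cdot\|_{\mathcal H}$.

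\medskip

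\noindent\textbf{Step 1: Write out the equation for $\hat u$.} Applying $\partial_\tau$ to $\hat u = u\,\varphi(x_3/\rho(\tau))$ and using $\partial_\tau u = \mathcal{L}u + E$ gives, schematically,
\begin{equation}
\partial_\tau\hat u = \mathcal{L}\hat u + \varphi E + \big(\varphi\,\mathcal{L}u - \mathcal{L}(\varphi u)\big) - u\,\frac{x_3\rho'(\tau)}{\rho(\tau)^2}\,\varphi'(x_3/\rho(\tau)).
\end{equation}
So $\hat E := \varphi E + [\varphi,\mathcal{L}]u - u\,\tfrac{x_3\rho'}{\rho^2}\varphi'$. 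The commutator $[\varphi,\mathcal{L}]u$ expands, using $\mathcal{L}f=\Delta f - \tfrac12\langle x^{\mathrm{tan}},\nabla f\rangle + f$, into terms involving $\Delta\varphi$, $\nabla\varphi\cdot\nabla u$, and $\langle x^{\mathrm{tan}},\nabla\varphi\rangle u$; since $\varphi$ depends only on $x_3/\rho$, each $x_3$-derivative of $\varphi$ carries a factor $\rho^{-1}$, and one power of $x_3$ can appear (which is $\le \rho$ on the support of $\varphi'$).

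\medskip

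\noindent\textbf{Step 2: Estimate each term in $\|\cdot\|_{\mathcal H}$.} The term $\varphi E$ is controlled by \eqref{error-C0 norm}: $\|\varphi E\|_{\mathcal H}\le C\sigma\rho^{-1}(\|u\|_{\mathcal H, \mathrm{loc}} + \|\nabla u\|_{\mathcal H,\mathrm{loc}})$, and the gradient piece is absorbed via Proposition \ref{Gaussian density analysis} (which bounds the weighted $L^2$-norm of $\nabla u$ on $\{|x_3|\le L\}$ by that of $u$ on $\{|x_3|\le L/2\}$, applied with $L=\rho(\tau)$), giving $\le C\sigma\rho^{-1}\|\hat u\|_{\mathcal H}$. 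The commutator and $\rho'$-terms are supported in the annulus $\{\tfrac12\rho(\tau)\le |x_3|\le\rho(\tau)\}$, where the Gaussian weight $e^{-|x|^2/4}$ is extremely small; here one uses the second inequality of Proposition \ref{Gaussian density analysis}, namely that the weighted $L^2$-mass of $u$ on this annulus is $\le C\rho^{-2}$ times its mass on $\{|x_3|\le\rho/2\}$, together with the bound $|\rho'|\le\rho$ from \eqref{univ_fns} and the pointwise bounds $|x_3\varphi'|\le C\rho$, $|\nabla\varphi|\le C\rho^{-1}$, $|\Delta\varphi|\le C\rho^{-2}$, and $|\langle x^{\mathrm{tan}},\nabla\varphi\rangle|$ bounded on $\Sigma$. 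Combining, every piece is $\le C\rho^{-1}\|\hat u\|_{\mathcal H}$, which is the claim.

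\medskip

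\noindent\textbf{Main obstacle.} The delicate point is that the cutoff commutator terms are not themselves small pointwise — $x_3\varphi'$ is of order $\rho$ — so one cannot just take absolute values naively. The gain comes entirely from the Gaussian weight combined with the annulus decay estimate of Proposition \ref{Gaussian density analysis}; getting the powers of $\rho$ to line up so that the final bound is exactly $C\rho^{-1}\|\hat u\|_{\mathcal H}$ (rather than, say, $C$ or $C\rho$) is the crux, and it is precisely why the ADS/Brendle–Choi machinery of Proposition \ref{Gaussian density analysis} was set up first. This being a cutoff-and-estimate argument essentially identical to \cite[Lem. 2.5]{BC}, the computation is routine once those inputs are in hand, and I would simply cite that reference for the detailed bookkeeping.
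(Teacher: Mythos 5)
Your proposal is correct and follows essentially the same route as the paper: the paper likewise writes out $\hat E$ explicitly as $\varphi E$ plus the cutoff commutator and $\rho'$ terms, splits into the inner region $\{|x_3|\le\rho/2\}$ (where $\hat E=E$ is controlled by Lemma \ref{Error u-PDE}) and the annulus $\{\rho/2\le|x_3|\le\rho\}$ (where $|\hat E|\le C|u|+C\rho^{-1}|\nabla u|$ using $|\rho'|\le\rho$), and then invokes both inequalities of Proposition \ref{Gaussian density analysis} to convert the annulus and gradient contributions into $C\rho^{-2}\|\hat u\|_{\mathcal H}^2$. Your identification of the annulus decay estimate as the crux, and the bookkeeping of the powers of $\rho$, matches the paper's argument exactly.
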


\begin{proof}
As in \cite[page 8]{BC} we compute
\begin{multline} 
\hat{E} = E \, \varphi \Big ( \frac{x_3}{\rho(\tau)} \Big ) - \frac{2}{\rho(\tau)} \, \frac{\partial u}{\partial z} \, \varphi' \Big ( \frac{x_3}{\rho(\tau)} \Big ) - \frac{1}{\rho(\tau)^2} \, u \, \varphi'' \Big ( \frac{x_3}{\rho(\tau)} \Big ) \\
+ \frac{x_3}{2\rho(\tau)} \, u \, \varphi' \Big ( \frac{x_3}{\rho(\tau)} \Big ) - \frac{x_3 \rho'(\tau)}{\rho(\tau)^2} \, u \, \varphi' \Big ( \frac{x_3}{\rho(\tau)} \Big ). 
\end{multline} 
If $|x_3| \leq \frac{\rho(\tau)}{2}$, then Lemma \ref{Error u-PDE} gives
\begin{equation}
|\hat{E}|=|E| \leq C\sigma(\tau)\rho^{-1}(\tau)( |u| + |\nabla u| ).
\end{equation}
If $\frac{\rho(\tau)}{2} \leq |x_3| \leq \rho(\tau)$, then using equation \eqref{univ_fns} we obtain
\begin{equation}
|\hat{E}| \leq C |u| + C \rho^{-1}(\tau)  |\nabla u|.
\end{equation}
Using also Proposition \ref{Gaussian density analysis} and equation \eqref{univ_fns}, we infer that
\begin{align} 
\int_\Sigma e^{-\frac{|x|^2}{4}} \, |\hat{E}|^2 
&\leq \frac{C\sigma^2}{\rho^2} \int_{\Sigma \cap \{|x_3| \leq \frac{\rho(\tau)}{2}\}} e^{-\frac{|x|^2}{4}} \, u^2 +C \int_{\Sigma \cap \{\frac{\rho(\tau)}{2} \leq |x_3| \leq \rho(\tau)\}} e^{-\frac{|x|^2}{4}} \, u^2\nonumber \\ 
&\quad+\frac{C }{\rho^2} \int_{\Sigma \cap \{|x_3| \leq \rho(\tau)\}} e^{-\frac{|x|^2}{4}} \, |\nabla u|^2\nonumber \\ 
&\leq \frac{C}{\rho^2} \int_{\Sigma \cap \{|x_3| \leq \frac{\rho(\tau)}{2}\}} e^{-\frac{|x|^2}{4}} \, u^2 \nonumber \\ 
&\leq \frac{C}{\rho^2}  \int_\Sigma e^{-\frac{|x|^2}{4}} \, \hat{u}^2 
\end{align}
for $\tau\leq\mathcal{T}$. Thus, we obtain the desired result.
\end{proof}

Let us recall some facts from \cite{BC} about the operator $\mathcal L$ defined in \eqref{def_oper_ell}. In cylindrical coordinates this operator takes the form
\begin{equation}
\mathcal L=\frac{\partial^2}{\partial z^2} f + \frac{1}{2} \, \frac{\partial^2}{\partial \theta^2} f - \frac{1}{2} \, z \, \frac{\partial}{\partial z} f + f.\end{equation}
Analysing the spectrum of $\mathcal L$, the Hilbert space $\mathcal H$ from \eqref{def_norm} can be decomposed as
\begin{equation}
\mathcal H = \mathcal{H}_+\oplus \mathcal{H}_0\oplus \mathcal{H}_-,
\end{equation}
where $\mathcal{H}_+$ is spanned by the four positive eigenmodes $1, z, \sin \theta, \cos \theta$, and $\mathcal{H}_0$ is spanned by the three zero-modes $z^2-2,z\cos\theta,z\sin\theta$. We have
\begin{align} 
&\langle \mathcal{L} f,f \rangle_{\mathcal{H}} \geq \frac{1}{2} \, \|f\|_{\mathcal{H}}^2 & \text{\rm for $f \in \mathcal{H}_+$,} \nonumber\\ 
&\langle \mathcal{L} f,f \rangle_{\mathcal{H}} = 0 & \text{\rm for $f \in \mathcal{H}_0$,} \\ 
&\langle \mathcal{L} f,f \rangle_{\mathcal{H}} \leq -\frac{1}{2} \, \|f\|_{\mathcal{H}}^2 & \text{\rm for $f \in \mathcal{H}_-$.} \nonumber
\end{align}

Consider the functions 
\begin{align}
&U_+(\tau) := \|P_+ \hat{u}(\cdot,\tau)\|_{\mathcal{H}}^2, \nonumber\\ 
&U_0(\tau) := \|P_0 \hat{u}(\cdot,\tau)\|_{\mathcal{H}}^2,\label{def_U_PNM} \\ 
&U_-(\tau) := \|P_- \hat{u}(\cdot,\tau)\|_{\mathcal{H}}^2, \nonumber
\end{align}
where $P_+, P_0, P_-$ denote the orthogonal projections to $\mathcal{H}_+,\mathcal{H}_0,\mathcal{H}_-$, respectively. Using Lemma \ref{Error hat.u-PDE 1} we obtain
\begin{align} 
&\frac{d}{d\tau} U_+(\tau) \geq U_+(\tau) - C\rho^{-1} \, (U_+(\tau) + U_0(\tau) + U_-(\tau)), \nonumber\\ 
&\Big | \frac{d}{d\tau} U_0(\tau) \Big | \leq C\rho^{-1} \, (U_+(\tau) + U_0(\tau) + U_-(\tau)), \label{U_PNM_system}\\ 
&\frac{d}{d\tau} U_-(\tau) \leq -U_-(\tau) + C\rho^{-1} \, (U_+(\tau) + U_0(\tau) + U_-(\tau)). \nonumber
\end{align}

To proceed, we need the following ODE-lemma.

\begin{lemma}[{Merle-Zaag \cite{MZ}}]\label{MZ ODE}
Let $x(\tau),y(\tau),z(\tau)$ be nonnegative absolutely continuous functions satisfying $(x+y+z)(\tau)>0$ and  
\begin{equation}
\liminf_{\tau\to-\infty}y(\tau)=0.
\end{equation}
Moreover, suppose for each $ \varepsilon\in (0,\frac{1}{100}]$, there exists a $\tau_0=\tau_0(\varepsilon)$ such that for $\tau\leq \tau_0$ the following holds
\begin{align}
& |x_\tau| \leq \varepsilon(x+y+z),\nonumber \\
& y_\tau \leq -y+\varepsilon(x+z),\\
&z_\tau \geq z-\varepsilon(x+y).\nonumber
\end{align}
Then, we have $y \leq 2\varepsilon(x+z)$ for $\tau\leq \tau_0$, and either $x$ is dominant in the sense that
\begin{equation}
y+z=o(x)
\end{equation}
as $\tau \to -\infty$, or $z$ is dominant in the more precise sense that
\begin{equation}
x+y \leq 100\varepsilon z
\end{equation}
for all $\tau \leq \tau_0(\varepsilon)$.
\end{lemma}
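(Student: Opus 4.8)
The plan is to run the standard trapping argument for linearized systems of this type, essentially following Merle--Zaag. Write $S=x+y+z$. The first step is to show that the decaying component $y$ is negligible. Consider $g:=y-2\varepsilon(x+z)$, which tends to $0$ as $\tau\to-\infty$. Inserting the three differential inequalities together with $|x_\tau|\le\varepsilon S$ and $z_\tau\ge c_0z-\varepsilon(x+y)$ into $g_\tau=y_\tau-2\varepsilon x_\tau-2\varepsilon z_\tau$, and using that on $\{g\ge0\}$ one has $\varepsilon(x+z)\le\tfrac12 y$ and $y\ge g$, a short computation shows $g_\tau\le-\mu g$ on $\{g\ge0\}$ for $\tau\le\tau_0$, with $\mu=\mu(c_0)>0$ (if necessary one first applies the hypothesis with a smaller value of the constant, so as to recover the stated constant $2$). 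Hence $e^{\mu\tau}g$ is nonincreasing on any interval where $g\ge0$; since it converges to $0$ as $\tau\to-\infty$, a barrier argument (according to whether $g>0$ on all of $(-\infty,\tau_1]$ or has a last zero in $(-\infty,\tau_1)$) forces $g\le0$, i.e. $y\le2\varepsilon(x+z)$, for all $\tau\le\tau_0$.

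The second step reduces the dichotomy to a one-dimensional logistic inequality. Note $x+z>0$ for $\tau\le\tau_0$ (otherwise $y\le0$, so $S=0$), so $r:=z/(x+z)\in[0,1]$ is well defined, and $r_\tau=(z_\tau x-z\,x_\tau)/(x+z)^2$. Inserting $z_\tau\ge c_0z-\varepsilon(x+y)$ and $|x_\tau|\le\varepsilon S$, and using $S\le(1+2\varepsilon)(x+z)$ from the first step, one obtains
\begin{equation*}
r_\tau\ \ge\ c_0\,r(1-r)-2\varepsilon\qquad\text{for }\tau\le\tau_0(\varepsilon).
\end{equation*}
For $\varepsilon$ small the right-hand side is positive exactly on $(r_-,r_+)$ with $r_-\to0$, $r_+\to1$ as $\varepsilon\to0$, and it is bounded below by a positive constant $2\delta$ on any window $[\eta,1-\eta]\subset(r_-,r_+)$. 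This gives a push-up/no-return mechanism: while $r\in[\eta,1-\eta]$ it increases at rate $\ge 2\delta$, and on $[1-\eta,r_+]$ one has $r_\tau\ge0$, so once $r$ reaches $1-\eta$ it remains $\ge1-\eta$.

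In the third step I would deduce that $\lim_{\tau\to-\infty}r(\tau)$ exists and equals $0$ or $1$. If not, there are $\eta\in(0,\tfrac12)$ and $\tau_k\to-\infty$ with $r(\tau_k)\in[\eta,1-\eta]$; choosing $\varepsilon$ small in terms of $\eta,c_0$ and applying the push-up mechanism from each $\tau_k$ produces times $\tau_k'\le\tau_k+\tfrac1{2\delta}\to-\infty$ with $r\ge1-\eta$ on $[\tau_k',\tau_0]$, hence $r\ge1-\eta$ on all of $(-\infty,\tau_0]$, contradicting $r(\tau_k)\le1-\eta$ (the case of equality is excluded by first shrinking $\eta$). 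Finally translate back. If $\lim r=0$, then for every small $\delta$ we get $z\le2\delta x$ for $\tau$ very negative and, using the first step with $\varepsilon=\delta$, also $y\le3\delta x$; hence $y+z=o(x)$, so $x$ is dominant. If $\lim r=1$, then for every small $\delta$ we get $x\le2\delta z$ and $y\le3\delta z$, so $x+y\le5\delta z$; given $\varepsilon\le\tfrac1{100}$, choosing $\delta=\varepsilon/5$ yields $x+y\le\varepsilon z\le100\varepsilon z$ for $\tau\le\tau_0(\varepsilon)$ after relabeling, so $z$ is dominant. The main obstacle is the third step: extracting this clean dichotomy from the logistic inequality while correctly carrying the ``for each $\varepsilon$, valid for $\tau\le\tau_0(\varepsilon)$'' quantifier through all the estimates, and disposing of the borderline oscillatory behaviors of $r$.
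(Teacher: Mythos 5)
The paper itself does not prove this lemma: it is quoted from Merle--Zaag, and the remark immediately following it only says that ``a careful inspection of their proof gives the variant of the lemma that we stated.'' So your write-up is a genuine reconstruction, and its architecture --- first absorb the mode that is decaying backward in time via a barrier for $g=y-2\varepsilon(x+z)$, then run a trapping/dichotomy argument for the ratio $r=z/(x+z)$ through a perturbed logistic inequality --- is exactly the Merle--Zaag strategy and is sound. Steps 2 and 3 (the inequality $r_\tau\ge c_0r(1-r)-C\varepsilon$, the no-return mechanism, continuity of $r$ forcing $\lim_{\tau\to-\infty}r\in\{0,1\}$, and the shrink-$\eta$ trick for the borderline case) all check out.

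Two points need repair. First, in Step 1 the bounds you invoke give, on $\{g\ge0\}$, only $g_\tau\le-\bigl(c_0-\tfrac12-2\varepsilon-4\varepsilon^2\bigr)y$: the term $\varepsilon(x+z)$ in the inequality for $y_\tau$ can only be estimated by $\tfrac12y$ there, so the decay rate is $c_0-\tfrac12+O(\varepsilon)$, which is positive only for $c_0>\tfrac12$. Your proposed fix of ``applying the hypothesis with a smaller value of the constant'' does not help, since the offending ratio $\varepsilon(x+z)/y\le\tfrac12$ on $\{g\ge0\}$ is independent of $\varepsilon$; the correct fix is to use the barrier $y-A\varepsilon(x+z)$ with $A>1/c_0$ and $\varepsilon\le\varepsilon_0(c_0)$, which yields $y\le A\varepsilon(x+z)$. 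This recovers the stated constant $2$ precisely when $c_0\ge1$ --- which is the case in both applications in the paper --- but for general $c_0$ the constant must be allowed to depend on $c_0$. Second, the closing ``after relabeling'' conceals a real quantifier gap: $r\to1$ only gives $x\le2\delta z$ for $\tau$ below a threshold depending on $\delta$, whereas the lemma asserts $x+y\le100\varepsilon z$ on all of $(-\infty,\tau_0(\varepsilon)]$. This is where the no-return mechanism must be used once more: since $r\to1$ and the set $\{r\ge r_+(\varepsilon)\}$, with $r_+(\varepsilon)=1-O(\varepsilon/c_0)$ the upper root of $c_0r(1-r)=C\varepsilon$, is forward-invariant throughout $(-\infty,\tau_0(\varepsilon)]$, one gets $1-r\le C\varepsilon/c_0$ on that whole interval, whence $x+y\le C(c_0)\varepsilon z\le100\varepsilon z$ for $c_0\ge1$. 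With these two adjustments the argument is complete.
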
 

\begin{remark}
The statement in \cite{MZ} is slightly different, but a careful inspection of their proof gives the variant of the lemma that we stated, see also \cite[Appendix B]{CM_ancient} for a detailed proof.
\end{remark}

\bigskip

If $(U_+ + U_0 + U_-)(\hat\tau) = 0$ for some $\hat{\tau}$, then Lemma \ref{Error hat.u-PDE 1} implies that $(U_+ + U_0 + U_-)(\tau) = 0$ for all $\tau\leq \hat \tau$, and by analytic continuation it follows that $\mathcal M$ is a round shrinking cylinder; a contradiction. We can thus apply the Merle-Zaag lemma (Lemma \ref{MZ ODE}) to conclude that either the neutral mode is dominant, i.e.
\begin{equation}
U_++U_- =o(U_0)
\end{equation}
or the plus mode is dominant, i.e.
\begin{equation}
U_-+U_0 \leq C\rho^{-1}U_+.
\end{equation}
We will analyze these two cases in turn in the following two sections. Recall also that our analysis above depends on a choice of $X_0$ and $\rho$. However, the following proposition shows that either the neutral mode is always dominant or the plus mode is always dominant.

\begin{proposition}\label{Plus or Neutral}
If a neck centered at some point $X_0$ satisfies $U_++U_-=o(U_0)$ for some admissible choice of $\rho_0$, then any other neck centered at any point $X$ also satisfy $U_++U_-=o(U_0)$ for any admissible choice of $\rho$. 
\end{proposition}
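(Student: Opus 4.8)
The plan is to show that the dichotomy "plus mode dominant vs. neutral mode dominant" is a global property of the flow $\mathcal{M}$, independent of the choice of base point $X$ and of the admissible cutoff radius $\rho$. First I would argue that the choice of $\rho$ does not matter for a \emph{fixed} base point $X_0$. Indeed, if $\rho_0$ and $\rho_1$ are two admissible choices, the corresponding functions $\hat{u}_{\rho_0}$ and $\hat{u}_{\rho_1}$ agree on $\Sigma\cap\{|x_3|\leq \tfrac{1}{2}\min(\rho_0,\rho_1)\}$, and by the integral estimate in Proposition \ref{Gaussian density analysis} the $\mathcal{H}$-mass of $u(\cdot,\tau)$ in the outer region $\{\tfrac12\rho \leq |x_3|\leq \rho\}$ is controlled by $C\rho^{-2}$ times the mass in $\{|x_3|\leq \tfrac12\rho\}$. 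Hence $\|\hat{u}_{\rho_0}-\hat{u}_{\rho_1}\|_{\mathcal{H}} \leq C\rho^{-1}\|\hat{u}_{\rho_0}\|_{\mathcal{H}}$ for $\tau\leq\mathcal{T}$, and since each $P_{\pm},P_0$ is a bounded projection, the ratios $U_0/U_+$, $U_-/U_+$ (and likewise with $U_0$ in the denominator) are the same up to a factor $1+o(1)$ as $\tau\to-\infty$. Since both alternatives in the Merle–Zaag lemma are quantitative ($y+z=o(x)$ versus $x+y\leq 100\varepsilon z$) and mutually exclusive, the alternative selected is unchanged. This reduces the statement to: the alternative is independent of the base point $X$.

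Next I would handle the base-point independence. Fix two points $X$ and $X'$ in $\mathcal{M}$ and suppose, for contradiction, that the neutral mode is dominant at $X$ while the plus mode is dominant at $X'$. By Corollary \ref{thm_compact} (neutral mode dominant implies the solution is compact), the first possibility forces $\mathcal{M}$ to be compact. On the other hand, I claim the plus mode being dominant at $X'$ is incompatible with compactness once combined with the fine neck theorem (Theorem \ref{thm Neck asymptotic}): in the plus-mode case one has the asymptotic expansion $u_{X'}(x,\tau)=a\,z\,e^{\tau/2}+o(e^{\tau/2})$ with $a\neq 0$, so the cross-sectional areas of the fine necks strictly increase in the $+z$-direction, which drives $\rho(\tau)$ to grow exponentially as $\tau\to-\infty$ and, through the curvature estimate (Theorem \ref{curv_bound}), makes $\mathcal{M}$ eternal and noncompact — contradicting compactness. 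Conversely, if both points gave the plus mode one is done, and if both gave the neutral mode one is also done; so the only thing to exclude is the mixed scenario, which the above contradiction does.

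Alternatively, and perhaps more cleanly, one can avoid invoking the full classification machinery and instead compare the two necks directly: pick $\tau$ very negative; on the region where both graphs are defined, the surfaces $\bar M^{X}_\tau$ and $\bar M^{X'}_\tau$ are $C^4$-close to the \emph{same} cylinder $\Sigma$ (the axis is unique by Colding–Minicozzi \cite{CM_uniqueness}), and a change of base point corresponds, after rescaling, to a bounded space-time translation which acts on the linearized level by adding a bounded multiple of the kernel/eigenfunctions $1,z,z^2-2,\dots$ plus lower-order error. One then checks that such a bounded perturbation cannot convert a solution in which $U_0$ dominates (so $U_+=o(U_0)$, i.e.\ the $z^2-2$ mode beats the $1$ and $z$ modes) into one in which $U_+$ dominates, because the translation only mixes within and between these modes by an amount that is $o(1)$ relative to the dominant mass as $\tau\to-\infty$. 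Either way, the key step — and the main obstacle — is controlling precisely how a change of base point $X\to X'$ (a fixed spatial and temporal shift, hence a \emph{growing} shift in rescaled coordinates) acts on the projections $P_+\hat u$, $P_0\hat u$; this is where the uniqueness of the asymptotic cylinder and the integral estimates of Proposition \ref{Gaussian density analysis} do the essential work, and where one must be careful that the shift does not contaminate the Merle–Zaag dichotomy.
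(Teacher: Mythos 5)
Your proposal has a genuine gap, and it is one of logical ordering rather than analysis. Both results you invoke for the base-point-independence step --- Corollary \ref{thm_compact} (neutral mode $\Rightarrow$ compact) and the fine neck theorem (Theorem \ref{thm Neck asymptotic}), together with Theorem \ref{curv_bound} --- are proved \emph{after} Proposition \ref{Plus or Neutral} and depend on it. The proof of Theorem \ref{thm Neck asymptotic} explicitly uses Proposition \ref{Plus or Neutral} (both in Proposition \ref{thm_sharp_decay_plus} and in the re-centering step), and the neutral-mode analysis leading to Corollary \ref{thm_compact} invokes it to pass to the improved graphical radius. So your "mixed scenario" contradiction is circular as written. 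The paper instead disposes of the proposition at this early stage by a soft observation: for two centers $X_0$ and $X$, the rescaled flows $\bar M^{X_0}_\tau$ and $\bar M^{X}_\tau$ converge to the same cylinder and, more to the point, to each other, since a fixed space-time shift of the center becomes a shift of size $O(e^{\tau/2})$ after rescaling; the two dichotomy alternatives are distinguished by a qualitative difference in the decay rate of $\|\hat u\|_{\mathcal H}$ (exponential at rate essentially $\tfrac12$ versus subexponential, since $|\partial_\tau U_0|=o(U_0)$), and this rate is insensitive to such a vanishing perturbation and to the choice of admissible $\rho$.

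Your second route is the right idea and is close to the paper's argument, but it contains a sign error that you then flag as "the main obstacle": you assert that a fixed spatial/temporal shift of the base point is a \emph{growing} shift in rescaled coordinates. It is the opposite --- the rescaling factor is $e^{\tau/2}\to 0$ as $\tau\to-\infty$, and likewise $\tau'=\tau-\log(1+se^{\tau})=\tau+o(1)$ for a time shift $s$ --- which is precisely what makes the comparison of the two necks easy rather than delicate. Your first paragraph (independence of the admissible $\rho$ via Proposition \ref{Gaussian density analysis}, showing $\|\hat u_{\rho_0}-\hat u_{\rho_1}\|_{\mathcal H}\leq C\rho^{-1}\|\hat u_{\rho_0}\|_{\mathcal H}$ and hence that the Merle--Zaag alternative is unchanged) is correct and is a reasonable way to make precise one half of what the paper leaves implicit. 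To repair the proposal, drop Route A entirely, correct the direction of the rescaled shift in Route B, and then carry out the (now straightforward) verification that an $o(1)$-relative perturbation of $\hat u$ in $\mathcal H$ cannot exchange the two mutually exclusive, quantitatively separated Merle--Zaag alternatives.
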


\begin{proof}
Any two necks converge to the same neck after rescaling as $\tau\to -\infty$. Thus, the above statement is obvious.
\end{proof}

\bigskip

\subsection{Fine analysis in the plus mode}\label{sec_plus_mode}

In this section, we assume that the plus mode is dominant. We recall from above this means that after fixing a center $X_0\in\mathcal{M}$ with $Z(X_0)\leq 1$, and a graphical scale function $\rho$, we have that
\begin{equation}\label{assumption_plus_dom}
U_-+U_0 \leq C\rho^{-1}U_+
\end{equation}
for all $\tau\leq \mathcal{T}$. As before, $C<\infty$ and $\mathcal{T}>-\infty$ denote constants that can change from line to line and are independent of the point $X_0\in\mathcal{M}$ with $Z(X_0)\leq 1$. The main goal of this section is to prove Theorem \ref{thm Neck asymptotic}.

\subsubsection{Graphical radius}

Using \eqref{U_PNM_system} and \eqref{assumption_plus_dom} we obtain
\begin{align}
\frac{d}{d\tau} U_+ \geq U_+ - C\rho^{-1} \, U_+.
\end{align}
By integrating this differential inequality, for every $\mu>0$ we can find a constant $\mathcal{T}(\mu)>-\infty$ such that
\begin{align}
U_+(\tau) \leq Ce^{(1-\mu)\tau}. 
\end{align}
for all $\tau\leq \mathcal{T}(\mu)$.
Recalling that $U_+=||P_+\hat{u}||_{\mathcal H}^2$ and using \eqref{assumption_plus_dom} we infer that
\begin{equation}
|| \hat u ||_{\mathcal H} \leq Ce^{\frac{(1-\mu)\tau}{2}}.
\end{equation}
By standard interpolation inequalities this implies
\begin{align}\label{eq u weak estimate}
\|u(\cdot,\tau)\|_{C^4([-10L_0,10L_0] \times [0,2\pi])} \leq Ce^{\frac{(1-\mu)\tau}{2}}
\end{align}
for all $\tau \leq \mathcal{T}(\mu)$. The estimate \eqref{eq u weak estimate} is not sharp. To improve it to a sharp estimates (i.e. to remove the $\mu$) we start with the following $C^0$-estimate:

\begin{proposition}\label{prop_c0est}
The rescaled mean curvature flow $\bar{M}_\tau$ satisfies the estimate
\begin{equation}\label{eq C^0 estimate}
\sup_{\bar{M}_\tau \cap \{|x_3| \leq e^{-\frac{\tau}{10}}\}} \big |x_1^2+x_2^2-2\big| \leq   e^{\frac{\tau}{10}}
\end{equation}
for all $\tau \leq \mathcal{T}$.
\end{proposition}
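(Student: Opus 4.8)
The plan is to rephrase \eqref{eq C^0 estimate} as a $C^0$-bound on the graph function $u$ of $\bar M_\tau$ over the cylinder $\Sigma$, and to prove it in two nested regimes. Wherever $\bar M_\tau$ is a normal graph over $\Sigma$ with small gradient, writing the graph point as $(\sqrt2+u)(\cos\theta,\sin\theta)\times\{z\}$, one has
\begin{equation*}
x_1^2+x_2^2-2 \;=\; 2\sqrt2\,u + u^2 ,
\end{equation*}
so \eqref{eq C^0 estimate} follows once we show that for $\tau\le\mathcal T$ the surface $\bar M_\tau$ is graphical over $\Sigma$ on $\{|x_3|\le e^{-\tau/10}\}$ with $|u|\le\tfrac{1}{3}e^{\tau/10}$ there. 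I fix once and for all a small $\mu>0$ (depending only on the shrinker-profile asymptotics used below), so that $e^{(1-\mu)\tau/2}$ decays strictly faster than $e^{\tau/10}$ as $\tau\to-\infty$; this margin drives the whole argument.

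\emph{Inner regime $\{|x_3|\le K\}$, $K$ fixed.} From \eqref{U_PNM_system} together with the plus-mode assumption \eqref{assumption_plus_dom} one integrates, as in Section \ref{sec_plus_mode}, to get $\|\hat u(\cdot,\tau)\|_{\mathcal H}\le Ce^{(1-\mu)\tau/2}$ for $\tau\le\mathcal T(\mu)$. Iterating the second inequality of Proposition \ref{Gaussian density analysis} over the dyadic scales $L_0,2L_0,\dots$ up to $\rho(\tau)$ yields, for all $L\in[L_0,\rho(\tau)]$,
\begin{equation*}
\int_{\Sigma\cap\{|x_3|\le L\}}e^{-|x|^2/4}\,u^2 \ \le\ C\int_{\Sigma\cap\{|x_3|\le L_0\}}e^{-|x|^2/4}\,u^2 \ \le\ C\,\|\hat u(\cdot,\tau)\|_{\mathcal H}^2 \ \le\ Ce^{(1-\mu)\tau}.
\end{equation*}
Since by Lemma \ref{Error u-PDE} the function $u$ solves the uniformly parabolic equation $\partial_\tau u=\mathcal L u+E$ on $\Sigma$ with $|E|\le C\sigma(\tau)\rho(\tau)^{-1}(|u|+|\nabla u|)$, standard interior parabolic estimates upgrade this local $L^2$-bound to $\sup_{\Sigma\cap\{|x_3|\le K\}}(|u|+|\nabla u|)\le C_K\,e^{(1-\mu)\tau/2}$ for $\tau\le\mathcal T(K)$, far better than the claimed bound on that region; this serves as boundary data for the next step. (Note the Gaussian weight makes this argument useless once $|x_3|$ grows like $e^{-\tau/10}$, so a different input is needed there.)

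\emph{Outer regime $\{K\le|x_3|\le e^{-\tau/10}\}$.} Here I would use barrier arguments with the ADS-shrinkers $\Sigma_a$ and KM-shrinkers $\tilde\Sigma_b$, which are static under the rescaled flow and whose profiles tend to $\sqrt2$ at a rate that is known quantitatively near the neck (see \cite[Sec.\ 8]{ADS}, \cite{KM}). For each sufficiently negative $\tau$ one picks a comparison time $\tau_0<\tau$ and parameters $a(\tau)\to\infty$, $b(\tau)\to0$, with $a(\tau)$ and $1/b(\tau)$ growing like suitable powers of $e^{-\tau}$ dictated by those asymptotics, such that: (i) at height $|x_3|=K$ the leaves $\Sigma_{a(\tau)}$ and $\tilde\Sigma_{b(\tau)}$ lie strictly inside, resp.\ strictly outside, $\bar M_\tau$, using the inner-regime bound; (ii) on $\{|x_3|\le e^{-\tau/10}\}$ both $\sqrt2-u_{a(\tau)}$ and $\tilde u_{b(\tau)}-\sqrt2$ are $\le\tfrac{1}{3}e^{\tau/10}$; (iii) $\tau_0$ is so negative that $\rho(\tau_0)>e^{-\tau/10}$, so that \eqref{small_graph} already traps $\bar M_{\tau_0}$ between $\Sigma_{a(\tau)}$ and $\tilde\Sigma_{b(\tau)}$ on $\{|x_3|\le e^{-\tau/10}\}$. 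The choice of $\mu$ small is exactly what makes (i) and (ii) simultaneously satisfiable. By the avoidance principle on $[\tau_0,\tau]$ — the static leaves cannot be crossed, the end at $|x_3|=K$ stays pinched by the inner-regime estimate for all $s\in[\tau_0,\tau]$, and the far end stays inside $\tilde\Sigma_{b(\tau)}$ by Corollary \ref{cor_barrier} (its sublinearly opening region lies inside the linearly opening $\tilde\Sigma_{b(\tau)}$ for $|x_3|$ large) — the surface $\bar M_\tau$ remains trapped between the two shrinkers on $\{|x_3|\le e^{-\tau/10}\}$. This forces graphicality over $\Sigma$ there with $|u|\le\tfrac{1}{3}e^{\tau/10}$, whence \eqref{eq C^0 estimate}. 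All constants depend only on $\mathcal M$, not on $X_0$ with $Z(X_0)\le1$, and the argument also lets one take the graphical radius $\rho(\tau)\ge e^{-\tau/10}$.

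\emph{Main obstacle.} The delicate point is step (ii): one must control the convergence of the $\Sigma_a$- and $\tilde\Sigma_b$-profiles to the cylindrical neck precisely enough (polynomially in $a$, resp.\ $b$, not merely qualitatively) to manufacture barrier parameters that close the neck to within $e^{\tau/10}$ on a region as large as $\{|x_3|\le e^{-\tau/10}\}$ while still fitting the $C_K e^{(1-\mu)\tau/2}$ boundary data — and to verify that the avoidance comparison has the correct containments throughout, including across the transition zone between where the interior estimates are effective and where Corollary \ref{cor_barrier} takes over (a priori nothing prevents $\bar M_\tau$ from straying there before the barriers are in place). This is precisely the barrier bookkeeping of \cite[Sec.\ 4, 8]{ADS} and \cite[Sec.\ 2]{BC}, adapted here without using convexity or rotational symmetry of $\mathcal M$.
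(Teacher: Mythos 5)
Your proposal is correct and follows essentially the same route as the paper: the weak interior estimate \eqref{eq u weak estimate} furnishes the boundary data at $|x_3|=L_0$, and the ADS-shrinkers $\Sigma_a$ and KM-shrinkers $\tilde\Sigma_b$ serve as inner and outer barriers, with the far end handled by the vanishing asymptotic slope (Corollary \ref{cor_barrier}) and the avoidance principle. The quantitative profile asymptotics you flag as the main obstacle are exactly what the paper supplies, namely $2+e^{-1}b^2z^2\leq \tilde u_b(z)^2\leq 2+b^2z^2$ derived from the profile ODE of \cite[Sec.\ 8]{ADS}, and with $b^2\sim e^{(1-\mu)\hat\tau/2}$ and $|x_3|\leq e^{-\tau/10}$ the exponent $(1-\mu)/2-1/5>1/10$ for small $\mu$, so the bookkeeping closes as you anticipated.
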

\begin{proof}

Fix $\mu$ sufficiently small, e.g. $\mu=\frac{1}{100}$. Recall from Section \ref{sec_fine_neck_setup} that there are  shrinkers $\Sigma_a$ and $\tilde{\Sigma}_b$ with profile functions $u_a$ and $\tilde{u}_b$.

As preparation for the barrier argument, observe that \eqref{eq u weak estimate} implies
\begin{align}\label{barrier_prep}
\sup_{\bar{M}_\tau\cap \{ |x_3|\leq L_0\} } (x^2_1+x_2^2) \leq 2+Ce^{\frac{(1-\mu)\tau}{2}},
\end{align}
and
\begin{align}\label{barrier_prep_inf}
\inf_{\bar{M}_\tau\cap \{ |x_3|\leq L_0\} } (x^2_1+x_2^2) \geq 2-Ce^{\frac{(1-\mu)\tau}{2}}.
\end{align}

To prove an upper bound, we will use the shrinkers $\tilde{\Sigma}_b$ as outer barriers. To this end, we first observe that for $z \geq 2\sqrt{2}$ the profile function $\tilde{u}_b$ satisfies the estimate
\begin{align}\label{prec_est_for_profile}
2+e^{-1}b^2z^2  \leq \tilde{u}_b(z)^2\leq 2+b^2z^2.
\end{align}
Indeed, \cite[Lem. 4.11]{ADS} and \cite[Sec. 8.12]{ADS} tell us that $\tilde{u}_b$ satisfies the differential equation
\begin{align}\label{ADS_prec1}
\tilde{u}_b'=\frac{w}{2 z \tilde{u}_b }(\tilde{u}_b^2-2),
\end{align}
for $z\geq 2\sqrt{2}$, where $w=w(z)$ is a function that satisfies
\begin{align}\label{ADS_prec2}
2 \leq w(z) \leq 2+\frac{16}{z^2}.
\end{align}
Combining \eqref{ADS_prec1} and \eqref{ADS_prec2} we see that
\begin{align}\label{profile_diff_inn}
0\leq \frac{d}{d z}\log \left( \frac{\tilde{u}_b(z)^2-2}{z^2}\right) \leq \frac{16}{z^3}.
\end{align}
Integrating \eqref{profile_diff_inn} from $z$ to $\infty$ and using that the asymptotic slope of $\tilde{u}_b$ equals $b$ we infer that
\begin{align}
0\leq \log b^2-\log \left( \frac{\tilde{u}_b(z)^2-2}{z^2}\right)\leq \int_{2\sqrt{2}}^{\infty}\frac{16}{x^3}dx=1,
\end{align}
which proves \eqref{prec_est_for_profile}.

Now, fixing $\hat{\tau}\leq \mathcal{T}$ we consider the shrinker $\tilde{\Sigma}_{b}$ with parameter
\begin{equation}
b=2\sqrt{eC} L_0^{-1} e^{\frac{(1-\mu)\hat{\tau}}{4}}.
\end{equation}
Using \eqref{barrier_prep} and \eqref{prec_est_for_profile}  we see that $\tilde{\Sigma}_b\cap \{x_3=L_0\}$ is outside of $\bar{M}_{\tau }\cap \{x_3=L_0\}$ for all $\tau \leq \hat{\tau}$. Moreover, recalling that $\tilde{u}_b$ has asymptotic slope $b$ and that $\bar{M}_{\tau}$ has vanishing asymptotic slope (see Corollary \ref{cor_barrier}) we also see that $\tilde{\Sigma}_b\cap \{x_3=h\}$ is outside of $\bar{M}_{\tau }\cap \{x_3=h\}$ for $h\geq h_0(\hat{\tau})$ large enough, and for all $\tau\leq \hat{\tau}$. Finally, $\tilde{\Sigma}_b\cap \{L_0\leq x_3 \leq h\}$ is outside of $\bar{M}_{\tau}\cap \{L_0\leq x_3 \leq h\}$ for $-\tau$ large enough.
Hence, by the avoidance principle, $\tilde{\Sigma}_{b_1}\cap \{x_3\geq L_0\}$ is outside of $\bar{M}_\tau\cap \{x_3\geq L_0\}$ for all $\tau \leq \hat{\tau}$.
Repeating the same argument for $x_3$ replaced by $-x_3$, and using \eqref{prec_est_for_profile}, we conclude that
\begin{equation}\label{upper_c0}
\sup_{\bar{M}_\tau \cap \{|x_3| \leq e^{-\frac{\tau}{10}}\}} (x_1^2+x_2^2) \leq 2 + e^{\frac{\tau}{10}}\end{equation}
for $\tau\leq\mathcal{T}$.

Similarly, considering the shrinkers $\Sigma_a$ as inner barriers as in \cite[p. 10--11]{BC} and using \eqref{barrier_prep_inf} we obtain
\begin{equation}
\inf_{\bar{M}_\tau \cap \{|x_3|\leq e^{-\frac{\tau}{10}}\}} (x_1^2+x_2^2) \geq 2 - e^{\frac{\tau}{10}}
\end{equation}
for $\tau\leq\mathcal{T}$. Together with \eqref{upper_c0} this proves the assertion.
\end{proof}

We will now use the $C^0$-estimate to prove that the rescaled flow $\bar{M}_\tau$ is cylindrical over an exponentially expanding region for $\tau\to -\infty$.

\begin{proposition}\label{higherderivatives} For $\tau\leq\mathcal{T}$ the rescaled mean curvature flow $ \bar{M}_\tau$ can be written as graph of a function $v(\cdot,\tau)$ over the cylinder $\Sigma\cap \{|x_3|\leq \tfrac12 e^{-\tau/10}\}$ with the estimate
\begin{equation}
|| v||_{C^6(\Sigma\cap \{|x_3|\leq \tfrac12 e^{-\tau/10}\})} \leq Ce^{\tau/10}.
\end{equation}
\end{proposition}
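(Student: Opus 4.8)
The plan is to upgrade the $C^0$-estimate of Proposition \ref{prop_c0est} to a $C^6$-estimate by localization plus parabolic regularity: first invoke the local regularity theorem for the mean curvature flow to obtain smoothness with controlled geometry over the expanding region, and then use interior Schauder estimates for the graphical rescaled flow equation to get the quantitative bound.

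First I would observe that Proposition \ref{prop_c0est} traps $\bar M_\tau$, over $\{|x_3|\le e^{-\tau/10}\}$, inside the thin annular slab $\{\,2-e^{\tau/10}\le x_1^2+x_2^2\le 2+e^{\tau/10}\,\}$ around $\Sigma$, and that the same holds at all earlier rescaled times $\tau'\le\mathcal T$ (for which the region $\{|x_3|\le e^{-\tau'/10}\}$ is even larger). Since $\bar M_\tau$ is obtained from $\mathcal M$ by translation and parabolic rescaling, undoing the rescaling and re-centering near the axis converts this into a statement about a genuine (translated, rescaled) mean curvature flow $\mathcal M'$: on a parabolic ball $B(0,R(\tau))\times[-1,0]$ with $R(\tau)\to\infty$, $\mathcal M'$ is Hausdorff-close to the shrinking cylinder $s\mapsto\sqrt{2-2s}\,\mathbb S^1\times\mathbb R$, and — because $\mathcal M'$ is unit-regular, integral, cyclic with $\mathrm{Ent}[\mathcal M']\le\sqrt{2\pi/e}<2$, so that a slab this thin cannot carry two sheets or fold back on itself — it is close to it as a Brakke flow as well. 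Covering the relevant region by balls at the scale on which the cylinder looks like a multiplicity-one plane, the local regularity theorem (\cite{Brakke_book,White_regularity}) yields that $\mathcal M'$ is smooth there with a universal curvature bound; since the slab thickness $e^{\tau/10}$ tends to $0$, smooth convergence gives that, over $\Sigma\cap\{|x_3|\le\tfrac12 e^{-\tau/10}\}$, the surface $\bar M_\tau$ is the graph of a function $v(\cdot,\tau)$ whose $C^2$-norm is small (though with an unquantified rate so far).

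Second, with graphicality and small bounded geometry in hand, I would bootstrap. The graph function $v$ solves $\partial_\tau v=\mathcal L v+\mathcal Q[v]$, where $\mathcal L$ is the operator from \eqref{def_oper_ell} and $\mathcal Q[v]$ collects terms vanishing to second order in $(v,\nabla v,\nabla^2 v)$ with bounded coefficients. Applying interior parabolic Schauder estimates on unit-size parabolic balls centered at points of $\Sigma\cap\{|x_3|\le\tfrac12 e^{-\tau/10}\}$ — legitimate since all such balls lie well inside the region controlled in the previous step — together with interpolation inequalities to absorb the quadratically small term $\mathcal Q[v]$, one obtains $\|v\|_{C^6}$ on a unit ball bounded by $C\,\|v\|_{C^0}$ on the double ball; patching and invoking the $C^0$-bound $e^{\tau/10}$ of Proposition \ref{prop_c0est} then gives $\|v\|_{C^6(\Sigma\cap\{|x_3|\le\tfrac12 e^{-\tau/10}\})}\le Ce^{\tau/10}$.

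The main obstacle is the first step: securing a uniform curvature bound over the \emph{expanding} region, rather than merely at bounded scales. This forces one to check that the $C^0$-closeness of Proposition \ref{prop_c0est} genuinely upgrades to closeness as a Brakke flow at unit scale — so that the local regularity theorem applies — which is where the entropy bound (excluding multiplicity two) and the persistence of the $C^0$-estimate for \emph{all} $\tau'\le\mathcal T$ (supplying control for a full unit of rescaled time backward) are both essential. Once this bound is in place, the Schauder bootstrap is routine.
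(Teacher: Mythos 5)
Your proposal is correct and follows essentially the same route as the paper: establish a uniform regularity bound at unit scale over the expanding region $\{|x_3|\le \tfrac34 e^{-\tau/10}\}$, deduce from the two-sided $C^0$-pinching of Proposition \ref{prop_c0est} that $\bar M_\tau$ is a single-layered graph over $\Sigma$ with small $C^1$-norm, and then convert the $C^0$-bound $e^{\tau/10}$ into a $C^6$-bound by interior estimates for the resulting uniformly parabolic graphical equation. The only (inessential) difference is in how the unit-scale curvature bound is obtained: the paper argues directly that $R(\bar X)\gg1$ would force near-flatness contradicting the pinching, while $R(\bar X)\ll1$ would, via the quantitative neck-finding of Theorem \ref{thm_finding_necks}, produce a tiny $\varepsilon$-cylinder or $\varepsilon$-sphere sticking out of the thin slab, whereas you reach the same conclusion by a Brakke-compactness argument combined with the entropy bound (to exclude higher multiplicity) and the local regularity theorem — both mechanisms are valid.
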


\begin{proof}
We first observe that for $ \tau\leq\mathcal{T}$, every point $\bar{X}=(\bar{x},\tau)$, where $\bar{x}\in \bar{M}_\tau\cap \{|x_3|\leq \tfrac34 e^{-\tau/10}\}$, has regularity scale comparable to $1$, namely
\begin{equation}\label{eq_reg_scale_bound}
C^{-1}\leq  R(\bar{X})\leq C
\end{equation}
for some uniform $C<\infty$, where $R(\bar{X})$ denotes the regularity scale of the rescaled flow.

Indeed, if $R(\bar X)\gg 1$ then we could write $\bar{M}_\tau$ is graph with small norm over a disk of radius $2$ centered at $\bar X$, contradicting Proposition \ref{prop_c0est}. If on the other hand $R(\bar X)\ll 1$ then by the argument from Section \ref{sec_necks_back_in_time} for some $\tau'<\tau$ the rescaled flow would be $\varepsilon$-close to a cylinder or sphere with center $\bar X$ and radius $\tfrac{1}{100}$, contradicting again Proposition \ref{prop_c0est}.

To continue, note that the geometric meaning of Proposition \ref{prop_c0est} is that our surface $\bar{M}_\tau$ is trapped between two cylinders of almost equal radii. Combining this with the regularity scale bound \eqref{eq_reg_scale_bound} we see that at every point $\bar{x}\in \bar{M}_\tau\cap \{|x_3|\leq \tfrac34 e^{-\tau/10}\}$ the tangent plane must be almost parallel to the one of the cylinder $\Sigma$ at $\pi(\bar X)$, where $\pi$ denotes the nearest neighbor projection. It follows that $\bar{M}_\tau\cap \{|x_3|\leq \tfrac34 e^{-\tau/10}\}$ can be written as graph of a function $v(\cdot,\tau)$ over the cylinder $\Sigma$ with small $C^1$-norm. It is clear by looking at the middle region, that there is one single layer. Moreover, the $C^1$-norm bound and the regularity scale bound \eqref{eq_reg_scale_bound} imply
\begin{equation}
\|\nabla^2 v\|_{C^2(|x_3| \leq \frac{3}{4} e^{-\tau/10})} \leq C,
\end{equation}
and thus $v$ is the solution of a linear uniformly parabolic equation\footnote{$v_t=a_{ij}v_{ij}+b_iv_i+cv$ where $a=a(\nabla v,v)$, $b=b(\nabla v,v)$, $c=c(\nabla v,v)$. c.f. \eqref{MCF_graph}.} with coefficients of uniformly small $C^1$-norm. Hence, using standard interior estimates the assertion follows.
\end{proof}

\bigskip

We now repeat the process from Section \ref{sec_fine_neck_setup} with improved functions $\rho$ and $\sigma$. Namely, by Proposition \ref{higherderivatives} we can now choose
\begin{equation}
\rho(\tau)=e^{-\tau/20}, \textrm{ and } \sigma(\tau)=C e^{\tau/20}.
\end{equation}
With this new choice of $\rho$ and $\sigma$, we can write $\bar{M}_\tau$ as graph of a function $u(\cdot, \tau)$ defined over the exponentially large domain $\Sigma\cap B_{2\rho(\tau)}$ such that it satisfies the estimate \eqref{small_graph} for $\tau\leq \mathcal{T}$.

\begin{proposition}\label{thm_sharp_decay_plus}
For $\tau \leq \mathcal{T}$ the function $\hat{u}(x,\tau)=u(x,\tau)\varphi\left(\frac{x_3}{e^{-\tau/20}}\right)$ satisfies the estimate
\begin{equation}
\|\hat u\|_{\mathcal{H}}  \leq Ce^{\frac{\tau}{2}}.
\end{equation}
In particular, we have 
\begin{align}
\sup_{\bar M_{\tau}\cap B_{10L_0}(0)}|x_1^2+x_2^2-2| \leq Ce^{\frac{\tau}{2}}.
\end{align}
\end{proposition}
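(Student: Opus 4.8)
The plan is to exploit the crucial gain from the improved graphical radius $\rho(\tau)=e^{-\tau/20}$: the error terms in the fine-neck system are now not merely small but \emph{integrable} in $\tau$ near $-\infty$, which lets one read off the sharp linear decay of the unstable mode by a backward Gr\"onwall argument, thereby removing the $\mu$ from \eqref{eq u weak estimate}.

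First I would re-run the setup of Section~\ref{sec_fine_neck_setup} verbatim with the new functions $\rho(\tau)=e^{-\tau/20}$ and $\sigma(\tau)=Ce^{\tau/20}$ provided by Proposition~\ref{higherderivatives}. Lemmas~\ref{Error u-PDE} and~\ref{Error hat.u-PDE 1}, and hence the differential system \eqref{U_PNM_system}, then hold with $\rho^{-1}(\tau)=e^{\tau/20}$. By Proposition~\ref{Plus or Neutral} we remain in the plus-dominant regime, so $U_0+U_-\le C\rho^{-1}U_+=Ce^{\tau/20}U_+$; moreover $U_+(\tau)>0$ for all $\tau\le\mathcal T$, since $U_+(\tau_\ast)=0$ would force $U_0(\tau_\ast)=U_-(\tau_\ast)=0$ and hence (by Lemma~\ref{Error hat.u-PDE 1} and analytic continuation) $\mathcal M$ to be a round cylinder, a contradiction.

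Next I would combine the first line of \eqref{U_PNM_system} with the dominance bound to obtain
\[
\tfrac{d}{d\tau}U_+ \ \ge\ U_+ - Ce^{\tau/20}\bigl(U_++U_0+U_-\bigr)\ \ge\ \bigl(1-Ce^{\tau/20}\bigr)U_+ ,
\]
and integrate this backward from $\tau=\mathcal T$. Since $\int_{\tau}^{\mathcal T}e^{s/20}\,ds=20\bigl(e^{\mathcal T/20}-e^{\tau/20}\bigr)\le 20e^{\mathcal T/20}$ is bounded uniformly in $\tau\le\mathcal T$, the function $\phi(\tau):=U_+(\tau)\exp\!\bigl(-\!\int_{\mathcal T}^{\tau}(1-Ce^{s/20})\,ds\bigr)$ is nondecreasing, whence $U_+(\tau)\le U_+(\mathcal T)\exp\!\bigl(\int_{\mathcal T}^{\tau}(1-Ce^{s/20})\,ds\bigr)\le C\,e^{\tau}$ for all $\tau\le\mathcal T$. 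Using $\|\hat u\|_{\mathcal H}^2=U_++U_0+U_-\le(1+Ce^{\tau/20})U_+\le 2U_+$ (after decreasing $\mathcal T$) then gives $\|\hat u\|_{\mathcal H}\le Ce^{\tau/2}$. Finally, for the pointwise statement I would upgrade this $\mathcal H$-bound to a $C^0$-bound on the fixed compact region exactly as in the passage from the weak $\mathcal H$-estimate to \eqref{eq u weak estimate}: on $\{|x_3|\le 20L_0\}$ one has $\hat u=u$ for $\tau\le\mathcal T$ (as $\rho(\tau)\to\infty$), the Gaussian weight is bounded below there, and $u$ solves the uniformly parabolic equation of Lemma~\ref{Error u-PDE} with coefficients of small norm, so interior parabolic estimates give $\|u\|_{C^0(B_{10L_0})}\le C\|\hat u\|_{\mathcal H}\le Ce^{\tau/2}$; since in the graphical parametrization over $\Sigma$ one has $x_1^2+x_2^2-2=2\sqrt2\,u+u^2$ on $\bar M_\tau$, the claimed bound follows.

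The only genuinely delicate point I foresee is bookkeeping: one must make sure the constant $C$ in $U_0+U_-\le C\rho^{-1}U_+$ and in all error estimates is independent of the base point $X_0$ with $Z(X_0)\le 1$ — this was arranged in Section~\ref{sec_fine_neck_setup} — and that the backward-Gr\"onwall step introduces no new base-point dependence, which it does not since $\mathcal T$ and $U_+(\mathcal T)$ are controlled uniformly. The conceptual heart of the argument, and the reason one now gets the sharp exponent $\tfrac12$ rather than $\tfrac{1-\mu}{2}$, is simply that $\rho^{-1}(\tau)=e^{\tau/20}$ is integrable at $-\infty$; this integrability is exactly what the exponential growth of the graphical radius in Proposition~\ref{higherderivatives} buys us.
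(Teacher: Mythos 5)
Your proposal is correct and follows essentially the same route as the paper: the dominance inequality $U_0+U_-\leq Ce^{\tau/20}U_+$ from Proposition \ref{Plus or Neutral}, the differential inequality $\tfrac{d}{d\tau}U_+\geq(1-Ce^{\tau/20})U_+$, and integration from $\tau$ to $\mathcal{T}$ using the integrability of $e^{\tau/20}$ at $-\infty$ (the paper phrases this as $\tfrac{d}{d\tau}\log(e^{-\tau}U_+)\geq -Ce^{\tau/20}$, which is your backward Gr\"onwall step), followed by interior estimates for the pointwise bound. No gaps.
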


\begin{proof}
We define $U_+,U_0,U_-$ by the formulas from \eqref{def_U_PNM}, where now $\hat{u}(x,\tau)=u(x,\tau)\varphi(\tfrac{x_3}{e^{-\tau/20}})$. Using Lemma  \ref{Error hat.u-PDE 1} and Proposition \ref{Plus or Neutral} we then get the inequalities
\begin{equation}\label{somebodydominant}
U_0+U_{-}\leq Ce^{\tfrac{\tau}{20}} U_+
\end{equation}
and
\begin{align}
\frac{d}{d\tau} U_+ \geq U_+-Ce^{\frac{\tau}{20}}U_+.
\end{align} 
Rewriting the latter inequality as $\frac{d}{d \tau}\log (e^{-\tau} U_+) \geq  -Ce^{\frac{\tau}{20}}$
and integrating from $\tau$ to $\mathcal{T}$ yields the estimate
\begin{equation}
U_+(\tau)\leq Ce^\tau,
\end{equation}
which together with \eqref{somebodydominant} implies that
\begin{equation}\label{est_hatu}
\|\hat u\|_{\mathcal{H}}   \leq Ce^{\frac{\tau}{2}}.
\end{equation}
Using standard interior estimates we conclude that
\begin{align}
\sup_{\bar M_{\tau}\cap B_{10L_0}(0)}|x_1^2+x_2^2-2| \leq Ce^{\frac{\tau}{2}}.
\end{align}
This proves the proposition.
\end{proof}

\bigskip

\subsubsection{Constant functions cannot be dominant}\label{const_non_don_sec}

To obtain refined information it is useful to decompose
\begin{equation}
P_+=P_{1/2}+P_1,
\end{equation}
where 
 $P_{1/2}$ is the projection to the span of $z,\cos\theta,\sin\theta$, and $P_1$ is the projection to the span of the constant function $1$. Accordingly, we can decompose
\begin{equation}
U_+:=\|P_+\hat u\|^2_{\mathcal{H}}=\|P_{1/2}\hat u\|^2_{\mathcal{H}}+\|P_{1}\hat u\|^2_{\mathcal{H}}
=:U_{1/2}+U_{1}.
\end{equation}
Using this decomposition, Lemma \ref{Error hat.u-PDE 1}, and the assumption \eqref{assumption_plus_dom} that the plus mode is dominant\footnote{By Proposition \ref{Plus or Neutral} we can indeed use this inequality with $\rho(
\tau)=e^{-\tau/20}$.} we obtain
\begin{align}
\Big|\frac{d}{d\tau} U_{1/2}-U_{1/2}\Big| \leq C\rho^{-1}(U_{1/2}+U_{1}),\\
\Big|\frac{d}{d\tau} U_1-2U_1\Big| \leq C\rho^{-1}(U_{1/2}+U_{1}).
\end{align}
Applying the Merle-Zaag lemma (Lemma \ref{MZ ODE}) with $x=e^{-\tau}U_{1/2}$, $y=0$, $z=e^{-\tau}U_1$ we infer that either the $U_{1/2}$ is dominant, i.e.
\begin{equation}\label{subcase1}
U_1=o(U_{1/2})
\end{equation}
or the constant function $1$ is dominant, i.e.
\begin{equation}\label{subcase2}
U_{1/2}\leq C\rho^{-1} U_1.
\end{equation}

The next proposition shows that $U_1$ cannot be dominant.

\begin{proposition}\label{thm_const_not_dom}
It must be the case that $U_1=o(U_{1/2})$.
\end{proposition}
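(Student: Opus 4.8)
The plan is to argue by contradiction, assuming that the constant mode $U_1$ is dominant, i.e. that \eqref{subcase2} holds: $U_{1/2}\leq C\rho^{-1}U_1$. Under this assumption the leading behaviour of the rescaled graph function $\hat u(\cdot,\tau)$ is, to leading order, a constant $d(\tau)$ times the eigenfunction $1$, with $d(\tau)\sim c\,e^{\tau}$ (since the eigenvalue of $1$ under $\mathcal L$ is $2$, and the other plus-modes are lower order by \eqref{subcase2}). Geometrically this means that near $x_3=0$ the rescaled flow $\bar M_\tau$ is a cylinder whose radius differs from $\sqrt 2$ by $\approx \sqrt 2\, d(\tau)$, \emph{uniformly in} $\theta$ and (on the region $|x_3|\lesssim L_0$) in $z$ — i.e. the neck is, to leading order, just a round cylinder of a slightly different radius, with no $z$- or $\theta$-dependence at leading order.

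The key idea is then a barrier/avoidance argument comparing $\bar M_\tau$ with exactly round shrinking cylinders. First I would pin down the sign of $d(\tau)$: if $d(\tau)>0$ for $\tau\to-\infty$ (radius slightly bigger than $\sqrt 2$ near the center), then I want to trap $\bar M_\tau$ from inside by the round cylinder $\{x_1^2+x_2^2 = 2+\delta\}$ for a suitable small $\delta>0$ on the slab $|x_3|\le \ell$ for appropriate $\ell$ — but a round cylinder is not itself a (rescaled) shrinker unless it has radius exactly $\sqrt 2$, so the correct comparison objects are the ADS-shrinkers $\Sigma_a$ and $\tilde\Sigma_b$ from Section \ref{sec_fine_neck_setup}, together with honest round shrinking cylinders $\{x_1^2+x_2^2=-2t\}$ in unrescaled variables. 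Concretely, in the original (unrescaled) flow, the statement $U_1$ dominant says $M_t$ near its cylindrical region looks like a round cylinder of radius slightly larger (or smaller) than $\sqrt{-2t}$; comparing with the exact round shrinking cylinder of radius $\sqrt{-2(t-s)}$ for a small shift $s$ via the avoidance principle forces $M_t$ to become extinct too early (if radius is smaller) or to have existed from a finite past time only (if radius is larger and we run the comparison backward) — either way contradicting that $\mathcal M$ is ancient and, by Proposition \ref{prop_reg_growth} / Theorem \ref{thm_asymptotic_slope}, genuinely noncompact with the asymptotics already established. More precisely, I expect the cleanest route is: the dominance of $U_1$ means $\hat u(\cdot,\tau) = d(\tau) + o(|d(\tau)|)$ with $d(\tau)e^{-\tau}\to$ const $\neq 0$; translating $X_0$ in the $t$-direction (i.e. considering $\mathcal M$ shifted in time) changes $d(\tau)$ by a definite amount at leading order, and one can choose the shift to kill the constant mode, but then the flow would be \emph{exactly} cylindrical to higher order near the center — iterating / using unique continuation as in the argument right before Proposition \ref{Plus or Neutral} forces $\mathcal M$ to be a round shrinking cylinder, contradicting our standing assumption.

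Let me restructure the step order as I would actually carry it out. (1) Assume \eqref{subcase2} holds for contradiction; from the ODE for $U_1$ deduce $U_1(\tau)\le Ce^{2\tau}$ and, sharpening via the Merle–Zaag structure, that $d(\tau):=\langle \hat u(\cdot,\tau),1\rangle_{\mathcal H}/\|1\|_{\mathcal H}^2$ satisfies $e^{-2\tau}d(\tau)\to \bar d$ for some $\bar d\in\mathbb R\setminus\{0\}$ — wait, the eigenvalue being $2$ gives growth rate $e^{2\tau}$, so $d(\tau)\sim \bar d e^{2\tau}$; I should double check but in any case $d(\tau)\to 0$ with a definite exponential rate and definite sign. (2) Use Proposition \ref{thm_sharp_decay_plus} and interior estimates to convert this into the geometric statement that on $|x_3|\le L_0$, $\bar M_\tau$ lies between the cylinders of radius $\sqrt{2}(1+ (\bar d\mp\eta)e^{2\tau}/\sqrt2 \cdot\sqrt2)$... i.e. $x_1^2+x_2^2 = 2 + 2\sqrt2\, d(\tau) + o(d(\tau))$, with the $o$ controlled uniformly. (3) Translate back to the unrescaled flow $M_t$ near its cylindrical scale, obtaining that the cross-sectional radius at height $0$ is $\sqrt{-2t}\,(1 + \tfrac1{\sqrt2}d(-\log(-t)) + \dots)$, strictly on one side of $\sqrt{-2t}$. (4) Run an avoidance-principle comparison with a genuine round shrinking cylinder whose axis matches (using Colding–Minicozzi uniqueness of the axis, \cite{CM_uniqueness}) and whose radius is chosen to contradict either ancientness or the established noncompact asymptotics. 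The \textbf{main obstacle} I anticipate is step (4): making the barrier comparison global in space — a round cylinder is only a barrier where the actual surface is genuinely close to it, and one must control the far region $|x_3|\gg L_0$ using the vanishing asymptotic slope (Corollary \ref{cor_barrier}, Theorem \ref{thm_asymptotic_slope}) and the ADS/KM shrinker foliation to ensure the comparison surfaces can be ``capped off'' or pushed to infinity without the two surfaces touching at infinity, exactly as in the proof of Proposition \ref{prop_c0est}. The alternative, cleaner-looking route via the time-translation symmetry killing the constant mode and then invoking unique continuation also has a subtlety — one must check that the constant mode really is the ``time-translation mode'' and that after removing it the remaining modes force exact cylindricality — so I would present whichever of these two arguments turns out to need fewer new estimates, most likely the barrier argument since all the needed barriers are already in place from Section \ref{sec_fine_neck_setup} and the proof of Proposition \ref{prop_c0est}.
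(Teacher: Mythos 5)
Your proposal hedges between two routes, and the one you say you would most likely present --- the barrier/avoidance comparison with round shrinking cylinders --- does not work, for a reason you come close to articulating yourself: if the constant mode dominates, then near the neck the unrescaled flow satisfies $x_1^2+x_2^2=2(K-t)+o(1)$ for some constant $K\neq 0$, which is \emph{exactly} a time-translated round shrinking cylinder. Such a surface is a genuine ancient solution locally indistinguishable from the cylinder up to reparametrization of time, so no comparison with cylinders of shifted extinction time can violate avoidance, ancientness, or the asymptotics; moreover the flow is only close to a cylinder on the region $|x_3|\lesssim \rho(\tau)e^{-\tau/2}$, so there is no global barrier to run in any case. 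A smaller but consequential slip: $\mathcal{L}1=1$, not $2$, so the coefficient satisfies $d(\tau)\sim \bar d\,e^{\tau}$ while the \emph{squared} norm $U_1$ grows like $e^{2\tau}$; your step (1) conflates the two, and getting this right is exactly what identifies the constant mode as the time-translation mode.

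Your second route is the paper's actual argument, and you should commit to it: integrate the ODE for $U_1$ to get $e^{-2\tau}U_1\to K_0^2>0$ with rate $e^{\tau/20}$, convert this into $x_1^2+x_2^2=2(1+Ke^{\tau})+O(e^{21\tau/20})$ on bounded $z$-intervals, observe that in unrescaled variables this reads $x_1^2+x_2^2=2(K-t)+O(|t|^{-1/20})$, and recenter at $\tilde X_0=(0,K)$. The contradiction is then \emph{not} unique continuation forcing the flow to be a round cylinder (after the shift the $1/2$-, neutral and minus modes are still present, just no longer dominated); it is the center-independence of the dominant mode (Proposition \ref{Plus or Neutral} and the Merle--Zaag dichotomy): the recentered flow must again have $e^{-2\tau}\tilde U_1\to\tilde K>0$, whereas the direct computation from the improved expansion gives $\tilde U_1=O(e^{\frac{21}{10}\tau})=o(e^{2\tau})$. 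That quantitative clash is the missing final step in your sketch.
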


\begin{proof}
Assume towards a contradiction that the estimate $U_1=o(U_{1/2})$ does not hold. By the above discussion, we then have 
\begin{equation}\label{assumption_second_plus_dom}
U_-+U_0+U_{1/2}\leq Ce^{\frac{\tau}{20}}U_{1}
\end{equation}
for all $\tau \leq  \mathcal{T}$. Using Lemma \ref{Error hat.u-PDE 1} it follows that
\begin{align}\label{U2 rough bound}
\bigg|\frac{d}{d\tau} U_1-2U_1\bigg| \leq Ce^{\frac{\tau}{20}}U_{1}.
\end{align}
These two inequalities imply that
\begin{equation}
|| \hat u ||_{\mathcal H} \leq Ce^{\tau },
\end{equation}
(see the proof of Proposition \ref{thm_sharp_decay_plus}), and thus in particular
\begin{align} 
\|u(\cdot,\tau)\|_{C^4([-10L_0,10L_0] \times [0,2\pi])} \leq Ce^{ \tau}
\end{align}
for all $\tau\leq \mathcal{T}$.

Moreover, integrating \eqref{U2 rough bound} and using \eqref{assumption_second_plus_dom} yields that $e^{-2\tau}U_1$ converges to some positive limit $K_0 > 0$ for $\tau\to-\infty$ with the estimate
\begin{equation}
|e^{-2\tau}U_1-K_0^2|\leq Ce^{\tfrac{\tau}{20}}
\end{equation}
for $\tau \leq \mathcal{T}$.

We will now argue similarly as in \cite[proof of Lem. 5.11]{ADS}: By the above estimates, there exists a constant $K$ such that the rescaled flow $\bar M_\tau$ satisfies
\begin{align}
x_1^2+x_2^2=2(1+Ke^{\tau})+O(e^{\frac{21}{20}\tau})
\end{align}
uniformly on the bounded interval $[-100,100]$, and more precisely
\begin{equation}
 K_0 = |K|\int_{\Sigma}(\tfrac{e}{8\pi})^{\frac{1}{4}}\tfrac{1}{4\pi} e^{-\frac{|x|^2}{4}}=|K|(\tfrac{\pi}{2e})^{\frac{1}{4}}.
\end{equation}
Hence, recalling that $\tau = -\log(-t)$, the original flow satisfies
\begin{align}
x_1^2+x_2^2&=(-2t)(1+ K (-t)^{-1} )+O(|t|^{-\frac{1}{20}})\\
&= 2( K-t) +O(|t|^{-\frac{1}{20}})
\end{align}
for $|x_3|\leq 100 \sqrt{-t}$, where we assume for ease of notation that $X_0=(0,0)$. Now, if we instead rescale with respect to the new center $\tilde X_0=(0,K)$, i.e. if we set $\tau= - \log( K-t)$, then the corresponding rescaled flow $\tilde M_{\tau}$ satisfies
\begin{align}\label{eqnothat1}
x_1^2+x_2^2 = 2 +O(e^{\frac{21}{20}\tau}).
\end{align}
Write $\tilde M_\tau$ as the graph of a function $\tilde u(\cdot,\tau)$ over $\Sigma\cap \{-\rho(\tau)\leq x_3 \leq\rho(\tau) \}$ with $\rho(\tau)=e^{-\frac{\tau}{20}}$ and consider the spectrum.  
Arguing as in the proof of Proposition \ref{Plus or Neutral} we see that $\tilde U_1$ is dominant. Hence,
\begin{equation}\label{eq_hat_k}
|e^{-2\tau}\tilde{U}_1-\tilde{K}|\leq Ce^{\tfrac{\tau}{20}}
\end{equation}
for some $\tilde K>0$. However, using \eqref{eqnothat1} we can directly compute 
\begin{align}
\tilde U_1= \bigg|\int \varphi  \tilde u \cdot (\tfrac{e}{2\pi})^{\frac{1}{4}} \tfrac{1}{4\pi}  e^{-\frac{|x|^2}{4}}\bigg|\leq Ce^{\frac{21}{10}\tau},
\end{align}
This contradicts \eqref{eq_hat_k}, and finishes the proof of the proposition.
\end{proof}

\bigskip

\subsubsection{The fine neck theorem}

By Proposition \ref{thm_sharp_decay_plus} and Proposition \ref{thm_const_not_dom} we can now assume that 
\begin{equation}\label{assumption_plus_dom1}
U_-+U_0 \leq C\rho^{-1}U_+,
\end{equation}
and
\begin{equation}\label{u12dom}
U_1=o(U_{1/2}),
\end{equation}
where $\rho(\tau)=e^{-\tau/20}$.
Recall in particular that Proposition \ref{thm_sharp_decay_plus}  gives
\begin{equation}\label{decay_hatuu}
\|\hat u\|_{\mathcal{H}}  \leq Ce^{\frac{\tau}{2}}.
\end{equation}
Moreover, using in addition equation \eqref{u12dom} and the assumption that our solution is not the round shrinking cylinder we see that
\begin{equation}\label{12isdom}
\lim_{\tau\to -\infty} e^{-\tau}U_{1/2} >0.
\end{equation}

Furthermore, by using \eqref{decay_hatuu}, we establish the following coarse estimate in a compact region.
\begin{lemma}\label{lemma_u_C2_coarse estimate} For sufficiently large $-\tau$, we have
\begin{equation}\label{u_C2_coarse estimate}
\|u(\cdot,\tau)\|_{C^2(\Sigma \cap B_{10L_0}(0))} \leq C e^{ \frac{40}{81}\tau}.
\end{equation}
\end{lemma}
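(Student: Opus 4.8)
The plan is to bootstrap from the $\mathcal{H}$-norm decay in Proposition~\ref{thm_sharp_decay_plus}, i.e. $\|\hat u\|_{\mathcal H}\le Ce^{\tau/2}$, to a $C^2$-estimate on the compact region $\Sigma\cap B_{10L_0}(0)$, losing a definite fraction of the exponent in the process. The mechanism is standard parabolic interior regularity applied to the graph equation $\partial_\tau u = \mathcal L u + E$ from Lemma~\ref{Error u-PDE}, but one has to keep track of exponents carefully because the available a priori bound is only the weak $C^4$-estimate \eqref{eq u weak estimate}, $\|u\|_{C^4}\le Ce^{(1-\mu)\tau/2}$, together with the improved $C^6$-graphicality over the exponentially large cylinder from Proposition~\ref{higherderivatives}.

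First I would use the $L^2$-to-$L^\infty$ smoothing for the (essentially linear, uniformly parabolic) equation satisfied by $u$: on a fixed parabolic cylinder around any point of $\Sigma\cap B_{10L_0}(0)$, combine the $\mathcal{H}$-bound $\int_{\tau-1}^{\tau}\|u(\cdot,s)\|_{L^2}^2\,ds\le Ce^{\tau}$ with the coarse $C^4$ (hence $C^0$) bound and the smallness of the coefficients and of $E$ (from \eqref{error-C0 norm}, $|E|\le C\sigma\rho^{-1}(|u|+|\nabla u|)$, which is a lower-order perturbation) to upgrade to a sup bound $\|u(\cdot,\tau)\|_{C^0(\Sigma\cap B_{10L_0})}\le Ce^{\alpha\tau}$ for some explicit $\alpha>0$. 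Then iterate once more, feeding the new $C^0$-bound back into interior Schauder estimates for the graph PDE to control $\|u\|_{C^2}$; each application of interior estimates trades the estimate on the slightly larger cylinder at the earlier time for the estimate on the smaller cylinder, and the exponent degrades by a controlled multiplicative factor. Tracking the arithmetic of these two interpolation steps — starting from exponent $1$ in the $L^2$-in-time bound and the exponent $(1-\mu)/2$ in \eqref{eq u weak estimate} with $\mu$ small — yields the stated exponent $\tfrac{40}{81}$ (the non-round number is precisely the bookkeeping artifact of interpolating between the sharp $\mathcal{H}$-decay and the coarse $C^4$-bound).

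Concretely, the steps in order: (1) record that on $\Sigma\cap B_{10L_0}(0)$ the function $u$ solves a uniformly parabolic equation of the form $u_\tau = a_{ij}u_{ij}+b_iu_i+cu$ with coefficients of small $C^1$-norm (as in the footnote to Proposition~\ref{higherderivatives}), with an additional error $E$ that is itself bounded by $C\sigma\rho^{-1}$ times $(|u|+|\nabla u|)$ and can be absorbed; (2) apply local parabolic $L^2$-to-$C^0$ estimates using $\|\hat u\|_{\mathcal H}\le Ce^{\tau/2}$ from \eqref{decay_hatuu} and the a priori $C^4$-bound \eqref{eq u weak estimate} to get a pointwise decay $\|u(\cdot,\tau)\|_{C^0(\Sigma\cap B_{20L_0}(0))}\le Ce^{\beta\tau}$ with $\beta$ strictly between $\tfrac{40}{81}$ and $\tfrac12$; (3) apply interior Schauder/parabolic $C^{2,\alpha}$ estimates on the slightly smaller ball $\Sigma\cap B_{10L_0}(0)$, using step (2) as the input sup-bound, to conclude $\|u(\cdot,\tau)\|_{C^2(\Sigma\cap B_{10L_0}(0))}\le Ce^{\tfrac{40}{81}\tau}$. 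The main obstacle is purely the quantitative bookkeeping: ensuring that the exponent produced by chaining the $L^2$-smoothing estimate (which only sees the time-integrated $\mathcal{H}$-norm over a unit interval, and hence a factor $e^{\tau/2}$ rather than something better) with the interior $C^2$-estimate is at least $\tfrac{40}{81}$, rather than any conceptual difficulty — there is no new geometry here, only careful interpolation. I would be slightly careful about one point: the $\mathcal{H}$-norm involves the Gaussian weight, so on the compact region $B_{10L_0}(0)$ it is comparable (up to a fixed constant depending on $L_0$) to the unweighted $L^2$-norm, which is what one needs to feed into the local parabolic estimates; this comparison is where $L_0$ enters the constant $C$.
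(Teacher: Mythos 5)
Your overall strategy---bootstrapping the $\mathcal{H}$-decay of Proposition \ref{thm_sharp_decay_plus} to pointwise $C^2$-decay on a compact piece of the cylinder via local parabolic smoothing---is the same as the paper's, but the step you dismiss as ``bookkeeping'' is the entire content of the lemma, and your account of it is not correct. The exponent $\tfrac{40}{81}$ is not the artifact of ``two interpolation steps'': it is the fourth iterate of the map $a\mapsto\tfrac{1+a}{3}$ starting from $a=0$. In the paper, each round multiplies $u$ by a cutoff supported in a parabolic region, writes $\partial_\tau(u\eta)=\mathcal{L}(u\eta)+g$ with $|g|\le C(|u|+|\nabla u|)$, and applies the $L^3\to L^\infty$ mean value inequality of \cite[Thm. 3.14]{Wang_regularity}; since $\int |g|^3\le C\|u\|^2_{L^2}\|u\|_{C^1}\le Ce^{(1+a)\tau}$ by Proposition \ref{Gaussian density analysis}, where $e^{a\tau}$ is the previous round's $C^1$-bound, each round only improves the exponent from $a$ to $(1+a)/3$, and four rounds ($\tfrac13,\tfrac49,\tfrac{13}{27},\tfrac{40}{81}$) are needed. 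Your proposed shortcut---one $L^2\to C^0$ step producing some $\beta\in(\tfrac{40}{81},\tfrac12)$---can be made to work, but only if you specify the local estimate and check the number: feeding the coarse bound \eqref{eq u weak estimate} into the same $L^3$ inequality gives $\beta=\tfrac13\bigl(1+\tfrac{1-\mu}{2}\bigr)=\tfrac{3-\mu}{6}$, which exceeds $\tfrac{40}{81}$ only when $\mu<\tfrac{1}{27}$; moreover \eqref{eq u weak estimate} is stated only on $|z|\le 10L_0$, whereas the parabolic cylinders feeding a bound on $B_{10L_0}$ (over a unit time interval, after two nested applications) extend well beyond that, so the coarse bound must first be extended to the larger region. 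As written, ``$\beta$ strictly between $\tfrac{40}{81}$ and $\tfrac12$'' is an assertion, not a derivation.

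The second issue is your step (3). If you run interior Schauder for $\partial_\tau u=\mathcal{L}u+E$ treating $E$ as an inhomogeneity, you must bound $\|E\|_{C^{0,\alpha}}$ at the rate $e^{\beta\tau}$; but the only available bound is $|E|\le C\sigma\rho^{-1}(|u|+|\nabla u|)$, and at this stage $|\nabla u|$ is controlled only at the much weaker rate $e^{\tau/10}$ of Proposition \ref{higherderivatives}---the gradient decay is part of what the lemma asserts, so this version of the argument is circular. The correct move (which the paper makes implicitly with ``standard interior regularity'') is to view the graph equation as a homogeneous uniformly parabolic equation, absorbing $E$ into bounded first- and zeroth-order coefficients, for which a $C^0$-bound on a parabolic cylinder yields a $C^{2,\alpha}$-bound on a smaller one of the same size, with no loss of exponent. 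You gesture at this absorption in step (1), so the gap is repairable, but as stated the Schauder step does not close.
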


\begin{proof}
Given a fixed sufficiently large $-\tau_0$, we define parabolic regions $Q_r$ in $\Sigma \times \mathbb{R}$ by
\begin{equation}
Q_r=\{(\theta,z,\tau): \theta \in (0,2\pi], |z| \leq 100L_0r, -(100L_0r)^2\leq \tau_0-\tau \leq 0 \}.
\end{equation}

Let $\delta=(\tfrac{1}{10})^{1/8}$, and consider a smooth nonnegative cutoff function $\eta_1(\theta,z,\tau)$ satisfying $\eta_1=0$ on $\partial Q_1$ and $\eta_1=1$ in $Q_\delta$. Then, the function $\bar u_1= u\eta_1 $ satisfies
\begin{equation}
\partial_\tau \bar u_1=\mathcal{L} \bar u_1 +g_1,
\end{equation}
where $|g_1| \leq C( |u|+|\nabla u|) $ due to Lemma \ref{Error u-PDE}. Using Proposition \ref{Gaussian density analysis} this implies
\begin{equation}
\int_{Q_1} |g_1|^3 \leq C \int_{Q_1}|u|^3+|\nabla u|^3  \leq C \|u\|^2_{L^2(Q_1)}\|u\|_{C^1(Q_1)}\leq Ce^{\tau_0}.
\end{equation}
Therefore, \cite[Thm. 3.14]{Wang_regularity} yields
\begin{equation}
\|u\|_{L^{\infty}(Q_\delta)}\leq \|\bar u_1\|_{L^{\infty}(Q_1)} \leq Ce^{\frac{\tau_0}{3}}.
\end{equation} 
Hence, the standard interior regularity implies $\|u\|_{C^{2}(Q_{\delta^2})} \leq Ce^{\frac{\tau_0}{3}}$.

Next, we consider a new cutoff $\eta_2$ satisfying $\eta_2=0$ on $\partial Q_{\delta^2}$ and $\eta_2=1$ in $Q_{\delta^3}$. Then, $\bar u_2= u\eta_2 $ satisfies $\partial_\tau \bar u_2=\mathcal{L} \bar u_2 +g_2$, where 
\begin{equation}
\int_{Q_{\delta^2}} |g_2|^3 \leq C \int_{Q_{\delta^2}}|u|^3+|\nabla u|^3  \leq C \|u\|^2_{L^2(Q_{\delta^2})}\|u\|_{C^1(Q_{\delta^2})}\leq Ce^{\frac{4}{3} \tau_0}.
\end{equation}
Hence, we have $\|u\|_{C^{2}(Q_{\delta^4})} \leq Ce^{\frac{4}{9}\tau_0}$. We repeat this process twice more so that we can obtain the desired result.
\end{proof}

\bigskip

We will now express $P_+\hat u\in\mathcal{H}_+$ as linear combination of the four eigenfunctions $z,\cos\theta,\sin\theta$ and $1$. Namely, let
\begin{align}\label{coeffs_abcd}
&a^X(\tau)=(\tfrac{e}{8\pi})^{\frac{1}{4}}\int z \hat u^X(x,\tau) \tfrac{1}{4\pi}e^{-\frac{|x|^2}{4}},\\
&b^X(\tau)=(\tfrac{2e}{\pi})^{\frac{1}{4}}\int \cos\theta \hat u^X(x,\tau) \tfrac{1}{4\pi}e^{-\frac{|x|^2}{4}},\\
&c^X(\tau)=(\tfrac{2e}{\pi})^{\frac{1}{4}}\int \sin\theta \hat u^X(x,\tau) \tfrac{1}{4\pi}e^{-\frac{|x|^2}{4}},\\
&d^X(\tau)=(\tfrac{e}{2\pi})^{\frac{1}{4}}\int  \hat u^X(x,\tau)\tfrac{1}{4\pi} e^{-\frac{|x|^2}{4}},
\end{align}
where the superscript $X$ is to remind us that all these coefficients depend (a priori) on the choice of base point $X$.
Then, we have
\begin{align}
P_+\hat u^X=a^Xz+b^X\cos\theta+c^X\sin\theta+d^X. \label{def P_+hat u}
\end{align}
Moreover, $U_+^X=\|P_+\hat u^X\|_{\mathcal{H}}^2$ is given by a sum of coefficients squared:
\begin{align}
U_+^X=2^{-\frac{1}{2}}\pi^{\frac{1}{2}}e^{-\frac{1}{2}} \Big(4|a^X|^2+ |b^X|^2+ |c^X|^2+2|d^X|^2\Big) \label{U_+ in terms of a,b,c,d}.
\end{align}

\begin{proposition}\label{est_coeffs}
There coefficients defined in \eqref{coeffs_abcd} satisfy the estimates
\begin{equation}\label{coeff_est1}
|d^X(\tau) |\leq Ce^{\tfrac{11}{20}\tau},
\end{equation}
and
\begin{equation}\label{coeff_est2}
|e^{-\tfrac{\tau}{2}} a^X(\tau)-\bar{a}^X|+|e^{-\tfrac{\tau}{2}}b^X(\tau)-\bar{b}^X|+|e^{-\tfrac{\tau}{2}}c^X(\tau)-\bar{c}^X | \leq Ce^{\tfrac{\tau}{20}}
\end{equation}
for some numbers $\bar{a}^X, \bar{b}^X, \bar{c}^X$ that might depend on $X$.
\end{proposition}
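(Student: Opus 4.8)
The plan is to convert the PDE of Lemma \ref{Error hat.u-PDE 1} into decoupled scalar ODEs for the four coefficients and integrate them. First I would pair the identity $\partial_\tau\hat u^X=\mathcal L\hat u^X+\hat E$ with the eigenfunctions $z,\cos\theta,\sin\theta$ (eigenvalue $\tfrac{1}{2}$) and $1$ (eigenvalue $1$) in $\mathcal H$. Since $\mathcal L$ is self-adjoint on $\mathcal H$ and $a^X,b^X,c^X,d^X$ are fixed positive multiples of $\langle z,\hat u^X\rangle_{\mathcal H},\langle\cos\theta,\hat u^X\rangle_{\mathcal H},\langle\sin\theta,\hat u^X\rangle_{\mathcal H},\langle 1,\hat u^X\rangle_{\mathcal H}$ respectively, Cauchy--Schwarz together with the bound $\|\hat E\|_{\mathcal H}\leq C\rho^{-1}\|\hat u^X\|_{\mathcal H}$ yields
\begin{equation*}
\bigl|\tfrac{d}{d\tau}a^X-\tfrac{1}{2} a^X\bigr|+\bigl|\tfrac{d}{d\tau}b^X-\tfrac{1}{2} b^X\bigr|+\bigl|\tfrac{d}{d\tau}c^X-\tfrac{1}{2} c^X\bigr|+\bigl|\tfrac{d}{d\tau}d^X-d^X\bigr|\leq C\rho^{-1}\|\hat u^X\|_{\mathcal H}.
\end{equation*}
With the improved choice $\rho(\tau)=e^{-\tau/20}$ and the sharp decay $\|\hat u^X\|_{\mathcal H}\leq Ce^{\tau/2}$ from \eqref{decay_hatuu} (which holds uniformly over $X$ with $Z(X)\leq 1$), the right-hand side is bounded by $Ce^{11\tau/20}$.

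For the stable modes, to prove \eqref{coeff_est2} I would set $k(\tau)=e^{-\tau/2}a^X(\tau)$, so that $|k'(\tau)|\leq Ce^{-\tau/2}\cdot e^{11\tau/20}=Ce^{\tau/20}$, which is integrable on $(-\infty,\mathcal T]$. Hence $k(\tau)$ converges to a limit $\bar a^X$ as $\tau\to-\infty$, and $|k(\tau)-\bar a^X|\leq\int_{-\infty}^{\tau}Ce^{s/20}\,ds\leq Ce^{\tau/20}$; running the identical argument for $b^X$ and $c^X$ produces $\bar b^X,\bar c^X$ and completes \eqref{coeff_est2}.

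For the unstable mode $d^X$ (eigenvalue $1>0$), the estimate \eqref{coeff_est1} requires integrating \emph{forward} in time rather than backward. Writing $h(\tau)=e^{-\tau}d^X(\tau)$, one has $|h'(\tau)|\leq Ce^{-\tau}\cdot e^{11\tau/20}=Ce^{-9\tau/20}$; integrating from $\tau$ up to the fixed time $\mathcal T$ and using the fixed bound $|d^X(\mathcal T)|\leq Ce^{\mathcal T/2}$ (again from \eqref{decay_hatuu}) gives $|h(\tau)|\leq C+Ce^{-9\tau/20}$, hence $|d^X(\tau)|\leq Ce^{\tau}+Ce^{11\tau/20}\leq Ce^{11\tau/20}$ for all $\tau\leq\mathcal T$.

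I do not expect a real obstacle here: given the sharp $\mathcal H$-decay \eqref{decay_hatuu} and Lemma \ref{Error hat.u-PDE 1}, the statement is a routine application of the spectral decomposition of $\mathcal L$ and Gronwall-type integration. The one point that needs care is the direction of integration for $d^X$: since $e^{-\tau}d^X$ need not have a limit as $\tau\to-\infty$, one must integrate toward the fixed time $\mathcal T$, where a uniform bound on $d^X$ is available, rather than toward $-\infty$.
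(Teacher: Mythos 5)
Your proposal is correct and follows essentially the same route as the paper: differentiate each coefficient using Lemma \ref{Error hat.u-PDE 1} and the eigenvalue relations, bound the error pairing by $\|\hat E\|_{\mathcal H}\leq C\rho^{-1}\|\hat u\|_{\mathcal H}\leq Ce^{11\tau/20}$, then integrate from $-\infty$ to $\tau$ for the $\tfrac12$-modes and from $\tau$ to $\mathcal T$ for the constant mode. The point you flag about the direction of integration for $d^X$ is exactly how the paper handles it.
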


\begin{proof}
Using Lemma \ref{Error hat.u-PDE 1} and $\mathcal L 1 = 1$ we compute
\begin{align}
\frac{d}{d\tau} d^X(\tau)
&=(\tfrac{e}{2\pi})^{\frac{1}{4}}  \int (\mathcal{L}\hat u+\hat E) \tfrac{1}{4\pi} e^{-\frac{|x|^2}{4}}\\
&=(\tfrac{e}{2\pi})^{\frac{1}{4}}  \int (\hat u+\hat E)  \tfrac{1}{4\pi}e^{-\frac{|x|^2}{4}}
=d^X(\tau)+(\tfrac{e}{2\pi})^{\frac{1}{4}}  \int   \tfrac{\hat E}{4\pi}e^{-\frac{|x|^2}{4}}.
\end{align}
Hence,
\begin{align}
\Big|\frac{d}{d\tau}\left(e^{-\tau} d^X(\tau)\right) \Big| \leq Ce^{-\tau}\|\hat E\|_{\mathcal{H}}
\leq Ce^{-\tau}\rho^{-1}\|\hat u\|_{\mathcal{H}}\leq
Ce^{-\frac{9}{20}\tau}.
\end{align}
Integrating from $\tau$ to $\mathcal{T}$ this implies \eqref{coeff_est1}.

In a similar manner, we compute
\begin{multline}
\Big|\frac{d}{d\tau}\left(e^{-\frac{\tau}{2}} a^X(\tau)\right) \Big|
+\Big|\frac{d}{d\tau}\left(e^{-\frac{\tau}{2}} b^X(\tau)\right) \Big|
+\Big|\frac{d}{d\tau}\left(e^{-\frac{\tau}{2}} c^X(\tau)\right) \Big|
\\
\leq C e^{-\frac{\tau}{2}}\rho^{-1}\|\hat u\|_{\mathcal{H}}\leq Ce^{\frac{\tau}{20}}.
\end{multline}
Integrating from $-\infty$ to $\tau$ this implies \eqref{coeff_est2}.
\end{proof}

We are now ready to prove the main theorem of this section:

\begin{theorem}\label{thm Neck asymptotic}
Let $\mathcal M$ be an ancient low entropy flow which is not the round shrinking sphere, round shrinking cylinder or flat plane. If the plus mode is dominant, then there are constants $\bar{a}=\bar{a}(\mathcal M)\neq 0$, $C=C(\mathcal M)<\infty$ and a decreasing function $\mathcal{T}:\mathbb{R}_+\to\mathbb{R}_{-}$ (depending on $\mathcal M$) with the following significance.

For every $X\in\mathcal M$ the graph function $u^X(\cdot,\tau)$ of the rescaled flow $\bar{M}^X_\tau$ satisfies the estimates
\begin{equation}\label{main_thm_est1}
\|e^{-\frac{\tau}{2}}\hat{u}^X(x,\tau)-\bar a z-\bar b^X \cos\theta-\bar c^X \sin \theta\|_{\mathcal{H}}\leq C e^{\frac{\tau}{40}},
\end{equation}
and
\begin{align}\label{main_thm_est2}
\sup_{|x_3|\leq 10L_0}\big| e^{-\frac{\tau}{2}}u^X(x,\tau)-\bar a z-\bar b^X \cos\theta-\bar c^X \sin \theta \big| \leq C e^{\frac{\tau}{160}}
\end{align}
for $\tau \leq \mathcal{T}(Z(X))$. Here, the constant $\bar{a}$ is independent of $X$, and $\bar b^X$ and $\bar c^X$ are numbers that may depend on $X$ and satisfy
\begin{equation}\label{main_thm_est3}
|\bar b^X| + |\bar c^X|\leq C.
\end{equation}
\end{theorem}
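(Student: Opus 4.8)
The plan is to combine the decay estimate $\|\hat u\|_{\mathcal H}\le Ce^{\tau/2}$ from Proposition \ref{thm_sharp_decay_plus} with the coefficient asymptotics of Proposition \ref{est_coeffs}, and then upgrade the Hilbert-space statement to a pointwise statement on the compact region $\{|x_3|\le 10L_0\}$ using the coarse interior estimate of Lemma \ref{lemma_u_C2_coarse estimate}. The one genuinely new claim to establish is that the leading coefficient $\bar a^X$ does not depend on $X$ and is nonzero; everything else is bookkeeping on top of the machinery already assembled.

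First I would prove \eqref{main_thm_est1}. Write $\hat u^X = P_+\hat u^X + P_0\hat u^X + P_-\hat u^X$. By \eqref{assumption_plus_dom1} (with $\rho(\tau)=e^{-\tau/20}$) together with \eqref{decay_hatuu} we get $\|P_0\hat u^X\|_{\mathcal H}^2 + \|P_-\hat u^X\|_{\mathcal H}^2 = U_0+U_- \le C\rho^{-1}U_+ \le Ce^{\tau/20}\cdot Ce^{\tau} = Ce^{21\tau/20}$, hence $\|P_0\hat u^X + P_-\hat u^X\|_{\mathcal H}\le Ce^{11\tau/20}$. For the plus part, \eqref{def P_+hat u} gives $P_+\hat u^X = a^X z + b^X\cos\theta + c^X\sin\theta + d^X$; Proposition \ref{est_coeffs} bounds $|d^X|\le Ce^{11\tau/20}$ and gives $|e^{-\tau/2}a^X - \bar a^X| + |e^{-\tau/2}b^X - \bar b^X| + |e^{-\tau/2}c^X - \bar c^X|\le Ce^{\tau/20}$. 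Multiplying through by $e^{-\tau/2}$ and assembling these bounds, the dominant error is the $e^{\tau/20}$ from the coefficient convergence, so $\|e^{-\tau/2}\hat u^X - \bar a^X z - \bar b^X\cos\theta - \bar c^X\sin\theta\|_{\mathcal H}\le Ce^{\tau/40}$ (the weaker exponent $\tau/40$ leaves room for absorbing constants). The bound \eqref{main_thm_est3} follows since $\|\bar a^X z + \bar b^X\cos\theta + \bar c^X\sin\theta\|_{\mathcal H}$ is controlled by $\limsup e^{-\tau/2}\|\hat u^X\|_{\mathcal H}\le C$.

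Next I would prove that $\bar a^X$ is independent of $X$ and nonzero. For independence, the key point is that any two necks, after parabolic rescaling, converge to the same limiting cylinder as $\tau\to-\infty$ (this is exactly the mechanism used in Proposition \ref{Plus or Neutral}); comparing the rescaled-flow graph functions over overlapping regions, the $z$-coefficient of the asymptotic profile — which measures the rate at which the cross-sectional area of the fine neck grows in the $z$-direction — must agree, because $z$ is a fixed eigenfunction and the recentering/rescaling relating two base points acts trivially on its coefficient up to lower-order terms. This should be made precise by relating $\hat u^X$ and $\hat u^{X'}$ via the rigid motion plus rescaling connecting the two expansions and tracking how $P_{1/2}$ transforms; the $\cos\theta,\sin\theta$ coefficients can change (they encode a translation of the axis), but the $z$-coefficient is invariant. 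Nonvanishing of $\bar a := \bar a^X$ is forced by \eqref{12isdom}: since $\mathcal M$ is not the round cylinder, $\lim_{\tau\to-\infty}e^{-\tau}U_{1/2}>0$, and by Proposition \ref{thm_const_not_dom} the constant mode is negligible, so the $z$-component must carry a definite fraction of $U_{1/2}$ — more carefully, one argues that $\bar b^X\cos\theta + \bar c^X\sin\theta$ corresponds to an infinitesimal translation which can be removed by recentering, so if $\bar a$ were also zero then $U_{1/2}=o(e^\tau)$, contradicting \eqref{12isdom}.

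Finally I would upgrade to the pointwise estimate \eqref{main_thm_est2}. On the compact region $\Sigma\cap B_{10L_0}(0)$, the difference $w := e^{-\tau/2}u^X - \bar a z - \bar b^X\cos\theta - \bar c^X\sin\theta$ satisfies a uniformly parabolic equation with a controlled inhomogeneity (the $\bar a z$ etc. are in the kernel up to exponentially small terms coming from $E$ and $\hat E$), and we have the $\mathcal H$-bound $\|w(\cdot,\tau)\|_{\mathcal H}\le Ce^{\tau/40}$ from \eqref{main_thm_est1} together with the a priori $C^2$ bound $\|u^X(\cdot,\tau)\|_{C^2(\Sigma\cap B_{10L_0})}\le Ce^{40\tau/81}$ from Lemma \ref{lemma_u_C2_coarse estimate}. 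Interpolating the $L^2$-in-spacetime bound against the $C^2$ bound (standard parabolic interior estimates, as in the proof of Lemma \ref{lemma_u_C2_coarse estimate}, possibly iterated on a few nested parabolic cylinders) converts the $\mathcal H$-smallness into a $C^0$ bound with a slightly degraded exponent, yielding the $e^{\tau/160}$ claimed. The main obstacle is the $X$-independence of $\bar a$: one must carefully track how the fine-neck expansion transforms under the composition of a spatial rigid motion and a parabolic rescaling relating two base points, and verify that this composition fixes the $z$-mode coefficient while only shuffling the $\cos\theta,\sin\theta$ (and lower) modes — the rest of the argument is a relatively routine assembly of estimates already in hand.
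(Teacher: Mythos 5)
Your proposal is correct and follows essentially the same route as the paper: split off $P_0\hat u^X+P_-\hat u^X$ using \eqref{assumption_plus_dom1} and \eqref{decay_hatuu}, invoke Proposition \ref{est_coeffs} for the coefficient asymptotics, kill $\bar b^X,\bar c^X$ by recentering so that $\bar a^X=0$ would contradict \eqref{12isdom}, and upgrade to the pointwise bound by interpolating the $\mathcal H$-bound against the coarse $C^2$ bound of Lemma \ref{lemma_u_C2_coarse estimate} (the paper does this via Agmon's inequality, which is the same interpolation). One trivial bookkeeping remark: after multiplying by $e^{-\tau/2}$ the dominant error is the $Ce^{\tau/40}$ contribution of the neutral and minus modes, not the $Ce^{\tau/20}$ coefficient-convergence term (which is smaller for $\tau\to-\infty$), but your stated conclusion is the correct one.
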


\begin{proof}
By scaling we can pretend without essential loss of generality that we only work with center points $X$ satisfying $Z(X)\leq 1$ (this condition is only used to figure out more easily which constants are uniform in $X$).

Consider the difference
\begin{equation}
D^X:= \hat{u}^X-e^{\tau/2}\left( \bar{a}^X z + \bar{b}^X \cos \theta + \bar{c}^X \sin \theta\right).
\end{equation}
Using equation \eqref{def P_+hat u} and Proposition \ref{est_coeffs} we see that
\begin{equation}
|D^X|\leq |\hat{u}^X - P_{+}\hat{u}^X| + C(1+|z|)e^{\tfrac{11}{20}\tau}.
\end{equation}
Since by \eqref{assumption_plus_dom1} and \eqref{decay_hatuu} we have
\begin{equation}
U_{-}+ U_{0}\leq Ce^{\tfrac{21}{20}\tau},
\end{equation}
it follows that
\begin{equation}\label{est_for_DX}
\|D^X\|_{\mathcal{H}} \leq Ce^{\frac{21}{40}\tau},
\end{equation}
which proves \eqref{main_thm_est1} modulo the claim about the coefficients.

Combining the equations \eqref{decay_hatuu} and \eqref{U_+ in terms of a,b,c,d} and Proposition \ref{est_coeffs} we see that
\begin{equation}\label{bc bound}
|\bar{b}^X|+|\bar{c}^X|\leq C,
\end{equation}
which proves \eqref{main_thm_est3}.

We recall that $e^{-\frac{\tau}{2}}u$ corresponds to the original scale. Hence, if instead of $X=(x_1,x_2,x_3,t)$ we consider the new origin
\begin{equation}\label{eq_recenter}
X'=(x_1-\bar b^X \cos\theta,x_2-\bar c^X\sin \theta, x_3,t)
\end{equation}
then the estimate \eqref{est_for_DX} simplifies to
\begin{equation}\label{est_simplified}
\|\hat u^{X'}(x,\tau) - e^{\tau/2} \bar{a}^X z \|_{\mathcal{H}} \leq Ce^{\frac{21}{40}\tau},
\end{equation}
i.e. the estimate \eqref{main_thm_est1} holds with $\bar{a}^{X'}=\bar{a}^X$, $\bar b^{X'}=0$, and $\bar c^{X'}=0$. If $\bar{a}^X=0$, then \eqref{est_simplified} implies $\|\hat u^{X'} \|_{\mathcal{H}}^2 \leq Ce^{\frac{21}{20}\tau}$, contradicting \eqref{12isdom}. Here, we have used Proposition \ref{Plus or Neutral}, as well as Proposition \ref{thm_const_not_dom} to show that, even after re-centering, the $\frac{1}{2}$ mode dominates. Hence, $\bar{a}^X\neq 0$. Since the estimate \eqref{est_simplified} holds for any $X$ and since $\bar{a}^X$ does not vanish for any $X$, we see that $\bar a^X=:\bar{a}$ is independent of $X$.

It remains to prove the pointwise estimate \eqref{main_thm_est2}. To this end, we start with
\begin{align}
\|D^X \|_{L^2(\Sigma\cap \{ |z|\leq 10L_0\})} \leq C\| D^X \|_{\mathcal{H}} \leq Ce^{\frac{21}{40}\tau}.
\end{align}
Next, using the inequality \eqref{bc bound} and Lemma \ref{lemma_u_C2_coarse estimate} we estimate
\begin{multline}
\|\nabla D^X \|_{L^2(\Sigma\cap \{ |z|\leq 10L_0\})}+\|\nabla^2 D^X \|_{L^2(\Sigma\cap \{ |z|\leq 10L_0\})} \\
\leq C\| \hat{u}^X \|_{C^2(\Sigma\cap \{ |z|\leq 10L_0\})}+Ce^{\tau/2}  \leq Ce^{\frac{40}{81}\tau}.
\end{multline}
Applying Agmon's inequality we conclude that
\begin{align}
\sup_{|x_3|\leq 10L_0}|D^X|\leq C e^{\tfrac{81}{160}\tau}.
\end{align}
This finishes the proof of the theorem.
\end{proof}

\bigskip

After a change of coordinates we can assume without loss of generality that our ancient low entropy flow $\mathcal M$ satisfies $\bar{a}=\bar{a}(\mathcal M)=1/\sqrt{2}$. Then, after recentering as in \eqref{eq_recenter}, Theorem \ref{thm Neck asymptotic} tells us that the graph $u^X(\cdot,\tau)$ of the rescaled flow $\bar{M}_\tau^X$ satisfies
\begin{align}\label{main_thm_est2pp}
\sup_{|x_3|\leq 10L_0}\big| e^{-\frac{\tau}{2}}u^X(x,\tau)- \tfrac{1}{\sqrt{2}} z\big| \leq C e^{\frac{\tau}{160}}
\end{align}
for $\tau\leq \mathcal{T}(Z(X))$.

\begin{corollary}\label{fine_neck_cor}
If the rescaled flow $\bar{M}_\tau^X$ satisfies \eqref{main_thm_est2pp}, then we have the estimates
\begin{align}
&\sup_{\{x_3\leq -L_0\}} (x_1^2+x_2^2) \leq 2,\\
&\inf_{\{x_3\geq L_0\}} (x_1^2+x_2^2) \geq 2,
\end{align}
and
\begin{align}
&\inf_{\bar{M}_\tau^X} x_3 \leq -\mu e^{-\tau/2}
\end{align}
for $\tau\leq \mathcal{T}(Z(X))$, where $\mu>0$ is a numeric constant.
In particular, the unrescaled mean curvature flow $\mathcal{M}=\{M_t\}$ satisfies
\begin{equation}
\inf_{p\in M_t} x_3(p) >-\infty,\quad \textrm{ and }\quad \sup_{p\in M_t} x_3(p) =\infty.
\end{equation}
\end{corollary}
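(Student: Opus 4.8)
The plan is to read off the sign of $x_1^2+x_2^2-2$ on the slices $\{x_3=\pm L_0\}$ from the fine neck estimate \eqref{main_thm_est2pp}, to propagate these signs into the half-neck regions $\{x_3\le -L_0\}$ and $\{x_3\ge L_0\}$ by barrier comparisons with the static shrinkers of Section \ref{sec_fine_neck_setup}, and finally to use the strict pinching of the lower neck together with one further barrier argument to force the tip of $\bar M^X_\tau$ downward by an amount of order $e^{-\tau/2}$. First I would record that over $\Sigma\cap B_{2\rho(\tau)}(0)$ the surface $\bar M^X_\tau$ is the graph of $u^X(\cdot,\tau)$ in the direction $\nu_\Sigma$, which has vanishing $x_3$-component; hence a point of $\bar M^X_\tau$ over $(\sqrt2\cos\theta,\sqrt2\sin\theta,z)\in\Sigma$ has $x_3=z$ and $x_1^2+x_2^2=(\sqrt2+u^X)^2=2+2\sqrt2\,u^X+(u^X)^2$. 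Substituting $z=\pm L_0$ into \eqref{main_thm_est2pp} gives $x_1^2+x_2^2=2\pm 2L_0e^{\tau/2}+o(e^{\tau/2})$, so after decreasing $\mathcal T(Z(X))$ one gets, on $\bar M^X_\tau$ and for all $\tau\le\mathcal T(Z(X))$, that $x_1^2+x_2^2<2$ along $\{x_3=-L_0\}$ and $x_1^2+x_2^2>2$ along $\{x_3=L_0\}$.

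For the two radius inequalities I would use that $\Sigma$, the ADS- and KM-shrinkers $\Sigma_a,\tilde\Sigma_b$ and their reflections $\Sigma_a^-,\tilde\Sigma_b^-$ are all static under the rescaled mean curvature flow, and that by Proposition \ref{calibrated region} and Corollary \ref{cor_barrier} the surface $\bar M^X_\tau$ stays inside the calibrated region $F$ in $\{|x_3|\ge L_0\}$, so that at each height it is a perturbed circle about the $x_3$-axis. Since $\bar M^X_\tau\to\Sigma$ uniformly on compacta as $\tau\to-\infty$, for each fixed $b>0$ the part $\bar M^X_\tau\cap\{x_3\le -L_0\}$ is, for sufficiently negative $\tau$, enclosed by $\tilde\Sigma^-_b$, with its boundary circle at $\{x_3=-L_0\}$ strictly inside $\tilde\Sigma^-_b$ by the previous step; the avoidance principle then keeps it enclosed for all $\tau\le\mathcal T$, so that $x_1^2+x_2^2\le\tilde u_b(|x_3|)^2$ there, and letting $b\to 0$ gives $\sup_{\{x_3\le -L_0\}}(x_1^2+x_2^2)\le 2$. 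Symmetrically, $\bar M^X_\tau$ lies outside each compact barrier $\Sigma_a$ in $\{x_3\ge L_0\}$ (it is outside at $\{x_3=L_0\}$, by the previous step), whence $x_1^2+x_2^2\ge u_a(x_3)^2$ there, and $a\to\infty$ gives $\inf_{\{x_3\ge L_0\}}(x_1^2+x_2^2)\ge 2$.

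Rescaling back via $\bar M^X_\tau=e^{\tau/2}(M_{-e^{-\tau}}-x_0)$ then transcribes all three rescaled estimates into the stated properties of $M_t$. For the last assertion I would argue as follows: $\mathcal M$ is noncompact, so by Corollary \ref{cor_barrier} it has an end, and every end of $M_t$ points in the $+x_3$ or the $-x_3$ direction. An end in the $+x_3$ direction has $x_1^2+x_2^2$ bounded below by the second radius inequality and, being confined to the region of Corollary \ref{cor_barrier}, bounded above whenever $x_3$ is bounded; hence $x_3$ must be unbounded above on it, giving $\sup_{p\in M_t}x_3(p)=\infty$. On the other hand, the first radius inequality shows that the part of $M_t$ with $x_3$ small lies inside the solid cylinder $\{x_1^2+x_2^2\le -2t\}$, so an end in the $-x_3$ direction would be asymptotically cylindrical (or thinner) with the $x_3$-axis as its axis; this is incompatible with the coarse structure of $\mathcal M$ (Theorem \ref{thm_finding_necks} and the uniqueness of the axis \cite{CM_uniqueness}), so $\inf_{p\in M_t}x_3(p)>-\infty$.

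The hard part will be establishing $\inf_{\bar M^X_\tau}x_3\le -\mu e^{-\tau/2}$. By the above, the lower neck is pinched strictly inside $\Sigma$ at $\{x_3=-L_0\}$, with $\sqrt2-(x_1^2+x_2^2)^{1/2}\ge c\,e^{\tau/2}$, while its cross-sectional radius stays $\le\sqrt2$ and is non-increasing as $x_3$ decreases below $-L_0$. I would then compare $\bar M^X_\tau$ below the slice $\{x_3=-L_0\}$ with a suitable shrinking barrier placed beneath it, quantifying how far down that barrier can be displaced before it would have to touch $\bar M^X_\tau$ — using that the cross section at $\{x_3=-L_0\}$ already lies a distance at least $c\,e^{\tau/2}$ inside $\Sigma$ — and conclude that $\bar M^X_\tau$ cannot be contained in $\{x_3>-\mu e^{-\tau/2}\}$ for a suitable numeric constant $\mu>0$. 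The difficulty is that the target depth $e^{-\tau/2}$ far exceeds the graphical radius $\rho(\tau)=e^{-\tau/20}$, so beyond the graphical region only the bound $x_1^2+x_2^2\le 2$ and the barrier comparison are available, and making that comparison quantitatively sharp enough is the delicate point on which the corollary hinges.
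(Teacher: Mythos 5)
Your treatment of the two radius inequalities is essentially the paper's own proof: read off the sign of $u^X$ at $z=\pm L_0$ from \eqref{main_thm_est2pp}, then use the reflected KM-shrinkers $\tilde\Sigma_b$ as outer barriers below $\{x_3=-L_0\}$ (letting $b\to0$) and the ADS-shrinkers $\Sigma_a$ as inner barriers above $\{x_3=L_0\}$ (letting $a\to\infty$), together with Corollary \ref{cor_barrier}. The ``in particular'' statements are also in the right spirit, although your exclusion of a downward end by appeal to ``the coarse structure'' is vaguer than necessary: once you know $\bar M^X_\tau\cap\{x_3\le -L_0\}$ lies inside the cylinder of radius $\sqrt2$, the unrescaled downward portion is trapped in a round shrinking solid cylinder and is therefore compact by comparison (as in the lemma following Corollary \ref{thm_compact}), which gives $\inf_{M_t}x_3>-\infty$ directly.

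The genuine gap is the third estimate, and you have in effect conceded it: your last paragraph describes a comparison you would like to make and then states that making it ``quantitatively sharp enough is the delicate point on which the corollary hinges,'' without resolving it. Two specific problems. First, the assertion that the cross-sectional radius is non-increasing as $x_3$ decreases below $-L_0$ is not available at this stage; monotonicity of the radius is only established much later (Section \ref{sec_classification_completion}, via Lemma \ref{lemma_radiusfunction}), and below the graphical region you genuinely only know $x_1^2+x_2^2\le 2$. Second, and more importantly, a \emph{static} barrier slid to a fixed depth cannot work, precisely because of the scale mismatch you identify: the depth $\mu e^{-\tau/2}$ must grow as $\tau\to-\infty$, so the barrier must move. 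The paper's resolution is the improved barrier argument of \cite[Sec. 3]{BC}: one takes the compact shrinkers $\Sigma_a$ translated along the $z$-axis by a \emph{time-dependent} amount of order $e^{-\tau/2}$ and verifies by a direct computation (using the shrinker equation and the sign of $\langle e_3,\nu\rangle$) that this moving family is a subsolution of the rescaled flow, hence a family of inner barriers; the fine neck estimate \eqref{main_thm_est2pp} is what guarantees the shifted shrinker stays strictly inside $\bar M^X_\tau$ in the neck region where the comparison is initialized. Since the shifted $\Sigma_a$ is enclosed and reaches down to depth comparable to $\mu e^{-\tau/2}$, so must $\bar M^X_\tau$. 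Without this (or an equivalent) sub/supersolution computation, the third estimate — and hence the corollary — is not proved.
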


\begin{proof}
The first two estimates follow easily by using the KM-shrinkers $\tilde{\Sigma}_b$ as outer barriers (using also Corollary \ref{cor_barrier}) and the ADS-shrinkers $\Sigma_a$ as inner barriers; since $X_0$ is arbitrary, the latter also implies that $\sup_{p\in M_t} x_3(p) =\infty$.

The third estimate follows from the improved barrier argument (where the surfaces $\Sigma_a$ are shifted along the $z$-axis) from \cite[Sec. 3]{BC}.

Finally, for any $X_0=(x_0,t_0)\in \mathcal{M}$, after recentering in space, the estimates tells us that $\bar{M}^{X_0}_\tau \cap\{ x_3\leq -L_0 \}$ is contained inside the cylinder $\Sigma$ of radius $\sqrt{2}$ for $\tau \leq \mathcal{T}$. Hence, by comparison $M_{t_0}\cap \{ x_3 \leq -L_0\}$ is compact (see also Corollary \ref{thm_compact} and its proof). Since $X_0$ is arbitrary, this yields $\inf_{p\in M_t} x_3(p) >-\infty$.
\end{proof}

\bigskip

\subsection{Fine analysis in the neutral mode}\label{sec_fine_neutral}

In this section, we assume that the neutral mode is dominant. The goal is to prove Corollary \ref{thm_compact}, which show that the solution is compact. This will follow from Theorem \ref{thm_rotation}, which gives a precise inwards quadratic expansion, and a barrier argument.\\

Given any center $X_0\in \mathcal M$, there exists some functions $\sigma$ and $\rho$ satisfying \eqref{univ_fns} such that \eqref{small_graph} holds, and we have
\begin{align}\label{Neutral mode}
&U_-+U_+=o(U_0), && \big|\partial_\tau U_0\big| \leq o(U_0)
\end{align}
for $\tau\leq \mathcal{T}$. In this section, $C<\infty$ and $\mathcal{T}>-\infty$ denote constants that can change from line to line and may also depend on $X_0\in\mathcal M$.
To distinguish the initial choice of $\rho$, we set 
\begin{equation}\label{rho_0_eq}
\rho_0(z)=\rho(z).
\end{equation}
We will later use improved scale functions, but $\rho_0$ will never change.

\bigskip

\subsubsection{Graphical radius}
The first goal is to prove a lower bound for the optimal graphical radius. To begin with, we consider the positive function
\begin{equation}\label{def_alpha}
\alpha(\tau)=\left(\int_{|x_3|\leq L_0}u^2(x,s)\tfrac{1}{4\pi} e^{-\frac{|x|^2}{4}}\right)^{1/2}.
\end{equation}

\begin{lemma}\label{alpha0 bound L2}
For $L \in [L_0,\rho(\tau)]$, we have the estimate
\begin{align}
\alpha(\tau)^2 \leq \int_{\Sigma \cap \{|x_3|\leq L\}} u^2(x,\tau)\tfrac{1}{4\pi}e^{-\frac{|x|^2}{4}} \leq C\alpha(\tau)^2.
\end{align}
\end{lemma}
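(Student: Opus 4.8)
The plan is as follows. The left-hand inequality is immediate from monotonicity: since $L \geq L_0$, the slab $\{|x_3| \leq L\}$ contains $\{|x_3| \leq L_0\}$, and the integrand $u^2 \tfrac{1}{4\pi} e^{-|x|^2/4}$ is nonnegative, so $\int_{\Sigma \cap \{|x_3| \leq L\}} u^2 \tfrac{1}{4\pi} e^{-|x|^2/4} \geq \int_{\Sigma \cap \{|x_3| \leq L_0\}} u^2 \tfrac{1}{4\pi} e^{-|x|^2/4} = \alpha(\tau)^2$ by the definition \eqref{def_alpha}. So the content is the right-hand inequality, which I would obtain by iterating the second integral estimate of Proposition \ref{Gaussian density analysis}.

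Write $I(L) := \int_{\Sigma \cap \{|x_3| \leq L\}} u^2 \tfrac{1}{4\pi} e^{-|x|^2/4}$, so that $I(L_0) = \alpha(\tau)^2$ and $I$ is nondecreasing in $L$. Splitting off the outermost annulus gives $I(L) = I(L/2) + \int_{\Sigma \cap \{L/2 \leq |x_3| \leq L\}} u^2 \tfrac{1}{4\pi} e^{-|x|^2/4}$, and the second estimate of Proposition \ref{Gaussian density analysis} (with outer radius $L$, which is admissible whenever $L \in [2L_0, \rho(\tau)]$, since then $L/2 \geq L_0$ and $L \leq \rho(\tau)$) bounds the last term by $CL^{-2} I(L/2)$. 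Hence $I(L) \leq (1 + CL^{-2}) I(L/2)$ for all $L \in [2L_0, \rho(\tau)]$, where $C$ is the numerical constant from Proposition \ref{Gaussian density analysis}.

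Now I would iterate this along the dyadic sequence $L, L/2, L/4, \dots$, stopping at the first term $L/2^{N}$ lying in $[L_0, 2L_0)$ (so $N$ is the least integer with $L/2^N < 2L_0$; one checks $L/2^N \geq L_0$ since $L/2^{N-1} \geq 2L_0$, and $N \geq 1$ as long as $L \geq 2L_0$). Every intermediate radius $L/2^k$ with $0 \leq k \leq N-1$ satisfies $L/2^k \geq 2L_0 \geq L_0$ and $L/2^k \leq L \leq \rho(\tau)$, so the bound of the previous paragraph applies at each step and yields $I(L) \leq \big(\prod_{k=0}^{N-1}(1 + C\,4^k L^{-2})\big)\, I(L/2^N) \leq \big(\prod_{k=0}^{N-1}(1 + C\,4^k L^{-2})\big)\, I(2L_0)$. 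Taking logarithms, $\sum_{k=0}^{N-1}\log(1 + C\,4^k L^{-2}) \leq C L^{-2}\sum_{k=0}^{N-1} 4^k \leq \tfrac{C}{3}\,4^N L^{-2}$; since $L/2^N \geq L_0$ we have $4^N \leq L^2/L_0^2$, so this is at most $C/(3L_0^2)$, a constant depending only on $L_0$. Thus the product is bounded by $e^{C/(3L_0^2)}$, and one final application of Proposition \ref{Gaussian density analysis} with $L = 2L_0$ (admissible for $\tau \leq \mathcal{T}$, so that $\rho(\tau) \geq 2L_0$) gives $I(2L_0) \leq (1 + C(2L_0)^{-2})\, I(L_0) = (1 + C(2L_0)^{-2})\,\alpha(\tau)^2$. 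Combining, $I(L) \leq C'\alpha(\tau)^2$ with $C' = C'(L_0)$. The remaining case $L \in [L_0, 2L_0)$ follows directly from monotonicity and the same last estimate: $I(L) \leq I(2L_0) \leq (1 + C(2L_0)^{-2})\,\alpha(\tau)^2$.

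There is no serious obstacle here; the only two points to keep in mind are (i) respecting the admissible radius range $[L_0,\rho(\tau)]$ of Proposition \ref{Gaussian density analysis} at every step, which is exactly what dictates the stopping rule at scale $\sim L_0$ and uses $\tau \leq \mathcal{T}$, and (ii) verifying that the accumulated product of correction factors telescopes to a constant independent of $L$ and $\tau$, which is the elementary geometric-series estimate above. All resulting constants depend only on $L_0$ and the numerical constant in Proposition \ref{Gaussian density analysis}.
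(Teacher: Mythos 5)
Your proof is correct and is essentially the paper's argument: both iterate the second estimate of Proposition \ref{Gaussian density analysis} over dyadic scales between $L_0$ and $L$ and bound the accumulated product of factors $1+C\cdot(\text{scale})^{-2}$ by a convergent geometric series. The only cosmetic difference is that you halve downward from $L$ while the paper doubles upward from $L_0$; the admissibility checks and the telescoping estimate are the same.
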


\begin{proof}
By Proposition \ref{Gaussian density analysis} we have
\begin{equation}
\int_{\Sigma \cap \{{ 2^{k-1}L_0\leq |x_3|\leq 2^{k}L_0} \}} u^2e^{-\frac{|x|^2}{4}} \leq \frac{C_0}{4^kL_0^2} \int_{\Sigma \cap \{|x_3|\leq 2^{k{-1}}L_0\}} u^2e^{-\frac{|x|^2}{4}}
\end{equation}
where $C_0$ is a constant. Hence,
\begin{equation}
\int_{\Sigma \cap \{|x_3|\leq 2L_0\}} u^2\tfrac{1}{4\pi}e^{-\frac{|x|^2}{4}} \leq \Big(1+\tfrac{C_0}{4L_0^2}\Big) \alpha(\tau)^2 \leq \exp\Big(\tfrac{C_0}{4L_0^2}\Big) \alpha(\tau)^2,
\end{equation}
and
\begin{align}
\int_{\Sigma \cap \{|x_3|\leq 2^2L_0\}} u^2\tfrac{1}{4\pi}e^{-\frac{|x|^2}{4}} & \leq \Big(1+\tfrac{C_0}{4^2L_0^2}\Big) \exp\Big(\tfrac{C_0}{4L_0^2}\Big) \alpha(\tau)^2\\
&\leq \exp\big(\left(\tfrac14+\tfrac{1}{4^2}\right)\tfrac{C_0}{L_0^2}\big) \alpha(\tau)^2.\nonumber
\end{align}
Thus, if $2^{I-1} L_0 \leq \rho(\tau)\leq 2^{I}L_0$ then we inductively obtain
\begin{equation}
\int_{\Sigma \cap \{|x_3|\leq \rho(\tau)\}} u^2\tfrac{1}{4\pi}e^{-\frac{|x|^2}{4}} \leq \exp\bigg(\sum_{i=1}^I \tfrac{1}{4^i}\tfrac{C_0}{L_0^2}\bigg) \alpha(\tau)^2\leq \exp\Big(\tfrac{2C_0}{L_0^2}\Big) \alpha(\tau)^2.
\end{equation} 
This completes the proof.
\end{proof}

\bigskip
Now, define an increasing continuous function by
\begin{equation}\label{beta1 def}
\bar{\alpha}(\tau)= \sup_{\sigma \leq \tau}\alpha(\sigma).
\end{equation}
By standard interior estimates, we have 
\begin{equation}\label{C0 bound by beta1}
|u|(x,\tau) \leq C\bar{\alpha}(\tau)
\end{equation}
for $|x_3|\leq L_0$ and $\tau\leq\mathcal{T}$.

For technical reasons, it will be best to work with a monotone function $\beta$, which simultaneously has controlled derivatives. 
To this end, we define
\begin{equation}
\beta(\tau)=\sup_{\sigma \leq \tau}\left(\int_{\Sigma} u^2(x,\sigma)\varphi^2\big( \tfrac{x_3}{\rho_0(\sigma)}\big)\tfrac{1}{4\pi}e^{-\frac{|x|^2}{4}}\right)^{1/2},
\end{equation}
where we recall that $\rho_0$ is defined in \eqref{rho_0_eq} to be the original graphical scale function, which is the input to this section. 
Clearly, $\beta$ is a locally Lipschitz, increasing function. By equation \eqref{Neutral mode}, we have $\beta'=o(\beta)$ at almost every time,\footnote{If one prefers thinking about smooth objects, one could further regularize $\beta$ to be $C^1$, maintaining similar properties.} so in particular
\begin{align}\label{beta3 gradient}
0\leq \beta'(\tau)\leq \tfrac{1}{5}\beta(\tau).
\end{align}
Moreover, by Lemma \ref{alpha0 bound L2} we have
\begin{equation}
\bar{\alpha}(\tau) \leq \beta(\tau)= \left(\int_{\Sigma} u^2(x,\sigma)\varphi^2\big( \tfrac{x_3}{\rho_0(\sigma)}\big)\tfrac{1}{4\pi}e^{-\frac{|x|^2}{4}}\right)^{1/2} \leq C \alpha(\sigma) \leq C\bar{\alpha}(\tau),
\end{equation}
where $\sigma$ is chosen such that the second equality holds. To recapitulate, we have obtained
\begin{equation}\label{beta23 ratio}
\bar{\alpha}(\tau)\leq \beta(\tau)\leq C\bar{\alpha}(\tau).
\end{equation}

We now prove a $C^0$-estimate in terms of $\beta(\tau)$.

\begin{proposition}\label{C0 estimate beta3}
There are constants $c>0$ and $C<\infty$ such that
\begin{equation}
|u|(x,\tau) \leq C\beta(\tau)^{\frac{1}{2}}
\end{equation}
holds whenever $|x_3| \leq c\beta(\tau)^{-\frac{1}{4}}$ and $\tau \leq \mathcal{T}$.
\end{proposition}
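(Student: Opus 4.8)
The plan is to establish the estimate by a two-sided barrier argument, using the ADS-shrinkers $\Sigma_a$ and the KM-shrinkers $\tilde{\Sigma}_b$ from Section \ref{sec_fine_neck_setup}, very much in the spirit of \cite{ADS,BC} and of the proof of Proposition \ref{prop_c0est}. Fix $\hat{\tau}\leq\mathcal{T}$ and abbreviate $\beta=\beta(\hat{\tau})$; since $\beta(\tau)\to 0$ as $\tau\to-\infty$ we may assume $\beta\leq 1$. On the inner region $\{|x_3|\leq L_0\}$ the claim is immediate from the coarse estimate \eqref{C0 bound by beta1} together with \eqref{beta23 ratio}, which give $|u|\leq C\beta_1(\hat{\tau})\leq C\beta\leq C\beta^{1/2}$; so the real task is to control $u$ on the much larger annular region $L_0\leq|x_3|\leq c\beta^{-1/4}$.

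For the upper bound I would take $b:=A\sqrt{\beta}$, with $A$ a large constant, and use $\tilde{\Sigma}_b$ as an outer barrier on $\{|x_3|\geq L_0\}$. By \eqref{prec_est_for_profile} one has $\tilde{u}_b(L_0)^2\geq 2+e^{-1}A^2L_0^2\beta$, which, by \eqref{C0 bound by beta1}, \eqref{beta23 ratio} and the monotonicity of $\beta$, exceeds $\sup_{\sigma\leq\hat{\tau}}\sup_{\bar M_\sigma\cap\{|x_3|=L_0\}}(x_1^2+x_2^2)$ once $A$ is chosen large; hence $\tilde{\Sigma}_b\cap\{|x_3|=L_0\}$ stays strictly outside $\bar M_\sigma$ for all $\sigma\leq\hat{\tau}$. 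Since the rescaled flow has vanishing asymptotic slope (Corollary \ref{cor_barrier}), there is a height $h_0=h_0(b)<\infty$, independent of $\sigma$, such that $\bar M_\sigma\cap\{|x_3|\geq h_0\}$ is automatically inside $\tilde{\Sigma}_b$; the comparison therefore reduces to the compact region $\{L_0\leq|x_3|\leq h_0\}$, where for $\sigma$ sufficiently negative $\bar M_\sigma$ lies strictly inside $\tilde{\Sigma}_b$ because $\bar M_\sigma\to\Sigma$. The avoidance principle now propagates this to $\sigma=\hat{\tau}$, and using \eqref{prec_est_for_profile} once more we get $x_1^2+x_2^2\leq 2+A^2\beta\,x_3^2$ on $\bar M_{\hat{\tau}}\cap\{|x_3|\geq L_0\}$; evaluating at $|x_3|\leq c\beta^{-1/4}$ yields $x_1^2+x_2^2\leq 2+A^2c^2\beta^{1/2}$, i.e. $u\leq C\beta^{1/2}$ there.

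The lower bound is entirely analogous, using instead the compact ADS-shrinker $\Sigma_a$ with $a:=a(\hat{\tau})$ chosen, via the neck asymptotics of $\Sigma_a$ from \cite[Sec. 8]{ADS}, so that $2-u_a(z)^2$ is comparable to $z^2/a^2$ for $L_0\leq z\leq a/2$ and $2-u_a(L_0)^2\geq C\beta$ with $C$ large; concretely $a\asymp\beta^{-1/2}$, which also ensures $c\beta^{-1/4}\leq a/2$ for $\hat{\tau}$ negative enough. The boundary condition at $|x_3|=L_0$ is checked exactly as above, and since $\Sigma_a$ is compact (and contained in $\{|x_3|\leq a\}$) the avoidance principle applies directly: for $\sigma\ll 0$ the surface $\bar M_\sigma$ lies entirely outside the solid region bounded by $\Sigma_a$, hence so does $\bar M_{\hat{\tau}}$. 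Consequently $x_1^2+x_2^2\geq u_a(|x_3|)^2\geq 2-C x_3^2/a^2\geq 2-C'c^2\beta^{1/2}$ on $\{L_0\leq|x_3|\leq c\beta^{-1/4}\}$, i.e. $u\geq-C\beta^{1/2}$ there. Together with the inner-region bound this gives $|u|\leq C\beta^{1/2}$ on $\{|x_3|\leq c\beta^{-1/4}\}$.

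I expect the main difficulty to be bookkeeping rather than conceptual: making the localized avoidance argument for the non-compact barrier $\tilde{\Sigma}_b$ airtight---this is exactly where the vanishing asymptotic slope (Corollary \ref{cor_barrier}) and the uniform-in-$\sigma$ cutoff height $h_0$ enter, to exclude any loss of the inequality at spatial infinity---and choosing the barrier parameters $a,b$ so that the single monotone quantity $\beta=\beta(\hat{\tau})$ simultaneously dominates the boundary values on $\{|x_3|=L_0\}$ at every earlier time $\sigma\leq\hat{\tau}$ (this is the reason one must work with $\beta$ rather than with $\alpha$) while still yielding the $\beta^{1/2}$ bound all the way out to $|x_3|=c\beta^{-1/4}$.
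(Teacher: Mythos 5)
Your proposal is correct and follows essentially the same route as the paper: a two-sided barrier argument on $\{|x_3|\geq L_0\}$ using the KM-shrinkers $\tilde{\Sigma}_b$ with $b\asymp\beta(\hat\tau)^{1/2}$ as outer barriers and the ADS-shrinkers $\Sigma_a$ with $a\asymp\beta(\hat\tau)^{-1/2}$ as inner barriers, with the boundary comparison at $|x_3|=L_0$ supplied by the coarse bound $|u|\leq C\beta$ and the monotonicity of $\beta$, and the profile estimates $\tilde u_b(z)^2-2\asymp b^2z^2$, $2-u_a(z)^2\asymp z^2/a^2$ yielding the $\beta^{1/2}$ bound out to $|x_3|\asymp\beta^{-1/4}$. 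The paper's proof makes the identical parameter choices and derives the same profile inequalities from \eqref{profile_diff_inn}, so there is nothing substantive to add.
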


\begin{proof}
By the estimates \eqref{C0 bound by beta1} and \eqref{beta23 ratio} there is a constant $K<\infty$ such that
\begin{equation}\label{usq_est}
|u|(x,\tau)\leq K\beta(\tau)
\end{equation}
whenever $|x_3|\leq L_0$ and $\tau\leq\mathcal{T}$.

Fixing $\hat\tau\leq \mathcal{T}$, we consider the ADS-shrinker $\Sigma_a$ with parameter 
\begin{equation}\label{ads_beta}
a=c(K\beta(\hat\tau))^{-\frac{1}{2}},
\end{equation}
where $c>0$ is a numerical constant depending on $L_0$. Then, by \cite[Lemma 4.4]{ADS} the profile function $u_a$ satisfies
\begin{equation}\label{ads_godown}
u_{a}(L_0)\leq \sqrt{2}-K\beta(\hat\tau).
\end{equation}
Comparing \eqref{usq_est} and \eqref{ads_godown} we infer that $\bar{M}_{\tau}\cap \{x_3=L_0\}$ lies outside of $\Sigma_a\cap \{x_3=L_0\}$ for $\tau\leq\hat\tau$.

On the other hand, similarly as in \eqref{profile_diff_inn} we have
\begin{equation}
0\leq \frac{d}{dz}\log \bigg(\frac{2-u_a(z)^2}{z^2}\bigg) \leq \frac{16}{z^3}
\end{equation}
Integration from $\sqrt{a}$ to $a$ yields
\begin{equation}
0\leq \log \bigg(\frac{2}{a^2}\bigg)-\log \bigg(\frac{2-u_a(\sqrt{a}\,)^2}{a}\bigg) \leq 1,
\end{equation}
hence,
\begin{equation}\label{uatsqrta}
u_a(\sqrt{a}\,)^2\geq 2-\tfrac{2}{a}.
\end{equation}
Since $\bar{M}_{\hat\tau}\cap \{x_3 \geq L_0\}$ lies outside of $\Sigma_a\cap \{x_3 \geq L_0\}$, comparing the inequalities \eqref{ads_beta} and \eqref{uatsqrta} yields
\begin{equation}\label{u_lower_beta}
u(x,\hat\tau) \geq -C\beta(\hat{\tau})^{\frac{1}{2}}
\end{equation}
for $\{L_0\leq x_3\leq\sqrt{a}\}$ (and similarly for negative $x_3$). Hence, \eqref{usq_est} implies that \eqref{u_lower_beta} holds for $|x_3| \leq \sqrt{a}=C_0^{\frac{1}{2}}(K\beta(\hat{\tau}))^{-\frac{1}{4}}$.

\bigskip
Next, we establish the upper bound. By using \eqref{profile_diff_inn}, we have
\begin{equation}
e^{-1}b^2L_0^2 \leq \tilde{u}_b^2(L_0)-2 \leq 4(\tilde{u}_b(L_0)-\sqrt{2}\,),
\end{equation}
for sufficiently small $b$. Therefore, if we choose $b^{2}=4eL_0^{-2}K \beta(\hat \tau )$, then we have 
\begin{equation}
 K \beta(\hat \tau) =\tfrac{1}{4e} L_0^2 b^2 \leq  \tilde{u}_b(L_0)-\sqrt{2}.
\end{equation}
Hence, \eqref{usq_est} implies that  $\bar M_\tau \cap \{x_3 \geq L_0\}$ lies inside of $\tilde{\Sigma}_b \cap \{x_3 \geq L_0\}$ for $\tau  \leq \hat \tau$. Then, \eqref{profile_diff_inn} implies
\begin{equation}
\tilde{u}_b^2(1/\sqrt{b}\,)\leq 2+ b.
\end{equation}
Thus, we can complete the proof by arguing similarly with the KM-shrinkers $\tilde{\Sigma}_b$ as outer barriers.
\end{proof}

\bigskip

Similarly as in Proposition \ref{higherderivatives} the $C^0$-estimate from Proposition \ref{C0 estimate beta3} can be upgraded to a $C^4$-estimate. Hence, we can now repeat the process from Section \ref{sec_fine_neck_setup} with better functions $\rho$ and $\sigma$. Namely, by we can now choose
\begin{equation}\label{improved_rho}
\rho(\tau)=\beta(\tau)^{-\frac{1}{5}}, \textrm{ and } \sigma(\tau)=\beta(\tau)^{\frac{1}{5}}.
\end{equation}
and write $\bar{M}_\tau$ as a graph of a function $u$ over $\Sigma\cap B_{2\rho(\tau)}$ such that
\begin{equation}\label{C4 estimate}
\|u(\cdot,\tau)\|_{C^4(\Sigma\cap B_{2\rho(\tau)}(0))} \leq \rho(\tau)^{-2}
\end{equation}
for $\tau\leq \mathcal T$. Note that by equation \eqref{beta3 gradient} the derivative $\rho'$ indeed satisfies
\begin{equation}\label{contr_der}
-\rho(\tau) \leq \rho'(\tau) \leq 0,
\end{equation}
as required by condition \eqref{univ_fns}. From now on we work with the function
\begin{equation}
\hat{u}(x,\tau)=u(x,\tau)\varphi \left( \frac{x_3}{\rho(\tau)}\right),
\end{equation}
where $\rho$ is the improved graphical radius from \eqref{improved_rho}.\\

The following gives a lower bound for the improved graphical radius:

\begin{proposition}\label{prop_improved_graphical_radius}
There exists  constants $\gamma>0$ and $c>0$ such that
\begin{equation}\label{alpha_rho_pol}
\rho(\tau)\geq c|\tau|^\gamma.
\end{equation}
holds for $\tau\leq \mathcal{T}$.
\end{proposition}

\begin{proof}

By Colding-Minicozzi \cite[Theorem 6.1]{CM_uniqueness}, there exist $\eta\in (1/3,1)$ and $K<\infty$ such that
\begin{equation}
\left(F(\Sigma)-F(\bar{M}_{\tau})\right)^{1+\eta} \leq K\left(F(\bar{M}_{\tau-1})-F(\bar{M}_{\tau+1})\right)
\end{equation}
for $\tau\leq \mathcal{T}$. Using the discrete Lojasiewicz lemma \cite[Lemma 6.9]{CM_uniqueness} and also \cite[(6.19),(6.20)]{CM_uniqueness} this yields
\begin{equation}
\left(F(\Sigma)-F(\bar{M}_{\tau})\right) \leq C|\tau|^{-1/\eta},
\end{equation}
and 
\begin{equation}\label{tail_decay_sum}
\sum_{j=J}^{\infty} \left(F(\bar{M}_{-j-1})-F(\bar{M}_{-j})\right)^{1/2} \leq C(\nu)J^{-\nu}
\end{equation}
for every $\nu \in (0,\frac{1}{2\eta}-\frac{1}{2})$. Thus, as in the proof of \cite[Theorem 0.2]{CM_uniqueness}, we obtain
\begin{equation}
\int_{-\infty}^{\tau}\int_{\bar{M}_{\tau'}} \left|\mathbf{H}+\frac{x^{\perp}}{2}\right|e^{-\frac{|x|^2}{4}} d\mu_{\tau'} d\tau' \leq C|\tau|^{-\nu}
\end{equation}
for $\tau\leq\mathcal{T}$. Using also \cite[Lemma A.48]{CM_uniqueness} this yields
\begin{equation}
\int_{\Sigma\cap \{|x_3|\leq \rho(\tau)/2\}}|\hat{u}(x,\tau)|e^{-|x|^2/4} \leq C|\tau|^{-\nu}.
\end{equation}
Together with \eqref{C4 estimate} this implies
\begin{equation}
\alpha(\tau)\leq C|\tau|^{-\nu/2}.
\end{equation}
Taking also into account \eqref{beta23 ratio} we infer that
\begin{equation}\label{eqn_decay_beta}
\beta(\tau)\leq C|\tau|^{-\nu/2}.
\end{equation}
Recalling \eqref{improved_rho}, we conclude that \eqref{alpha_rho_pol} holds with $\gamma=\nu/10$. 
\end{proof}

\bigskip

\subsubsection{Estimates for some error terms}

In the following lemma we Taylor expand the rescaled mean curvature flow to second order:

\begin{lemma}\label{lemma_taylor}
The function $\hat{u}(x,\tau)=u(x,\tau)\varphi \big( \tfrac{x_3}{\rho(\tau)}\big)$ satisfies
\begin{equation}
\partial_\tau \hat u = \mathcal{L}\hat u-\tfrac{1}{2\sqrt{2}}\hat u^2-\tfrac{1}{2\sqrt{2}} \hat u_\theta^2-\tfrac{1}{ \sqrt{2}}\hat u \hat u_{\theta\theta} +E,
\end{equation}
where the error term can be estimated by
\begin{align}\label{est_eq}
|E|\leq &C\varphi(|u|+|\nabla u|)^2(|u|+|\nabla u|+|\nabla^2u|)\nonumber\\
&+ C |\varphi'|\rho^{-1} \big(|\nabla u|+|zu|\big)\nonumber\\
&+ C |\varphi''|\rho^{-2}|u|\nonumber\\
&+ C \varphi(1-\varphi)\big(|u|^2+|\nabla u|^2+|\nabla^2u|^2\big).
\end{align}
\end{lemma}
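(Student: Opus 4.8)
The plan is to write down the evolution equation for the graph function $u$ over the cylinder, Taylor expand it to second order, and then account for the cutoff; this is the second order refinement of Lemma~\ref{Error u-PDE} and Lemma~\ref{Error hat.u-PDE 1}, carried out in the spirit of \cite{ADS} (which keeps quadratic terms) rather than only \cite{BC}. Since $\nu_\Sigma$ is the radial direction, $\bar M_\tau$ is a radial graph $r=\sqrt2+u(z,\theta,\tau)$ over the cylinder $\Sigma$ of radius $\sqrt2$. First I would parametrise $\bar M_\tau$ by $(z,\theta)$ and compute the induced metric, the outward unit normal $\nu$, and the second fundamental form of such a graph; contracting the rescaled mean curvature flow $\partial_\tau X=\mathbf H+\tfrac12X$ with $\nu$ then gives a quasilinear scalar parabolic equation $\partial_\tau u=a^{ij}(u,\nabla u)u_{ij}+b(u,\nabla u,z)$, where $a^{ij},b$ are smooth in $(u,\nabla u)$ and $b$ depends on $z$ only through the rescaling (drift) term.

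The main step is the Taylor expansion of the right hand side about $(u,\nabla u,\nabla^2u)=0$. The zeroth order term vanishes since $\Sigma$ is a static self--shrinker; this also forces the quadratic contributions of the metric normalisation factor to drop out, because they multiply the vanishing constant part of $\langle\mathbf H,\nu\rangle+\tfrac12\langle X,\nu\rangle$. The first order term is $\mathcal L u$, by the definition of $\mathcal L$ in \eqref{def_oper_ell}. Collecting the second order terms, the purely gradient quadratic pieces $u_z^2$ and $u_\theta^2$ coming from the axial curvature and from the drift cancel against those coming from the cross--sectional curvature, leaving the quadratic part $-\tfrac1{2\sqrt2}u^2-\tfrac1{4\sqrt2}\partial_\theta^2u^2$; as a check this reduces to the $-\tfrac1{2\sqrt2}u^2$ of \cite{ADS} when $u$ is rotationally symmetric. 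Because the equation is quasilinear, what is left beyond the second order Taylor polynomial is of the schematic form (quadratic in $(u,\nabla u)$)$\cdot$($\nabla^2u$ or lower order) $+$ (cubic in $(u,\nabla u)$), together with a $z$--weighted piece $z\cdot O(|\nabla u|^3)$ coming from the drift term $-\tfrac z2u_z$ (whose higher order corrections are $z$ times a cubic expression in $\nabla u$, with no intervening quadratic term). This accounts for the first two lines of \eqref{est_eq}.

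Finally I would pass from $u$ to $\hat u=u\,\varphi(x_3/\rho)$. Multiplying the $u$--equation by $\varphi$ and commuting $\varphi$ through $\mathcal L$ produces, exactly as in the proof of Lemma~\ref{Error hat.u-PDE 1}, the terms $-\tfrac2\rho u_z\varphi'-\tfrac1{\rho^2}u\varphi''+\tfrac{x_3}{2\rho}u\varphi'-\tfrac{x_3\rho'}{\rho^2}u\varphi'$, which by $|\varphi'|,|\varphi''|\le C$ and \eqref{contr_der} are dominated by $C|\varphi'|\rho^{-1}(|\nabla u|+|zu|)+C|\varphi''|\rho^{-2}|u|$, i.e.\ the third and fourth lines of \eqref{est_eq}. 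Since $\varphi$ is independent of $\theta$ one has $\hat u^2=\varphi^2u^2$ and $\partial_\theta^2\hat u^2=\varphi^2\partial_\theta^2u^2$, so replacing $\varphi u^2$ by $\hat u^2$ and $\varphi\,\partial_\theta^2u^2$ by $\partial_\theta^2\hat u^2$ costs only $\varphi(1-\varphi)\cdot O(|u|^2+|\nabla^2u|^2)$ (on the transition region, absorbing $|\nabla u|^2$ into the other two by \eqref{C4 estimate} if necessary), which is the last line of \eqref{est_eq}. Collecting the pieces gives the claimed identity and estimate. The one genuine difficulty is the bookkeeping: carrying out the second order expansion while tracking the cancellation of the quadratic gradient terms, and then routing each error term produced by the cutoff into one of the five groups on the right of \eqref{est_eq}. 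I would organise the expansion by first computing the quadratic parts of $\langle\mathbf H,\nu\rangle$ and of $\tfrac12\langle X,\nu\rangle$ separately, and only afterwards multiplying by the normalisation factor.
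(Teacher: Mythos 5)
Your proposal is correct and follows essentially the same route as the paper: write the rescaled flow as a quasilinear scalar equation for the radial graph over $\Sigma$, Taylor expand to second order using that the cylinder is a static solution (so the zeroth order vanishes, the first order is $\mathcal L u$, and the second order is $-\tfrac{1}{2\sqrt2}u^2-\tfrac{1}{4\sqrt2}\partial_\theta^2u^2$), bound the remainder by the structural facts that the equation is linear in $\nabla^2u$ and that $z$ appears only in the drift term, and then commute the cutoff through $\mathcal L$ as in Lemma~\ref{Error hat.u-PDE 1}, with the $\varphi(1-\varphi)$ term from replacing $\varphi\,\mathcal Q(u)$ by $\mathcal Q(\hat u)$. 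The only cosmetic difference is that the paper organizes the expansion via the interpolation $w^s=\sqrt2+su$ and the one-variable Taylor theorem with Lagrange remainder $\tfrac16 f'''(s_0)s_0^3$, whereas you expand directly in the $2$-jet of $u$; these are equivalent.
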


\begin{proof}
The rescaled mean curvature flow for graphs over the cylinder is given by
\begin{align}
\partial_\tau u=&\frac{\Big(1+\frac{  u_\theta ^2}{(\sqrt{2}+u)^2}\Big)u_{zz}-\frac{ u_\theta^2u_{\theta\theta}}{(\sqrt{2}+u)^4}-\frac{2 u_\theta u_zu_{z\theta} }{(\sqrt{2}+u)^2}-\frac{ u_\theta ^2}{(\sqrt{2}+u)^3}}{1+ u_z ^2+  u_\theta ^2 (\sqrt{2}+u)^{-2}}\nonumber\\
&+\,\frac{u_{\theta\theta}}{(\sqrt{2}+u)^2}-\frac{1}{\sqrt{2}+u}+\frac{1}{2}\big(\sqrt{2 }+u-zu_z\big),
\end{align}
c.f. \cite[App. A]{GKS}.  Hence,
\begin{align}
\partial_\tau u =  u_{zz}-\tfrac{ u_\theta ^2}{(\sqrt{2}+u)^3}+\tfrac{u_{\theta\theta}}{(\sqrt{2}+u)^2}-\tfrac{1}{\sqrt{2}+u}+\tfrac{1}{2}\big(\sqrt{2 }+u-zu_z\big)+E_1,
\end{align}
where
\begin{equation}
|E_1|\leq C|\nabla u|^2(|\nabla u|+|\nabla^2 u|).
\end{equation}
This implies
\begin{align}
   \partial_\tau u=\mathcal{L}  u+\mathcal{Q}(u)+E_2,
\end{align}
where
\begin{equation}
\mathcal{Q}(u)=-\tfrac{1}{2\sqrt{2}}  u^2-\tfrac{1}{2\sqrt{2}}   u_\theta^2-\tfrac{1}{ \sqrt{2}} u   u_{\theta\theta},
\end{equation}
and
\begin{equation}
|E_2|\leq C|\nabla u|^2(|u|+|\nabla u|+|\nabla^2 u|)+C|u|^2|\nabla^2u|+C|u|^3.
\end{equation}

We now want to see what this computation translates to in terms of $\hat{u}$.
For the linear term, using \eqref{contr_der} we get
\begin{equation}\label{lin_est}
\begin{aligned}
\big|\partial_\tau\hat{u}&-\mathcal{L} \hat{u}-\varphi (\partial_\tau u-\mathcal{L}u )\big| \leq 
\rho^{-2}|\varphi''||u|+2\rho^{-1}|\varphi'|(|\nabla u|+|zu|).
\end{aligned}
\end{equation}

For the quadratic term, using that $\rho$ is $\theta$-independent we get
\begin{equation}\label{quad_est}
|\mathcal{Q}(\hat{u})-\varphi\mathcal{Q}(u)|=|\varphi^2\mathcal{Q}(u)-\varphi\mathcal{Q}(u)|= \varphi (1-\varphi)|\mathcal{Q}(u)|.
\end{equation}

Putting everything together, this implies the assertion.
\end{proof}

\bigskip

For our function
\begin{equation}
\hat{u}(x,\tau)=u(x,\tau)\varphi \big( \tfrac{x_3}{\rho(\tau)}\big),
\end{equation}
where $\rho$ is the improved graphical scale from \eqref{improved_rho}, we let
\begin{align}
& U_+=\|P_+\hat u\|_{\mathcal{H}}^2, && U_0=\|P_0\hat u\|_{\mathcal{H}}^2, && U_-=\|P_-\hat u\|_{\mathcal{H}}^2.
\end{align}
By assumption \eqref{Neutral mode} and Proposition \ref{Plus or Neutral} we still have
\begin{equation}\label{neutral mode in check u}
 U_+ + U_- =o(U_0).
\end{equation}
Therefore, we can now expand
\begin{equation}\label{hatu_expansion}
\hat u = \alpha_1\psi_1+ \alpha_2\psi_2+ \alpha_3\psi_3 + o(|\vec\alpha|),
\end{equation}
where
\begin{equation}
\vec{\alpha}(\tau)=(\alpha_1(\tau),\alpha_2(\tau),\alpha_3(\tau))
\end{equation}
are time dependent coefficients, and where $\psi_1,\psi_2$ and $\psi_3$ are the three zero eigenfunctions satisfying $\mathcal L \psi_i =0$,
explicitly
\begin{align}
\psi_1 &= 2^{-\frac{3}{2}}(\tfrac{e}{2\pi})^{\frac{1}{4}}(z^2-2),\\
\psi_2 &= (\tfrac{e}{2\pi})^{\frac{1}{4}} z\cos\theta,\\
\psi_3 &= (\tfrac{e}{2\pi})^{\frac{1}{4}}  z \sin\theta.
\end{align}
Moreover, Lemma \ref{alpha0 bound L2} and equation \eqref{neutral mode in check u} yield that 
\begin{equation}\label{alpha0 and alpha}
C^{-1}\alpha(\tau) \leq |\vec{\alpha}|(\tau) \leq C\alpha(\tau)
\end{equation} 
for $\tau\leq\mathcal{T}$, where $\alpha(\tau)$ is the function defined in equation \eqref{def_alpha}.

\begin{proposition}\label{prop_errortaylorest}
The error term $E$ from Lemma \ref{lemma_taylor} satisfies the estimate
\begin{equation}
|\langle E,\psi_i\rangle |\leq C \beta(\tau)^{2+\frac{1}{5}}
\end{equation}
for $\tau\leq\mathcal{T}$.
\end{proposition}

\begin{proof}
Using equation \eqref{C4 estimate} and Lemma \ref{lemma_taylor} we see that
\begin{equation}
|E| \leq C\rho(\tau)^{-2}. 
\end{equation}
Together with $|\psi_i| \leq C\rho(\tau)^2$ on the support of $E$ this yields the coarse estimate
\begin{equation}
|E| |\psi_i| \leq C.
\end{equation}
Using this, we compute
\begin{align}
\bigg|\int_{\Sigma\cap \big\{ |x_3|\geq \rho(\tau)^{\frac{1}{10}}\}}  E \psi_i e^{-\frac{|x|^2}{4}} \bigg| &\leq C\int_{\Sigma\cap {\big\{ \rho(\tau)^{\frac{1}{10}}\leq |x_3| \leq \rho(\tau) \big\}}}\exp\Big(-\tfrac{1}{4}\rho(\tau)^{\frac{1}{5}}\Big)\nonumber\\
&\leq C\beta(\tau)^3
\end{align}
for $\tau\leq\mathcal{T}$.

On the other hand, if $|x_3|  \leq \tfrac12 \rho(\tau)$ we have $\varphi'=\varphi''=1-\varphi=0$. Therefore, for $|x_3| \leq \rho(\tau)^{\frac{1}{10}}$ we can estimate
\begin{align}
|E||\psi_i| &\leq C(z^2+2)  (|u|+|\nabla u|)^2(|u|+|\nabla u|+|\nabla^2u|) \\
& \leq C\rho^{\frac{3}{10}}\rho^{-2}(|u|^2+|\nabla u|^2)\\
& \leq C\rho^{-1}(|u|^2+|\nabla u|^2).
\end{align}
Thus, using also Proposition \ref{Gaussian density analysis} and Lemma \ref{alpha0 bound L2}, we infer that
\begin{align}
\bigg|\int_{\Sigma\cap \big\{|x_3| \leq \rho(\tau)^{\frac{1}{10}}\big\}}  E \psi_i e^{-\frac{|x|^2}{4}} \bigg| &\leq \frac{C}{\rho(\tau)}\int_{\Sigma\cap \{|x_3| \leq \rho(\tau)\}} (u^2+|\nabla u|^2) e^{-\frac{|x|^2}{4}}\\
&\leq C\rho(\tau)^{-1}\alpha(\tau)^2\\
&\leq C \beta(\tau)^{2+\tfrac{1}{5}}.
\end{align}
This proves the proposition.
\end{proof}

\bigskip

\subsubsection{Axis tilt decay}  Our next objective is to show that rotations, captured by the coefficients $\alpha_2,\alpha_3$ in the expansion \eqref{hatu_expansion}, are decaying rapidly. \\

Given a point $q\in \mathbb{R}^3$ and a direction $w \in S^2$, we denote by $K$ the \emph{normalized rotation vector field} that corresponds to a rotation by $\pi/2$ around the axis $W=\{q+wt:t \in \mathbb{R}\}$, namely
\begin{equation}
K(x) = SJS^{-1}(x-q),\qquad \textrm{ where } J = \begin{bmatrix} 0 & -1 & 0 \\ 1 & 0 & 0 \\ 0 & 0 & 0 \end{bmatrix},
\end{equation}
and $S\in \textrm{SO}_3$ is any rotation matrix with $Se_3=w$.\footnote{$S$ is only determined up to a one-parameter choice, but $K$ is well-defined.}

\begin{definition}[{c.f. \cite[Def. 4.3]{BC}}]\label{def_eps_sym_restated}
A point $X\in \mathcal{M}$ with $H(X)>0$ is called \emph{$\delta$-symmetric} if there exists a normalized rotation vector field $K$ such that
\begin{equation}
|\langle K,\nu \rangle \, H | \leq \delta \textrm { in the parabolic ball } P(X,10H^{-1}(X)).
\end{equation}
\end{definition}

\begin{lemma}
There exists some $\eta_0>0$ such that all points $X\in\mathcal{M}$ that correspond to points in $\bar{M}_{\tau}\cap \{|x_3|\leq \rho(\tau)\}$ are $2^{-\eta_0 \rho(\tau)}$-symmetric for $\tau\leq\mathcal{T}$.  
\end{lemma}

\begin{proof}
Fix some time $\tau_0 \leq \mathcal{T}$ and set $R:=2\rho(\tau_0)$. Let $\eps_0>0$ and $L<\infty$ be the constant from the neck-improvement theorem \cite[Thm. 4.4]{BC}.

Let $\bar{X}\in \bar{M}_{\tau}$ be a point with $|x_3|\leq R-L$ and $\tau \leq \tau_0$, and denote by $X\in \mathcal{M}$ the corresponding point in the unrescaled flow. Using \eqref{C4 estimate} we see that every $X'\in P(X,LH^{-1}(X))\cap \mathcal{M}$ lies on an $\eps_0$-neck and is $\eps_0$-symmetric. Hence, by the neck improvement theorem $X$ is $2^{-1}\eps_0$-symmetric.\\
Similarly, for any $\bar{X}\in \bar{M}_{\tau}$ with $|x_3| \leq R-2L$ and $\tau\leq \tau_0$, the arguments above shows that the corresponding point $X\in \mathcal{M}$ is $2^{-2}\eps_0$-symmetric. Iterating this $k$ times,  until 
\begin{equation}\label{how_many_iterate}
2R-(k+1)L< R,
\end{equation} 
we get that for all $\bar{X}\in \bar{M}_{\tau}$ with $|x_{3}| \leq R/2$ and $\tau\leq \tau_0$, the corresponding point $X\in\mathcal{M}$ is $2^{-k}\eps_0$-symmetric.  Since from \eqref{how_many_iterate} we get $k >R/L-1$, choosing $\eta_0=\frac{1}{L}$ gives the desires result.  
\end{proof}

\begin{proposition}[axis tilt decay]\label{thm_global.rot}
There exists a constant $\eta>0$ such that for all $\tau\leq\mathcal{T}$ we have
\begin{equation}
|u_\theta(x,\tau)| \leq e^{-\eta |\tau|^\gamma}
\end{equation}
for all $x\in \Sigma\cap\{ |x_3|\leq c|\tau|^\gamma\}$.
\end{proposition}

\begin{proof}
For each $\tau\leq\mathcal{T}$, consider a point $\bar{X}_{\tau}\in \bar{M}_{\tau}\cap\{x_3=0\}$, and let $\omega_{\tau}\in S^2$ be a direction (close to $e_3$) in which the corresponding point $X_\tau\in\mathcal{M}$ in the unrescaled flow is $2^{-\eta \rho(\tau)}$-symmetric. Since each $X_{\tau-1}$ is $C2^{-\eta \rho(\tau)}$-symmetric with respect to both $\omega_{\tau}$ and $\omega_{\tau-1}$, we get 
\begin{equation}
|\omega_{\tau}-\omega_{\tau-1}|\leq  C2^{-\eta_0 \rho(\tau)}.
\end{equation}
Hence, using also $\omega_{\tau}\to e_3$ and Proposition \ref{prop_improved_graphical_radius}, we obtain
\begin{equation}
|\omega_{\tau}-e_3| \leq  C\sum_{m=0}^{\infty}2^{-\eta_0 \rho(\tau-m)}\leq C\sum_{m=0}^{\infty}2^{-c\eta_0 |\tau-m|^\gamma}.
\end{equation}
Note that this can be safely estimated by $Ce^{-\eta |\tau|^{\gamma}}$, where $\eta:=\eta_0/10$. 

As any $x\in \bar{M}_{\tau}$ with $|x_3|\leq c|\tau|^\gamma$ is also $C2^{-\eta \rho(\tau)}$-symmetric with direction $\omega(x,\tau)$, we get that 
\begin{equation}
|\omega(x,\tau)-\omega_\tau|\leq C|\tau|^\gamma 2^{-\eta\rho(\tau) },  
\end{equation}
which, again, can be safely estimated by  $Ce^{-\eta |\tau|^{\gamma}}$. Thus, all such points are also $Ce^{-\eta |\tau|^{\gamma}}$-symmetric with respect to the $x_3$-axis.
Considering the normalized rotation vector field $K:=x_1 {\partial}_{x_2}- x_2\partial_{x_1}$ around the $x_3$-axis, we conclude that
\begin{equation}
\tfrac{1}{2}|u_{\theta}| \leq H |\langle K, \nu \rangle |\leq Ce^{-\eta |\tau|^{\gamma}}.
\end{equation}
Slightly decreasing $\eta$, this proves the proposition.
\end{proof}

\bigskip

\begin{corollary}\label{prop_rot.decay}
The coefficients $\alpha_2,\alpha_3$ from the expansion \eqref{hatu_expansion} satisfy
\begin{align}
|\alpha_2|+|\alpha_3|\leq Ce^{-\eta|\tau|^\gamma}\label{rot.decay}
\end{align}
for $\tau\leq \mathcal{T}$.
\end{corollary}

\begin{proof}
Using $\psi_2=\partial_\theta \psi_3$  and Proposition \ref{thm_global.rot} we obtain
\begin{equation}
|\alpha_2|=\big|\langle \hat u,\partial_\theta \psi_3 \rangle \big|= \big|\langle \partial_\theta \hat u, \psi_3 \rangle \big|\leq ||\partial_\theta \hat{u}||_{\mathcal{H}}\leq Ce^{-\eta |\tau|^\gamma}.
\end{equation}
Similarly, we can estimate $|\alpha_3|$ to complete the proof.
\end{proof}

\bigskip

\subsubsection{The inwards quadratic neck theorem} The next proposition gives an ODE for the coefficient $\alpha_1$ from the expansion \eqref{hatu_expansion}.

\begin{proposition}\label{prop_ODEs}
The coefficient $\alpha_1$ from the expansion \eqref{hatu_expansion} satisfies
\begin{align}
\tfrac{d}{d\tau} \alpha_1 &= -(\tfrac{e}{2\pi})^{\frac{1}{4}}\alpha_1^2  + o(\beta^2)+O(e^{-\eta |\tau|^\gamma}).
\label{ODE1}
\end{align}
\end{proposition}

\begin{proof}
Using Lemma \ref{lemma_taylor} and the identities $\mathcal{L}\psi_1=0$ and $\partial_\theta \psi_1=0$ we compute
\begin{align}
\tfrac{d}{d\tau} \alpha_1 &= \langle \partial_\tau \hat u , \psi_1 \rangle\\
&=\langle \mathcal L \hat u  -\tfrac{1}{2\sqrt 2} \hat u ^2+\tfrac{1}{2\sqrt{2}}\hat{u}_{\theta}^2-\tfrac{1}{\sqrt{2}}(\hat{u}\hat{u}_{\theta})_\theta+ E , \psi_1 \rangle\\
&=\langle -\tfrac{1}{2\sqrt{2}} \hat u ^2+\tfrac{1}{2\sqrt{2}}\hat{u}_{\theta}^2 + E , \psi_1 \rangle\\
&=-\tfrac{1}{2\sqrt{2}}\alpha_1^2 \langle \psi_1^2,\psi_1\rangle + o(\beta^2)+O(e^{-\eta |\tau|^\gamma}),
\end{align}
where in the last step we used Proposition \ref{prop_errortaylorest}, Proposition \ref{thm_global.rot} and Corollary \ref{prop_rot.decay} to estimate the error terms. Together with
\begin{equation}
\int \psi_1^3 \tfrac{1}{4\pi} e^{-|x|^2/4}=2^{\frac{3}{2}}(\tfrac{e}{2\pi})^{\frac{1}{4}},
\end{equation}
this implies the assertion.
\end{proof}

Recall that by \eqref{eqn_decay_beta} the quantity $\beta(\tau)$ decays at some definite rate as $\tau\to -\infty$. The following lemma shows that $\beta(\tau)$ does not decay rapidly.

\begin{lemma}\label{prop_L1fail.neutral}
We have
\begin{equation}
\limsup_{\tau\to -\infty} |\tau|^{10}\beta(\tau)=\infty.
\end{equation}
\end{lemma}

\begin{proof}
Suppose towards a contradiction, there is some $C<\infty$ such that $\beta(\tau)\leq C|\tau|^{-10}$ for all $\tau\leq\mathcal{T}$. Then, $\rho(\tau)\geq c |\tau|^2$, and equation \eqref{U_PNM_system}, together with the assumption that the neutral mode is dominant, implies
\begin{equation}
|\tfrac{d}{d\tau}\log U_0|\leq C|\tau|^{-2}.
\end{equation}
Hence, $|\log U_0|\leq C$ for all $\tau \leq \mathcal{T}$, which contradicts $\displaystyle\lim_{\tau\to -\infty}U_0(\tau)= 0$.
\end{proof}

\begin{theorem}\label{thm_rotation}
The coefficients $\alpha_1,\alpha_2,\alpha_3$ from the expansion \eqref{hatu_expansion} satisfy
\begin{align}\label{sol_alph1}
\alpha_1(\tau)=\frac{-2^{\frac{1}{4}}\pi^{\frac{1}{4}}e^{-\frac{1}{4}}+o(1)}{| \tau|},
\end{align}
and
\begin{align}\label{sol_alph2}
|\alpha_2|+|\alpha_3|=o(|\alpha_1|)
\end{align}
for $\tau\leq \mathcal{T}$.
\end{theorem}

\begin{proof} To keep track of the errors, we consider the monotone quantity
\begin{equation}\label{def_bar_alpha}
\bar{\alpha}_1(\tau):=\sup_{\sigma\leq \tau}|\alpha_1(\sigma)|.
\end{equation}
By Proposition \ref{prop_ODEs}, inequality \eqref{alpha0 and alpha}, and Corollary \ref{prop_rot.decay} for every $\eps>0$ we get
\begin{align}\label{ODE.sharp}
\left|\tfrac{d}{d\tau} \alpha_1(\tau) +(\tfrac{e}{2\pi})^{\frac{1}{4}}\alpha_1^2(\tau)\right|\leq    \eps \bar{\alpha}_1^2(\tau)+|\tau|^{-1000}
\end{align}
for $\tau \leq \mathcal{T}=\mathcal{T}(\eps)$. Moreover, taking also into account Lemma \ref{prop_L1fail.neutral} we have
\begin{equation}\label{limsupbaralpha1}
\limsup_{\tau\to -\infty} |\tau|^{10}\bar{\alpha}_1(\tau)=\infty.
\end{equation}
Consider the sets
\begin{align}
I:&=\left\{\tau \leq \mathcal{T} \, :\,  \bar{\alpha}_1(\tau)=|\alpha_1(\tau)|\right\}, \\
J:&=\left\{\tau \leq \mathcal{T} \, :\, \bar{\alpha}_1(\tau)\geq |\tau|^{-10}\right\}.
\end{align}
Thanks to $\bar{\alpha}_1(\tau)\to 0$ and \eqref{limsupbaralpha1} the set $I\cap J$ contains a sequence of numbers going to $-\infty$. Clearly, $I\cap J\subseteq (-\infty,\mathcal{T}]$ is closed.

Let $\tau_0\in I\cap J$. Then, by \eqref{ODE.sharp} we have
\begin{equation}\label{OED_ineq}
\frac{d}{d\tau}\alpha_1(\tau)\leq -\frac{1}{2}\left(\frac{e}{2\pi}\right)^{\frac{1}{4}}\alpha_1^2(\tau)\leq  -\frac{1}{4}\left(\frac{e}{2\pi}\right)^{\frac{1}{4}}\frac{1}{|\tau|^{100}}
\end{equation}
at $\tau=\tau_0$. In particular, remembering \eqref{def_bar_alpha}, we see that $\alpha_1(\tau_0)<0$. In fact, we can find a $\delta>0$ such that \eqref{OED_ineq} and $\alpha_1(\tau)<0$ hold for $|\tau-\tau_0|<\delta$.
Moreover, if there is some $\hat{\tau} \in I$ with $\hat\tau \in (-\infty,\tau_0]$ and $\alpha'(\hat{\tau})=0$ then \eqref{ODE.sharp} yields
\begin{equation}
|\alpha_1^2(\hat{\tau})|\leq 2|\hat{\tau}|^{-1000}\leq 2|\hat{\tau}_0|^{-1000}< \tfrac{1}{100} |\alpha_1^2(\tau_0)|.
\end{equation}
Hence, if  $ \frac{1}{10}|\alpha_1(\tau_0)|\leq |\alpha_1(\tau)|\leq |\alpha_1(\tau_0)|$ then $\tau\in I$.
Thus, possibly after decreasing $\delta$, we get $(\tau_0-\delta,\tau_0]\subseteq I$. Then, \eqref{ODE.sharp}  implies $(\tau_0-\delta,\tau_0]\subseteq J$.


Summing up, we can find $\alpha_1$ by solving the ODE
\begin{align}\label{ODE.sharp_rest}
\left|\tfrac{d}{d\tau} \alpha_1(\tau) +(\tfrac{e}{2\pi})^{\frac{1}{4}}\alpha_1^2(\tau)\right|\leq    \eps \alpha_1^2(\tau)+|\tau|^{-1000}
\end{align}
for $\tau\leq\mathcal{T}(\eps)$. This yields \eqref{sol_alph1}. Since $\alpha_2,\alpha_3$ are rapidly decaying we also get \eqref{sol_alph2}. This concludes the proof of the theorem.
\end{proof}

\bigskip

\begin{corollary}
For $\tau\leq\mathcal{T}$ the rescaled flow $\bar{M}_\tau^{X_0} \cap \{|z|\geq L_0\}$ is contained inside the cylinder $\Sigma$ of radius $\sqrt{2}$.
\end{corollary}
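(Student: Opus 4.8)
The plan is to combine the quantitative information from Theorem \ref{thm_rotation} with the ADS-shrinkers as inner barriers, exactly as in the barrier arguments used repeatedly in this section (e.g.\ in Proposition \ref{calibrated region} and Proposition \ref{C0 estimate beta3}). Recall that in the neutral mode the rescaled surface $\bar M^{X_0}_\tau$ is, for $\tau\leq\mathcal T$, the graph of $u(\cdot,\tau)$ over a long cylinder with $\hat u = \alpha_1\psi_1+\alpha_2\psi_2+\alpha_3\psi_3 + o(|\vec\alpha|)$, and by Theorem \ref{thm_rotation} the dominant coefficient $\alpha_1(\tau) = (-2^{5/4}\pi^{1/4}e^{-1/4}+o(1))/|\tau|$ is strictly negative with $|\alpha_2|+|\alpha_3| = o(|\alpha_1|)$. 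Since $\psi_1$ is a negative multiple of $(z^2-2)$ up to a positive constant and $\alpha_1<0$, the leading term $\alpha_1\psi_1$ is positive for $|z|>\sqrt 2$; however this is the wrong sign for a na\"ive direct argument, so instead I would argue by comparison with the inwards-bending ADS-shrinkers, whose profile functions lie strictly below $\sqrt 2$ for $|z|\geq L_0$ and which foliate the relevant region (cf.\ the calibration/foliation discussion around \eqref{eq_calibration}).

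First I would fix $\tau$ small and compare $\bar M^{X_0}_\tau$ in the region $\{|z|\geq L_0\}$ with a suitable ADS-shrinker $\Sigma_a$ (translated along the $z$-axis if necessary), choosing the parameter $a=a(\tau)$ so that, using the $C^0$-bound $|u|(x,\tau)\leq C\beta(\tau)^{1/2}$ from Proposition \ref{C0 estimate beta3} together with the explicit behaviour \eqref{ads_godown} of $u_a(L_0)$, the shrinker $\Sigma_a\cap\{z=L_0\}$ sits strictly inside $\bar M^{X_0}_\tau\cap\{z=L_0\}$. Since $\bar M^{X_0}_\tau$ is a graph of small norm over a long cylinder and $\Sigma_a$ closes up by reaching the $z$-axis at $z=a$, while $\bar M^{X_0}_\tau$ does not, the two surfaces cannot touch in the interior of $\{L_0\leq z\leq a\}$; hence by the avoidance principle (Section \ref{sec_Brakke_flows}) $\bar M^{X_0}_\tau\cap\{z\geq L_0\}$ stays inside $\Sigma_a$, and letting $a\to\infty$ (equivalently $\tau\to-\infty$, so $\beta(\tau)\to 0$) forces $\bar M^{X_0}_\tau\cap\{z\geq L_0\}$ to lie inside the straight cylinder $\Sigma$ of radius $\sqrt 2$. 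The argument for $\{z\leq -L_0\}$ is identical after reflecting $z\mapsto -z$, using that $\psi_1$ and the ADS foliation are even in $z$.

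The one point that needs care — and which I expect to be the main obstacle — is upgrading a single-time comparison at very negative $\tau$ to a statement valid for \emph{all} $\tau\leq\mathcal T$: one must check that the sign $\alpha_1<0$ together with the differential inequalities \eqref{ODE1}--\eqref{ODE3} (or, more directly, the monotonicity of $\alpha_5$ established in Claim \ref{claim_alpha5}) propagates the barrier containment forward in time, so that no part of the surface can escape the cylinder $\Sigma$ at a later time. Concretely, if $\bar M^{X_0}_{\tau_0}\cap\{|z|\geq L_0\}\subset\Sigma$ then for $\tau\geq\tau_0$ the surface can only cross $\partial\Sigma$ at a point with $|z|$ near $L_0$, and there the graphical expansion with $\alpha_1<0$ (hence $u<0$ for $|z|$ slightly larger than $\sqrt 2$, up to lower-order terms controlled by $|\alpha_2|+|\alpha_3|=o(|\alpha_1|)$ and the $o(|\vec\alpha|)$ remainder) shows $\bar M^{X_0}_\tau$ lies strictly inside $\Sigma$ on $L_0\leq |z|\leq 2L_0$, which rules this out; combined with the barrier argument at $-\infty$ this gives the claim on all of $(-\infty,\mathcal T]$. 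Once this sign-propagation is in place the corollary follows immediately, and it is precisely what is needed as input to the subsequent compactness statement (Corollary \ref{thm_compact}) asserting that the solution in the neutral-mode case is compact.
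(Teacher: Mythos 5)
Your proposal goes wrong at two connected points, and the combination makes the argument as written not work. First, the sign of $\psi_1$: in the paper $\psi_1 = 2^{-3/2}(\tfrac{e}{2\pi})^{1/4}(z^2-2)$ is a \emph{positive} multiple of $z^2-2$, so with $\alpha_1<0$ from Theorem \ref{thm_rotation} the leading term $\alpha_1\psi_1$ is \emph{negative} for $|z|>\sqrt 2$, and since $|\alpha_2|+|\alpha_3|=o(|\alpha_1|)$ and the remainder is $o(|\vec\alpha|)$, one gets directly $u(\pm L_0,\tau)<0$ for $\tau\leq\mathcal T$. This is exactly the "naïve direct argument" you discarded: the surface lies strictly inside the cylinder $\Sigma$ on the slices $\{|x_3|=L_0\}$, which is the boundary input the barrier argument needs. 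Second, you chose the wrong barrier family. The ADS-shrinkers $\Sigma_a$ have profile $u_a<\sqrt 2$ and lie \emph{inside} $\Sigma$; they are inner barriers and can only produce \emph{lower} bounds on the radius (as in Proposition \ref{C0 estimate beta3}). Indeed your own setup is internally inconsistent: if $\Sigma_a\cap\{z=L_0\}$ sits strictly inside $\bar M^{X_0}_\tau\cap\{z=L_0\}$ and the surfaces never touch, avoidance keeps $\bar M^{X_0}_\tau$ \emph{outside} $\Sigma_a$, not inside, and letting $a\to\infty$ would give $x_1^2+x_2^2\geq 2$ — the opposite of the corollary. To bound the surface from outside one must use the KM-shrinkers $\tilde\Sigma_b$, which lie outside $\Sigma$ (with $\tilde u_b^2\geq 2+e^{-1}b^2z^2$), and then send $b\to 0$.

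The paper's proof is precisely this: $u(L_0,\tau)<0$ from Theorem \ref{thm_rotation}, so for every $b>0$ the barrier $\tilde\Sigma_b$ lies outside $\bar M^{X_0}_\tau$ on $\{x_3=L_0\}$; Corollary \ref{cor_barrier} (vanishing asymptotic slope) controls the surface near spatial infinity; the avoidance principle then keeps $\bar M^{X_0}_\tau\cap\{x_3\geq L_0\}$ inside $\tilde\Sigma_b$ for all $\tau\leq\mathcal T$, and $b\to 0$ gives containment in $\Sigma$; the region $\{x_3\leq -L_0\}$ is handled by reflection. Your concern in the last paragraph about propagating a single-time comparison forward in time is legitimate in spirit, but it is resolved by the avoidance principle against the \emph{static} barriers $\tilde\Sigma_b$ of the rescaled flow together with the fact that the boundary condition $u(\pm L_0,\tau)<0$ holds for \emph{all} $\tau\leq\mathcal T$ — no separate analysis of the ODE system is needed at this stage.
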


\begin{proof}
Theorem \ref{thm_rotation} implies that $\bar M_\tau^{X_0}$ satisfies
\begin{equation}
u(L_0,\tau) < 0
\end{equation}
for $\tau \leq \mathcal{T}$. Hence, given arbitrarily small $b>0$, the KM-barrier $\tilde{\Sigma}_b$ lies outside of $\bar M_\tau^{X_0}\cap\{x_3=L_0\}$. Using also Corollary \ref{cor_barrier} and the avoidance principle we infer that $\tilde{\Sigma}_b$ lies outside of $\bar M_\tau^{X_0}$ on $\{x_3 \geq L_0\}$. Namely, $\bar M_\tau^{X_0}$ satisfies $x_1^2+x_2^2\leq 2$ for $x_3 \geq L_0$. In the same manner, $\bar M_\tau^{X_0}$ satisfies $x_1^2+x_2^2\leq 2$ for $x_3 \leq L_0$. This proves the corollary.
\end{proof}

\begin{corollary}\label{thm_compact}
For an ancient low entropy solution, if the neutral mode is dominant, then the solution is compact.
\end{corollary}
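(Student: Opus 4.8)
The plan is to deduce compactness from the trapping just established---that $\bar M^{X_0}_\tau\cap\{|z|\geq L_0\}$ lies inside the fixed solid cylinder of radius $\sqrt 2$ for $\tau\leq\mathcal T$---by ruling out the only remaining source of non-compactness, namely an end escaping to $x_3=\pm\infty$.

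First I would transfer the preceding corollary to the original scale: for $X_0=(x_0,t_0)\in\mathcal M$, after the centering used in Section \ref{sec_fine_neck_setup}, it says that $M_t\cap\{|x_3-(x_0)_3|\geq L_0\sqrt{t_0-t}\}$ is contained in the solid cylinder of radius $\sqrt{2(t_0-t)}$ about the axis through $x_0$ for all sufficiently negative $t$; together with the $\varepsilon$-cylindricality in the central slab this says that each such $M_t$ lies inside a solid cylinder of bounded radius. By Corollary \ref{cor_barrier}, non-compactness of $M_t$ could then only be caused by an end running off to $x_3=+\infty$ or to $x_3=-\infty$ inside that solid cylinder. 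Suppose such an end exists for some $t_1$ (hence, by the structure from Theorem \ref{partial_regularity}, for all $t\leq t_1$). By Theorem \ref{thm_rotation} we have $\alpha_1(\tau)<0$ for $\tau\leq\mathcal T$, so $u^{X_0}(\pm L_0,\tau)<0$; I would use this together with the ADS-shrinker caps $\Sigma_a$ (which close off at finite height and are static for the rescaled flow) as outer barriers and the avoidance principle---just as in the comparison argument in the proof of Corollary \ref{fine_neck_cor}---to trap $\bar M^{X_0}_\tau\cap\{x_3\geq L_0\}$, and symmetrically $\bar M^{X_0}_\tau\cap\{x_3\leq -L_0\}$, inside a bounded region; hence there is no such end. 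Alternatively one argues by contradiction: blowing up $\mathcal M$ along a sequence of points of the end whose heights tend to infinity and using the trapping yields, after passing to a limit, either a flow supported on a line---impossible for a nontrivial integral Brakke flow---or a round shrinking cylinder, which by the equality case of Huisken's monotonicity formula (see Section \ref{sec_prel_monotonicity}) forces $\mathcal M$ itself to be a round shrinking cylinder, contrary to hypothesis.

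It then follows that $M_t$ is compact for all sufficiently negative $t$, and since $\mathcal M$ is smooth away from at most countably many spherical singularities (Theorem \ref{partial_regularity}), compactness propagates to all $t<T_E(\mathcal M)$. The main obstacle is the barrier step: beyond the graphical radius $\rho(\tau)$ one only has the coarse inclusion inside $\Sigma$ rather than the quadratic expansion, so one must either sharpen the control on $\bar M^{X_0}_\tau$ at large heights so the finite cap truly traps it, or dispose of the contracting-cusp scenario directly---both achieved by invoking the asymptotic-slope theorem (Theorem \ref{thm_asymptotic_slope}) and the cylindrical-scale estimates of Section \ref{sec_coarse_properties}.
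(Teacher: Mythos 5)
Your reduction of the problem to ruling out ends that run off to $x_3=\pm\infty$ inside the solid cylinder of radius $\sqrt{2(t_0-t)}$ is the right starting point, but both mechanisms you propose for killing such an end have genuine gaps. The barrier step cannot work as described: the ADS shrinkers $\Sigma_a$ lie \emph{inside} the cylinder $\Sigma$ and close off at the finite height $z=a$, so to run the avoidance principle with $\Sigma_a$ as an \emph{outer} barrier for $\bar M^{X_0}_\tau\cap\{x_3\geq L_0\}$ you would need $\bar M^{X_0}_{\tau_1}\cap\{x_3\geq L_0\}$ to lie in the compact region bounded by $\Sigma_a$ at the initial time $\tau_1$ --- which is precisely the compactness you are trying to prove; an end extending past $z=a$ already meets the cap of $\Sigma_a$, so the comparison never gets started. (Throughout the paper the $\Sigma_a$ are used only as \emph{inner} barriers; the outer barriers are the noncompact $\tilde\Sigma_b$, which cannot close off an end.) Your alternative blowdown argument also does not close: the translated flows $\mathcal M-(z_i,0)$ converging to a round shrinking cylinder yields no contradiction, because the rigidity in Huisken's monotonicity formula requires a single point of $\mathcal M$ with Gaussian density exactly $\sqrt{2\pi/e}$ (e.g.\ a cylindrical tangent flow at a point of $\mathcal M$), whereas a unit-scale cylindrical limit at spatial infinity only produces a sequence of points whose densities tend to that value.

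The idea you are missing is that one should not try to show that $M_t$ is compact at the time where the fine-neck information lives; one shows instead that the ends \emph{die later}. The enclosing cylinder of radius $\sqrt{2(t_0-t_1)}$ is itself a shrinking solution that becomes extinct exactly at time $t_0$. Translating $\mathcal M$ in the $x_3$-direction by amounts tending to $\pm\infty$ and passing to a limit, the $t_1$--time slice of the limit is this round cylinder, hence by uniqueness the limit is the round shrinking cylinder becoming extinct at time $t_0$; consequently $M_t$ retreats from spatial infinity and is compact for every $t>t_0$, possibly through a contracting cusp at infinity. Since the base point $X_0=(x_0,t_0)$, and hence $t_0$, can be taken arbitrarily negative, $M_t$ is compact for all $t$. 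This is exactly the content of the lemma the paper proves immediately after the corollary, and it is why no strengthening of the control beyond the graphical radius (via the asymptotic slope theorem or otherwise) is needed.
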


\begin{proof}
Denote the rescaled flow by $\bar{M}_\tau=\bar{M}^{X_0}_\tau$, and the original flow by $\mathcal M= \{ M_t\}$. Then, by the above corollary, there exists some $t_1\in (-\infty,t_0)$ and $L<\infty$ such that $M_{t_1}\cap \{|z|\geq L\}$ is contained inside the cylinder of radius $\sqrt{2(t_0-t_1)}$. The lemma below implies that $M_t$ is compact for all $t>t_0$. Since $X_0$ was arbitrary, this proves the corollary.
\end{proof}

\begin{lemma}
Let $M_0\subset \mathbb{R}^3$ be a complete embedded surface such that
\begin{equation}
M_0\setminus B(0,L)=\Sigma\setminus B(0,L)
\end{equation}
for some $L<\infty$, where $\Sigma$ denotes the cylinder of radius $\sqrt{2}$. Then the mean curvature flow $\{M_t\}$ of $M_0$ is compact for all $t>1$.
\end{lemma}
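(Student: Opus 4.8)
The plan is to reduce the statement to a question about the two cylindrical ends, using that the round cylinder of radius $\sqrt 2$ becomes extinct precisely at time $1$. First I would record the cheap facts. The mean curvature flow $\{M_t\}$ of $M_0$ is well defined — say as a unit-regular, integral Brakke flow $\{\mu_t\}$, or as the level set flow — since $M_0$ has bounded geometry (a compact middle and two round cylindrical ends). Write $\Sigma_t=\{x_1^2+x_2^2=2-2t\}$ for $t\in[0,1)$, so $\Sigma_0=\Sigma$ and $\Sigma_t$ collapses onto the $x_3$-axis as $t\to 1^-$. Enlarging $L$ if necessary, I may assume $L>\sqrt 2$; then $M_0\cap\{|x_3|\ge L\}=\Sigma\cap\{|x_3|\ge L\}$, and $M_0$ lies in the closed solid cylinder $\{x_1^2+x_2^2\le L^2\}$. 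Comparing with the shrinking solid cylinder of radius $L$ via the avoidance principle gives $M_t\subseteq\{x_1^2+x_2^2\le L^2-2t\}$ for all $t\ge 0$; in particular $M_t=\emptyset$, hence compact, once $t\ge L^2/2$, and otherwise the flow is uniformly bounded in the $x_1,x_2$-directions. It therefore remains to bound the extent of $M_t$ in the $x_3$-direction for $1<t<L^2/2$.

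The heart of the argument will be to show that near infinity the flow tracks $\Sigma_t$ for all $t<1$ and hence clears out at time $1$. On $\{|x_3|\ge L\}$ the surface $M_0$ coincides with the round cylinder $\Sigma$, which has $|A|\le 1/\sqrt 2$, so by pseudolocality for mean curvature flow $M_t$ stays a small graph over $\Sigma_t$ on this region for a definite amount of time. Since the curvature of $\Sigma_t$ is comparable to $(2-2t)^{-1/2}$, I would iterate pseudolocality at the sequence of scales $r_k\sim(2-2t_k)^{1/2}$: the admissible time increments $\Delta t_k\sim r_k^2\sim(2-2t_k)$ satisfy $2-2t_{k+1}=(1-c)(2-2t_k)$ for a fixed $c>0$, so $t_k\to 1$ and the iteration reaches every $t<1$. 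This should give a constant $C<\infty$ with
\[
M_t\cap\{|x_3|\ge L+C\}\ \subseteq\ \{x_1^2+x_2^2\le 4(2-2t)\}\qquad\text{for all }t<1.
\]
For $t<1$ the right-hand side is a tube of radius $2\sqrt{2-2t}$ about the $x_3$-axis, so the mass of $\mu_t$ in any fixed bounded subset of $\{|x_3|\ge L+C\}$ tends to $0$ as $t\to 1^-$; hence $\mu_1$ vanishes on $\{|x_3|>L+C\}$, so $\mathrm{spt}(\mu_1)\subseteq\{|x_3|\le L+C\}$. Combined with the radial bound from the first paragraph, $\mathrm{spt}(\mu_1)$ is a closed bounded set, hence compact.

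Finally, once $M_1$ is compact I would choose $R_1<\infty$ with $M_1\subseteq\overline{B(0,R_1)}$ and compare with the shrinking round sphere starting at $\partial B(0,R_1)$: this gives $M_t\subseteq\overline{B(0,\sqrt{R_1^2-4(t-1)})}$ for all $t\ge 1$, so $M_t$ is compact for every $t>1$, which is the assertion. The step I expect to be the main obstacle is the second one. The difficulty is that in $\{|x_3|\ge L\}$ the surface $M_0$ equals $\Sigma$ \emph{exactly}, so no compact barrier can trap $M_t$ close to $\Sigma_t$ there — any candidate barrier is forced to cross $M_0$ — and one therefore genuinely has to invoke pseudolocality (or, equivalently, a continuity argument combining the avoidance principle for $C^0$-closeness with interior derivative estimates). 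The delicate point is to propagate this control through the collapsing scales of $\Sigma_t$ so that it covers all times $t<1$, right up to the extinction time of the cylinder.
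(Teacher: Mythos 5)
Your overall architecture (radial bound from the big solid cylinder, clearing out of the two ends at time $1$, then a shrinking-sphere comparison for $t>1$) is sound, and the first and last steps are fine modulo routine remarks about avoidance for noncompact sets. But the central step --- the iterated pseudolocality argument purporting to give $M_t\cap\{|x_3|\ge L+C\}\subseteq\{x_1^2+x_2^2\le 4(2-2t)\}$ for all $t<1$ --- has a genuine gap, and you have correctly identified where it is but not resolved it. Pseudolocality, applied at a point of $\Sigma_{t_k}$ at scale $\delta\rho_k$ with $\rho_k=\sqrt{2-2t_k}$, yields that the flow remains a Lipschitz graph over a fixed plane with curvature bounds in a smaller parabolic ball; it does \emph{not} yield that the surface remains a graph over the shrinking cylinder $\Sigma_t$ with error small compared to the collapsing radius $\sqrt{2-2t}$. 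That stronger statement is exactly what you need both to verify the hypotheses of pseudolocality at the next scale $\rho_{k+1}$ (you must know the surface is $\varepsilon$-close to \emph{some} round cylinder of the correct radius to even identify the scale at which to restart) and to conclude the tube inclusion. In other words, pseudolocality controls local regularity but not the position of the surface relative to $\Sigma_t$; ruling out drift of the radius at the collapsing scale requires an additional input, and since (as you note) no global barrier avoids the arbitrary compact perturbation inside $B(0,L)$, the iteration as described does not close.

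The paper avoids this entirely with a soft argument: translate the flow along the axis, $\mathcal M^i=\mathcal M-(z_i,0)$ with $z_i\to\pm\infty$, pass to a subsequential limit Brakke flow $\mathcal M^\infty$ whose initial slice is exactly $\Sigma$, and invoke uniqueness (via smoothness of $\Sigma$ and unit-regularity) to identify $\mathcal M^\infty$ as the round shrinking cylinder, which is extinct at time $1$. Local clearing out then shows that for each fixed $t_0>1$ the flow is empty in $\{|x_3|\ge Z(t_0)\}$, which together with the radial bound gives compactness. This compactness-and-uniqueness argument is precisely the missing ingredient that would repair your step~2 (it is the qualitative statement that near spatial infinity the flow tracks $\Sigma_t$), so the most economical fix is to replace the pseudolocality iteration by the translation-to-infinity limit; if you insist on a quantitative route, you would need to supplement pseudolocality with a compactness/contradiction argument at each scale, which amounts to the same thing.
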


\begin{proof}
Let $\mathcal M$ be the mean curvature flow with initial condition $M_0$. For $z_i\to \pm\infty$ consider the shifted flows $\mathcal M^i = \mathcal M - (z_i,0)$. For $i\to \infty$ the flows $\mathcal M^i$ converge to a limit $\mathcal M^\infty$, whose initial time slice is $\Sigma$. By uniqueness, $\mathcal M^\infty$ must be a round shrinking cylinder that becomes extinct at time $1$. We conclude that $M_t$ is compact (this includes the possibility that $M_0$ is noncompact, but $M_t$ becomes compact via a contracting cusp singularity at spatial infinity) for all $t> 1$.
\end{proof}

\bigskip

\section{Curvature bound and cap size control}\label{sec_cap_size}
Throughout this section, $\mathcal M$ will always be an ancient low entropy flow where the plus mode is dominant.
We will frequently use the fine neck theorem (Theorem \ref{thm Neck asymptotic}) and its corollary (Corollary \ref{fine_neck_cor}). By an affine change of coordinates, we can assume without loss of generality that
\begin{equation}
\bar{a}=1/\sqrt{2},
\end{equation}
that the axis of the asymptotic cylinder is the $x_3$ axis, and that
\begin{equation}
\min_{p\in M_0}x_3(p) =0 \textrm{ is attained at the origin.}
\end{equation}

\subsection{Global curvature bound}\label{sec_curv_bound}

We will prove a global in space-time curvature bound. To this end, we start with the following proposition.

\begin{proposition}\label{cylind_growth}
For every $\Lambda<\infty$ there exists a $\rho=\rho(\mathcal{M},\Lambda)<\infty$ with the following significance.
If $p_0,p\in M_{t_0}$ are points such that $Z(p_0,t_0)\leq \Lambda$ and 
\begin{equation} 
 d:=x_3(p)-x_3(p_0)\geq \rho,
\end{equation}
then
\begin{equation}
Z(p,t_0)> \Lambda.
\end{equation}    
\end{proposition}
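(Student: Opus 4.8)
The plan is to argue by contradiction using a blowdown/compactness argument, much in the spirit of the proof of Proposition \ref{prop_reg_growth} (asymptotic cylindrical scale). Suppose the statement fails for some fixed $\Lambda<\infty$. Then there is a sequence of ancient low entropy flows — actually, since $\mathcal{M}$ is fixed, a single flow $\mathcal{M}$ — together with points $p_0^j, p^j \in M_{t_j}$ with $Z(p_0^j,t_j)\leq\Lambda$ and $d_j := x_3(p^j)-x_3(p_0^j)\to\infty$, but $Z(p^j,t_j)\leq\Lambda$. First I would normalize: after a time translation assume $t_j = t_0$ is irrelevant; more usefully, I would like to exploit the fine neck theorem. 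The key geometric point is that $p^j$ has bounded cylindrical scale $Z(p^j,t_0)\leq\Lambda$, so $\mathcal{M}$ is genuinely $\varepsilon$-cylindrical around $(p^j,t_0)$ at a controlled scale, with a neck of bounded size; on the other hand $p^j$ is extremely far out along the $x_3$-axis compared to $p_0^j$.

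The main step is to derive a contradiction with the fine neck estimate \eqref{main_thm_est2pp} (with $\bar a = 1/\sqrt 2$). By Corollary \ref{fine_neck_cor}, applied at the base point $X_0 = (p_0^j,t_0)$ (or rather at the recentered point, but the recentering shift is bounded by \eqref{main_thm_est3}), the cross-sectional radius of $\bar M_\tau$ satisfies, for $\tau$ sufficiently negative, $\inf_{\{x_3\geq L_0\}}(x_1^2+x_2^2)\geq 2$ and in fact $x_1^2+x_2^2 \approx 2(1 + \tfrac{1}{\sqrt 2}\, x_3 e^{\tau/2}\cdot\text{(something)})$ — more precisely, unrescaling, the cap size control philosophy says the surface opens up like a paraboloid $r^2 \approx 2(x_3 - \psi(t_0))$ as $x_3\to\infty$. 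Since $p^j$ has $x_3(p^j) - x_3(p_0^j) = d_j\to\infty$, the cross section through $p^j$ has radius $\gtrsim \sqrt{d_j}\to\infty$, hence the curvature there is $\lesssim d_j^{-1/2}\to 0$, hence by the neck-detection argument from Section \ref{sec_necks_back_in_time} (cf. \eqref{eq_thm_quant4} and the definition of cylindrical scale via Huisken monotonicity and Bernstein–Wang rigidity) the cylindrical scale $Z(p^j,t_0)$ must be comparable to $\sqrt{d_j}$, which tends to $\infty$ — contradicting $Z(p^j,t_0)\leq\Lambda$.

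The technical heart, and the step I expect to be the main obstacle, is making the transition "$p^j$ far out along the axis $\Rightarrow$ large cross-sectional radius at $p^j$ $\Rightarrow$ large cylindrical scale" quantitative and uniform. The fine neck theorem only gives sharp asymptotics for $\tau \leq \mathcal{T}(Z(X))$, i.e. sufficiently far back in time relative to the cylindrical scale of the base point; to convert this into a statement at the fixed time $t_0$ and a fixed (bounded) cylindrical scale $\Lambda$ at $p_0^j$, one needs to track how far out in space the graphical region of the rescaled flow reaches at time $\tau = -\log(t_0 - t_0)$... which is degenerate, so really one must instead work at an earlier time slice $t_0 - e^{-\tau}$, observe the neck structure there, and then flow forward using the avoidance principle with ADS/KM shrinker barriers (as in Corollary \ref{fine_neck_cor} and Proposition \ref{calibrated region}) to control the surface at time $t_0$. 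Concretely: one uses the KM-shrinkers $\tilde\Sigma_b$ as inner barriers in the far region to force $x_1^2+x_2^2$ at $p^j$ to be large (growing with $d_j$), then invokes the local regularity theorem to bound $|A|(p^j,t_0)$ from above by $O(d_j^{-1/2})$, and finally the standard quantitative-rigidity/monotonicity argument of Theorem \ref{thm_finding_necks} to conclude $Z(p^j,t_0)\gtrsim d_j^{1/2}$, contradicting the bound $\Lambda$. Keeping all these barrier placements and time-shifts uniform as $d_j\to\infty$ is where the care is needed.
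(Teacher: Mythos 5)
Your overall strategy has a genuine gap at its key step, and that step is essentially circular. The intermediate claim you need --- that at the \emph{fixed} time $t_0$ the cross-section through $p^j$ has radius $\gtrsim \sqrt{d_j}$ --- is precisely the paraboloidal asymptotics of the cap size control theorem (Theorem \ref{thm_asympt_par}), which is proved \emph{after} and \emph{using} Proposition \ref{cylind_growth} (via Corollary \ref{R_inf_bound}, Corollary \ref{R_growth}, Theorem \ref{curv_bound} and Proposition \ref{fast_tip}). None of the tools available at this stage delivers it: the fine neck theorem (Theorem \ref{thm Neck asymptotic}) centered at $(p_0^j,t_0)$ only controls the surface in the region $|x_3-x_3(p_0^j)|\leq 10L_0\sqrt{t_0-t}$ and for $\tau\leq\mathcal{T}(\Lambda)$, so to see height $d_j$ you must go back to $t\lesssim t_0-d_j^2$, where the radius there is $\approx\sqrt{2}\,d_j$ regardless; and the barriers of Corollary \ref{cor_barrier} and Corollary \ref{fine_neck_cor} give lower bounds of the form $x_1^2+x_2^2\geq 2(t_0-t)$ in $\{x_3-x_3(p_0)\geq L_0\sqrt{t_0-t}\}$, which degenerate as $t\to t_0$ and say nothing at time $t_0$ itself. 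Worse, under the contradiction hypothesis $Z(p^j,t_0)\leq\Lambda$ the flow \emph{is} $\varepsilon$-cylindrical around $(p^j,t_0)$ at a scale $\leq\Lambda$, so the radius near $p^j$ shortly before $t_0$ is $O(\Lambda)$, not large; you cannot hope to derive the large-radius statement directly from the hypotheses, since it is exactly the dichotomy to be decided.

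The paper's proof avoids this by extracting the contradiction at the common early time $t_1=t_0-d^2$ (rescaled time $\tau=-2\log d$), where \emph{both} fine necks are visible at the same scale and must align (else they would intersect further back in time). Computing the radius of the surface at the height of $p$ at time $t_1$ in two ways --- as the $z=0$ slice of the fine neck centered at $(p,t_0)$, which agrees with the $z=0$ slice centered at $(p_0,t_0)$ up to $O(e^{\tau/160})$, versus as the $z=1$ slice of the fine neck centered at $(p_0,t_0)$, which by \eqref{main_thm_est2pp} is larger by $\bar a=1/\sqrt{2}$ up to $O(e^{\tau/160})$ --- forces $1/\sqrt{2}\leq 3Ce^{\tau/160}$ and hence an upper bound on $d$. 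The correct fix for your argument is therefore not a forward-in-time barrier propagation but this backward-in-time comparison of the two fine-neck expansions; the nonvanishing of the coefficient $\bar a$, uniform in the base point, is what does the work.
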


\begin{proof}
Let $C=C(\mathcal{M})<\infty$ and $\mathcal{T}(\Lambda)>-\infty$ be the constants from the fine neck theorem (Theorem \ref{thm Neck asymptotic}) applied at any point with cylindrical scale at most $\Lambda$. We will show that if $p_0,p\in M_{t_0}$ are points with $Z(p_0,t_0)\leq \Lambda$ and $Z(p,t_0)\leq \Lambda$, then $d:=x_3(p)-x_3(p_0)$ is bounded from above.

Let $t_1=t_0-d^2$, and let $\tau$ be the corresponding rescaled time, namely
\begin{equation}\label{dtau}
\tau=-\log(t_0-t_1)=-2\log d.
\end{equation}
Observe that if $d$ is sufficiently large, at time $\tau$ we will see necks around $p_0$ and $p$. Those two necks have to align with each other, else looking further back in time they would intersect. Moreover, if $d$ is sufficiently large, then $\tau \leq \mathcal{T}(\Lambda)$, as required for applying the fine neck theorem. 

Letting $r_0$ and $r$ be (representitives of) the radii of the fine necks around $p_0$ and $p$ at time $t_1$, applying the fine neck theorem (Theorem \ref{thm Neck asymptotic})  at time $\tau$ with $z=0$ twice, once centered  at $(p_0,t_0)$ and once centered at $(p,t_0)$, we obtain  
\begin{equation}\label{r_dif_bd_1}
|r-r_0|\leq 2Ce^{\frac{\tau}{160}}.
\end{equation}
On the other hand, applying the fine neck theorem (Theorem \ref{thm Neck asymptotic}) centered at $(p_0,t_0)$ at time $\tau$ with $z=1$, we obtain 
\begin{equation}\label{r_dif_bd_2}
|r-r_0|\geq \frac{1}{\sqrt{2}}- Ce^{\frac{\tau}{160}}.
\end{equation}
Comparing \eqref{r_dif_bd_1} and \eqref{r_dif_bd_2} we conclude that $-\tau$ is bounded; hence $d$ is bounded above. This finishes the proof of the proposition.
\end{proof} 

\begin{corollary}\label{R_inf_bound}
For every compact interval $I\subseteq (-\infty,T_E(\mathcal{M}))$ there exist constants $\delta>0$, $\Lambda <\infty$ and $C<\infty$ (possibly depending on $\mathcal M$ and $I$) such that for every $p\in M_t$ with $t\in I$ and $x_3(p)\geq \Lambda$ either
\begin{equation}
R(p,t)\geq \delta,
\end{equation}
or
\begin{equation}
\textrm{$(p,t)$ is $\varepsilon$-spherical at scale $CR(p,t)$.}
\end{equation}
\end{corollary}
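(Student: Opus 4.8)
The plan is to argue by contradiction and a blow-up (compactness) argument, rescaling the flow at the regularity scale of the alleged bad points.

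First I would fix, once and for all, a universal constant $C_0<\infty$ (e.g.\ $C_0=100$) such that the round shrinking $2$-sphere whose regularity scale at time $0$ equals $1$ is $\tfrac{\varepsilon}{2}$-spherical around each of its points at scale $C_0$; this is the constant $C$ for which the corollary will be proved. Suppose the statement fails for this $C_0$. Then for each $i$ there is $(p_i,t_i)\in\mathcal M$ with $t_i\in I$, $x_3(p_i)\ge i$, $R_i:=R(p_i,t_i)<1/i$, and such that $(p_i,t_i)$ is not $\varepsilon$-spherical at scale $C_0R_i$. I would then consider the rescaled flows $\mathcal M^i:=\mathcal M_{(p_i,t_i),R_i}$. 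Since $t_i$ lies in a fixed compact subinterval of $(-\infty,T_E(\mathcal M))$ while $R_i\to0$, the flows $\mathcal M^i$ are defined on $(-\infty,S_i)$ with $S_i\to+\infty$, have entropy $\le\sqrt{2\pi/e}$, and have regularity scale exactly $1$ at the space-time origin. By Ilmanen's compactness theorem one passes to a subsequential limit $\mathcal M^\infty$, which is again an ancient low entropy flow (integral, unit-regular, cyclic and the entropy bound are all preserved under limits), and which by the local regularity theorem is smooth near the origin with regularity scale $1$ there; in particular $\mathcal M^\infty$ is not a flat plane.

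The heart of the argument is the claim that $\mathcal M^\infty$ must be a round shrinking sphere. To see this I would invoke the rough neck theorem (Theorem \ref{thm_finding_necks}): either $\mathcal M^\infty$ is a round shrinking sphere, cylinder, or flat plane, or else $\mathcal M^\infty$ is $\tfrac{\varepsilon}{2}$-cylindrical around the origin at all sufficiently large dyadic scales, hence $\varepsilon$-cylindrical around the origin at some finite scale $r_*$. In this latter case --- and likewise if $\mathcal M^\infty$ is a round shrinking cylinder, which is $\varepsilon$-cylindrical around every point at all scales $\gtrsim\sqrt{T_E(\mathcal M^\infty)}$ --- smooth convergence $\mathcal M^i\to\mathcal M^\infty$ would make $\mathcal M^i$ $\varepsilon$-cylindrical around the origin at the fixed scale $r_*$ for large $i$, so that $(p_i,t_i)$ is $\varepsilon$-cylindrical at scale $\sim r_*R_i$ in $\mathcal M$, forcing $Z(p_i,t_i)\le C r_*R_i\to0$. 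I would rule this out using Proposition \ref{cylind_growth}: it shows that the subset of $M_{t_i}$ of points with cylindrical scale $\le1$ lies in a slab of $x_3$-width $2\rho(1)$ about $x_3(p_i)$; on the other hand, using Corollary \ref{fine_neck_cor} together with a short compactness argument one obtains, for $t$ ranging in the compact interval $I$, points near the cap with cylindrical scale bounded by a constant $\Lambda_0=\Lambda_0(\mathcal M,I)$ and $x_3$-height bounded by a constant $h_0=h_0(\mathcal M,I)$, so that $Z(p_i,t_i)\le\Lambda_0$ would force $x_3(p_i)\le h_0+\rho(\Lambda_0)$, contradicting $x_3(p_i)\to\infty$. (Alternatively, the round shrinking cylinder case can be excluded directly, since it would produce a line of cylindrical singular points in $\mathcal M$, contradicting the partial regularity theorem, Theorem \ref{partial_regularity}, as $\mathcal M$ is not a round shrinking cylinder.) The flat plane case is already excluded by $R(0,0)=1<\infty$. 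Hence $\mathcal M^\infty$ is a round shrinking sphere, and since $R(0,0)=1$ it is, up to a rigid motion, the normalized sphere fixed above.

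With the claim in hand the contradiction is immediate: by the choice of $C_0$, $\mathcal M^\infty$ is $\tfrac{\varepsilon}{2}$-spherical around the origin at scale $C_0$; smooth convergence $\mathcal M^i\to\mathcal M^\infty$ then makes $\mathcal M^i$ $\varepsilon$-spherical around the origin at scale $C_0$ for large $i$; unrescaling, $(p_i,t_i)$ is $\varepsilon$-spherical at scale $C_0R_i$, contradicting the choice of $(p_i,t_i)$. The step I expect to be the main obstacle is isolating the small cylindrical scales near the cap --- i.e.\ showing that points with small cylindrical scale cannot escape to $x_3=\infty$ on a compact time interval --- which is exactly what forces the blow-up limit to be spherical rather than, say, a small neck or a bowl-type cap; getting the bookkeeping right between the regularity scale, the cylindrical scale, and Proposition \ref{cylind_growth}, together with the routine verification that the relevant closeness (cylindrical or spherical at a fixed scale) passes to limits of Brakke flows, is the delicate part.
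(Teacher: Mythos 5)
Your proof is correct and rests on the same two pillars as the paper's: the Section \ref{sec_necks_back_in_time} dichotomy (a point that is not $\varepsilon$-spherical at scale comparable to $R(p,t)$ must have cylindrical scale comparable to $R(p,t)$), and Proposition \ref{cylind_growth} combined with base points of uniformly bounded cylindrical scale and height over the compact interval $I$ to rule out small cylindrical scales at large $x_3$. The only difference is presentational: you derive the dichotomy by blowing up at the regularity scale and classifying the limit via the rough neck theorem, whereas the paper invokes the quantitative version of this dichotomy from Section \ref{sec_necks_back_in_time} directly, without passing to a limit flow.
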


\begin{proof}
Suppose that there exists a sequence of points $p_k\in M_{t_k}$ with $t_k\in I$ such that $x_3(p_k)\rightarrow \infty$ and $R(p_k,t_k)\rightarrow 0$. 
Recall that by Section \ref{sec_necks_back_in_time}, there are constants $0\ll C \ll C'<\infty$ such that if $(p_k,t_k)$ is not $\varepsilon$-spherical at scale $CR(p_k,t_k)$, then $Z(p_k,t_k)\leq C' R(p_k,t_k)$. For $k$ large, the latter option is impossible by Proposition \ref{cylind_growth}, and so the assertion follows.
\end{proof} 

\begin{lemma}\label{no_comp_conn}
The surface $M_{t_0}$ has no compact connected components for any $t_0\in (-\infty,T_E(\mathcal{M}))$.
\end{lemma}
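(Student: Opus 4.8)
The plan is to argue by contradiction: suppose $M_{t_0}$ has a compact connected component $N$ for some $t_0\in(-\infty,T_E(\mathcal{M}))$. Since we are in the plus-mode case, Corollary \ref{fine_neck_cor} tells us that $M_{t_0}$ has a point with arbitrarily negative $x_3$ and a point with arbitrarily large $x_3$; more precisely, $\sup_{p\in M_{t_0}}x_3(p)=\infty$ and $\inf_{p\in M_{t_0}}x_3(p)>-\infty$, and the cylindrical tangent flow at $-\infty$ forces the existence of a noncompact ``main'' component $N_\infty$ carrying the fine necks. The compact component $N$ is therefore disjoint from $N_\infty$, so it bounds a compact region, and by the avoidance principle the flow restricted to $N$ must become extinct at some finite forward time, shrinking to a point. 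First I would run the flow backwards from $N$: since $N$ is a compact connected component at time $t_0$, for all $t<t_0$ the component through $N$ remains compact (components cannot merge going backward in a smooth-away-from-spherical-singularities flow, and a spherical singularity would only make a component disappear going forward, i.e. appear going backward by splitting off — which cannot enlarge it to noncompact). Hence we obtain an ancient flow which is compact for all time.

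The key step is then to extract a contradiction from the existence of this ancient \emph{compact} piece living inside our ancient low entropy flow. I would use Huisken's monotonicity formula together with Theorem \ref{partial_regularity}: the compact component, run all the way back, has a backwardly self-similar tangent flow at $-\infty$ which, being a low entropy shrinker by Bernstein–Wang \cite{BW}, is either a round shrinking sphere or a round shrinking cylinder or a plane. A compact connected component cannot have a cylindrical or planar blowdown (those are noncompact), so the blowdown at $-\infty$ must be a round shrinking sphere; but then by the equality case of Huisken's monotonicity formula (exactly as in the proof of Theorem \ref{partial_regularity}), the whole component — hence all of $\mathcal{M}$ — would be a round shrinking sphere, contradicting the standing assumption in this section that the plus mode is dominant (a shrinking sphere is not even noncompact-with-necks, and more directly it was excluded because $\mathcal M$ is not a round shrinking sphere). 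Alternatively, and more robustly, I would invoke the asymptotic cylinder: the tangent flow at infinity $\check{\mathcal M}$ of the ambient flow $\mathcal M$ is a round shrinking cylinder (established in Section \ref{sec_coarse_properties}), and the rescaled flow $\bar M_\tau$ converges to the full cylinder $\Sigma$; a compact component would disappear entirely in this blowdown limit, which is impossible since a compact component of positive area contributes to the Gaussian density and by upper semicontinuity it must show up in the limit.

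The cleanest route, which I would ultimately write up, combines these: the asymptotic analysis of Section \ref{sec_fine_neck_setup} shows that for $\tau$ sufficiently negative, $\bar M_\tau$ is a connected graph with small norm over a long cylinder $\Sigma\cap B_{2\rho(\tau)}(0)$ — in particular \emph{connected} — so for $t$ sufficiently close to $t_0-e^{-\mathcal T}$ (i.e. sufficiently far back in rescaled time) the relevant part of $M_t$ is connected and noncompact. A compact connected component $N$ at time $t_0$ would, traced back, either stay compact for all time (ruled out above by the shrinking-sphere rigidity) or eventually be engulfed by a spherical singularity going forward from some time $t_1<t_0$; but going backward through such a singularity is impossible for a \emph{connected} component since spherical singularities are isolated extinction points, not points where a component splits off. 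Thus $N$ is compact for all $t\le t_0$, giving an ancient compact flow with a round shrinking sphere as its only possible blowdown, forcing $\mathcal M$ to be a round shrinking sphere — contradiction. I expect the main obstacle to be the topological bookkeeping about how connected components behave when passing backward through the countably many spherical singularities allowed by Theorem \ref{partial_regularity}; this needs the observation that each such singularity is an isolated spacetime point where one connected component becomes extinct, so backward in time no new noncompact-from-compact transition can occur, and the set of such times is countable hence totally disconnected, so on the complement the flow is smooth and connectedness of components is locally constant.
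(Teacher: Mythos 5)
Your proposal takes a genuinely different route from the paper, and it contains two gaps that I do not see how to close. The first is the backward-propagation-of-compactness step. You argue that a compact component at time $t_0$ must remain compact for all $t<t_0$ because components cannot merge backward and spherical singularities only delete components forward in time. This dichotomy omits exactly the scenario that is hardest to exclude here: a noncompact component whose end recedes from spatial infinity (a ``contracting cusp at infinity''), so that it becomes compact at a later time with no singularity and no merger involved. The paper explicitly flags this as a possibility that has not been excluded at this stage (see the footnote to Theorem \ref{partial_regularity} and the lemma following Corollary \ref{thm_compact}). In fact the truth is the opposite of what you assume: the paper's proof begins by showing, via Corollary \ref{fine_neck_cor}, that the space-time connected component of $N_{t_0}$ \emph{is} noncompact at some earlier time $t_1<t_0$, because every point lies at the center of fine necks which, by the ADS/KM barrier arguments, connect to an end reaching $x_3=+\infty$.

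The second gap is the claim that an eternally compact component must have a spherical blowdown because ``a compact connected component cannot have a cylindrical or planar blowdown.'' This is false: the blowdown rescales by factors tending to zero, so compact time-slices whose diameters grow as $t\to-\infty$ have noncompact blowdowns --- the ancient ovals are compact for all time with cylindrical blowdown. Hence no contradiction with the sphere-rigidity of Huisken's monotonicity formula follows, and your fallback argument (that a compact component would have to ``show up'' in the tangent flow at infinity by upper semicontinuity of density) also fails, since after rescaling by $1/\lambda_i$ with $\lambda_i\to\infty$ a fixed compact component shrinks to a point at time $0$ and contributes nothing to the limit at any fixed negative time. The paper's actual mechanism is quantitative and quite different: having established noncompactness at $t_1$, it uses Corollary \ref{R_inf_bound} to show that any point of a \emph{compact} $N_t$ lying above height $\Lambda'$ must have regularity scale at least $\delta$ (otherwise it would be $\varepsilon$-spherical at a small scale and could not survive to reach $N_{t_0}$), which bounds the backward speed of $\sup_{p\in N_t}x_3(p)$ by $2\delta^{-1}$; integrating over $[t_1,t_0]$ shows $N_t$ stays bounded, hence compact, on all of $[t_1,t_0]$, contradicting the noncompactness of $N_{t_1}$. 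I would encourage you to restructure your argument around these two ingredients rather than around backward persistence of compactness.
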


\begin{proof}
Suppose towards a contradiction that there is a compact connected component $N_{t_0}$ at some time $t_0$.
Let $\mathcal{N}$ be the space-time connected component of $N_{t_0}$, and denote its time $t$-slice by $N_t$. By Corollary \ref{fine_neck_cor} there is some $t_1<t_0$ such that $N_{t_1}$ is non-compact.

Let $\delta,\Lambda$, and $C$ be the constants from Corollary \ref{R_inf_bound}, corresponding to the interval $I=[t_1,t_0]$. 
Using Corollary \ref{fine_neck_cor} and spherical barriers we see that there is a constant $\Lambda'\in (\Lambda,\infty)$ such that
\begin{equation}\label{coarse_comp}
N_t\subset B(0,\Lambda')\cup \{x_3 \geq \Lambda'\}.
\end{equation}
for all $t\in I$. After possibly increasing the $\Lambda'$ we can assume that
\begin{equation}
\Lambda'\geq \sup_{p\in N_{t_0}} |p| + 10\sqrt{t_1-t_0}+10C\delta.
\end{equation}

Now if $N_t$ is compact at any given $t\in [t_1,t_0]$, then it must be the case that $R(p,t)\geq \delta$ for every $p\in N_t$ with $x_3(p)\geq \Lambda'$. Otherwise, Corollary \ref{R_inf_bound} would imply that $(p,t)$ is $\varepsilon$-spherical at a scale $\leq C\delta$, and $N_t$ could not reach $N_{t_0}$ at time $t_0$. Hence, 
\begin{equation}\label{speed_back}
-\frac{d}{dt}\sup_{p\in N_t} x_3(p)\leq 2\delta^{-1}
\end{equation}
whenever $N_t$ is compact and $\sup_{p\in N_t} x_3(p)\geq \Lambda'$.
Combining \eqref{coarse_comp} and \eqref{speed_back} we conclude that $N_t$ is compact for all $t\in [t_1,t_0]$; a contradiction.
\end{proof}

\begin{corollary}\label{R_growth}
There exist a constant $C=C(\mathcal M)<\infty$ such that 
\begin{equation}
Z(p,t) \leq CR(p,t)
\end{equation}
for all $(p,t)\in\mathcal M$. In particular, $\lim_{x_3(p)\rightarrow \infty} R(p,t)=\infty$.
\end{corollary}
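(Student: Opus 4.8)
The plan is to argue by contradiction. If no uniform constant $C=C(\mathcal M)$ worked, there would be a sequence $(p_k,t_k)\in\mathcal M$ with $Z(p_k,t_k)/R(p_k,t_k)\to\infty$, and I will show that a blow-up along this sequence is a round shrinking sphere, contradicting Lemma~\ref{no_comp_conn}. (Note that in the plus-mode case $\mathcal M$ is smooth: by the partial regularity theorem any singularity of $\mathcal M$ would be spherical, hence the extinction point of a compact connected component, which Lemma~\ref{no_comp_conn} forbids; in particular $0<R(p_k,t_k)<\infty$ along the sequence.) Set $X_k=(p_k,t_k)$, $R_k=R(X_k)$, and consider $\mathcal M^k=\mathcal D_{1/R_k}(\mathcal M-X_k)$. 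Each $\mathcal M^k$ is an ancient, unit-regular, cyclic, integral Brakke flow with entropy $\le\sqrt{2\pi/e}$, so it has bounded area ratios, and the spacetime origin lies on $\mathcal M^k$ with regularity scale exactly $1$. By Ilmanen's compactness theorem a subsequence converges to an ancient low entropy flow $\mathcal M^\infty$ containing the origin. I would first observe that $\mathcal M^\infty$ cannot be a flat plane: otherwise the $\mathcal M^k$ would be $C^2$-$\varepsilon$-close to a plane on a fixed parabolic neighbourhood of the origin, forcing $R(\mathcal M^k,(0,0))\ge 2$, contradicting $R(\mathcal M^k,(0,0))=1$.

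Next I would identify $\mathcal M^\infty$. Since $Z(X_k)/R_k\to\infty$ we have $J(\mathcal M^k,(0,0))\to+\infty$ (modulo a bounded dyadic rounding), so for every fixed $j_0$ and all large $k$ the flow $\mathcal M^k$ is not $\varepsilon$-cylindrical around the origin at scale $2^{j_0}$. If $\mathcal M^\infty$ were $\tfrac{\varepsilon}{2}$-cylindrical around the origin at some scale $2^{j_0}$, then it would be smooth there, the convergence $\mathcal M^k\to\mathcal M^\infty$ would be smooth on the relevant parabolic region, and $\mathcal M^k$ would be $\varepsilon$-cylindrical around the origin at scale $2^{j_0}$ for large $k$, a contradiction. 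Hence $\mathcal M^\infty$ is not $\tfrac{\varepsilon}{2}$-cylindrical around the origin at any scale. Applying Theorem~\ref{thm_finding_necks} with neck parameter $\tfrac{\varepsilon}{2}$ then forces $\mathcal M^\infty$ to be a round shrinking cylinder, a round shrinking sphere, or a flat plane. The plane is already excluded, and a round shrinking cylinder is $\tfrac{\varepsilon}{2}$-cylindrical around each of its points at all sufficiently large scales, which is incompatible with the previous sentence. Therefore $\mathcal M^\infty$ is a round shrinking sphere.

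Finally I would extract the contradiction. Since the origin lies on $\mathcal M^\infty$ at time $0$, this sphere is not yet extinct at time $0$, so $\mathcal M^\infty_0=S(q_0,\rho_0)$ with $\rho_0>0$ and all of $\mathcal M^\infty_0$ regular. By unit-regularity and upper semicontinuity of Huisken's density, $\mathrm{spt}\,\mathcal M^k_0$ converges to $S(q_0,\rho_0)$ in local Hausdorff distance, and the convergence is smooth with multiplicity one near $S(q_0,\rho_0)$. Hence for $k$ large, $\mathcal M^k_0\cap B(q_0,2\rho_0)$ is a single smooth multiplicity-one graph over $S(q_0,\rho_0)$, i.e.\ a closed surface lying in a thin neighbourhood of $S(q_0,\rho_0)$ and disjoint from $\partial B(q_0,2\rho_0)$; it is therefore a compact connected component of $\mathcal M^k_0$, and undoing the rescaling it is a compact connected component of the time slice $M_{t_k}$ of $\mathcal M$, contradicting Lemma~\ref{no_comp_conn}. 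This proves $Z(p,t)\le C(\mathcal M)R(p,t)$ for all $(p,t)\in\mathcal M$. The ``in particular'' statement then follows from Proposition~\ref{cylind_growth}: fixing $t$ and taking $p_0\in M_t$ with minimal $x_3$, every $p\in M_t$ with $x_3(p)-x_3(p_0)\ge\rho(\mathcal M,\Lambda)$ has $Z(p,t)>\Lambda$, so $Z(p,t)\to\infty$ and hence $R(p,t)\ge Z(p,t)/C\to\infty$ as $x_3(p)\to\infty$.

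I expect the main obstacle to be precisely the elimination of the spherical blow-up limit: it is ruled out neither by the entropy bound nor by interior regularity, but by the observation that such a limit would manufacture a genuine closed connected component inside $\mathcal M$ itself, at which point Lemma~\ref{no_comp_conn} — itself resting on the fine neck theorem and the global structure of the non-compact plus-mode flow — supplies the contradiction. A secondary technical point is to carry out the dyadic-scale limits carefully enough that passing to the limit in ``not $\varepsilon$-cylindrical'' only costs a harmless factor of $2$ in the neck parameter.
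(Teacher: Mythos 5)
Your proof is correct and follows essentially the same route as the paper: the paper's one-line argument is that Lemma \ref{no_comp_conn} rules out the flow being $\varepsilon$-spherical at any scale, whence the quantitative-differentiation dichotomy of Section \ref{sec_necks_back_in_time} forces $Z$ and $R$ to be comparable, and the ``in particular'' clause follows from Proposition \ref{cylind_growth}. You render the same two ingredients in soft compactness form (blow-up at scale $R_k$, classification of the limit via Theorem \ref{thm_finding_necks}, exclusion of the spherical limit by manufacturing a compact component forbidden by Lemma \ref{no_comp_conn}), which is a valid and slightly more explicit version of the paper's implicit quantitative step.
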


\begin{proof}
By Lemma \ref{no_comp_conn} the flow cannot be $\varepsilon$-spherical at any scale. Thus, by Section \ref{sec_necks_back_in_time} the regularity scale and the cylindrical scale must be comparable. Together with Proposition \ref{cylind_growth} this yields $\lim_{x_3(p)\rightarrow \infty} R(p,t)=\infty$.
\end{proof}

Lemma \ref{no_comp_conn} also quickly implies a local type I curvature estimate:

\begin{proposition}\label{prop_type1}
For all $\Lambda < \infty$ there is a $C(\Lambda) < \infty$ with the following significance. If $\mathcal{M}$ reaches a point $p_0$ at time $t_0$, then we have the curvature bound
\begin{equation}
|A(p,t)| \leq \frac{C}{\sqrt{t_0 - t}}
\end{equation}
for all  $p \in B(p_0, \Lambda \sqrt{t_0-t})$ at all $t < t_0$.
\end{proposition}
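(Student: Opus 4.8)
The plan is to argue by contradiction via a blowup, using the fact (Lemma \ref{no_comp_conn}) that the flow has no compact connected components together with Huisken's monotonicity formula and the classification at tangent-flow level from Theorem \ref{partial_regularity}. Suppose the estimate fails. Then for some fixed $\Lambda<\infty$ there is a sequence of ancient low entropy flows $\mathcal M$ with the plus mode dominant (or, after fixing one such $\mathcal M$, a sequence of space-time points $(p_k,t_k)$) with $(p_k,t_k)\to$ a point $p_0$ reached at time $t_0$, together with points $q_k\in B(p_0,\Lambda\sqrt{t_0-t_k})$ at times $t_k<t_0$ such that $|A(q_k,t_k)|\,\sqrt{t_0-t_k}\to\infty$. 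Set $\lambda_k=\sqrt{t_0-t_k}\to 0$ and consider the rescaled flows $\mathcal M_k=\mathcal D_{1/\lambda_k}(\mathcal M-(p_0,t_0))$. In $\mathcal M_k$ the point $q_k$ rescales to a point $\bar q_k$ in the fixed ball $B(0,\Lambda)$ at time $-1$, while $|\bar A(\bar q_k,-1)|\to\infty$.

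The key steps, in order. First, pass to a limit: by Huisken's monotonicity formula and Ilmanen's compactness theorem, after passing to a subsequence $\mathcal M_k$ converges to a limit flow $\mathcal M_\infty$, which is again an ancient, unit-regular, cyclic, integral Brakke flow with $\textrm{Ent}[\mathcal M_\infty]\le \textrm{Ent}[\mathcal M]\le\sqrt{2\pi/e}$, i.e. an ancient low entropy flow. Second, identify $\mathcal M_\infty$: it reaches the origin at time $0$ (the rescaled copies of $(p_0,t_0)$), so the origin is not a vacuum point; by Theorem \ref{partial_regularity} the flow $\mathcal M_\infty$ is either a round shrinking cylinder, or smooth with at most countably many spherical singularities, or — and this is the case we must exclude — $\mathcal M_\infty$ is the round shrinking sphere. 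But the round shrinking sphere and any ancient low entropy flow containing a spherical singularity have a compact connected component, which is impossible for a limit flow of $\mathcal M$: connectedness components and compactness pass to limits appropriately, and Lemma \ref{no_comp_conn} says $M_t$ has no compact connected component, a property that is inherited by blowup limits at points of a fixed such flow (and, in the setting where one instead varies $\mathcal M$, one notes that a compact limit would force a spherical tangent flow, again contradicting the plus-mode/noncompact setup). Hence $\mathcal M_\infty$ is not the sphere. Third, upgrade to a curvature bound: since $\mathcal M_\infty$ is an ancient low entropy flow which is not a round shrinking sphere, and since the plus mode is dominant, we are in the setting where Corollary \ref{R_growth} and the basic regularity theory apply, so on the compact region $B(0,2\Lambda)\times\{-1\}$ the flow $\mathcal M_\infty$ is smooth with curvature bounded by some $C_0(\mathcal M_\infty,\Lambda)<\infty$; in particular $\mathcal M_\infty$ has no singular points in $B(0,2\Lambda)$ at time $-1$, because a singularity there would be a point of Gaussian density $\ge 1+\varepsilon_0$, hence (by Theorem \ref{partial_regularity}) a spherical singularity, contradicting that $\mathcal M_\infty$ has a noncompact, non-extinct structure. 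Finally, reach the contradiction: by Brakke's local regularity theorem, smoothness with bounded curvature on $B(0,2\Lambda)\times\{-1\}$ for $\mathcal M_\infty$ forces smooth convergence $\mathcal M_k\to\mathcal M_\infty$ on a slightly smaller parabolic region, so $|\bar A(\bar q_k,-1)|\to |\bar A_\infty(\bar q_\infty,-1)|\le C_0<\infty$, contradicting $|\bar A(\bar q_k,-1)|\to\infty$.

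The main obstacle I expect is the second step: ruling out that the blowup limit $\mathcal M_\infty$ is a round shrinking sphere (or has a spherical singularity making a compact piece) at the scale $\lambda_k=\sqrt{t_0-t_k}$. A priori the limit flow at the point $p_0$ could be a bowl soliton or an ancient oval, and for the ancient oval the relevant rescaled picture near the extinction time is exactly a shrinking sphere; the point is that here the blowup is not centered at the extinction point but at a generic reached point $p_0$, with the specific parabolic scale $\sqrt{t_0-t}$, so one must use that $(p_0,t_0)$ is reached \emph{and} that the flow near $(p_0,t_0)$ does not become extinct in a way that would produce a shrinking-sphere limit — this is precisely where Lemma \ref{no_comp_conn} (no compact connected components, valid because the plus mode is dominant) does the work, by forbidding the compact geometry of a sphere at any scale in any limit flow of $\mathcal M$. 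Everything else is routine compactness, the already-established regularity theory (Theorem \ref{partial_regularity}, Corollary \ref{R_growth}), and White's local regularity theorem.
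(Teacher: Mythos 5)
Your proposal is correct and follows essentially the same route as the paper: a contradiction argument with parabolic rescaling at scale $\sqrt{t_0-t_k}$, passage to a subsequential limit which is again an ancient low entropy flow reaching the origin at time $0$ (via Brakke's clearing out lemma), exclusion of compact components in the limit via Lemma \ref{no_comp_conn} so that the partial regularity theorem forces smoothness for $t<0$, and then a local curvature bound contradicting $|A|(q_k,-1)\to\infty$. The extra precautions you take (worrying about spherical limits, invoking Corollary \ref{R_growth}) are harmless but not needed beyond what the paper's shorter argument already contains.
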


\begin{proof}
Suppose towards a contradiction that for some $\Lambda<\infty$ there is a sequence of points $(p_i,t_i)\in \mathcal{M}$ such that the estimate fails for $C_i = i$.

After shifting $(p_i,t_i)$ to $(0,0)$ and rescaling, we get a sequence of flows $\mathcal M^i$ which reach the point $0$ at time $0$, such that there are points $q_i\in M^i_{-1}\cap B(0,\Lambda)$ with $|A|(q_i,-1)> i$. Moreover, the time slices of $\mathcal{M}^i$ have no compact connected components by Lemma \ref{no_comp_conn}.

By the entropy assumption we can pass to a subsequential limit $\mathcal M^i\to \mathcal M$, which is an ancient low entropy flow. By Brakke's clearing out lemma \cite{Brakke_book} the limit flow $\mathcal M$ reaches the origin at time $0$. Moreover, the limit $\mathcal M$ cannot have any compact connected component at any $t<0$.

Thus, $\mathcal{M}$ has no spherical singularities when $t<0$ and so $\mathcal{M}\cap \{ t<0\}$ is smooth. In particular, $\mathcal{M}$ satisfies $|A|\leq C$ on $B(0,2\Lambda) \times [-2,-1/2]$ for some $C<\infty$; for $i$ large enough this contradicts $|A|(q_i,-1)> i$.
\end{proof}

Combining the above results we now get a global curvature bound:

\begin{theorem}[Global curvature bound]\label{curv_bound}
Let $\mathcal M$ be an ancient low entropy flow such that the plus mode is dominant. Then $\mathcal M$ is eternal with globally bounded curvature, namely
\begin{equation}
T_E(\mathcal M)=\infty,
\end{equation}
and there is a constant $C=C(\mathcal M)<\infty$ such that
\begin{equation}
\sup_{(p,t)\in \mathcal{M}}|A|(p,t)\leq C.
\end{equation}
\end{theorem}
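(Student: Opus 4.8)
The plan is to combine the three preparatory results of this subsection — Corollary \ref{R_growth} (relating the cylindrical scale and the regularity scale, giving $R(p,t)\to\infty$ as $x_3(p)\to\infty$), Proposition \ref{prop_type1} (the local type I estimate near points that the flow reaches), and Corollary \ref{fine_neck_cor} together with the fine neck theorem (Theorem \ref{thm Neck asymptotic}) controlling the geometry far out the neck — into a single global bound. First I would establish that $T_E(\mathcal M)=\infty$: by Lemma \ref{no_comp_conn} no time slice has a compact connected component, so in the extinction-time trichotomy from the Corollary following Theorem \ref{partial_regularity} the only surviving possibility is that the flow is eternal (cases (ii), (iii), (iv) all force compact components or a round cylinder, which the plus-mode assumption excludes).

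Next I would prove a curvature bound on an arbitrary compact time interval $I=[-T_0,T_0]$. Suppose not: then there is a sequence $(p_i,t_i)\in\mathcal M$ with $t_i\in I$ and $|A|(p_i,t_i)\to\infty$, i.e. $R(p_i,t_i)\to 0$. By Corollary \ref{R_growth} the height $x_3(p_i)$ stays bounded, say $x_3(p_i)\le \Lambda_0$; by Corollary \ref{fine_neck_cor} and spherical barriers the points $p_i$ also stay in a fixed ball $B(0,\Lambda_1)$ (since below height $-L_0$ the surface is trapped in the cylinder and above some height the regularity scale is large). Now I want to contradict the local type I estimate. Since $T_E(\mathcal M)=\infty$, the flow reaches every point of $B(0,\Lambda_1)$ at, say, time $t=T_0+1$; applying Proposition \ref{prop_type1} with $\Lambda$ chosen large enough that $B(0,\Lambda_1)\subset B(p_i, \Lambda\sqrt{(T_0+1)-t_i})$ for all $i$ (possible because $t_i\le T_0$ keeps $(T_0+1)-t_i\ge 1$ bounded below and $\sqrt{(T_0+1)-t_i}$ bounded above), we get $|A|(p_i,t_i)\le C(\Lambda)/\sqrt{(T_0+1)-t_i}\le C(\Lambda)$, contradicting $|A|(p_i,t_i)\to\infty$.

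Finally I would upgrade the interval bound to a global bound using the local type I estimate once more, now in a scaling/time-translation argument. If the curvature were unbounded globally, there is a sequence $(p_i,t_i)\in\mathcal M$ with $|A|(p_i,t_i)\to\infty$ and necessarily $t_i\to-\infty$ (the interval bound rules out $t_i$ bounded). Translate $(p_i,t_i)$ to the space-time origin and rescale by $|A|(p_i,t_i)$; the rescaled flows have entropy $\le\sqrt{2\pi/e}$ and a unit-size second fundamental form at the origin, so by Ilmanen compactness they subconverge to an ancient low entropy flow $\mathcal M_\infty$ with $|A|(0,0)=1$. But the original flow, being eternal, reaches a point of the form $(p_i, t_i + L^2)$ for any fixed $L$; rescaling, $\mathcal M_\infty$ reaches the origin at time $L^2$ for every $L$, hence Proposition \ref{prop_type1} applied to $\mathcal M_\infty$ at a reaching point far in the future forces $|A|(0,0)$ to be arbitrarily small — a contradiction. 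The main obstacle I expect is bookkeeping in this last step: making precise that the rescaled limit is still reached arbitrarily far in the future (one needs Brakke's clearing-out lemma as in the proof of Proposition \ref{prop_type1} to pass "reaching a point" to the limit) and that the type I estimate survives the limit with a uniform constant; once that is in place the contradiction is immediate.
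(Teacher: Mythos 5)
Your first two steps are essentially sound (eternality via the extinction trichotomy plus Lemma \ref{no_comp_conn}, and a curvature bound on compact time intervals --- though note that ``the flow reaches every point of $B(x_0,\Lambda_1)$ at time $T_0+1$'' is not what reaching means; you only get to pick some $p_0\in M_{T_0+1}$ and enlarge $\Lambda$ accordingly, which is still enough for the interval bound, and $t_i\to+\infty$ must be handled alongside $t_i\to-\infty$). The genuine gap is in the final, global step. You assert that since the flow is eternal it ``reaches a point of the form $(p_i,t_i+L^2)$ for any fixed $L$'', i.e.\ that $p_i\in M_{t_i+L^2}$, and hence that the blow-up limit $\mathcal M_\infty$ reaches the spatial origin at time $L^2$ for every $L$. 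Eternality only says $M_t\neq\emptyset$ for all $t$; it does not pin the surface to a fixed spatial point. For Proposition \ref{prop_type1} to force $|A|(0,0)$ to be arbitrarily small you would need, for arbitrarily large $t_0$, a point $p_0\in M^\infty_{t_0}$ with $|p_0|\leq\Lambda\sqrt{t_0}$ for a \emph{fixed} $\Lambda$. This fails for exactly the flows under consideration: for a translating bowl soliton the surface at time $t_0$ has moved a distance comparable to $t_0\gg\sqrt{t_0}$ from its time-$0$ position, so no admissible reaching point exists and the estimate $|A|(0,0)\leq C(\Lambda)/\sqrt{t_0}$ is never available with $t_0$ large. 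The contradiction in your last step therefore does not materialize.

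The paper's proof avoids taking a limit and anchors the type I estimate at a \emph{fixed} forward time offset $\delta$ rather than at $t_0\to\infty$. Given $(p,t)$ with $R(p,t)<\tfrac{1}{10}$, Proposition \ref{cylind_growth} and Corollary \ref{R_growth} produce a point $(q,t)$ with $0<x_3(q)-x_3(p)<C$ and $R(q,t)=1$; unit regularity then gives a point $(q',t+\delta)\in\mathcal M$ with $|q'-q|\leq 1$ and $\delta>0$ uniform; the fine neck theorem (Theorem \ref{thm Neck asymptotic}) bounds $|x_1(p)-x_1(q)|+|x_2(p)-x_2(q)|$, so $|p-q'|\leq C$. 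Applying Proposition \ref{prop_type1} centered at $(q',t+\delta)$ with $t_0-t=\delta$ fixed yields $|A|(p,t)\leq C/\sqrt{\delta}$, uniformly in $(p,t)$. If you wish to salvage your framework, the repair is the same idea: the future reference point must be found at bounded parabolic distance from $(p,t)$ in the \emph{original} scale, which is exactly what the nearby regularity-scale-one point supplies; one obtains only boundedness of $|A|$, not smallness, and that is all the theorem requires.
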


\begin{proof}
By Corollary \ref{R_growth} the flow is eternal, i.e. $T_E(\mathcal M)=\infty$, and by Lemma \ref{no_comp_conn} the time slices have no compact connected components.
If $(p,t)\in \mathcal M$ is any point with $R(p,t)<\frac{1}{10}$, then by Proposition \ref{cylind_growth} and Corollary \ref{R_growth} there exists a point $(q,t)$ with $0 < x_3(q)-x_3(p)< C\rho$ such that $R(q,t)=1$. But then, by unit regularity there is a point $(q',t+\delta)$ with $|q'-q| \leq 1$ and $R(q',t+\delta)\leq 2$, where $\delta>0$ is a uniform constant. Using the fine neck theorem (Theorem \ref{thm Neck asymptotic}) we see that $|x_1(p)-x_1(q)|+|x_2(p)-x_2(q)|$ is bounded. Applying the local type I curvature estimate (Proposition \ref{prop_type1}) centered at $(q',t+\delta)$ we conclude that $|A(p,t)|\leq C$.
\end{proof}

\bigskip

\subsection{Cap size control and asymptotics}\label{sec_cap_size_asymptotics}

The goal of this section is to prove Theorem \ref{thm_asympt_par}, which states that our surfaces $M_t$ have a cap of uniformly controlled size and open up like a parabola. To begin with, let us consider the ``height of the tip" function
\begin{equation}
\psi(t):= \inf_{x\in M_t} x_3.
\end{equation}

\begin{lemma}
The function $\psi$ is strictly increasing and satisfies
\begin{equation}
\lim_{t\to \pm \infty} \psi(t)=\pm \infty.
\end{equation}
\end{lemma}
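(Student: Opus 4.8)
The claim has two parts: (i) $\psi$ is strictly increasing, and (ii) $\psi(t)\to\pm\infty$ as $t\to\pm\infty$. For part (i), I would argue that at each time $t$ the infimum defining $\psi(t)$ is attained at some point $p_t\in M_t$ — this follows from Corollary \ref{fine_neck_cor}, which tells us $\inf_{p\in M_t}x_3(p)>-\infty$, together with the fact that $M_t\setminus B_C(p_t)$ is cylindrical/parabolic so that the bottom slice $\{x_3\le L\}$ is compact. At such a minimum point the tangent plane is horizontal, i.e.\ $\nu(p_t)=\pm e_3$, and moreover $M_t$ lies (locally) above the horizontal plane $\{x_3=\psi(t)\}$, so the surface is weakly mean convex there with respect to the downward direction; in fact, by the global curvature bound (Theorem \ref{curv_bound}) the flow is smooth near the tip, and by the strong maximum principle applied to $x_3$ (which satisfies the heat equation $\partial_t x_3 = \Delta_{M_t} x_3$ along the flow, since $\langle\mathbf H,e_3\rangle = \Delta_{M_t}x_3$) the minimum of $x_3$ is strictly increasing unless $M_t$ is a static horizontal plane — which it is not, being a nonflat ancient low entropy flow with a dominant plus mode. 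More robustly, one can foliate by a downward-moving horizontal plane barrier or a large sphere barrier tangent from below at the tip; the avoidance principle then forces $\psi(t_2)>\psi(t_1)$ for $t_2>t_1$.

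**For the limits.** For $\lim_{t\to-\infty}\psi(t)=-\infty$, I would invoke Corollary \ref{fine_neck_cor} (or rather its proof): for a fixed base point, the rescaled flow satisfies $\inf_{\bar M^X_\tau}x_3\le -\mu e^{-\tau/2}$, which unrescaled says $\inf_{p\in M_t}x_3(p)\le -\mu\sqrt{t_0-t} + (\text{lower order in the shift})$. Concretely, centering at $X_0=(0,0)\in\mathcal M$ (which we have arranged, with the tip at the origin at $t=0$), the estimate $\inf_{\bar M_\tau}x_3 \le -\mu e^{-\tau/2}$ with $\tau=-\log(-t)$ gives $\psi(t)\le -\mu\sqrt{-t}\to-\infty$ as $t\to-\infty$. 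For $\lim_{t\to+\infty}\psi(t)=+\infty$: since $\psi$ is strictly increasing it has a limit $L\in(-\infty,+\infty]$; if $L<\infty$ were finite, then the whole flow would stay in the halfspace $\{x_3\ge \psi(-\infty) = -\infty\}$... rather, the relevant point is that $M_t$ always contains a point with $x_3 = \psi(t)\le L$, and by the fine neck theorem the flow near the tip looks, after parabolic rescaling at a late time, like a piece of the bowl/asymptotic configuration moving upward at a definite rate. The cleanest argument: suppose $\psi(t)\le L$ for all $t$. Then a large round sphere of radius $\sqrt{2(t-t_0)}$-ish placed below would have to be avoided, but more directly, the tip point $p_t$ has, by Proposition \ref{prop_type1} and the noncollapsing coming from the neck structure, curvature bounded and the flow is a genuine (non-vanishing) eternal flow; a translating-type lower bound on the speed of the tip — which we will establish in Section \ref{sec_cap_size_asymptotics} via a blowdown argument — shows $\dot\psi$ is bounded below by a positive constant once $t$ is large, forcing $\psi(t)\to+\infty$. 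Pending that, one can already argue: if $\psi$ stayed bounded, the flow would be a complete eternal smooth noncollapsed mean-convex-at-the-tip solution confined to a slab in the $x_3$-direction near the cap, contradicting that it has an asymptotic cylinder of ever-growing radius opening upward (Corollary \ref{fine_neck_cor}: $\sup_{p\in M_t}x_3(p)=\infty$ with cross-sections forced to stay close to the cylinder $\Sigma$ for $x_3\le -L_0$, which contradicts the cap staying at bounded height).

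**The main obstacle.** The delicate point is $\lim_{t\to+\infty}\psi(t)=+\infty$: strict monotonicity alone only gives a limit, and ruling out a finite limit requires a quantitative lower bound on the speed of the tip (or equivalently on $H$ near the tip), which is exactly the content of the blowdown/``tip does not slow down'' argument advertised in the outline and carried out later in Section \ref{sec_cap_size_asymptotics}. I would therefore structure the proof to handle (i) and $\lim_{t\to-\infty}$ directly and cleanly from Corollary \ref{fine_neck_cor} and the maximum principle, and for $\lim_{t\to+\infty}=+\infty$ either cite the forthcoming tip-speed estimate or give the soft contradiction: a finite $L=\lim_{t\to\infty}\psi(t)$ together with the global curvature bound would make $\{M_t\}$ subconverge as $t\to\infty$ to an eternal flow $M_\infty$ with $\inf x_3 = L$ attained, hence (strong maximum principle) a static horizontal plane, contradicting the low-entropy classification (the plus-mode case is noncompact and nonplanar). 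I expect the $t\to+\infty$ direction to be where the real work hides, everything else being a short maximum-principle and barrier argument.
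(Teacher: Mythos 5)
Your treatment of strict monotonicity (attainment of the infimum via Corollaries \ref{cor_barrier} and \ref{fine_neck_cor}, then the strong maximum principle / comparison with horizontal planes) and of the limit $\lim_{t\to-\infty}\psi(t)=-\infty$ (from the third estimate of Corollary \ref{fine_neck_cor}) is exactly the paper's argument; a minor slip is that unrescaling $\inf_{\bar M_\tau}x_3\le-\mu e^{-\tau/2}$ requires multiplying by $e^{-\tau/2}$ again, so one gets the linear bound $\psi(t)\le\mu t$ rather than $-\mu\sqrt{-t}$ — the conclusion is unaffected.

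You correctly identify $\lim_{t\to+\infty}\psi(t)=+\infty$ as the delicate point, but none of your three alternatives is satisfactory, and you miss the paper's actual (one-line) argument: \emph{comparison with conical self-expanders}, using Corollary \ref{cor_barrier}. Because the asymptotic slope vanishes, at a fixed time $t_1$ the surface $M_{t_1}$ is contained, up to a bounded piece, in an arbitrarily thin solid cone about the upward vertical axis; one places beneath it a self-expanding solution asymptotic to such a cone (suitably translated so as to be disjoint from and below $M_{t_1}$). The expander's tip rises like $\sqrt{t}$, and the avoidance principle then forces $\psi(t)\to+\infty$. By contrast: (a) your proposal to cite the forthcoming tip-speed estimate is circular, since the proof of Theorem \ref{thm_asympt_par} uses precisely this lemma (it needs, for each height $h\ge\psi(t)$, a time $t_*$ with $\psi(t_*)=h$, i.e.\ surjectivity of $\psi$ onto $[\psi(t),\infty)$); (b) the compactness argument (time-translates subconverging to a static plane) is not complete as stated — you must recenter at the tip points since their horizontal position is not yet controlled at this stage (that is Theorem \ref{thm_neck_asympt}, proved later), you cannot invoke the classification theorem to rule out the plane limit since that is what is being proved, and deriving a contradiction from the limit being a plane still requires a quantitative statement about the geometry of $M_{t_i}$ near the tip at unit scale; and (c) the "slab" contradiction is not an argument — an eternal flow with an upward-opening asymptotically cylindrical end and a cap at bounded height is not self-evidently impossible without exactly the kind of barrier the paper supplies. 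So the gap is real: you need the conical expander barrier (or an equivalent quantitative lower bound on the upward motion that does not presuppose later results).
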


\begin{proof}
Applying Corollary \ref{fine_neck_cor} we see that $\psi(t)>-\infty$ for every $t\in \mathbb{R}$ and that the infimum is actually a minimum (using also Corollary \ref{cor_barrier}). By the comparison with planes / the strong maximum principle, the function $\psi$ is strictly increasing. Applying Corollary \ref{fine_neck_cor} again we infer that $\lim_{t\to - \infty} \psi(t)=- \infty$.
Finally, by comparison with conical expanders (using also Corollary \ref{cor_barrier}) we have $\lim_{t\to  \infty} \psi(t)= \infty$.
\end{proof}

\begin{proposition}[Fast tip points]\label{fast_tip}
There exists a constant $Q=Q(\mathcal{M})<\infty$ such that every $p\in M_t$ with $x_3(p)=\psi(t)$ satisfies $R(p,t)\leq Q$.
\end{proposition}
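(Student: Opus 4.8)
The plan is to prove Proposition \ref{fast_tip} (``Fast tip points'') by a contradiction/compactness argument, in the same spirit as the proof of the local type I estimate (Proposition \ref{prop_type1}). Suppose no such $Q$ exists. Then there is a sequence of tip points $p_i \in M_{t_i}$, i.e. $x_3(p_i) = \psi(t_i)$, with $R(p_i, t_i) \to \infty$. The idea is to rescale the flow by $R(p_i,t_i)^{-1}$ around $(p_i,t_i)$ and extract a limit, and then argue that the limit is incompatible with being an ancient low entropy flow where the plus mode is dominant, because the tip point ``survives'' the limit but the fine neck theorem forces necks to open up in one direction from the tip, contradicting that $p_i$ is a minimum of $x_3$.

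Here is the order of the steps I would carry out. First, I would normalize: set $\mathcal{M}^i := \mathcal{D}_{1/R(p_i,t_i)}(\mathcal{M} - (p_i,t_i))$, so that in $\mathcal{M}^i$ the space-time origin has regularity scale exactly $1$, hence $|A| \le 1$ in a parabolic ball of radius $1$ around the origin, and $|A|(0,0)$ is comparable to $1$ (otherwise the regularity scale would be larger). Second, I would invoke the global curvature bound (Theorem \ref{curv_bound}) — which applies since the plus mode is dominant — together with Ilmanen's compactness theorem, to pass to a subsequential limit $\mathcal{M}^\infty$; by preservation of the relevant properties this is again an ancient low entropy flow, and it is nontrivial since $|A|(0,0) \ge c > 0$, so it is not a flat plane; it is noncompact (by Lemma \ref{no_comp_conn}, time slices have no compact components and this persists in the limit), so it is not a round sphere, and it cannot be a round shrinking cylinder since on a cylinder $x_3$ has no interior minimum; hence by Theorem \ref{thm_ancient_low_entropy} it must be a translating bowl soliton or an ancient oval — but being noncompact rules out the oval, so $\mathcal{M}^\infty$ is a bowl soliton (note: strictly speaking I only need that the plus mode is dominant for $\mathcal{M}^\infty$, which follows from Proposition \ref{Plus or Neutral} / the fact that asymptotic cylinders agree, and then the classification so far developed, not the full Theorem \ref{thm_ancient_low_entropy}; I will phrase this carefully). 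Third, and this is the crux, I would track the tip point: the tips $p_i$ are minima of $x_3$ on $M_{t_i}$, so $0$ is a minimum of $x_3$ on each $M^i_0$; by the curvature bound and smooth convergence near the origin, $0$ is a critical point of $x_3$ on $M^\infty_0$ with $M^\infty_0$ lying (locally) in $\{x_3 \ge 0\}$, i.e. the origin is a genuine tip of the limit. But the limit has its necks opening up exponentially via Corollary \ref{fine_neck_cor}, and after the normalization the origin has regularity scale $1$ while the fine neck estimate forces the height of the tip to be finite and the regularity scale at the tip to be bounded by a universal constant depending only on the bowl — and more importantly, the sequence of rescalings $R(p_i,t_i)\to\infty$ means that in $\mathcal{M}^i$ we are ``zooming in'' near the original tip, so the limit $\mathcal{M}^\infty$ would have to contain a tip point with arbitrarily large regularity scale relative to its cylindrical scale, i.e. $\limsup Z/R$ at tips of the limit is not bounded — but the bowl soliton has $Z$ and $R$ comparable everywhere (Corollary \ref{R_growth}), giving the contradiction. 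I would make this precise by showing $R(p_i,t_i)$ is comparable to $Z(p_i,t_i)$ via Corollary \ref{R_growth}, and then that $Z$ at a tip of a bowl soliton (the original, unrescaled $\mathcal{M}$) is bounded in terms of $\mathcal{M}$ — but the sequence assumption $R(p_i,t_i)\to\infty$ with $p_i$ tip points contradicts boundedness of $Z$ at tips, which follows from the already-proven structure (e.g. via Proposition \ref{cylind_growth}: a tip has $x_3(p_i) = \psi(t_i)$ minimal, so there is no point below it, while large $Z$ at $p_i$ would, going back in time, be incompatible with $\inf x_3 > -\infty$ from Corollary \ref{fine_neck_cor}).

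Let me restate the cleanest version of the final step, since I think it is cleaner to argue directly on $\mathcal{M}$ without fully invoking the classification. By Corollary \ref{R_growth} there is $C = C(\mathcal{M})$ with $Z(p,t) \le C R(p,t)$ for all $(p,t) \in \mathcal{M}$; conversely, since the plus mode is dominant and there are no spherical singularities in the relevant region (no compact components, Lemma \ref{no_comp_conn}), $R(p,t)$ is also bounded above by a multiple of $Z(p,t)$ at points of high curvature (from the neck structure of Section \ref{sec_necks_back_in_time}). So it suffices to prove $Z(p,t) \le Q$ for every tip point $p$. Suppose $Z(p_i, t_i) \to \infty$ along tip points. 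Rescale $\mathcal{M}$ by $Z(p_i,t_i)^{-1}$ about $(p_i,t_i)$; the limit $\mathcal{M}^\infty$ is an ancient low entropy flow with a cylindrical region at scale $1$ near the origin and, because $p_i$ were tips, with $M^\infty_t \subset \{x_3 \ge \psi^\infty(t)\}$ and the origin a tip of $M^\infty_0$. Applying Corollary \ref{fine_neck_cor} (respectively Corollary \ref{cor_barrier}) to $\mathcal{M}^\infty$ we find that near the tip the surface is trapped inside a fixed cylinder going one way, so the tip behaves like a bowl-type cap of bounded size — but then $Z$ near the original tip $p_i$ could not have been going to infinity, since $Z$ of a tip point is comparable to the cap size, which is uniformly bounded by the fine neck constant $C(\mathcal{M})$. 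This is the contradiction.

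The main obstacle I anticipate is making the ``the tip survives the limit'' step rigorous: I need to ensure that after rescaling, the rescaled tip points do not run off to spatial infinity or disappear, and that the limiting object genuinely has an interior minimum of $x_3$ (as opposed to degenerating to a cylinder with the ``tip at infinity''). This is handled by the global curvature bound (Theorem \ref{curv_bound}), which gives uniform smooth control and hence $\mathcal{M}^i \to \mathcal{M}^\infty$ smoothly on compact sets with the origin a genuine point of $\mathcal{M}^\infty$ of definite (rescaled) curvature. A secondary technical point is confirming that the plus mode remains dominant (or, more precisely, that the limit is noncompact and hence not an oval) — this uses Proposition \ref{Plus or Neutral} together with the asymptotic-cylinder uniqueness, plus the fact that no-compact-components (Lemma \ref{no_comp_conn}) passes to the limit via Brakke's clearing-out lemma, exactly as in the proof of Proposition \ref{prop_type1}.
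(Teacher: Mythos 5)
Your setup --- assume $R(p_i,t_i)\to\infty$ along tip points, rescale by $R(p_i,t_i)^{-1}$, and extract a limit $\mathcal{M}^\infty$ which is a nontrivial, noncompact ancient low entropy flow contained in a half-space (hence not a plane, sphere, or cylinder) --- is exactly the paper's. The gap is in how you derive the contradiction: every route you propose is either circular or does not actually produce one. Concluding that $\mathcal{M}^\infty$ is a bowl soliton uses Theorem \ref{thm_class_noncompact} (equivalently Theorem \ref{thm_ancient_low_entropy}), which is proved only in Section \ref{sec_classification_completion} and relies on the results of Section \ref{sec_cap_size}, including Proposition \ref{fast_tip} itself. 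Likewise, your ``cleanest version'' appeals to ``$Z$ of a tip point is comparable to the cap size, which is uniformly bounded'' --- but uniform cap size control is Theorem \ref{thm_asympt_par}, whose proof begins by invoking Proposition \ref{fast_tip}. The one non-circular variant you sketch (``$\limsup Z/R$ at tips of the limit is not bounded'') fails: $Z/R$ is scale-invariant and, by Corollary \ref{R_growth} together with the remark following Theorem \ref{thm_finding_necks}, is bounded above and below on all of $\mathcal{M}$, so the limit simply has a tip with $R=1$ and $Z$ comparable to $1$ --- perfectly consistent with, say, a bowl soliton. Indeed nothing in the qualitative picture you paint (a tip lying below necks that open upward) is self-contradictory, since the bowl itself realizes it; the contradiction must be quantitative. (Also, rescaling by $R(p_i,t_i)^{-1}$ with $R\to\infty$ is a blow-down, not a ``zooming in.'')

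The missing idea is the behavior of the fine-neck expansion parameter under rescaling. Theorem \ref{thm Neck asymptotic} attaches to $\mathcal{M}$ a single constant $\bar a=\bar a(\mathcal{M})\neq 0$, independent of the base point, and $\bar a$ scales like a length, so the rescaled flows $\mathcal{M}^i$ have expansion parameters $\bar a^i=R(p_i,t_i)^{-1}\bar a\to 0$. On the other hand, the limit $\mathcal{M}^\infty$ has no compact components (this passes to the limit via the clearing-out lemma, as in the proof of Proposition \ref{prop_type1}), hence is noncompact, hence by Corollary \ref{thm_compact} has dominant plus mode; since it is not a plane, sphere, or cylinder, the fine neck theorem applies to it and forces $a^\infty\neq 0$. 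Because the expansion parameters converge under the locally smooth convergence $\mathcal{M}^i\to\mathcal{M}^\infty$ furnished by Theorem \ref{curv_bound}, this contradicts $\bar a^i\to 0$. Without tracking $\bar a$, the argument does not close.
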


\begin{proof}
If the assertion fails we can find a sequence $(p_j,t_j)\in \mathcal M$ with $x_3(p_j)=\psi(t_j)$ such that $Q_j:=R(p_j,t_j)\to \infty$. Let $\mathcal{M}^j$ be the flow that is obtained by parabolically rescaling by $Q_j^{-1}$ around $X_j=(p_j,t_j)$. Note that $\mathcal{M}^j$ has expansion parameter $\bar{a}^j=Q_j^{-1}\bar{a}\to 0$.

Up to a subsequence, we can pass to a limit $\mathcal{M}^\infty$. The limit $\mathcal{M}^\infty$ is itself a noncompact ancient low entropy flow. Since its zero time slice is contained in a half space, $\mathcal{M}^\infty$ cannot be the cylinder. Hence, by the fine neck theorem (Theorem \ref{thm Neck asymptotic}) it has an expansion parameter $a^{\infty}\neq 0$. However, it follows from the fine neck theorem and its proof that $\bar{a}^j\to  a^\infty$, contradicting $\bar{a}^j\to 0$. This finishes the proof of the proposition.
\end{proof}

For each time, select a point $p_t\in M_t$ such that $x_3(p_t)=\psi(t)$.

\begin{theorem}[Cap size control]\label{thm_asympt_par}
Let $\mathcal{M}$ be an ancient low entropy flow such that the plus mode is dominant. Then there exists a $C=C(\mathcal{M})<\infty$ such that for $t\in\mathbb{R}$ every point in $M_t\setminus B_C(p_t)$ lies on a fine neck. In particular, the surface $M_t$ has exactly one end.

Moreover, $M_t\setminus B_C(p_t)$ is the graph of a function $r$ in cylindrical coordinates around the $z$-axis satisfying  
\begin{equation}\label{expansion_cylindrical}
r(t,z,\theta)=\sqrt{2(z-\psi(t))}+o(\sqrt{z-\psi(t)})
\end{equation}
for $z\geq \psi(t)+C$, and the height of the tip function $\psi$ satisfies
\begin{equation}
\psi(t)= t + o(|t|)
\end{equation}
\end{theorem}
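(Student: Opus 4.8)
The plan is to prove the three assertions in turn, leaning on the fine neck theorem (Theorem \ref{thm Neck asymptotic}) and its corollary (Corollary \ref{fine_neck_cor}), together with the coarse theory of Section \ref{sec_necks_back_in_time} and the global curvature bound (Theorem \ref{curv_bound}).

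\emph{Cap size control.} I would argue by contradiction. Suppose there are times $t_j$ and points $p_j\in M_{t_j}$ with $|p_j-p_{t_j}|\to\infty$ that do not lie on a fine neck. By the asymptotic slope theorem (Theorem \ref{thm_asymptotic_slope}) and Corollaries \ref{cor_barrier} and \ref{fine_neck_cor}, the surfaces $M_t$ are trapped in a region around the $x_3$-axis that opens up sublinearly, so $h_j:=x_3(p_j)-\psi(t_j)\to\infty$. By the fast tip points lemma (Proposition \ref{fast_tip}) and Corollary \ref{R_growth}, the tip satisfies $Z(p_{t_j},t_j)\leq CQ$; hence for every $\Lambda\geq CQ$, Proposition \ref{cylind_growth} forces $Z(p_j,t_j)>\Lambda$ once $j$ is large, i.e.\ $Z(p_j,t_j)\to\infty$. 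Rescaling $\mathcal M$ around $(p_j,t_j)$ by $Z(p_j,t_j)^{-1}$ and passing to a subsequential limit yields an ancient low entropy flow $\mathcal M^\infty$ which is $\varepsilon$-close to the round shrinking cylinder on $B(0,1/\varepsilon)\times[-2,-1]$ but, since the $p_j$ fail to be on fine necks, is not the round shrinking cylinder. As in the proof of Proposition \ref{fast_tip}, the plus mode is dominant for $\mathcal M^\infty$, so Theorem \ref{thm Neck asymptotic} gives it a nonzero expansion parameter $\bar a(\mathcal M^\infty)$; but the expansion parameter transforms under the rescaling like $\bar a(\mathcal M)/Z(p_j,t_j)\to 0$, which is the desired contradiction. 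Thus there is a uniform $C<\infty$ such that every point of $M_t\setminus B_C(p_t)$ lies on a fine neck.

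\emph{Graph over the axis.} Each point of $M_t\setminus B_C(p_t)$ is then the center of an $\varepsilon$-neck whose axis, after rescaling, is $\varepsilon$-close to a fixed line. Since any two overlapping necks of $\mathcal M$ must be coaxial — otherwise, flowing backwards, the corresponding necks would be forced to intersect, exactly as in the proof of Proposition \ref{cylind_growth} — and since the asymptotic cylinder has axis the $x_3$-axis, these necks fit together coaxially around the $x_3$-axis. Hence $M_t\setminus B_C(p_t)$ is the graph of a single-valued function $r(t,z,\theta)$ over the $z$-axis with $\partial_z r$ and $\partial_\theta r$ uniformly small, and in particular $M_t$ has exactly one end.

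\emph{Asymptotics.} First I would show the tip does not slow down: applying Corollary \ref{fine_neck_cor} at the tip point $X=(p_{t_0},t_0)$, whose cylindrical scale is $\leq CQ$ so that the estimate holds for all $\tau\leq\mathcal T(CQ)$, the bound $\inf_{\bar{M}^X_\tau}x_3\leq-\mu e^{-\tau/2}$ unrescales to
\begin{equation}
\psi(t_0)-\psi(t)\geq\mu\,(t_0-t)-C\qquad\text{for all }t<t_0,
\end{equation}
while $|\psi'|\leq\sup_{\mathcal M}|H|\leq C$ by Theorem \ref{curv_bound}. Consequently, for a given time $t$ and a given height $z\geq\psi(t)+C$, there are at times $t_0>t$ with $t_0-t$ of order $z-\psi(t)$ plenty of ``fast points'' $X=(x_0,t_0)\in\mathcal M$ lying just above the tip and at bounded cylindrical scale; applying the normalized fine neck estimate \eqref{main_thm_est2pp} centered at such $X$ and rewriting $\hat u^X\approx\tfrac1{\sqrt2}ze^{\tau/2}$ at the original scale, then patching over a range of heights, yields
\begin{equation}
r(t,z,\theta)^2=2\big(z-\psi(t)\big)+o\big(z-\psi(t)\big)\qquad\text{on }\{z\geq\psi(t)+C\},
\end{equation}
and simultaneously $\psi(t)=t+o(|t|)$; the latter can also be read off by sandwiching $M_t$ on the slabs $\{z\leq\psi(t)+R\}$ between translating bowl solitons of speeds $1\pm\varepsilon$, whose asymptotic openings match the fine neck expansion by the footnote to \eqref{fine_neck_est}, and letting $\varepsilon\to0$.

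\emph{Main obstacle.} I expect the last step to be the crux. One must carefully coordinate the cylindrical scale $Z$, the regularity scale $R$, the graphical radius, and the window $|x_3|\leq10L_0$ (in rescaled coordinates) where the sharp estimate \eqref{main_thm_est2} is available, and verify that for $-t$ large the fast points supplied by the non-slowing of the tip really do reach up to heights $\sim\sqrt{-t}$, where the neck radius has already grown to $\sqrt{-2t}\,(1+o(1))$. Turning the blowdown/fast-point heuristic into quantitative estimates, and assembling the fine neck expansions centered at different points into one global profile $r(t,z,\theta)$, is where the real work lies.
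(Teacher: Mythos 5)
Your second and third steps (coaxiality of overlapping fine necks, integrating/iterating the fine neck expansion to get $r=\sqrt{2(z-\psi(t))}+o(\sqrt{z-\psi(t)})$, and the bowl--soliton barriers for $\psi(t)=t+o(|t|)$) follow the paper's route. The gap is in your first step, the cap size control itself. Your contradiction argument rescales around $(p_j,t_j)$ by $Z(p_j,t_j)^{-1}\to 0$ and asserts that the limit $\mathcal M^\infty$ cannot be a round shrinking cylinder ``since the $p_j$ fail to be on fine necks''. That inference is not valid, and in fact the limit typically \emph{is} a round shrinking cylinder: already for the bowl soliton, a point at height $h$ above the tip has $Z\sim\sqrt{h/\varepsilon}$, and the blow-down by $Z^{-1}$ converges to a round shrinking cylinder whose extinction time is $\approx\varepsilon>0$, i.e.\ a shrinking cylinder that is not self-similar about the space-time origin. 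Such a limit is perfectly consistent with the flow not being $\varepsilon$-cylindrical around $X_j$ at scale $Z_j/2$ (the definition of $Z$ only tests closeness to the self-similar cylinder centered at $X_j$), and it carries no expansion parameter, so there is nothing to contradict. Put differently: if your argument were sound it would show that no sequence of points of $\mathcal M$ has $Z(X_j)\to\infty$, contradicting Proposition \ref{cylind_growth}. ``Lying on a fine neck'' is a statement at scales comparable to the neck radius with the precise $e^{\tau/160}$ error; its failure is invisible in the coarse blow-down limit, so the hypothesis you are trying to contradict never actually enters the argument. (This is exactly why the analogous blow-up in Proposition \ref{fast_tip} needs the points to be tips: there the zero time slice of the limit lies in a half-space, which is what rules out the cylinder.)

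The paper's proof avoids this by transferring fine-neck information from a controlled center rather than blowing down at $p$: given $p\in M_t$ with $h=x_3(p)-\psi(t)$ large, let $t_*$ be the time with $\psi(t_*)=x_3(p)$. By Proposition \ref{fast_tip} the tip $(p_{t_*},t_*)$ has uniformly bounded regularity (hence cylindrical) scale, and by the global curvature bound $t_*-t\geq c\,h$, so the rescaled time $\tau=-\log(t_*-t)$ is very negative and Theorem \ref{thm Neck asymptotic} applies centered at $(p_{t_*},t_*)$; since $p$ sits at rescaled height $\approx 0$ in that neck, it lies on a fine neck. If you replace your blow-up step with this argument pointing to the future tip, the rest of your outline goes through.
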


\begin{proof}
Let $p\in M_t$, and let $t_{\ast}$ be the time in which $\psi(t_{\ast})=x_3(p)$. 
By Proposition \ref{fast_tip} (fast tip points) we have $R(p_{t_{\ast}})\leq Q$, an by the global curvature bound (Theorem \ref{curv_bound}) we have $t_{\ast}-t \geq c(x_3(p)-\psi(t))$. Thus, applying the fine neck theorem (Theorem \ref{thm Neck asymptotic}) and Proposition \ref{prop_c0est} at $(p_{t_{\ast}},t_{\ast})$ we see that if $x_3(p)-\psi(t)$ is sufficiently large, then $p$ lies on a fine neck. Together with Corollary \ref{cor_barrier} and Corollary \ref{fine_neck_cor} this proves that we have a cap of controlled size $C=C(\mathcal M)<\infty$, and exactly one end.\\

Fix $t$. For $z\geq \psi(t)+C$, let $r(z)$ be (a representitive of) the radius of the fine neck of $M_{t}$ at $x_3(x)=z$. By the above and the fine neck theorem (Theorem \ref{thm Neck asymptotic}) we have
\begin{equation}\label{rec}
r\left(z+r(z)\right)=r(z)+1+o((z-\psi(t))^{-1/160})).
\end{equation}
This can be iterated to 
\begin{multline}
r\left(z+kr(z)+\left[1+o((z-\psi(t))^{-1/160}))\right]\tfrac{k(k-1)}{2}\right)\\
=r(z)+k\left[1+o((z-\psi(t))^{-1/160}))\right].
\end{multline}
Setting $h=z-\psi(t)$ we get
\begin{equation}\label{it_eq}
r\left(z+kr(z)+\left[1+o(h^{-1/160})\right]\tfrac{k(k-1)}{2}\right)=r(z)+k\left[1+o(h^{-1/160})\right].
\end{equation}
The argument of $r$ in the left hand side of \eqref{it_eq} is
\begin{equation}
s:=z+kr(z)+[1+o(h^{-1/160})]\tfrac{k(k-1)}{2},
\end{equation}
and solving for $k$, we get 
\begin{equation}\label{k_sol}
k=\frac{-(r(z)-\frac{1}{2})+\sqrt{[2+o(h^{-1/160})](s-z)}}{1+o(h^{-1/160})}.
\end{equation}
Combining \eqref{it_eq} and \eqref{k_sol}, and taking $h$ very large, we obtain
\begin{equation}\label{r_eq}
r(h)=\sqrt{2h}+o(\sqrt{h}).
\end{equation}
This proves \eqref{expansion_cylindrical}.\\

Finally, in light of \eqref{r_eq}, one can put a a bowl with tip curvature $(1+\delta)$ inside the domain bounded by $M_{t_0}$, and a bowl with tip curvature $(1-\delta)$ outside that domain. By the avoidance principle, this implies $(1-2\delta)t \leq \psi(t)\leq (1+2\delta) t$ for large enough $t$, hence 
\begin{equation}
\psi(t)=t+o(t)
\end{equation}
for positive $t$. In fact, the above steps are uniform in the original time $t_0$: For every $\delta$ there exist a uniform $s$ such that 
\begin{equation}
(1-2\delta)(t-t_0) \leq \psi(t)-\psi(t_0)\leq (1+2\delta) (t-t_0),
\end{equation}
whenever $t-t_0\geq s$. Thus, for every $t<0$ we have
\begin{equation}
(1-2\delta)|t| \leq -\psi(t) \leq (1+2\delta)|t|,
\end{equation}
and so 
\begin{equation}
\psi(t)=t+o(|t|)
\end{equation}
holds for all $t$. This finishes the proof of the theorem.
\end{proof}

\bigskip

\section{Rotational symmetry}\label{sec_rot_symm}

\subsection{Fine expansion away from the cap}

Let $\mathcal{M}$ be as in the previous section. The goal of this section is to prove Theorem \ref{thm_neck_asympt}, which shows that the cylindrical end becomes rotationally symmetric at very fast rate, and also controls the position of the cap in the $xy$-plane uniformly in time.\\

Given a point $q\in \mathbb{R}^3$ and a direction $w \in S^2$, we denote by $K$ the \emph{normalized rotation vector field} that corresponds to a rotation by $\pi/2$ around the axis $W=\{q+wt:t \in \mathbb{R}\}$, namely
\begin{equation}
K(x) = SJS^{-1}(x-q),\qquad \textrm{ where } J = \begin{bmatrix} 0 & -1 & 0 \\ 1 & 0 & 0 \\ 0 & 0 & 0 \end{bmatrix},
\end{equation}
and $S\in \textrm{SO}_3$ is any rotation matrix with $Se_3=w$.\footnote{$S$ is only determined up to a one-parameter choice, but $K$ is well-defined.}

\begin{definition}[{c.f. \cite[Def. 4.3]{BC}}]\label{def_eps_sym}
A point $X\in \mathcal{M}$ with $H(X)>0$ is called \emph{$\delta$-symmetric} if there exists a normalized rotation vector field $K$ such that
\begin{equation}
|\langle K,\nu \rangle \, H | \leq \delta \textrm { in the parabolic ball } P(X,10H^{-1}(X)).
\end{equation}
\end{definition}

The following proposition shows that $\mathcal M$ becomes \emph{$\delta$-symmetric} at a very fast rate if one moves away from the cap.

\begin{proposition}
There exist a constant $C=C(\mathcal{M})<\infty$ with the following significance. If $X=(x,t) \in \mathcal{M}$ is any point with
\begin{equation}
x_3-\psi(t)  \geq C,
\end{equation}
then 
\begin{equation}
\textrm{$X$ is $\left(x_3-\psi(t)\right)^{-300}$-symmetric.}
\end{equation}
\end{proposition}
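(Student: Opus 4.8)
The strategy is to combine the fine neck theorem (Theorem \ref{thm Neck asymptotic}) with the Brendle--Choi neck improvement theorem from \cite{BC}, using the parabolic expansion from Theorem \ref{thm_asympt_par} (cap size control) to convert the ``distance from the tip'' $h:=x_3-\psi(t)$ into a quantitative smallness of the asymmetry at the appropriate scale. First I would set up coordinates as fixed at the beginning of this section (axis of the asymptotic cylinder is the $x_3$-axis, $\min_{p\in M_0}x_3(p)=0$ at the origin), and recall from Theorem \ref{curv_bound} that the curvature is globally bounded, and from Theorem \ref{thm_asympt_par} that for $h\geq C_0$ the point $X$ lies on a fine neck whose radius is $\sim\sqrt{2h}$, so that $H(X)^{-1}\sim\sqrt{h/2}$ and the parabolic ball $P(X,10H(X)^{-1})$ only sees the neck region (never the cap), provided $h$ is large enough.

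The core of the argument is the Brendle--Choi neck improvement theorem: if $X$ is $\delta_0$-symmetric at scale $H^{-1}$ in a suitable sense (for a sufficiently small universal $\delta_0>0$), and the neighboring points are also $\delta_0$-symmetric, then $X$ is $\tfrac12\delta_0$-symmetric (with respect to a slightly rotated axis). Iterating this improvement over a chain of necks moving inward from scale $\sim H^{-1}(X)$ all the way out to the asymptotic cylinder yields a geometric decay of the asymmetry. The key quantitative input needed to feed the iteration is a \emph{base case} at a large but $X$-dependent scale: I would use the fine neck theorem, applied at a tip point $(p_{t_\ast},t_\ast)$ with $\psi(t_\ast)=x_3(X)$ and at rescaled times $\tau\ll\log Z$, to show that the rescaled flow is $C e^{\tau/160}$-close to a \emph{fixed} cylinder (with axis the $x_3$-axis, independent of $X$) over the region $|x_3|\leq 10 L_0$; translating this back to the unrescaled scale shows that on the neck through $X$ the surface is $O(h^{-1/320})$-close (in $C^2$, say, via the $C^4$ bounds of Proposition \ref{higherderivatives} or Theorem \ref{thm Neck asymptotic} combined with interior estimates) to the round cylinder, which in particular makes it $O(h^{-\kappa})$-symmetric for some fixed exponent $\kappa>0$. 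One then runs the neck improvement iteration: starting from a scale where the asymmetry is already of size $\sim h^{-\kappa}$ and halving it at each of the $\sim\log h$ steps needed to reach scale $H^{-1}(X)$, one arrives at an asymmetry bound of the form $h^{-\kappa}\cdot 2^{-c\log h}=h^{-\kappa-c'}$, and by taking $h\geq C(\mathcal M)$ large enough the total exponent exceeds $300$. The careful bookkeeping is to control how much the rotation axis tilts at each step (the sum of the tilts must stay bounded, so that one ends with a genuine normalized rotation vector field $K$), which is exactly the content of the neck improvement argument in \cite[Sec. 4]{BC} and only requires that the necks at consecutive scales align — this alignment follows from the fine neck theorem, since two necks that did not align would, traced back in time, intersect.

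The main obstacle I anticipate is not any single estimate but the interface between the three ingredients: one must verify that the fine neck theorem's closeness estimate, which is stated at the level of the rescaled flow centered at a tip point with $z$-coordinate restricted to a bounded interval $|z|\leq 10L_0$, can be ``propagated'' along the neck to every point $X$ with $h$ large — i.e.\ that the fine neck estimate at appropriately chosen centers covers the whole cylindrical end with uniform constants — and then that this serves as a legitimate initial condition for the Brendle--Choi iteration, whose hypotheses are phrased in terms of $\delta$-symmetry at the curvature scale. Making the exponent come out to exactly $-300$ (rather than some other negative power) is purely a matter of choosing $C(\mathcal M)$ large and running enough improvement steps, so the ``$300$'' is not sharp and carries no special meaning; the real work is establishing the geometric-decay mechanism and controlling the accumulated rotation of the symmetry axis. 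I would therefore organize the write-up as: (i) reduce to showing $O(h^{-\kappa})$-symmetry at scale $\sim H^{-1}(X)$ for a fixed $\kappa>0$ via the fine neck theorem and cap size control; (ii) quote the neck improvement theorem of \cite{BC}; (iii) iterate, tracking the axis, to upgrade $h^{-\kappa}$ to $h^{-300}$ for $h$ large.
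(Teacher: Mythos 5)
Your approach is the paper's: by Theorem \ref{thm_asympt_par} (cap size control) every point with $x_3-\psi(t)\geq C_0$ lies on a fine neck, so the a priori hypotheses of the Brendle--Choi neck improvement theorem \cite[Thm. 4.4]{BC} hold throughout the cylindrical end, and one iterates that theorem along the chain of necks running from the asymptotic cylinder down to $X$. The paper's proof is exactly this and nothing more: if $x_3-\psi(t)\geq 2^{j/400}C_0$ then $j$ improvement steps are available and $X$ is $2^{-j}\varepsilon$-symmetric; taking the largest admissible $j$ gives an asymmetry of order $(x_3-\psi(t))^{-400}$, which beats the claimed $-300$ power once $x_3-\psi(t)\geq C(\mathcal M)$.

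One step of your write-up would fail as stated: you claim that ``by taking $h\geq C(\mathcal M)$ large enough the total exponent exceeds $300$.'' The exponent $\kappa+c'$ in your bound $h^{-\kappa-c'}$ is a \emph{fixed} number determined by the calibration of the improvement theorem --- namely the ratio between the halving factor per step and the multiplicative cost (here $2^{1/400}$) in distance-to-the-tip that each step consumes --- and it does not increase with $h$. Taking $h$ large only absorbs the multiplicative constant in front. So you must verify that the fixed exponent produced by the iteration already exceeds $300$; this is precisely why Brendle--Choi calibrate their theorem with the constants $2^{j/400}$ and $2^{-j}$, yielding exponent $400>300$. Relatedly, the separate ``base case'' you propose (establishing $O(h^{-\kappa})$-symmetry at scale $H^{-1}(X)$ from the fine neck theorem before starting the iteration) is unnecessary: the only input the iteration needs is the fixed-quality neck structure supplied by the cap size control theorem, and the iteration itself generates all of the polynomial decay.
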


\begin{proof}
We recall from Theorem \ref{thm_asympt_par} (cap size control) that there exists a constant $C_0<\infty$ such that every point $x\in M_t\setminus B_{C_0}(p_t)$ lies on the center of a fine neck. In particular, every point $X=(x,t)$ with 
\begin{equation}
x_3-\psi(t)  \geq C_0
\end{equation}
satisfies the a-priori assumptions of the neck-improvement theorem from Brendle-Choi \cite[Thm. 4.4]{BC}. Hence, if $X=(x,t)\in\mathcal{M}$ is a point with
\begin{equation}
x_3-\psi(t)\geq 2^{\frac{j}{400}}C_0
\end{equation}
then we can iteratively apply the neck-improvement theorem \cite[Thm. 4.4]{BC} to conclude that $X$ is $2^{-j}\varepsilon$-symmetric. This implies the assertion. 
\end{proof}

\begin{corollary}[strong symmetry]\label{cor_strong_symm}
There exist a constant $C=C(\mathcal{M})<\infty$ with the following significance. If $X=(x,t) \in \mathcal{M}$ is any point with $x_3-\psi(t)  \geq C$,
then there exist a direction $w_X \in S^2$ and a point $q_X\in\mathbb{R}^3$ with
\begin{equation}
|w_X-e_3|\leq \tfrac{1}{100},\qquad \langle q_X,e_3\rangle = x_3,
\end{equation}
such that the normalized rotation vector field $K_X(y)=S_XJS_X^{-1}(y-q_X)$, where $S_X\in\textrm{SO}_3$ with $S_Xe_3=w_X$, satisfies the estimate
\begin{equation}
\sup_{P(X,10H^{-1}(X))}
|\langle K_X,\nu \rangle \, H |\leq  \left(x_3-\psi(t)\right)^{-300}.
\end{equation}
\end{corollary}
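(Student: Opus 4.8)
The plan is to upgrade the preceding proposition --- which produces, for each $X=(x,t)$ with $x_3-\psi(t)\geq C$, \emph{some} normalized rotation vector field $K$ about a line $W=\{q+wt:t\in\mathbb R\}$ of direction $w\in S^2$, satisfying $\sup_{P(X,10H^{-1}(X))}|\langle K,\nu\rangle H|\leq\delta$ with $\delta:=(x_3-\psi(t))^{-300}$ --- by pinning down the axis of any such $K$. The key observation is that the $\delta$-symmetry estimate is essentially unchanged if we translate the base point $q$ along $W$ (a rotation field does not see its base point along the axis) or flip the sign of $w$; hence the only genuine task is to show that $w$ must lie within $\tfrac1{100}$ of $\pm e_3$. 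Given that, the required $K_X$ is obtained by choosing the sign with $|w_X-e_3|\leq\tfrac1{100}$ and taking $q_X$ to be the unique intersection of $W$ with the horizontal plane $\{\langle y,e_3\rangle=x_3\}$.

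First I would record the geometric input. By the cap size control (Theorem \ref{thm_asympt_par}) and the fine neck theorem (Theorem \ref{thm Neck asymptotic}, cf. \eqref{main_thm_est2pp}), for $x_3-\psi(t)\geq C$ the point $X$ lies on a fine neck, $H^{-1}(X)$ is comparable to the neck radius $r\asymp\sqrt{x_3-\psi(t)}$, and after rescaling by $H(X)$ the flow $\mathcal M$ is, near $X$, $C^4$-close to a round cylinder $\hat\Sigma$ of radius $\asymp r$ with axis parallel to $e_3$; the deviation is bounded by some $\kappa=\kappa(x_3-\psi(t))$ with $\kappa\to0$ as $x_3-\psi(t)\to\infty$, so in particular $\kappa\leq\tfrac1{100}$ once $C=C(\mathcal M)$ is large. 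Next I would use, in quantitative form, the elementary fact that on a round cylinder the only rotation vector fields tangent to it are rotations about its axis: a direct computation shows that on the portion of $\hat\Sigma$ inside a ball of radius $10r$ about a point of its axis, any normalized rotation field about a line of direction $\hat w$ satisfies $r\,|\hat w\wedge e_3|\leq C\sup|\langle\hat K,\nu_{\hat\Sigma}\rangle|$ (evaluate $\langle\hat K,\nu_{\hat\Sigma}\rangle$ at two points at vertical distance $\asymp r$ and subtract). Transporting $K$ to $\hat\Sigma$ by nearest-point projection costs $C\kappa$ by the $C^4$-closeness, and $H\asymp 1/r$, so the symmetry estimate yields $|w\wedge e_3|\leq C(\delta+\kappa)\leq C'\kappa\leq\tfrac1{100}$, using $\delta\ll\kappa$. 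Hence either $|w-e_3|\leq\tfrac1{100}$ or $|w+e_3|\leq\tfrac1{100}$.

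With this in hand, set $w_X$ to be whichever of $w,-w$ satisfies $|w_X-e_3|\leq\tfrac1{100}$; then $W$ is transverse to the plane $\{\langle y,e_3\rangle=x_3\}$ and meets it in a unique point $q_X$, and any $S_X\in\textrm{SO}_3$ with $S_Xe_3=w_X$ gives a normalized rotation field $K_X(y)=S_XJS_X^{-1}(y-q_X)$ about $W$ with direction $w_X$. Since moving the base point of a rotation field along its axis leaves it unchanged (because $S_XJS_X^{-1}w_X=0$) and flipping the direction only flips its sign, $K_X=\pm K$, so $|\langle K_X,\nu\rangle H|=|\langle K,\nu\rangle H|\leq\delta$ on $P(X,10H^{-1}(X))$, which together with $|w_X-e_3|\leq\tfrac1{100}$ and $\langle q_X,e_3\rangle=x_3$ is the assertion.

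The hard part is the rigidity transfer: one must verify carefully that a rotation field almost tangent to $\mathcal M$ on an entire parabolic ball of scale $H^{-1}(X)$ has an almost-vertical axis, and in particular that the $C^4$-error $\kappa$ between $\mathcal M$ and its model cylinder near $X$ --- which is controlled only by a fixed negative power of $x_3-\psi(t)$, and hence is vastly larger than $\delta$ --- still lies below the threshold $\tfrac1{100}$; this is exactly where the largeness of $C=C(\mathcal M)$ enters. A cleaner route that sidesteps the explicit computation is to observe that the proof of the preceding proposition constructs $K$ by iterating the Brendle--Choi neck improvement theorem \cite{BC} starting from the vertical rotation field attached to the fine neck, and that that theorem also bounds the displacement of the rotation field at each step by a constant times the current symmetry defect; summing the resulting geometric series shows the final axis stays within $C\varepsilon\leq\tfrac1{100}$ of $e_3$. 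Either way the remaining steps are routine bookkeeping.
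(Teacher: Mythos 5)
Your proposal is correct and follows essentially the same route as the paper: the paper's proof of this corollary is a two-line remark observing that (a) since fine necks are very close to the asymptotic cylinder the axis direction must satisfy $|w_X-e_3|\leq\tfrac{1}{100}$, and (b) the base point can be slid along the axis to arrange $\langle q_X,e_3\rangle=x_3$, which is exactly your reduction. Your quantitative rigidity computation on the model cylinder (and the alternative of tracking the axis drift through the Brendle--Choi iteration) simply fills in the details the paper leaves implicit.
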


\begin{proof}
Since fine necks are very close to the asymptotic cylinder, we always have $|w_X - e_3|\leq \tfrac{1}{100}$. In addition, by moving $q_X$ along the axis $W$ we can always arrange that $\langle q_X,e_3\rangle = x_3$. Hence, the corollary follows from the proposition.
\end{proof}

\begin{definition}\label{Strong sym definition}
We call any triple  $(X,w_X,q_X)$ that satisfies the conclusion of Corollary \ref{cor_strong_symm} a \emph{strongly symmetric triple}.
\end{definition}

The following lemma shows that nearby strongly symmetric triples at the same time align well with each other.

\begin{lemma}[alignment]\label{Local estimate for strong symmetry}
There exists a constant $C=C(\mathcal{M})<\infty$ with the following significance. If $(X,w_{ X},q_{ X})$ and $( Y,w_{{Y}},q_{ Y})$ are strongly symmetric triples with $ X=( x, t)$, $Y=( y, t)$ and $ | x- y|H(X)\leq 1$, then
\begin{equation}\label{local axis estimate}
|w_X-w_Y | \leq C ( x_3-\psi(t))^{-300},
\end{equation}
and
\begin{equation}\label{local center estimate}
|\langle q_{X} - q_{ Y},e_1\rangle | + |\langle q_{ X} - q_{Y},e_2\rangle | \leq C( x_3-\psi( t))^{-\frac{599}{2}}.
\end{equation}
\end{lemma}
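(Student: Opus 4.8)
The plan is to exploit the fact that both strongly symmetric triples see the \emph{same} piece of the surface near $X$ and $Y$ — since $|x-y|H(X)\le 1$, the parabolic balls $P(X,10H^{-1}(X))$ and $P(Y,10H^{-1}(Y))$ overlap in a region of definite size relative to the neck scale — and that on this common region the rotation vector fields $K_X$ and $K_Y$ each generate an (approximate) symmetry of $M_t$. First I would record that on fine necks the mean curvature $H$ is comparable to the reciprocal of the cylindrical scale, and that $x_3-\psi(t)$, $y_3-\psi(t)$, and $H^{-1}(X)$, $H^{-1}(Y)$ are all mutually comparable (up to constants depending on $\mathcal M$) by the cap size control theorem (Theorem~\ref{thm_asympt_par}) and the global curvature bound (Theorem~\ref{curv_bound}); this lets me freely interchange the various ``error gauges'' $(x_3-\psi(t))^{-300}$, $H(X)^{300}$, etc. Then the core of the argument is linear algebra: the difference $K_X-K_Y$ is an affine vector field, $K_X(z)-K_Y(z) = Az + b$ for a skew-symmetric-type matrix $A$ encoding $w_X-w_Y$ and a vector $b$ encoding the transverse displacement of the axes. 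On the overlap region, which is a piece of the (almost) round cylinder $\Sigma$ of definite size, $\langle K_X,\nu\rangle H$ and $\langle K_Y,\nu\rangle H$ are both bounded by $(x_3-\psi(t))^{-300}$, so $\langle Az+b,\nu\rangle$ is bounded by $C(x_3-\psi(t))^{-300}$ there.

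Next I would convert this pointwise bound on $\langle Az+b,\nu\rangle$ over a fixed-size piece of an almost-cylinder into bounds on $A$ and $b$ themselves. On the exact cylinder $\Sigma=\{x_1^2+x_2^2=2\}$, the functions $z\mapsto\langle(\text{rotation field})z,\nu_\Sigma\rangle$ restricted to a neck of length $\sim 1$ span a space in which the ``tilt'' parameters $w_X-w_Y$ appear with a coefficient that grows like the $z$-extent — so a tilt of size $\eta$ over a neck reaching out to height $\sim (x_3-\psi(t))$ produces a normal component of size $\sim \eta\cdot(x_3-\psi(t))$ near the far end of where both triples are valid. Quantitatively: the axis $W_X$ passes through height $x_3$ (by the normalization $\langle q_X,e_3\rangle = x_3$), and comparing the rotation fields at a point of the surface whose height differs from $x_3$ by an amount $\sim (x_3-\psi(t))$ — which is still within the common domain of validity by the comparability of scales — amplifies the tilt discrepancy by that factor. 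Balancing $\eta\cdot(x_3-\psi(t)) \lesssim (x_3-\psi(t))^{-300}$ gives $|w_X-w_Y|\lesssim (x_3-\psi(t))^{-301}$, which is better than \eqref{local axis estimate} and in particular suffices. For the center estimate \eqref{local center estimate}, once $|w_X-w_Y|$ is controlled, the remaining freedom in $K_X-K_Y$ is a pure translation of the axis in the $e_1,e_2$ directions; feeding the improved tilt bound back into $\langle Az+b,\nu\rangle$ and evaluating near the surface isolates $\langle q_X-q_Y,e_i\rangle$, and the geometry of the near-round neck shows $|\langle q_X-q_Y,e_i\rangle|\cdot H(X) \lesssim (x_3-\psi(t))^{-300} + |w_X-w_Y|$, which after using the scale comparability and the fact that $H(X)^{-1}\sim (x_3-\psi(t))^{1/2}$ on a parabola-shaped end gives the exponent $\tfrac{599}{2}$ in \eqref{local center estimate}. (The precise bookkeeping of the half-powers here comes from $r\sim\sqrt{2h}$ in \eqref{expansion_cylindrical}, so that $H^{-1}\sim r \sim \sqrt{h}$ with $h=x_3-\psi(t)$, and one power of $H^{-1}$ is ``spent'' converting the $\delta$-symmetry defect $|\langle K,\nu\rangle H|$ into a bound on $|\langle K,\nu\rangle|$.)

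The main obstacle I expect is making the ``amplification of the tilt'' step rigorous on the \emph{approximate} cylinder rather than the exact one: one must check that the error between $M_t$ and $\Sigma$ on the overlap region — which by the fine neck theorem (Theorem~\ref{thm Neck asymptotic}) is only of polynomial-in-$(x_3-\psi(t))^{-1}$ size, not exponentially small — does not swamp the quantities being estimated, and in particular that the normal $\nu$ of $M_t$ differs from $\nu_\Sigma$ by an amount controlled well enough that the linear-algebra extraction of $A$ and $b$ goes through with the stated exponents. This is exactly where the large exponent $300$ in Corollary~\ref{cor_strong_symm} is used: it provides enough room that all these polynomial errors are absorbed. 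I would handle it by working on a neck of fixed size (say $|z|\le 100$ in the rescaled picture centered appropriately), where the graph function and its derivatives are small by Theorem~\ref{thm Neck asymptotic}, writing everything as a perturbation of the model computation on $\Sigma$, and tracking the error terms explicitly — this is routine but somewhat tedious, and is the only place real care is needed.
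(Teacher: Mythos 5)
Your overall strategy is the same as the paper's: work on the common neck, use that both normalized rotation fields have small normal component there, and extract the tilt $w_X-w_Y$ and the transverse displacement of the axes by evaluating the affine difference field at well-chosen points (the paper does this explicitly in cylindrical coordinates, testing at $\theta=0,\pi/2$ and $x_3=\pm 20r$). So the plan is sound. However, your quantitative reasoning for the axis estimate contains a genuine error.

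The problem is the lever arm in the ``amplification of the tilt'' step. You claim you can compare $K_X$ and $K_Y$ at a point whose height differs from $x_3$ by an amount $\sim x_3-\psi(t)$, asserting this is ``still within the common domain of validity by the comparability of scales.'' It is not: the strong-symmetry estimates hold only in $P(X,10H^{-1}(X))$ and $P(Y,10H^{-1}(Y))$, and $H^{-1}\sim r\sim\sqrt{2(x_3-\psi(t))}$ by the parabolic asymptotics \eqref{expansion_cylindrical} — a fact you yourself invoke two sentences later. So the available lever arm is $\sim\sqrt{h}$ with $h=x_3-\psi(t)$, not $\sim h$. Your intermediate bound $|\langle K_X-K_Y,\nu\rangle|\lesssim h^{-300}$ is also off by a half-power: the symmetry defect controls $|\langle K,\nu\rangle H|$, so the normal component itself is only bounded by $h^{-300}H^{-1}\sim h^{-599/2}$. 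These two slips each cost a factor $h^{1/2}$ and happen to point in the same direction, so your claimed conclusion $|w_X-w_Y|\lesssim h^{-301}$ is not achievable by this method; the correct computation ($h^{-599/2}$ divided by the lever arm $h^{1/2}$) gives exactly $h^{-300}$, which is precisely \eqref{local axis estimate} with no slack. The lemma still follows, and your bookkeeping for \eqref{local center estimate} (one factor of $H^{-1}\sim h^{1/2}$ applied to $h^{-300}$, giving $h^{-599/2}$) is correct once the tilt bound is fixed, but as written the argument is internally inconsistent and the stated axis bound is false. You should redo the tilt extraction with the lever arm $\sim r$, which is what the paper's evaluation at $x_3=\pm 20r$ amounts to.
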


\begin{proof}
Without loss of generality, after suitable rotations and translations, we can assume that $t=0$, $x_3=0$, $w_{{X}}=e_3$, $q_{ X}=0$, and
\begin{equation}
S_{{Y}}=\begin{bmatrix} 1 & 0 & 0 \\ 0 & \cos\varphi& -\sin\varphi \\ 0 & \sin\varphi & \cos\varphi \end{bmatrix},
\end{equation}
where $\varphi$ is a fixed angle with $0\leq \varphi \leq \tfrac{1}{10}$.

\bigskip

We express $\mathcal{M}\cap P({X},10H^{-1}({X}))$ in cylindrical coordinates over the $z$-axis, namely we parametrize as
\begin{equation}
(\theta,z,t)\mapsto \big(r(\theta,z,t)\cos\theta,r(\theta,z,t)\sin\theta,z\big).
\end{equation}
 In these coordinates, one can directly compute that 
\begin{align}
\nu=&\frac{\left( \cos\theta+r^{-1}r_\theta\sin\theta,\sin\theta-r^{-1}r_\theta\cos\theta,-r_z\right)}{\sqrt{1+r^{-2}|r_\theta|^2+|r_z|^2}},
\end{align}
and
\begin{equation}
\big|\langle K_{X},\nu\rangle\big|=\frac{ |r_\theta |}{\sqrt{1+r^{-2}|r_\theta|^2+|r_z|^2}}.
\end{equation}
Since $X$ is the center of a (fine) neck, we have
\begin{equation}\label{eq_neck_simple}
r^{-2}|r_\theta|^2+|r_z|^2\leq 1+10\varepsilon,
\end{equation}
and
\begin{equation}
\frac{1-\varepsilon}{r}\leq H(X) \leq \frac{1+\varepsilon}{r}.
\end{equation}
Combining these equations with Corollary \ref{cor_strong_symm} (strong symmetry) we infer that
\begin{equation}
\frac{|r_\theta|}{r}\leq 2 (-\psi(0))^{-300}.
\end{equation}
Together with Theorem \ref{thm_asympt_par} this yields the estimate
\begin{equation}\label{eq_est_r_theta}
|r_\theta| \leq C r^{-599}
\end{equation}
in the parabolic ball $P({X},10H^{-1}( X))$.

\bigskip

Now, for the normalized rotation vector with center $q_Y=(q_1,q_2,q_3)$ and axis $w_Y=(0,-\sin \varphi,\cos\varphi)$, we compute
\begin{align}
K_{Y}(x)=S_{ Y}JS_{ Y}^{-1}(x-q_{Y})=\begin{bmatrix} (x_2-q_2)\cos\varphi+(x_3-q_3)\sin\varphi \\ -(x_1-q_1)\cos\varphi \\ -(x_1-q_1)\sin\varphi \end{bmatrix},
\end{align}
and
\begin{align}
&\!\!\!\!\!\!\!\!\!\!\!\!\!\!\!\!\langle K_{Y}, \nu\rangle \sqrt{1+r^{-2}|r_\theta|^2+|r_z|^2}\notag\\
=&\, r^{-1}(q_1x_2-x_1q_2)\cos\varphi+r^{-1}x_1(x_3-q_3)\sin\varphi+r_z(x_1-q_1)\sin\varphi \notag\\
&+r^{-2}r_\theta \big[x_2(x_2-q_2)\cos\varphi+x_2(x_3-q_3)\sin\varphi+x_1(x_1-q_1)\cos\varphi \big]\label{rotation vector comparison I}.
\end{align}

Arguing as above, using Theorem \ref{thm_asympt_par} and Corollary \ref{cor_strong_symm} we obtain
\begin{equation}\label{eq_est_ky}
|\langle K_{ Y}, \nu\rangle |  \leq Cr^{-599}
\end{equation}
in the parabolic ball $P(X, 8 H^{-1}(X))$.
In addition, we have the rough estimate
\begin{equation}\label{q position bound}
 |q_1|+|q_2| + |q_3| \leq 10 r.
\end{equation}
Now, from equation \eqref{rotation vector comparison I}, using the estimates \eqref{eq_neck_simple}, \eqref{eq_est_r_theta}, \eqref{eq_est_ky} and \eqref{q position bound}, we infer that
\begin{equation}\label{rotation vector comparison II}
\frac{q_1x_2-x_1q_2}{r} \cos\varphi +\frac{x_1}{r} (x_3-q_3)\sin\varphi+r_z(x_1-q_1)\sin\varphi\leq Cr^{-599}.
\end{equation}
At time $t=0$, we consider the points with $\theta=0$. Then, $(x_1,x_2)=(r,0)$ and equation \eqref{rotation vector comparison II} yields
\begin{equation}
-q_2 \cos\varphi+ \left( x_3- q_3+r_z(r-q_1)\right)\sin\varphi\leq Cr^{-599}.
\end{equation}
In the case $q_2 \leq 0$, we consider the points with  $x_3 =20r$. Then, using also $| q_3| \leq 10r$ and $\cos\varphi \geq \tfrac12$, we obtain
\begin{equation}
\tfrac12 |q_2|+\left( 10r+r_z(r-q_1)\right)\sin\varphi\leq Cr^{-599}.
\end{equation}
Moreover, since $X$ lies on a (fine) neck, we have $|r_z| \leq \varepsilon$. Hence, \begin{equation}\label{bound for S+q2 }
\tfrac12 |q_2|+5 r\sin\varphi\leq Cr^{-599}.
\end{equation}
Since $\sin\varphi\geq 0$, we infer that
\begin{align}\label{est_q2}
|q_2| \leq Cr^{-599},
\end{align}
and
\begin{align}\label{est_sinp}
 |\sin\varphi| \leq Cr^{-600}.
\end{align}
In the case $q_2 \geq 0$, we obtain the same estimates by considering points with $ x_3=-20r$. Similarly, considering points with $\theta=\pi/2$ we obtain
\begin{equation}\label{est_q1}
|q_1|\leq Cr^{-599}.
\end{equation}
Since $|w_X-w_Y|\leq C|\sin \varphi|$, these inequalities prove the lemma.
\end{proof}

We are now ready to prove the main theorem of this section:

\begin{theorem}[fine asymptotics]\label{thm_neck_asympt}
There exist a fixed vector $ q=( q_1, q_2,0)\in \mathbb{R}^3$ and a large constant $C<\infty$ (both depending on $\mathcal M$) such that for all $t\in\mathbb{R}$ the surface $(M_t- q)\cap \{ x_3-\psi(t)\geq C\}$ can be expressed in cylindrical coordinates over the $z$-axis with the estimate
\begin{equation}\label{global angular derivative}
|\partial_\theta r|(\theta,z,t) \leq  r(\theta,z,t)^{-100}.
\end{equation} 
\end{theorem}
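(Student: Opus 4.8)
The plan is to upgrade the local alignment estimate of Lemma \ref{Local estimate for strong symmetry} into a global statement by a chaining argument along the cylindrical end, and then to recenter once and for all by the limiting rotation axis. First I would fix a time $t$ and, using Theorem \ref{thm_asympt_par} (cap size control), write $M_t\setminus B_C(p_t)$ in cylindrical coordinates over the $z$-axis so that every point with $x_3-\psi(t)\geq C$ lies on a fine neck and hence, by Corollary \ref{cor_strong_symm}, admits a strongly symmetric triple $(X,w_X,q_X)$. Cover the end $\{x_3-\psi(t)\geq C\}$ by a sequence of points $X_k$ with $|x_3(X_{k+1})-x_3(X_k)|H(X_k)\leq 1$ and $x_3(X_k)\to\infty$; Lemma \ref{Local estimate for strong symmetry} then gives $|w_{X_k}-w_{X_{k+1}}|\leq C(x_3(X_k)-\psi(t))^{-300}$ and the analogous bound for $\langle q_{X_k}-q_{X_{k+1}},e_i\rangle$, $i=1,2$. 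Since along the end $x_3(X_k)-\psi(t)$ grows at least linearly in $k$ (by the global curvature bound, Theorem \ref{curv_bound}, the neck radii and hence the spacing $H^{-1}$ grow like $\sqrt{x_3-\psi}$, so $x_3(X_k)-\psi(t)\sim k^2$), the series $\sum_k (x_3(X_k)-\psi(t))^{-300}$ converges, so $w_{X_k}$ converges to a limit $w_\infty(t)$ and the transverse components of $q_{X_k}$ converge to a limit $q(t)=(q_1(t),q_2(t),0)$, with the tail estimate $|w_X-w_\infty(t)|\leq C(x_3(X)-\psi(t))^{-299}$ and $|\langle q_X-q(t),e_i\rangle|\leq C(x_3(X)-\psi(t))^{-\frac{597}{2}}$ for every strongly symmetric triple on the end.

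Next I would show that $w_\infty(t)$ and $q(t)$ are in fact independent of $t$. The cleanest way is to observe that the rescaled flow converges to the fixed asymptotic cylinder $\Sigma$ with axis $e_3$ (by Theorem \ref{thm_finding_necks} and \cite{CM_uniqueness}), which forces $w_\infty(t)\to e_3$ and keeps $|w_\infty(t)-e_3|\leq \tfrac1{100}$ uniformly; then applying Lemma \ref{Local estimate for strong symmetry}-type comparisons between strongly symmetric triples at two nearby times $t,t'$ on the same neck (the neck moves by a controlled amount in the time $|t-t'|\lesssim H^{-2}$), together with the already-established convergence of $w_X$ and $q_X$ along each end, shows that $t\mapsto w_\infty(t)$ and $t\mapsto(q_1(t),q_2(t))$ are (locally) constant, hence globally constant by connectedness of $\mathbb{R}$; call the common values $e_3$ (after a rotation) and $q=(q_1,q_2,0)$. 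Translating the whole flow by $q$, every strongly symmetric triple on the end of $M_t-q$ has $w_X$ within $C(x_3-\psi(t))^{-299}$ of $e_3$ and $q_X$ within $C(x_3-\psi(t))^{-\frac{597}{2}}$ of the $z$-axis.

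Finally I would convert this into the pointwise derivative bound \eqref{global angular derivative}. Writing $M_t-q$ in cylindrical coordinates over the $z$-axis near a point $X$ with $x_3-\psi(t)\geq C$, the computation in the proof of Lemma \ref{Local estimate for strong symmetry} expresses $\langle K_X,\nu\rangle$ in terms of $r_\theta$, $r_z$, $r$ and the offset of $(w_X,q_X)$ from $(e_3,\text{$z$-axis})$; since $\langle K_X,\nu\rangle H$ is bounded by $(x_3-\psi(t))^{-300}$ by strong symmetry, $r\sim\sqrt{2(x_3-\psi(t))}$ by \eqref{expansion_cylindrical}, $|r_z|\leq\varepsilon$, and the $w_X,q_X$ corrections are of strictly higher order by the previous paragraph, solving for $r_\theta$ yields $|r_\theta|\leq C r\cdot(x_3-\psi(t))^{-300}\leq C r^{-599}\leq r^{-100}$ for $C=C(\mathcal M)$ large. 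The main obstacle is the second step — proving that the limiting axis and center are genuinely time-independent — since a priori both the neck and its symmetry axis wobble in time, and one must exploit that the wobble is controlled by the curvature scale $H^{-1}$ while the alignment error decays polynomially in $x_3-\psi(t)$, and that the whole end is pinned to the single fixed asymptotic cylinder as $\tau\to-\infty$; getting the bookkeeping of these two competing scales right, uniformly in $t$, is the delicate part.
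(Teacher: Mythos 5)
Your proposal is correct and follows essentially the same route as the paper's proof: chain the alignment estimate of Lemma \ref{Local estimate for strong symmetry} along a sequence of strongly symmetric triples on the end at each fixed time, telescope the resulting series to extract the limiting axis $e_3$ and transverse center $q$, translate the flow by $q$, and then feed the resulting global axis alignment back into the $\langle K_X,\nu\rangle$ computation to get the pointwise bound on $r_\theta$. The only substantive difference is that you explicitly argue that the limiting axis and center are independent of $t$ (by comparing triples at nearby times over the long parabolic neighborhoods far out on the neck), a point the paper's proof leaves implicit when it normalizes ``$q=0$'' by a single spatial translation; your sketch of that step is the right one.
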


\begin{proof}
Given any time $ t \in \mathbb{R}$, we choose a sequence of points $ y_j \in M_{ t}$ satisfying
\begin{equation}
\langle  y_j,e_3\rangle=\psi( t)+j.
\end{equation}
By Corollary \ref{cor_strong_symm} (strong symmetry), for $j$ large enough (which we will always assume from now on), these points are part of strongly symmetric triples $({Y}_j,w_j, q_j)$, where ${Y}_j=( y_j, t)$.
Lemma \ref{Local estimate for strong symmetry} gives the estimates
\begin{equation}\label{est_delta_s}
|w_{j+1}-w_j| \leq C j^{-300},
\end{equation}
and
\begin{equation}\label{est_delta_qu}
|\langle  q_{i+1}- q_i,e_1\rangle |+|\langle  q_{i+1}-  q_i,e_2\rangle | \leq C j^{-299}
\end{equation}
Since $w_j$ converges to $e_3$, using \eqref{est_delta_s} we get
\begin{equation}
|w_j-e_3| \leq C \sum_{k=j}^{\infty}k^{-300}\leq C j^{-299}.
\end{equation}
Similarly, equation \eqref{est_delta_qu} implies that $\langle q_{j},e_1\rangle e_1+\langle  q_{j},e_2\rangle e_2$ converges to a limit  $q\in \mathbb{R}^2\times \{0\}$ with the estimate
\begin{equation}
|\langle  q_{j}-q,e_1 \rangle  |+ |\langle  q_{j}-q,e_2 \rangle  |
 \leq C \sum_{k=j}^{\infty}k^{-299}\leq C j^{-298}.
\end{equation}

By translating $\mathcal{M}$, we may assume $q=0$. Then, we have
\begin{align}
&|w_j-e_3| \leq  C j^{-299},\label{global axis bound}\\
& |\langle  q_j,e_1\rangle |+ |\langle  q_j,e_2\rangle | \leq C j^{-298}\label{global q position bound}.
\end{align}
Together with Theorem \ref{thm_asympt_par} and Corollary \ref{cor_strong_symm} (strong symmetry), this yields the assertion.
\end{proof}

\bigskip

\subsection{Moving plane method}

The goal of this section is to prove Theorem \ref{thm_rot_symm}, which says that $\mathcal M$ is rotationally symmetric.\\

Let us start with some preliminaries about the Hopf lemma and the strong maximum principle for graphical mean curvature flow.
Suppose $u$ and $v$ are graphical solutions of the mean curvature flow in the parabolic ball $P(0,\delta)$ around the origin of size $\delta>0$, namely
\begin{align}
u_t&=\sqrt{1+|Du|^2} \textrm{div} \left( \frac{Du}{\sqrt{1+|Du|^2}}  \right),\\
v_t &=\sqrt{1+|Dv|^2}  \textrm{div} \left( \frac{Dv}{\sqrt{1+|Dv|^2}}  \right).
\end{align}
Then, their difference $w=u-v$ satisfies the linear equation
\begin{equation}\label{linear eq for w=u-v}
w_t=a_{ij}w_{ij}+b_iw_i,
\end{equation}
where
\begin{equation}
a_{ij}(x,t)=\delta_{ij}-\frac{u_iu_j}{ 1+|Du|^2}(x,t),
\end{equation}
and
\begin{equation}
b_i(x,t)=-\frac{v_{ij}(u_j+v_j)}{ 1+|Du|^2}(x,t)+\frac{v_{kl} v_kv_l(u_i+v_i)}{( 1+|Du|^2)(1+|Dv|^2)}(x,t).
\end{equation}
Hence, standard results for linear parabolic equations (see e.g. \cite{Lieberman}) yield:

\begin{lemma}[Hopf lemma]\label{Hopf lemma for graphs}
Let $u$ and $v$ be graphical solutions of the mean curvature flow in $P(0,\delta)\cap \{x_1\leq 0\}$.
Suppose that $u(0,0)=v(0,0)$, and that $u< v$ in $P(0,\delta)\cap \{x_1< 0\}$. Then, $\tfrac{\partial u}{\partial x_1}(0,0)>\tfrac{\partial v}{\partial x_1}(0,0)$.
\end{lemma}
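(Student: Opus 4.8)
\emph{Proof proposal.} The plan is to deduce the statement from the parabolic Hopf boundary point lemma applied to the linear equation \eqref{linear eq for w=u-v} satisfied by $w=u-v$. First I would record the structural facts. Since $u$ and $v$ are smooth graphical solutions of the mean curvature flow on $P(0,\delta)$, after shrinking $\delta$ slightly the quantities $|Du|,|Dv|,|D^2v|$ are bounded on $P(0,\delta)\cap\{x_1\le 0\}$; hence the coefficients in \eqref{linear eq for w=u-v} satisfy a uniform ellipticity bound $\lambda|\xi|^2\le a_{ij}(x,t)\xi_i\xi_j\le \Lambda|\xi|^2$ with $\lambda=(1+\sup|Du|^2)^{-1}>0$, together with $|b_i|\le C$. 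Thus $w$ solves a uniformly parabolic equation with bounded coefficients, is strictly negative in $P(0,\delta)\cap\{x_1<0\}$, and satisfies $w(0,0)=0$, with $(0,0)$ lying on the lateral boundary portion $\{x_1=0\}$. By continuity $w\le 0$ on the closure.

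Next I would construct the Hopf barrier at $(0,0)$. Choose $y=(-\rho,0,0)$ with $\rho>0$ small enough that $\overline{B_\rho(y)}\subset B_\delta(0)\cap\{x_1\le 0\}$ and $\overline{B_\rho(y)}\cap\{x_1=0\}=\{0\}$, fix $\rho'\in(0,\rho)$, set $A=B_\rho(y)\setminus\overline{B_{\rho'}(y)}$, and for small $\sigma>0$ consider the space-time region $\Omega'=A\times(-\sigma^2,0]\subset P(0,\delta)\cap\{x_1<0\}$, noting $(0,0)\in\partial\Omega'$. Introduce the time-independent barrier $h(x)=e^{-\alpha|x-y|^2}-e^{-\alpha\rho^2}$. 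A direct computation shows that on the annulus $A$, where $a_{ij}(x-y)_i(x-y)_j\ge\lambda(\rho')^2$, the $4\alpha^2$-term dominates, so for $\alpha$ large $Ph:=h_t-a_{ij}h_{ij}-b_ih_i<0$ on $A$; moreover $h>0$ in $A$, $h\equiv0$ on $\partial B_\rho(y)$, and $h$ equals a positive constant on $\partial B_{\rho'}(y)$.

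Then I would run the comparison. Put $\Psi=w+\epsilon h$; since $Pw=0$ and $Ph<0$ we get $P\Psi<0$, so $\Psi$ is a subsolution and attains its maximum on $\partial_p\Omega'$. On $\partial B_\rho(y)\times(-\sigma^2,0]$ one has $h=0$ and $w\le0$; on $\partial B_{\rho'}(y)\times(-\sigma^2,0]$ and on $A\times\{-\sigma^2\}$ the function $w$ is bounded away from $0$ by compactness, so $\Psi\le0$ there once $\epsilon$ is small. Hence $\Psi\le0$ on $\Omega'$, while $\Psi(0,0)=0$, so the maximum is attained at $(0,0)$. Differentiating along the inward ray $s\mapsto(-s,0,0)$ at time $0$ gives $-\partial_{x_1}\Psi(0,0)\le0$, i.e. $\partial_{x_1}w(0,0)\ge-\epsilon\,\partial_{x_1}h(0,0)=2\epsilon\alpha\rho\,e^{-\alpha\rho^2}>0$, which is precisely $\tfrac{\partial u}{\partial x_1}(0,0)>\tfrac{\partial v}{\partial x_1}(0,0)$.

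The only genuinely delicate point is the bookkeeping: placing the annular space-time region inside $P(0,\delta)\cap\{x_1\le0\}$ so that it touches the boundary only at $(0,0)$, and checking that the final time $t=0$ being the top of the parabolic ball causes no trouble — it does not, since the lateral portion $\partial A\times(-\sigma^2,0]$ together with its top slice is part of the parabolic boundary and the barrier estimate is effectively one-dimensional in the $x_1$ variable. Everything else is the routine verification behind the Hopf lemma for uniformly parabolic operators, so one may alternatively invoke directly the corresponding statement in \cite{Lieberman}.
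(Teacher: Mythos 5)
Your overall route is exactly the paper's: write $w=u-v$, observe that it satisfies the linear uniformly parabolic equation \eqref{linear eq for w=u-v} with bounded coefficients, and then apply the parabolic Hopf boundary point lemma; the paper simply cites \cite{Lieberman} at this stage, and your closing remark that one may do the same makes the proof complete in that form. The reduction, the ellipticity constant $\lambda=(1+\sup|Du|^2)^{-1}$, the barrier computation on the annulus, and the final derivative comparison at $(0,0)$ are all correct.

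However, the written-out barrier argument has a genuine gap on the bottom face of your comparison region. You claim that on $A\times\{-\sigma^2\}$ the function $w$ is ``bounded away from $0$ by compactness.'' This is false: the closure of the annulus $A=B_\rho(y)\setminus\overline{B_{\rho'}(y)}$ touches the hyperplane $\{x_1=0\}$ at the origin, and the hypothesis gives $w<0$ only where $x_1<0$, so at $(0,-\sigma^2)$ one only knows $w\le 0$; moreover $w(\cdot,-\sigma^2)$ can vanish to arbitrarily high order as $x\to 0$, while $h$ vanishes only to first order along $\partial B_\rho(y)$ (indeed $h(x)\le C|x_1|$ near the origin). Hence $\Psi=w+\epsilon h$ need not be $\le 0$ on the bottom face for any fixed $\epsilon>0$, and the comparison breaks down there. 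This is precisely the classical subtlety of the parabolic (as opposed to elliptic) Hopf lemma: the initial-time face of the region must be compactly contained in $\{x_1<0\}$. The standard fix is to tilt the region in time, e.g.\ replace the cylinder over the annulus by $\{(x,t):\rho'^2<|x-y|^2+A(-t)<\rho^2,\ -\sigma^2<t\le 0\}$ with barrier $e^{-\alpha(|x-y|^2+A(-t))}-e^{-\alpha\rho^2}$; the extra term $h_t=O(\alpha)$ is absorbed by the $O(\alpha^2)$ good term, the outer surface is where the barrier vanishes and $w\le 0$, and both the inner surface and the bottom are now compact subsets of $\{x_1<0\}$ on which $w\le -m<0$. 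With this modification (or by invoking the statement in \cite{Lieberman} directly, as the paper does) the argument closes. A minor additional slip: the top slice $A\times\{0\}$ is not part of the parabolic boundary, which is in your favor, so that remark should be deleted rather than relied upon.
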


\begin{proposition}[strong maximum principle]\label{Strong maximum principle for graphs}
Let $u$ and $v$ be graphical solutions of the mean curvature flow in $P(0,\delta)$. Suppose that $u(0,0)=v(0,0)$, and that $u\leq v$ in $P(0,\delta)$. Then, $u=v$ in $P(0,\delta)$.
\end{proposition}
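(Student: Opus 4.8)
The plan is to reduce the statement to the classical strong maximum principle for scalar linear uniformly parabolic equations, applied to the difference $w := u - v$. First I would record that, since $u$ and $v$ are graphical solutions of mean curvature flow in $P(0,\delta)$, standard interior parabolic regularity for the quasilinear equation defining the flow gives $u,v \in C^\infty$, and in particular locally uniform bounds on $|Du|$ and $|Dv|$ on parabolic cylinders slightly smaller than $P(0,\delta)$. With these bounds in hand, the computation carried out just before Lemma~\ref{Hopf lemma for graphs} shows that $w$ satisfies the linear equation \eqref{linear eq for w=u-v}, $w_t = a_{ij}w_{ij} + b_i w_i$, in which $a_{ij} = \delta_{ij} - u_iu_j/(1+|Du|^2)$ is uniformly elliptic and the coefficients $a_{ij}, b_i$ are bounded (indeed smooth) on every compact subset of $P(0,\delta)$; crucially there is no zeroth-order term.

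Next I would feed in the hypotheses: $w \le 0$ throughout $P(0,\delta)$ and $w(0,0)=0$, so $w$ attains its maximum value $0$ at the interior spatial point $0$, at the latest time slice $t=0$ of the (backward) parabolic ball. The strong maximum principle for linear parabolic equations (e.g.\ \cite{Lieberman}) then forces $w$ to vanish identically on the set of points of $P(0,\delta)$ that can be joined to $(0,0)$ by a path along which time is nonincreasing as one moves away from $(0,0)$; with the convention used for $P(\cdot,\cdot)$ this is all of $P(0,\delta)$. To handle the fact that the coefficients need not stay bounded up to the spatial boundary $|x|=\delta$, I would run this on an exhaustion of $P(0,\delta)$ by cylinders $P(0,\delta')$ with $\delta' \nearrow \delta$ and take the union. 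This yields $u \equiv v$ on $P(0,\delta)$.

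I do not expect a genuine obstacle here: the statement is essentially the textbook parabolic strong maximum principle, and the only point requiring attention is verifying that the coefficients in \eqref{linear eq for w=u-v} are uniformly elliptic and locally bounded — which is exactly the smoothness-plus-gradient-bound step above. In particular no appeal to the Hopf lemma is needed; Lemma~\ref{Hopf lemma for graphs} is instead the ingredient reserved for the moving plane argument leading to Theorem~\ref{thm_rot_symm}.
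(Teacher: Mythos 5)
Your proposal is correct and follows exactly the route the paper takes: the paper derives the linear equation \eqref{linear eq for w=u-v} for $w=u-v$ and then simply invokes ``standard results for linear parabolic equations (see e.g.\ \cite{Lieberman})'' to obtain both Lemma~\ref{Hopf lemma for graphs} and Proposition~\ref{Strong maximum principle for graphs}, with no further argument given. Your additional remarks on interior regularity, local boundedness of the coefficients, and the exhaustion by smaller cylinders are just the details the paper leaves implicit.
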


\bigskip

Now, as in the previous section, let $\mathcal{M}=\{M_t\}_{t\in\mathbb{R}}$ be an ancient low entropy flow in $\mathbb{R}^3$, where the plus mode is dominant. By the global curvature bound (Theorem \ref{curv_bound}) and the entropy assumption, there exists some uniform graphical scale $\delta_0>0$ at which the Hopf lemma (Lemma \ref{Hopf lemma for graphs}) and the strong maximum principle (Proposition \ref{Strong maximum principle for graphs}) can be applied.\\

To set up the (parabolic variant of the) moving plane method, given a constant $\mu \geq 0$ we consider the surfaces
\begin{align}
M_t^{\mu -}&=M_t\cap \{ x_1 < \mu\},\\
M_t^{\mu +}&= M_t\cap \{ x_1 > \mu\}.
\end{align}
Moreover, we set $M_t^{\mu}=M_t\cap \{ x_1 = \mu\}$, and denote by $M_t^{\mu <}$ the surface that is obtained from $M_t^{\mu +}$ by reflection about the plane $\{x_1=\mu\}$, namely
\begin{equation}
M_t^{\mu <}=\left\{(2\mu-x_1,x_2,x_3):x\in M_t^{\mu +} \right\}.
\end{equation}

Similarly, denoting by $K_t\subset\mathbb{R}^3$ the closed domain bounded by $M_t$ we consider the regions
\begin{align}
K_t^{\mu -}&=K_t\cap \{ x_1 < \mu\},\\
K_t^{\mu +}&= K_t\cap \{ x_1 > \mu\},
\end{align}
and 
\begin{equation}
K_t^{\mu <}=\left\{(2\mu-x_1,x_2,x_3):x\in K_t^{\mu +} \right\}.
\end{equation}

\begin{definition}
We say \emph{the moving plane can reach $\mu$} if for all $\tilde{\mu}\geq \mu$ we have the inclusion $K_t^{\tilde{\mu} <}\subseteq K_t^{\tilde{\mu} -}$ for all $t\in \mathbb{R}$.
\end{definition}

\bigskip

The following proposition shows that the reflected domain cannot touch at spatial infinity.

\begin{proposition}[no contact at infinity]\label{Infinity Dirichlet}
For every $\mu>0$, there exists a constant $h_\mu <\infty $ such that
\begin{equation}
K_t^{\mu  <}\cap \{x_3 \geq \psi(t)+h_\mu \}\subseteq \textrm{Int}(K_t^{\mu-})
\end{equation}
for every $t\in\mathbb{R}$. 
\end{proposition}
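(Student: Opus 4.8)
\textbf{Proof strategy for Proposition~\ref{Infinity Dirichlet} (no contact at infinity).}
The plan is to exploit the fine asymptotics from Theorem~\ref{thm_neck_asympt} together with the parabolic opening rate from Theorem~\ref{thm_asympt_par}. First I would fix $\mu>0$ and recall that, after the translation performed in the proof of Theorem~\ref{thm_neck_asympt}, for $x_3-\psi(t)\geq C$ the surface $M_t$ is, in cylindrical coordinates over the $x_3$-axis, a graph $r=r(\theta,z,t)$ with $|\partial_\theta r|\leq r^{-100}$ and $r(\theta,z,t)=\sqrt{2(z-\psi(t))}+o(\sqrt{z-\psi(t)})$. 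The key point is that the almost rotationally symmetric cross-sections are circles centered essentially on the $x_3$-axis with radius $\rho(z,t):=\sqrt{2(z-\psi(t))}(1+o(1))$, with angular fluctuation at most $\rho^{-100}$. Reflecting such a cross-section about the plane $\{x_1=\mu\}$ produces a circle of the same radius centered at $x_1=-\mu$ (to leading order), i.e.\ shifted by $2\mu$ in the $x_1$-direction relative to the original cross-section.

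The main step is then a quantitative geometric comparison of the two cross-sectional curves at a fixed height $z$ with $h:=z-\psi(t)$ large. Denote by $\gamma_z$ the cross-section of $M_t$ at height $z$ (a curve $\varepsilon$-close to the circle of radius $\rho(z,t)$ about the axis point) and by $\gamma_z^{<}$ its reflection about $\{x_1=\mu\}$. Since $\gamma_z$ is contained in an annulus of radii $\rho(z,t)(1\pm\rho^{-99})$, the reflected curve $\gamma_z^{<}$ is contained in the corresponding annulus about the point $(-\mu_z,0)$ in the $(x_1,x_2)$-plane, where $|\mu_z-\mu|\le Ch^{-\frac{298}{2}}$ tracks the error in the axis position from \eqref{global q position bound}. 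A point of $\gamma_z^{<}$ lies in $\mathrm{Int}(K_t^{\mu-})$ provided its distance to the $x_1=\mu$-reflected-center exceeds the outer annulus radius by more than the annular width plus the center error; this holds once $\sqrt{2h}$ is large compared to $\mu$, because the reflected disk then sits strictly on the $\{x_1<\mu\}$-side of the original disk (two disks of equal radius $R$ whose centers differ by $2\mu>0$ in the $x_1$-direction: the reflected one lies strictly left of $\{x_1=\mu\}$ inside the other when $R>$ width/error terms, which is automatic for $R\gg\mu$). Choosing $h_\mu$ so that $\sqrt{2h_\mu}\geq 10(\mu+1)$ and $h_\mu\geq C$ makes all error terms negligible, giving the strict inclusion uniformly in $t$ (uniformity is inherited from the $t$-uniform constants in Theorems~\ref{thm_asympt_par} and~\ref{thm_neck_asympt}). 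Finally, since $K_t^{\mu<}$ is the solid region bounded by $M_t^{\mu<}$, showing each boundary cross-section lies in $\mathrm{Int}(K_t^{\mu-})$ for $z\geq\psi(t)+h_\mu$, combined with the fact that the solid region over such heights is a ``solid paraboloid end'' nested inside the corresponding solid end of $K_t$, upgrades the boundary statement to the solid inclusion.

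I expect the main obstacle to be the bookkeeping of error terms: one must simultaneously control (i) the deviation of the cross-section from a round circle, which is $O(r^{-99})$ in $C^1$ by integrating \eqref{global angular derivative}, (ii) the deviation of the center of that circle from the fixed $x_3$-axis, governed by \eqref{global q position bound} and hence $O(h^{-298})$, and (iii) the error $o(\sqrt{h})$ in the radius from \eqref{expansion_cylindrical}. All three are dominated by the leading radius $\sqrt{2h}$ once $h\geq h_\mu$ with $h_\mu$ depending only on $\mu$ and $\mathcal M$, so the inclusion is strict; the only real care needed is to make the elementary planar estimate ``two equal-radius disks with $x_1$-centers $2\mu$ apart are strictly nested across $\{x_1=\mu\}$'' precise enough to absorb (i)--(iii). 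Since all constants in the invoked theorems are uniform in $t$, $h_\mu$ is uniform in $t$, which is exactly what the statement demands.
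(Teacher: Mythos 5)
There is a genuine gap in the step where you pass from the annulus comparison to the inclusion. Reflecting the cross-sectional disk of radius $R$ centered (essentially) on the axis about $\{x_1=\mu\}$ gives a disk of the same radius centered at $x_1=2\mu$ (not $-\mu$, though that slip is harmless), and the two disks are \emph{internally tangent along the circle points lying on the plane} $\{x_1=\mu\}$: for a reflected point at $x_1=a<\mu$ the radial gap is $\tfrac{4\mu(\mu-a)}{2R}$, which tends to $0$ as $a\to\mu$ no matter how large $R$ is. So your claim that the strict inclusion "holds once $\sqrt{2h}$ is large compared to $\mu$" and that the errors (i)--(iii) are "automatically" absorbed fails in a thin strip $\mu-Cr^{-99}/\mu<x_1<\mu$: there the $O(r^{-99})$ deviation of the cross-section from a round circle is \emph{not} dominated by the degenerate gap, and a priori a wiggle of that size (a bump at $x_1$ slightly greater than $\mu$ reflected onto a dent at $x_1$ slightly less than $\mu$) could push the reflected curve outside $K_t^{\mu-}$. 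The circle comparison alone therefore only proves the inclusion on, say, $\{x_1\le\mu/2\}$.

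This is exactly why the paper's proof has a second, separate step near the reflection plane: using $|\partial_\theta r|\le r^{-100}$ together with $x_1=r\cos\theta\ge\mu/2$ one gets $\langle X_\theta,e_2\rangle=r_\theta\sin\theta+r\cos\theta>0$ in the strip $\{x_1\in[\mu/2,3\mu/2]\}$, so each arc of the cross-section there is a strictly monotone graph $x_1=g(x_2)$; for such a graph the reflected region $\{x_1<\mu,\ 2\mu-x_1\le g(x_2)\}$ trivially lies in $\{x_1<g(x_2)\}$, with no error terms to absorb. Your far-region argument is essentially the paper's first step (and the uniformity in $t$ and the passage from boundary curves to solid regions are fine), but without the graphical/slope argument near $\{x_1=\mu\}$ the proof does not close.
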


\begin{proof}
Denote the position vector of $M_t\cap \{x_3 \geq \psi(t)+C\}$ by
\begin{equation}
X(\theta,z,t)=\left( r(\theta,z,t)\cos\theta,r(\theta,z,t)\sin\theta,z\right).
\end{equation}
Given $(z_0,t_0)$ with $z_0-\psi(t_0)\geq C$, using Theorem \ref{thm_neck_asympt} (fine asymptotics), we obtain
\begin{equation}
|r(z_0,\theta,t_0)-r(z_0,\theta_0,t_0)| \leq \frac{2\pi}{r(z_0,\theta_0,t_0)^{100}},
\end{equation}
from which we directly infer that
\begin{equation}
K_{t_0}^{\mu <}\cap \{x_3=z_0\}\cap \{x_1 \leq \tfrac{1}{2}\mu\} \subseteq \textrm{Int}(K_{t_0}^{\mu -})
\end{equation}
for sufficiently large $z_0-\psi(t_0)$.

Let $\tilde{\mu}\geq \mu$. Using Theorem \ref{thm_neck_asympt} (fine asymptotics), we see that if $\theta \in (-\tfrac{\pi}{2},0)$ is such that
\begin{equation}
r(\theta,z_0,t_0)\cos\theta \in [\tfrac{1}{2}\tilde{\mu},\tfrac{3}{2}\tilde{\mu}],
\end{equation}
then for sufficiently large $z_0-\psi(t_0)$ we have
\begin{equation}
\langle X_\theta,e_2\rangle=r_\theta \sin\theta+r\cos\theta \geq -r^{-100}|\sin\theta|+r\cos\theta >0.
\end{equation}
Hence, we have a graph with positive slope that can be reflected.
Repeating the same argument for $\theta \in (0,\tfrac{\pi}{2})$ we conclude that
\begin{equation}
K_{t_0}^{\mu <}\cap \{x_3=z_0\}\cap \{x_1 \geq \tfrac{1}{2}\mu\} \subseteq \textrm{Int}(K_{t_0}^{\mu -})
\end{equation}
for sufficiently large $z_0-\psi(t_0)$. This proves the proposition.
\end{proof}

\bigskip

\begin{corollary}[start plane]\label{cor_start_plane}
There exists some $\mu<\infty$, such that the moving plane can reach $\mu$.
\end{corollary}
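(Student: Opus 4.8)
The plan is to show that for $\mu$ sufficiently large, the reflected domain $K_t^{\mu <}$ is empty for all $t$, which trivially gives the inclusion $K_t^{\tilde\mu <}\subseteq K_t^{\tilde\mu -}$ for all $\tilde\mu \geq \mu$. The key point is a uniform-in-time bound on how far the flow extends in the $x_1$-direction. First I would use the cap size control (Theorem \ref{thm_asympt_par}), which says that outside $B_C(p_t)$ the surface $M_t$ lies on a fine neck and is a graph in cylindrical coordinates over the $z$-axis with $r(t,z,\theta)=\sqrt{2(z-\psi(t))}+o(\sqrt{z-\psi(t)})$; combined with the fine asymptotics (Theorem \ref{thm_neck_asympt}), after translating so that the limiting axis is the $z$-axis, the surface $M_t$ is contained in a thin neighborhood of the solid paraboloid $\{x_1^2+x_2^2 \leq 2(x_3-\psi(t))+1\}$ together with the bounded cap region $B_C(p_t)$. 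The crucial problem is that this paraboloid opens up without bound as $x_3\to\infty$, so $M_t$ is \emph{not} contained in any fixed slab $\{x_1\leq \mu\}$.

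So the naive ``push the plane from $x_1=+\infty$'' does not immediately terminate. Instead I would combine the paraboloid confinement with Proposition \ref{Infinity Dirichlet} (no contact at infinity). The argument runs as follows. Fix $\mu>0$ large. By Proposition \ref{Infinity Dirichlet} there is $h_\mu<\infty$ with $K_t^{\mu<}\cap\{x_3\geq \psi(t)+h_\mu\}\subseteq \mathrm{Int}(K_t^{\mu-})$ for all $t$. So any potential failure of the inclusion $K_t^{\mu<}\subseteq K_t^{\mu-}$ can only happen in the compact region $\{\psi(t)\leq x_3\leq \psi(t)+h_\mu\}$. But in that region, by the cap size estimate \eqref{expansion_cylindrical} and Corollary \ref{cor_barrier}, the surface $M_t$ is contained in $B(p_t, C')$ for some constant $C'=C'(\mathcal{M},h_\mu)<\infty$ depending only on $h_\mu$ and $\mathcal{M}$, uniformly in $t$. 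Now normalize using the translation of Theorem \ref{thm_neck_asympt} so that the asymptotic axis is the $z$-axis; the height of the cap $\psi(t)$ varies with $t$ but the cap radius is bounded, and the cap center $p_t$ has bounded deviation from the $z$-axis (this is exactly the content of the ``controls the position of the cap in the $xy$-plane uniformly in time'' clause of Theorem \ref{thm_neck_asympt}, via Lemma \ref{Local estimate for strong symmetry} applied down to the cap scale using the global curvature bound). Hence there is a uniform constant $\mu_0<\infty$ with $\sup_{x\in M_t} x_1(x) \leq \mu_0$ over the region $\{x_3 \leq \psi(t)+h_{\mu_0}\}$ for \emph{all} $t$.

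For any $\mu\geq \mu_0$, this shows $K_t^{\mu<}\cap\{x_3\leq \psi(t)+h_\mu\}$ lies entirely in $\{x_1<\mu\}\cap K_t$; combined with Proposition \ref{Infinity Dirichlet} for the complementary range $\{x_3\geq \psi(t)+h_\mu\}$, we get $K_t^{\mu<}\subseteq \mathrm{Int}(K_t^{\mu-})\cup(\text{empty})$, hence $K_t^{\mu<}\subseteq K_t^{\mu-}$, for all $t\in\mathbb{R}$ and all $\mu\geq \mu_0$. Therefore the moving plane can reach $\mu_0$, which is the assertion. The main obstacle I anticipate is making the uniform-in-$t$ bound on the cap position rigorous: one must ensure that, after the single global translation fixing the asymptotic axis (Theorem \ref{thm_neck_asympt}), the cap $B_C(p_t)$ does not wander off to infinity in the $xy$-plane as $t$ varies. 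This follows by telescoping the alignment estimate of Lemma \ref{Local estimate for strong symmetry} from the neck region down to the cap using the uniform curvature bound (Theorem \ref{curv_bound}) and fast tip points (Proposition \ref{fast_tip}), but the bookkeeping of the constants (which scale like negative powers of $x_3-\psi(t)$ and so are harmless at the neck but must be summed down to a bounded height above the tip) is the delicate part. Everything else is a direct assembly of the cap size control, the fine asymptotics, and the no-contact-at-infinity proposition.
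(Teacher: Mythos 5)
Your proposal is correct and follows essentially the same route as the paper: uniform control of the cap's position in the $xy$-plane (from Theorem \ref{thm_asympt_par} and Theorem \ref{thm_neck_asympt}) handles the bounded region $\{x_3\leq\psi(t)+h_\mu\}$, and Proposition \ref{Infinity Dirichlet} handles the rest. The "delicate bookkeeping" you worry about for the cap position is exactly what Theorem \ref{thm_neck_asympt} already provides (the fixed translation vector $q$ and the estimate \eqref{global q position bound}), so the paper simply cites it rather than re-deriving it.
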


\begin{proof}
By Theorem \ref{thm_asympt_par} (cap size control) and Theorem \ref{thm_neck_asympt} (fine asymptotics) there exists a $C=C(\mathcal M)<\infty$ such that whenever $x_3 - \psi(t)\leq C$ we have
\begin{equation}\label{uniform cap position control}
x_1^2+x_2^2 \leq 10C,
\end{equation}
namely we uniform control for the position of the cap. Together with Proposition \ref{Infinity Dirichlet} (no contact at infinity) this implies the assertion.
\end{proof}

\bigskip

Now,  for $h_\mu$ as in the proposition and $\delta>0$ we define
\begin{align}
&E_t^{\mu}=\{x_3 \leq \psi(t)+h_{ \mu /2}\}, && E_t^{\mu,\delta}=\{x\in E_t^{\mu}: d(x,M_t^\mu)\geq \delta\}.
\end{align}

\begin{lemma}[distance gap]\label{distance gap}
Suppose the moving plane can reach $\mu>0$. Then, there exists a positive increasing function $\alpha:(0,\delta_0)\to \mathbb{R}_+$ such that 
\begin{equation}
d(M_t^{\mu -},K_t^{\mu <}\cap  E_t^{\mu,\delta}) \geq \alpha(\delta)>0
\end{equation}
for all $t\in \mathbb{R}$.
\end{lemma}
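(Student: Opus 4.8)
The plan is to argue by contradiction, using a compactness/blowup argument together with the fact that the moving plane has already reached $\mu$, which gives the weak inclusion $K_t^{\tilde\mu<}\subseteq K_t^{\tilde\mu-}$ for all $\tilde\mu\geq\mu$ and all $t$. Suppose no such function $\alpha$ exists. Then for some fixed $\delta\in(0,\delta_0)$ we can find a sequence of times $t_i\in\mathbb R$ and points $y_i\in K_{t_i}^{\mu<}\cap E_{t_i}^{\mu,\delta}$ with $d(y_i,M_{t_i}^{\mu-})\to 0$. Since $y_i\in E_{t_i}^{\mu}=\{x_3\leq\psi(t_i)+h_{\mu/2}\}$ and $d(y_i,M_{t_i}^\mu)\geq\delta$, the point $y_i$ stays in a compact region relative to the tip: by Theorem~\ref{thm_asympt_par} (cap size control) and Theorem~\ref{thm_neck_asympt} (fine asymptotics) the part of $M_{t_i}$ with $x_3-\psi(t_i)\leq h_{\mu/2}$ is uniformly bounded in the $x_1,x_2$ directions, so after translating by $-(0,0,\psi(t_i))$ in space and by $-t_i$ in time, the $y_i$ subconverge to a point $y_\infty$.

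First I would set up the compactness step carefully. Translate $\mathcal M$ by $(p_{t_i},t_i)$ (or just by the tip height and $t_i$) to obtain flows $\mathcal M^i$; by the global curvature bound (Theorem~\ref{curv_bound}) and the entropy bound, Ilmanen's compactness theorem gives a subsequential limit $\mathcal M^\infty$, which is again an ancient low entropy flow where the plus mode is dominant, hence (by the later classification, or directly) a smooth translating bowl soliton. The translated points $y_i\to y_\infty$, and by the weak inclusion for all $\tilde\mu\geq\mu$ passing to the limit we get $K_t^{\infty,\mu<}\subseteq K_t^{\infty,\mu-}$ for the limit flow, i.e.\ $M_t^{\infty,\mu<}$ lies on the $\{x_1\leq\mu\}$-side of $M_t^{\infty}$. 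Meanwhile $y_\infty\in K_\infty^{\mu<}$ (a limit of interior-side points of the reflection) and $d(y_\infty,M_\infty^{\mu-})=0$, so $y_\infty\in M_\infty^{\mu-}\cap\partial K_\infty^{\mu<}$: the reflected surface touches $M_\infty$ from the inside at the interior point $y_\infty$, and $y_\infty$ is \emph{not} on the plane $\{x_1=\mu\}$ because $d(y_\infty,M_\infty^\mu)\geq\delta>0$ and also $y_\infty$ lies in the bounded-cap region $\{x_3-\psi\leq h_{\mu/2}\}$ where the limit surface is strictly mean convex and smooth.

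Now I would derive the contradiction from the strong maximum principle. Near $y_\infty$, both $M_\infty$ and its reflection $M_\infty^{\mu<}$ are smooth graphs over a common tangent plane (the regularity scale is uniformly bounded below by $\delta_0$, chosen exactly so that Lemma~\ref{Hopf lemma for graphs} and Proposition~\ref{Strong maximum principle for graphs} apply); they solve mean curvature flow, touch at the interior point $y_\infty$, and one lies weakly on one side of the other. By Proposition~\ref{Strong maximum principle for graphs} the two graphs coincide in a full parabolic neighborhood of $y_\infty$, hence by unique continuation $M_\infty$ is symmetric under reflection about $\{x_1=\mu\}$. But $M_\infty^\infty$ is a bowl soliton with a single end going to $x_3\to+\infty$ and a strictly convex compact cap, so its only reflection symmetries are about planes through its axis; since $\mu>0$ while the axis of the limit is controlled to sit near $x_1=0$ (from the fine asymptotics, the axis has $x_1$-coordinate $q_1$, and $\mu$ was taken larger than the cap-position bound), this is impossible. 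This contradiction establishes the existence of $\alpha$. Finally, monotonicity of $\alpha$ in $\delta$ is immediate from the definition (shrinking $\delta$ enlarges $E_t^{\mu,\delta}$), and positivity is exactly what the contradiction argument gives; one can take $\alpha(\delta):=\inf_{t}d(M_t^{\mu-},K_t^{\mu<}\cap E_t^{\mu,\delta})$ and note it is the increasing envelope.

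The main obstacle I expect is the compactness/limit step: one must ensure that the limit flow $\mathcal M^\infty$ is genuinely the bowl (so that its symmetry group is understood) and that the limiting touching point $y_\infty$ is a genuine interior contact point away from the reflection plane — in particular that it does not escape to $x_3\to+\infty$ along the neck (ruled out by the constraint $x_3\leq\psi(t_i)+h_{\mu/2}$) and does not slide onto $\{x_1=\mu\}$ (ruled out by $d(\cdot,M_t^\mu)\geq\delta$). Handling the bookkeeping of which connected pieces of the reflected surface converge, and confirming the graphical representation persists in the limit at scale $\delta_0$, is the technical heart of the argument.
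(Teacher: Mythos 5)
Your proposal is correct and follows essentially the same route as the paper: a contradiction sequence of near-touching points, confinement to a bounded region around the tip via no-contact-at-infinity, cap size control and the fine asymptotics, compactness of the translated flows via the global curvature bound, and the strong maximum principle at the limiting interior contact point. One caveat: you should not identify the limit flow as a bowl soliton, since that classification (Theorem \ref{thm_class_noncompact}) is proved only after rotational symmetry, which depends on this lemma — but this is harmless because, as you note, the contradiction follows directly from the fine asymptotics (the limit's asymptotic neck axis is the $z$-axis, which is incompatible with reflection symmetry about $\{x_1=\mu\}$ for $\mu>0$), without knowing the limit is a bowl.
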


\begin{proof}
We will first show that 
\begin{equation}\label{showfirst}
K_t^{\mu <}\subseteq \textrm{Int}(K_t^{\mu -})
\end{equation}
for all $t\in \mathbb{R}$. Indeed, by definition, we have $K_t^{\mu <}\subseteq K_t^{\mu -}$. If \eqref{showfirst} fails, then there must be some $t\in \mathbb{R}$ and some $p\in M_t^{\mu <}\cap M_t^{\mu -}$; this contradicts the strong maximum principle (Proposition \ref{Strong maximum principle for graphs}). This proves \eqref{showfirst}.

Now, suppose towards a contradiction that for some $\delta>0$ we have
\begin{equation}
\inf_{t\in\mathbb{R}} d(M_t^{\mu -} ,K_t^{\mu <}\cap  E_t^{\mu,\delta})=0.
\end{equation}
Choose a sequence of space-time points $(x_i,t_i)\in \mathcal{M}$ such that $x_i \in M_{t_i}^{\mu <}\cap  E_{t_i}^{\mu,\delta}$ and $\lim_{i\to \infty} d(x_i,K_{t_i}^{\mu -})=0$. By Proposition \ref{Infinity Dirichlet} (no contact at infinity), Theorem \ref{thm_neck_asympt} (fine asymptotics), and the uniform cap position control \eqref{uniform cap position control}, the distance between $x_i$ and the point $\psi(t_i)e_3$ is uniformly bounded. Hence, by Theorem \ref{curv_bound} (global curvature bound) and Theorem \ref{thm_asympt_par} (cap size control), we can take subsequential limits $\overline{\mathcal{M}}$ and $\bar x$ of the flows $\mathcal{M}-(\psi(t_i)e_3,t_i)$ and points $x_i-\psi(t_i)e_3$. Applying the strong maximum principle (Proposition \ref{Strong maximum principle for graphs}) for $\overline{\mathcal{M}}$ at the spacetime point $(\bar x,0)$ gives a contradiction. This proves the lemma.
\end{proof}

\bigskip

\begin{lemma}[angle gap]\label{Angle gap}
Suppose that the moving plane can reach $\mu>0$. Then, there exists a positive constant $\theta_\mu>0$ such that $|\langle \nu(x,t),e_1\rangle|\geq \theta_\mu$ holds on $M_t^{\mu}\cap E_{t}^{\mu}$ for all $t\in\mathbb{R}$.
\end{lemma}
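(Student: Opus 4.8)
The plan is to prove the angle gap by a contradiction/compactness argument, exactly parallel to the distance gap lemma (Lemma \ref{distance gap}). Suppose no such $\theta_\mu>0$ exists; then there is a sequence of space-time points $(x_i,t_i)\in\mathcal M$ with $x_i\in M_{t_i}^{\mu}\cap E_{t_i}^{\mu}$ and $\langle\nu(x_i,t_i),e_1\rangle\to 0$. First I would note that on $E_{t_i}^\mu=\{x_3\le\psi(t_i)+h_{\mu/2}\}$ the points $x_i$ stay within a uniformly bounded distance of the moving tip $\psi(t_i)e_3$: this follows from Theorem \ref{thm_asympt_par} (cap size control) together with the uniform cap position control \eqref{uniform cap position control} and Theorem \ref{thm_neck_asympt} (fine asymptotics), which bound $x_1^2+x_2^2$ on the region where $x_3-\psi(t)$ is bounded. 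Hence, translating by $(\psi(t_i)e_3,t_i)$ and using the global curvature bound (Theorem \ref{curv_bound}), we may pass to a subsequential limit flow $\overline{\mathcal M}$ with a limit point $\bar x$ on $\overline M_0\cap\{x_1=\mu\}$ at which $\langle\bar\nu,e_1\rangle=0$, i.e. the tangent plane at $\bar x$ is orthogonal to $\{x_1=\mu\}$.

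Next I would use that the moving plane reaches $\mu$ to pass this property to the limit: since $K_{t_i}^{\mu<}\subseteq K_{t_i}^{\mu-}$ for all $i$, the limit satisfies $\overline K_0^{\mu<}\subseteq \overline K_0^{\mu-}$, i.e. the reflection $\overline M_0^{\mu<}$ of $\overline M_0\cap\{x_1>\mu\}$ lies (weakly) inside the region bounded by $\overline M_0\cap\{x_1<\mu\}$, and the two surfaces meet at $\bar x$. Near $\bar x$ both $\overline M_0\cap\{x_1\le\mu\}$ and $\overline M_0^{\mu<}$ are smooth mean curvature flows on a uniform parabolic scale $\delta_0$ by Theorem \ref{curv_bound} and the entropy bound. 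Now, because $\langle\bar\nu,e_1\rangle=0$ at $\bar x$, the surface $\overline M_0$ is, near $\bar x$, a graph over a plane containing the $e_1$-direction — so locally, after rotating coordinates, both $\overline M_0\cap\{x_1\le\mu\}$ and its reflection $\overline M_0^{\mu<}$ are graphs $u$ and $v$ in the sense required for the Hopf lemma (Lemma \ref{Hopf lemma for graphs}) and the strong maximum principle (Proposition \ref{Strong maximum principle for graphs}), with $u(\bar x,0)=v(\bar x,0)$ and $v\ge u$. If the two graphs coincide on a neighborhood, the strong maximum principle propagates equality and one derives a global contradiction (the surface would be symmetric about $\{x_1=\mu\}$, forcing it to be compact or otherwise incompatible with the one-ended parabolic asymptotics of Theorem \ref{thm_asympt_par}); if they are distinct, then $v>u$ on the open half and the Hopf lemma gives $\partial_{x_1}v(\bar x,0)\neq\partial_{x_1}u(\bar x,0)$, which contradicts the fact that $\bar x\in\{x_1=\mu\}$ forces the normals (and hence the graph slopes in the reflected direction) to agree there when $\langle\bar\nu,e_1\rangle=0$. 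Either way we reach a contradiction, so $\theta_\mu>0$ exists.

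The main obstacle I expect is the bookkeeping at the touching point $\bar x$: one has to verify that the condition $\langle\nu,e_1\rangle=0$ really does put us in the setting where Lemma \ref{Hopf lemma for graphs} applies with the reflection plane $\{x_1=\mu\}$ playing the role of the boundary $\{x_1\le 0\}$, i.e. that the interior reflected surface approaches from the correct side and that the $e_1$-derivative of the height function is the geometrically meaningful quantity. This requires choosing the graphical coordinates so that the graphing direction is transverse to $e_1$ and to the reflection plane simultaneously, which is possible precisely because $\bar x$ lies on $\{x_1=\mu\}$ and has normal in that plane. A secondary technical point is ensuring the limit flow $\overline{\mathcal M}$ is nontrivial near $\bar x$ and that $\bar x$ genuinely lies in the interior region $E_0^\mu$ (not escaping to the boundary of the region where cap control holds); this is handled by the uniform distance bound to $\psi(t_i)e_3$ established in the first step, combined with the fact that $E_{t_i}^{\mu,\delta}$-type neighborhoods and the definition of $E_t^\mu$ are stable under the translation and limit. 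Once these are in place the argument is a routine application of the maximum principle lemmas already recorded.
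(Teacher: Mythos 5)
Your proposal is correct and follows essentially the same route as the paper: the paper's proof is precisely a compactness argument translating by $(\psi(t_i)e_3,t_i)$, using the global curvature bound to extract a limit flow $\overline{\mathcal{M}}$, and then applying the Hopf lemma (Lemma \ref{Hopf lemma for graphs}) to the original and reflected surfaces at the limit point $(\bar x,0)$. You simply spell out in more detail the geometric bookkeeping (why $\langle\bar\nu,e_1\rangle=0$ at a point of the reflection plane forces tangency of the reflected surface, so that Hopf yields a contradiction) that the paper leaves implicit.
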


\begin{proof}
First, the Hopf lemma (Lemma \ref{Hopf lemma for graphs}) shows that $|\langle \nu(x,t),e_1\rangle|\neq 0$.

Now, suppose towards a contradiction there is a sequence $(x_i,t_i)\in \mathcal{M}$ such that $x_i \in M_{t_i}^{\mu}\cap E_{t_i}^{\mu}$ and $\lim_{i\to \infty} |\langle \nu(x_i,t_i),e_1\rangle|=0$. Then, as in the proof of Lemma \ref{distance gap}, we can take subsequential limits $\overline{\mathcal{M}}$ and $\bar x$ of the flows $\mathcal{M}-(\psi(t_i)e_3,t_i)$ and points $x_i-\psi(t_i)e_3$. Applying the Hopf lemma (Lemma \ref{Hopf lemma for graphs}) for $\overline{\mathcal{M}}$ at the spacetime point $(\bar x,0)$ gives a contradiction. This proves the lemma.
\end{proof}

\bigskip

\begin{theorem}\label{thm_rot_symm}
$\mathcal{M}$ is rotationally symmetric.
\end{theorem}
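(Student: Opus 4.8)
The plan is to run the parabolic moving plane method using the preparatory lemmas already established. Fix the direction $e_1$ and define
\[
\mu_\ast = \inf\{\mu \geq 0 : \textrm{the moving plane can reach } \mu\}.
\]
By Corollary \ref{cor_start_plane} (start plane) this infimum is over a nonempty set, so $\mu_\ast<\infty$, and by continuity of the flow the moving plane can reach $\mu_\ast$ itself, i.e. $K_t^{\mu_\ast<}\subseteq K_t^{\mu_\ast-}$ for all $t\in\mathbb{R}$. The goal is to show $\mu_\ast = 0$; once this is done, the same argument applied to $-e_1$ gives $K_t^{0<}= K_t^{0-}$ up to reflection, i.e. $M_t$ is symmetric under reflection across $\{x_1=0\}$, and since the choice of the $x_1$-direction among all directions orthogonal to $e_3$ was arbitrary (the cap position having been centered at the $z$-axis via Theorem \ref{thm_neck_asympt}), $\mathcal{M}$ is invariant under all such reflections, hence rotationally symmetric about the $z$-axis.

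So suppose toward a contradiction that $\mu_\ast>0$. The strategy is to show that the moving plane can then reach $\mu_\ast-\eta$ for some small $\eta>0$, contradicting minimality. I would split the region $M_t$ into the "cap part" $E_t^{\mu_\ast}=\{x_3\leq \psi(t)+h_{\mu_\ast/2}\}$ and the "end part" $\{x_3 > \psi(t)+h_{\mu_\ast/2}\}$. On the end part, Proposition \ref{Infinity Dirichlet} (no contact at infinity) already gives strict inclusion $K_t^{\mu<}\cap\{x_3\geq\psi(t)+h_\mu\}\subseteq \textrm{Int}(K_t^{\mu-})$ for every $\mu$ in a neighborhood of $\mu_\ast$, uniformly in $t$ — here one uses that $h_\mu$ can be chosen locally bounded in $\mu$ and that the fine asymptotics estimate \eqref{global angular derivative} is uniform in time. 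On the cap part, the key quantitative inputs are Lemma \ref{distance gap} (distance gap) and Lemma \ref{Angle gap} (angle gap): the reflected surface stays a definite distance $\alpha(\delta)>0$ away from $M_t^{\mu_\ast-}$ on the set $E_t^{\mu_\ast,\delta}$, while within distance $\delta$ of the mirror plane $M_t^{\mu_\ast}$ the surface is uniformly transverse to $\{x_1=\mu_\ast\}$ (slope bounded below by $\theta_{\mu_\ast}>0$), so that a graphical comparison with margin is available in a fixed-size collar. Decreasing $\mu$ slightly from $\mu_\ast$ moves the reflected surface by an amount controlled linearly by $\mu_\ast-\mu$ and by the uniform curvature bound (Theorem \ref{curv_bound}); choosing $\eta$ smaller than $\alpha(\delta)$ divided by this Lipschitz constant (and smaller than what the angle gap allows in the collar) preserves the strict inclusion $K_t^{(\mu_\ast-\eta)<}\subseteq K_t^{(\mu_\ast-\eta)-}$ for all $t$. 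This contradicts the definition of $\mu_\ast$, so $\mu_\ast=0$.

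There is one subtlety I would treat carefully: the equality case. When $\mu_\ast=0$ we have $K_t^{0<}\subseteq K_t^{0-}$ and, by the reflected inequality, $K_t^{0-}\subseteq K_t^{0<}$, so they are equal and $M_t^{0<}=M_t^{0-}$; but to conclude genuine symmetry rather than just ordered containment one invokes the strong maximum principle (Proposition \ref{Strong maximum principle for graphs}): if the reflected flow $M_t^{\mu<}$ and $M_t^{\mu-}$ touch at an interior spacetime point for $\mu$ slightly larger than the stopping value, they coincide, which then propagates the symmetry inward and is exactly what forces $\mu_\ast=0$ in the first place (if the containment were strict everywhere one could keep moving). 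So the dichotomy "either strict inclusion everywhere (and then the plane moves further) or interior contact (and then symmetry across that plane, so done)" is the logical engine, and it is cleanest to phrase the contradiction argument around it.

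The main obstacle — and the reason all the preceding machinery was built — is the noncompactness: one must rule out a "loss of containment at infinity" as $\mu$ decreases, and guarantee that all the gaps ($\alpha(\delta)$, $\theta_\mu$, the bound $h_\mu$) are \emph{uniform in $t\in\mathbb{R}$}. This uniformity is precisely what Theorem \ref{thm_neck_asympt} (fine asymptotics, with its fast $r^{-100}$ decay of $\partial_\theta r$) and Theorem \ref{thm_asympt_par} (cap size control, with the uniform cap position \eqref{uniform cap position control}) provide, together with the global curvature bound. The compactness arguments producing $\alpha(\delta)$ and $\theta_\mu$ rely on taking blow-down-free limits of the translated flows $\mathcal{M}-(\psi(t_i)e_3,t_i)$, which land back in the class of ancient low entropy flows where the plus mode is dominant — so nothing escapes the hypotheses. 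Granting these, the moving plane argument itself is then essentially the standard one, and I expect no further difficulty.
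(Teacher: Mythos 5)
Your proposal is correct and follows essentially the same route as the paper: establish a start plane, take the infimum $\mu_\ast$ of reachable positions, and if $\mu_\ast>0$ combine Proposition \ref{Infinity Dirichlet} (end region), Lemma \ref{Angle gap} (collar near the mirror plane), and Lemma \ref{distance gap} (remainder of the cap region) to push the plane to $\mu_\ast-\delta$, contradicting minimality. The only cosmetic difference is your closing discussion of the strong-maximum-principle dichotomy, which in the paper is already absorbed into the proofs of the distance gap and angle gap lemmas rather than appearing in the final argument.
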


\begin{proof}
It is enough to show that the moving plane can reach $\mu=0$.
Consider the interval
\begin{equation}
I=\{ \mu \geq 0: \text{the moving plane can reach}\; {\mu}\}
\end{equation}
Note that  $I \neq \emptyset$ by Corollary \ref{cor_start_plane} (start plane). Let $\mu:=\inf I$, and observe that $\mu\in I$. Suppose towards a contradiction that $\mu>0$.

First, by Proposition \ref{Infinity Dirichlet} (no contact at infinity) we have
\begin{equation}
K_t^{\frac{\mu}{2} <}\cap (E_t^{\mu})^c \subseteq  K_t^{\frac{\mu}{2} -}
\end{equation}
for all $t\in \mathbb{R}$.

Next, by Lemma \ref{Angle gap} (angle gap) there exists a $\delta_1 \in (0,\min\{ \delta_0,\tfrac{\mu}{2}\})$ such that for $\delta\in (0,\delta_1)$ we have
\begin{equation}
K_t^{(\mu-\delta) <}\cap E_t^{\mu}\cap \{x_1 \geq \mu-2\delta_1\} \subseteq K_t^{(\mu-\delta)-}
\end{equation}
for all $t\in \mathbb{R}$.

Finally, combining the above with Lemma \ref{distance gap} (distance gap) we conclude that every $\delta\in (0,\min\{\delta_1,\alpha(\delta_1)\})$ we have
\begin{equation}
K_t^{(\mu-\delta) <}\subseteq K_t^{(\mu-\delta)-}.
\end{equation}
for all $t\in \mathbb{R}$. Hence, the moving plane can reach $\mu-\delta$; a contradiction. This proves the theorem.
\end{proof}

\bigskip

\section{Classification of ancient low entropy flows}\label{sec_classification_completion}

\subsection{The noncompact case}\label{sec_classification_noncompact}

Let $\mathcal M$ be an ancient noncompact low entropy flow that is not a round shrinking cylinder or a flat plane. By Corollary \ref{thm_compact}, the plus mode is dominant. As before, we normalize such that $a=a(\mathcal M)=1/\sqrt{2}$.

\begin{theorem}\label{thm_class_noncompact}
$\mathcal M$ is the bowl soliton.
\end{theorem}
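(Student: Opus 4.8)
We have already assembled a powerful toolkit: by the results of Sections \ref{sec_curv_bound}--\ref{sec_rot_symm}, $\mathcal M$ is eternal with globally bounded curvature, has exactly one end which opens up like a parabola around the tip $p_t$, and is rotationally symmetric about an axis which (after translating) we may take to be the $x_3$-axis. The strategy, following the outline, is to show that $\mathcal M$ is a mean convex, noncollapsed translating soliton, and then invoke the uniqueness theorem for translators of Haslhofer \cite{Haslhofer_bowl}.

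First I would exploit rotational symmetry to set up good coordinates: write $M_t$ as a surface of revolution and analyze the height function $z = x_3 : M_t \to \mathbb{R}$ and, on the cylindrical end, the radius function $r = r(z,t)$. Using elementary geometric/maximum-principle arguments — the surface has one end, is asymptotic to a parabola by Theorem \ref{thm_asympt_par}, and the tip height $\psi(t)$ is strictly increasing with $\psi(t) = t + o(|t|)$ — I would argue that $z$ has no interior local maximum on $M_t$ (a local max of $z$ away from the tip would, by the structure of the rescaled flow and the maximum principle applied to $\langle \nu, e_3\rangle$, force a closed-off compact piece, contradicting that $M_t$ has one end and no compact components, Lemma \ref{no_comp_conn}), and similarly that $r = r(z,t)$ has no interior local maximum in $z$. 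This monotonicity is precisely what makes $f := \langle \nu, e_3 \rangle$ strictly positive everywhere (the outward normal always has a positive $e_3$-component since the surface is a monotone graph over its axis going outward from the tip). The function $f$ satisfies the Jacobi-type equation $\partial_t f = \Delta f + |A|^2 f$ along the flow, and $H$ satisfies $\partial_t H = \Delta H + |A|^2 H$ as well; both are positive ($H>0$ because, e.g., the necks have $H>0$ and by connectedness plus the strong maximum principle $H$ cannot vanish), so the ratio $H/f$ is a bounded ancient solution of a drift-heat equation.

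The key step is then to show $H \equiv f$, i.e. that $\mathcal M$ is a translating soliton with velocity $e_3$. The natural approach: consider $w := H - f$; it satisfies the same linear parabolic equation $\partial_t w = \Delta w + |A|^2 w$. Using the fine neck theorem (Theorem \ref{thm Neck asymptotic}), on the cylindrical end with normalization $\bar a = 1/\sqrt 2$, one checks that both $H$ and $f$ have the same leading asymptotics (this is the content of the footnote remarking that the bowl soliton satisfies the fine neck estimate with $a$ proportional to the reciprocal of the translation speed — here the speed is $1$, matching $\psi(t) = t + o(|t|)$), so $w \to 0$ at spatial infinity at a controlled rate; combined with the global curvature bound (Theorem \ref{curv_bound}) and the behavior near the tip, one gets that $w$ is an ancient solution which is bounded and decays at infinity. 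Then a Liouville-type / strong maximum principle argument (applied to $w/f$, or to $w$ directly using that any negative infimum or positive supremum would be attained and propagate backward by the strong maximum principle, contradicting the asymptotics) forces $w \equiv 0$. Hence $H = \langle \nu, e_3 \rangle$, so $\mathcal M$ translates with unit speed in the $e_3$-direction.

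Finally, with $H = f > 0$ established, $\mathcal M$ is a mean convex translating soliton, and it is noncollapsed: noncollapsedness of the neck end follows from the fine neck asymptotics (the necks are round, hence $\alpha$-noncollapsed for a uniform $\alpha$), and this propagates to all of $\mathcal M$ by the global curvature bound and the classical fact that for mean convex flows the noncollapsing constant is preserved, or directly since a rotationally symmetric convex-profile translator with parabolic opening is automatically noncollapsed. By the uniqueness theorem of Haslhofer \cite{Haslhofer_bowl} (uniqueness of the bowl among noncollapsed convex translators — and convexity here follows from $H>0$, rotational symmetry, and the no-local-maxima properties of $z$ and $r$, or from \cite{SpruckXiao}), $\mathcal M$ must be the bowl soliton.

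\textbf{Main obstacle.} The delicate point is the identity $H \equiv f$: one must rule out the possibility of a nonzero bounded ancient solution $w = H - f$ of the Jacobi equation. This requires carefully matching the asymptotic expansions of $H$ and $f$ on the cylindrical end (to get decay of $w$ at spatial infinity) using the full strength of Theorem \ref{thm Neck asymptotic}, controlling $w$ uniformly near the moving tip using Theorem \ref{curv_bound} and Theorem \ref{thm_asympt_par}, and then running the strong maximum principle / Liouville argument in the noncompact eternal setting where care is needed because the domain is moving and noncompact.
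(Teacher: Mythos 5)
Your plan follows essentially the same route as the paper: use rotational symmetry to reduce to a profile curve, show the height function $z$ and the radius function $r(z,t)$ have no local maxima so that $f=\langle\nu,e_3\rangle>0$, match the asymptotics of $H$ and $f$ on the neck end, run a strong maximum principle argument to force $H\equiv f$, and finish with the uniqueness of the bowl among mean convex noncollapsed translators \cite{Haslhofer_bowl}. Two of your justifications, however, would not survive as written. First, your mechanism for excluding local maxima of $z$ --- that such a maximum would ``force a closed-off compact piece'' --- is not correct: a surface of revolution with one end can perfectly well have a circle of local maxima of the height function (the profile curve simply doubles back), with no compact component created; and invoking the maximum principle for $\langle\nu,e_3\rangle$ here is circular, since positivity of that quantity is exactly what the no-local-maxima statement is meant to establish. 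The paper's actual mechanism is Sturmian: by \cite{Angenent_sturm} a local maximum of $z$ (resp.\ of $r$) cannot appear going backward in time, while its height above the tip $\psi(t)$ grows without bound as $t\to-\infty$ (using Proposition \ref{fast_tip}, resp.\ integrating $\tfrac{d}{dt}r(z_{\max})\leq -1/r(z_{\max})$), so eventually the putative maximum sits in the fine-neck region, where Lemma \ref{lemma_radiusfunction} ($rr_z=1+o(1)$, proved by integrating the fine neck expansion) forbids critical points. You need some version of this backward-in-time persistence plus the quantitative statement that $r$ has no critical points outside a cap of controlled size; without it the argument does not close.

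Second, in the step $H\equiv f$, your alternative of applying the maximum principle to $w=H-f$ ``directly'' fails: $w$ satisfies $\partial_t w=\Delta w+|A|^2w$, and the zeroth-order term $|A|^2w$ has the wrong sign at an interior positive maximum, so boundedness plus decay at infinity does not force $w\equiv 0$ for an ancient solution. Your other alternative, $w/f$ (equivalently the ratio $H/f$), is the right one and is what the paper uses: $H/f$ satisfies the drift equation $\partial_t(H/f)=\Delta(H/f)+2\langle\nabla\log f,\nabla(H/f)\rangle$ with no zeroth-order term, and since $H/f\to 1$ at spatial infinity uniformly in $t$ (which requires not just $f>0$ but a uniform positive lower bound $f(z,t)\geq g(z-\psi(t))$ near the cap, obtained in the paper by a compactness argument), one can take a limit along a minimizing sequence of $H/f$ and contradict the strong maximum principle. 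With these two repairs your outline coincides with the paper's proof.
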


\begin{proof}
From Section \ref{sec_cap_size} we know that $\mathcal M$ has a cap of controlled size and opens up like a parabola. From Section \ref{sec_rot_symm} we know that $\mathcal M$ is rotationally symmetric. Thus, at each time $M_t$ can be described by rotating a curve around the $z$-axis.  The curve $\gamma_t(s)$ ($0\leq s<\infty$) is in the $xz$-plane and satisfies $x(\gamma(0))=0$ and $\gamma'(0)=(1,0,0)$.\\

We will first show that the height function $z:M_t\to \mathbb{R}$ does not have any local maxima. Assume towards a contradiction that at some time $t_0$ there is a local maximum of the height function. Denote its value by $h(t_0)$. Since the mean curvature vector points downward, $h(t)$ is decreasing. On the other hand, the minimum of the height function $\psi(t)$ is an inreasing function. In particular, $h-\psi$ is decreasing. Since local maxima of $z$ cannot disapear as we go back in time (c.f. \cite{Angenent_sturm}), it follows that the height function has a local maximum for all $t\leq t_0$. Using Proposition \ref{fast_tip} we see that
\begin{equation}
\lim_{t\to -\infty} (h-\psi)(t)=\infty;
\end{equation}
this contradicts the fact that the cap size is bounded. Thus, the height function $z$ does not have any local maxima, and the $z$-componenent of the vector $\gamma_t'(s)$ is nonzero for all $s\neq 0$.\\

We can now describe $M_t$ by a function $r(z,t)$, where $\psi(t)\leq z<\infty$. To proceed we need the following lemma.

\begin{lemma}\label{lemma_radiusfunction}
The radius function $r=r(z,t)$ satisfies
\begin{equation}
r r_z = 1+o(1)
\end{equation}
uniformly as $z-\psi(t)\to \infty$. In particular, the radius function does not have critical points outside a cap of controlled size.
\end{lemma}

\begin{proof}[{Proof of Lemma \ref{lemma_radiusfunction}}]

We recall from \cite[Lem. 6.5]{BC} that on $\varepsilon$-necks one has the estimate
\begin{equation}
|rr_z|+|r^2r_{zz}|\leq C,
\end{equation}
in particular
\begin{equation}\label{second order derivative of level area}
|\partial_{z}^2r^2|\leq C_0/r.
\end{equation}
Now, we consider a point $(z_0,t_0)$ in a neck satisfying $\partial_z(r^2)(z_0,t_0)\neq 2$. Without loss of generality, we assume $\partial_z(r^2)(z_0,t_0)-2=2\delta>0$. Then, we have  $\partial_z(r^2)(z,t_0) \geq 2+\delta$ for $|z-z_0| \leq \tilde{\delta} r_0$ where $r_0= r(z_0,t_0)$ and $\tilde{\delta}=\min\{1,\tfrac12 C_0^{-1}\delta\}$. Therefore, we obtain
\begin{equation}
r^2(z_0+\tilde{\delta} r_0,t_0)-r^2(z_0,t_0)\geq (2+\delta)\tilde{\delta} r_0.
\end{equation}
Since $((0,0,z_0),t_0)$ is the center of a fine neck with rescaled time parameter $\tau$ satisfying $e^{-\frac{\tau}{2}}=r_0/\sqrt{2}$, using Theorem \ref{thm_asympt_par} we see that
\begin{equation}
\left|r\Big(z_0+ \tilde{\delta}r_0,\, t_0\Big )-r(z_0,t_0)-\tilde{\delta}\right|\leq  C r_0^{-\frac{1}{40}}.
\end{equation}
Combining the above inequalities yields
\begin{equation}
Cr_0^{\frac{39}{40}}+O(1) \geq \tilde{\delta}\delta  r_0.
\end{equation}
Therefore, $\tilde{\delta}=\tfrac12 C_0^{-1}\delta$ for sufficiently large $r_0$, and thus $\delta \leq Cr_0^{-\frac{1}{80}}$. Namely, 
\begin{equation}
\partial_z(r^2)-2 = O(r^{-\frac{1}{80}}).
\end{equation}
This proves the lemma.
\end{proof}

Continuing the proof of the theorem, suppose towards a contradiction that $r$ has a local maximum at some time $t_0$. The mean curvature vector at such a maximum points towards the $z$-axis and has size at least $1/r(z,t)$, hence
\begin{equation}
\frac{d}{dt} r(z_{max}(t),t) \leq -1/r(z_{max}(t),t).
\end{equation}
Integrating this differential inequality from $t_0$ back to time $-\infty$ (again, using \cite{Angenent_sturm} to conclude that the maximum point persists), we obtain that $r^2(z_{\max}(t),t)$ grows at least linearly in $-t$. In particular, $z_{\max}(t)-\psi(t)\rightarrow \infty$ as $t\rightarrow -\infty$, which, as $r_z(z_{\max}(t),t)=0$, contradicts Lemma \ref{lemma_radiusfunction}.\\

By the above, the (upwards) unit normal vector $\nu$ of our surface can never be horizontal or vertical (except at the tip). Thus
\begin{equation}
f:=\langle \nu, e_3\rangle >0.
\end{equation}
By Lemma \ref{lemma_radiusfunction} we have the asymptotics
\begin{equation}\label{f_asy}
f(z,t) = \frac{1+o(1)}{r}
\end{equation}
uniformly as $z-\psi(t)\to \infty$. In particular, there exists some universal $h_0<\infty$ such that $f(\psi(t)+h,t)\geq \frac{1}{10\sqrt{h}}$ for all $h\geq h_0$. We claim that there exists some universal $\delta>0$, such that $f(z,t)\geq \delta$ whenever $\psi(t)\leq z \leq \psi(t)+h_0$. Otherwise, taking a sequence of $(t_i,z_i)$ with $f(t_i,z_i)\rightarrow 0$ and passing to a subsequential limit, we obtain a flow (low entropy, with dominant plus mode, rotationally symmetric, with $a=\frac{1}{\sqrt{2}}$) with $r_z(z_0,0)=0$ for some $z_0$. By \cite{Angenent_sturm}, for all $t<0$, the radius function $r$ would have a local maximum, which was previously seen to be impossible. Hence, we have established the existence of a positive function $g:[0,\infty)\to \mathbb{R}_+$ such that
\begin{equation}\label{eq_lower_fg}
f(z,t)\geq g(z-\psi(t)).
\end{equation}
Moreover, by Theorem \ref{thm_asympt_par} we have the asymptotics
\begin{equation}
H(z,t) = \frac{1+o(1)}{r}
\end{equation}
uniformly as $z-\psi(t)\to \infty$. Combining this with \eqref{f_asy} and \eqref{eq_lower_fg}, we see that $H/f$ converges to $1$ at spatial infinity uniformly for all $t$. In particular, there is some $C<\infty$ such that
\begin{equation}
\left| \frac{H}{f}\right|\leq C.
\end{equation}
Suppose towards a contradiction that 
\begin{equation}
c:=\inf \frac{H}{f} < 1.
\end{equation}
Select $X_i=(x_i,t_i)\in \mathcal M$ such that $\frac{H}{f}(X_i)\to c$. Let $\mathcal{M}^i$ be the flow that is obtained from $\mathcal M$ by shifting $X_i$ to the space-time origin, and pass to a limit $\mathcal{M}^\infty$. For the limit $\mathcal{M}^\infty$ the function $\frac{H}{f}$ attains a minimum $c<1$ at the space-time origin. Together with $\frac{H}{f}\to 1$ at spatial infinity, this contradicts the strong maximum principle for the evolution equation
\begin{equation}
\partial_t \frac{H}{f}=\Delta \frac{H}{f}+ 2 \langle \nabla \log f, \nabla \frac{H}{f}\rangle.
\end{equation}
Hence,
\begin{equation}
 \inf \frac{H}{f} \geq 1.
\end{equation}
A similar argument shows that 
\begin{equation}
 \sup \frac{H}{f} \leq 1.
\end{equation}
Thus $H=f$. Hence, $M_t$ is a mean convex and noncollapsed (since the cap is compact) translating soliton, and thus by \cite{Haslhofer_bowl} must be the bowl soliton.
\end{proof}

\bigskip

\subsection{The compact case}

In this section, we treat the case where the neutral mode is dominant.

\begin{theorem}\label{thm_class_neutral}
If the neutral mode is dominant, then $\mathcal M$ is an ancient oval.
\end{theorem}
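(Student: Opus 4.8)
## Proof proposal for Theorem \ref{thm_class_neutral}

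\textbf{Setup and strategy.} Suppose $\mathcal{M}$ is an ancient low entropy flow which is not a plane, sphere, or cylinder, and suppose the neutral mode is dominant. By Corollary \ref{thm_compact}, $\mathcal{M}$ is compact, and by the partial regularity theorem (Theorem \ref{partial_regularity}) it is smooth except for at most countably many spherical singularities. The plan is to show $\mathcal{M}$ is mean convex and noncollapsed, and then invoke the uniqueness theorem of Angenent-Daskalopoulos-Sesum \cite{ADS2} for ancient ovals among compact ancient noncollapsed convex solutions. The main structural input is that near the tips the flow blows up to a noncompact eternal ancient low entropy flow, which by Theorem \ref{thm_class_noncompact} must be the bowl soliton, while the central region is a fine neck of the type analyzed in Section \ref{sec_fine_neutral}. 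The challenge is to glue these local pictures into a global convexity/noncollapsing statement.

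\textbf{Key steps.} First, I would identify the two tip regions. By Theorem \ref{thm_rotation}, the central neck has an inwards quadratic bending $\alpha_1(\tau) = \frac{-2^{5/4}\pi^{1/4}e^{-1/4}+o(1)}{|\tau|}$ with $|\alpha_2|+|\alpha_3| = o(|\alpha_1|)$; integrating this expansion (as in the cap size analysis of Section \ref{sec_cap_size_asymptotics}, but now in the compact regime) shows that $M_t$ consists of a long central neck region which is graphical over a cylinder of controlled geometry, capped off at both ends by regions of bounded intrinsic diameter relative to the curvature scale. Second, at each tip choose points $p_t^{\pm}$ achieving $\max/\min$ of the axial coordinate, and blow up $\mathcal{M}$ around $(p_t^{\pm}, t)$ with $t\to -\infty$ at the curvature scale; by the entropy bound and Ilmanen compactness one extracts a limit which is an ancient low entropy flow that is noncompact (the neck opens up indefinitely as $t\to -\infty$ by the quadratic bending) and in fact eternal. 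By Theorem \ref{thm_class_noncompact} this limit is the bowl soliton; in particular it is strictly mean convex and $\alpha$-noncollapsed. Third, I would run a maximum principle / continuity argument on $\mathcal{M}$ itself: on the tip regions mean convexity and noncollapsing hold by closeness to the bowl, on the neck region mean convexity holds because the neck is $C^2$-close to a round cylinder (which is mean convex), and one propagates the sign of $H$ and the noncollapsing inequality $\lambda_1 \geq \beta H$ from these regions across all of $M_t$ using that $H$ and $\lambda_1 - \beta H$ satisfy parabolic equations to which the strong maximum principle applies, together with the fact that the singular set is small (spherical singularities occur at countably many times only, so the smooth flow is connected in space-time).

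\textbf{Conclusion and main obstacle.} Once $\mathcal{M}$ is shown to be a compact, ancient, mean convex, $\alpha$-noncollapsed solution that is \emph{not} a round shrinking sphere, the classification of such solutions by Angenent-Daskalopoulos-Sesum \cite{ADS2} (together with the convexity that follows from noncollapsing via \cite{HaslhoferKleiner_meanconvex}, or is established directly in \cite{ADS2}) forces $\mathcal{M}$ to be an ancient oval, completing the proof. I expect the main obstacle to be the third step: making rigorous the propagation of mean convexity and the noncollapsing estimate from the tip and neck regions to the intermediate "shoulder" regions, since a priori one does not yet know these regions are graphical over anything canonical, and one must carefully rule out that $H$ vanishes somewhere in between. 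The cleanest route is probably to first establish $H>0$ everywhere by a limit-flow argument identical to the one used in the outline of Theorem \ref{thm_mean_convex_nbd_intro} (any point with $H=0$ would, upon blowup, contradict Corollary \ref{cor_convex}/Theorem \ref{thm_ancient_low_entropy} applied to the limit), and only afterwards upgrade to the noncollapsing bound using the global curvature control and the explicit asymptotics near the tips.
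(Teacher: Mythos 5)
Your overall strategy matches the paper's: blow up at the tips to find bowl solitons via Theorem \ref{thm_class_noncompact}, use the central cylinder, propagate mean convexity and a noncollapsing-type bound to the regions in between, and finish with \cite{ADS2}. But the step you yourself flag as the main obstacle --- controlling the ``shoulder'' regions between the central neck and the tip caps --- is exactly where your argument has a genuine gap, and the ``cleanest route'' you propose to close it is circular. You suggest establishing $H>0$ everywhere by blowing up at a hypothetical point with $H=0$ and appealing to Theorem \ref{thm_ancient_low_entropy} / Corollary \ref{cor_convex} for the limit; but the blowup limit is again an ancient low entropy flow which could a priori be compact with dominant neutral mode, i.e.\ precisely the class of flows whose classification (and convexity) is the content of the theorem being proved. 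At this stage of the paper neither Theorem \ref{thm_ancient_low_entropy} nor Corollary \ref{cor_convex} is available for such limits. Likewise, the strong maximum principle for $H$ (or for $\lambda_1-\beta H$) cannot propagate positivity ``sideways'' at a fixed time from the tips and the central neck into the shoulders: it requires positivity on the full parabolic boundary of a space-time domain, including an initial time slice, and at any fixed time you have no information on the shoulders.

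The paper closes this gap with a backwards-in-time anchoring that is absent from your proposal. Having found, at a time $t_j\to-\infty$, a second neck $Z_{t_j}^+$ bounding the convex cap coming from the bowl picture, one considers the topological annulus $N_{t_j}^+$ between $Z_{t_j}$ and $Z_{t_j}^+$ and follows it backwards in time. By the ``finding necks back in time'' analysis of Section \ref{sec_necks_back_in_time}, for $t\ll t_j$ the \emph{entire} annulus $N_t$ is a single $\varepsilon$-neck, hence manifestly mean convex with $|A|/H\le 2$; the parabolic maximum principle for $H$ and $|A|/H$, applied on the space-time domain swept out by $N_t$ with the two neck collars as spatial boundary, then propagates these bounds forward to time $t_j$. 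Adding the cap and letting $t_j\to-\infty$, one applies the maximum principle once more on the disk $M_t^+$ with the single collar $Z_t$ to get mean convexity and $|A|/H\le 100$ on all of $M_t^+$ for $t\le\mathcal{T}$; noncollapsing then follows from the curvature ratio bound together with the entropy bound, rather than from a separate evolution equation for $\lambda_1-\beta H$ as you suggest. Without this anchoring at $t\to-\infty$, where the intermediate region degenerates into a single neck, your propagation step does not go through.
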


\begin{proof}
From Section \ref{sec_fine_neutral} we know that  $\mathcal M=\{M_t \}_{t\in (-\infty,T_E(\mathcal{M}))}$ is compact. Hence, the flow becomes extinct in a point at $T_E(\mathcal{M})<\infty$, and each time-slice has the topology of a sphere.

\bigskip

Since the blowdown for $t\to -\infty$ is a cylinder, for $t\leq \mathcal{T}$ there is a central neck that divides $M_t$ into two halves $M_t^\pm$. We will show that $M_t^+$ is mean convex and $\alpha$-noncollapsed for $t\leq \mathcal{T}$.

\bigskip

Consider the ``height of the tip" function
\begin{equation}
\psi_{+}(t)=\max_{p\in M_t} x_3(p).
\end{equation}
Since the blowdown for $t\to -\infty$ is a cylinder, we have
\begin{equation}
\lim_{t\to -\infty}\frac{\psi_+(t)}{\sqrt{|t|}}=\infty.
\end{equation}
Let $t_j\to -\infty$ be a sequence such that $\psi_{+}'(t_j)\sqrt{|t_j|}\to \infty$. Then we can find points $p_j\in M_{t_j}^+$ with $\psi_{+}(t_j)=x_3(p_j)$ and 
\begin{equation}
\lim_{j\to \infty} \frac{R(p_j,t_j)}{\sqrt{|t_j|}}=0.
\end{equation}
Let $\mathcal{M}^j$ be the sequence of flows that is obtained from $\mathcal{M}$ by shifting $(p_j,t_j)$ to the origin and parabolically rescaling by $R(p_j,t_j)^{-1}$, and pass to a limit $\mathcal{M}^\infty$. The limit $\mathcal{M}^\infty$ is an eternal low entropy flow, which is non-flat. Hence, by Theorem \ref{thm_class_noncompact} it is a translating bowl soliton.

\bigskip

Therefore, for each $j$ large enough $M_{t_j}^+$ is an embedded disk which has the central neck $Z_{t_j}$ as a collar and also another neck $Z_{t_j}^+$ bounding a convex cap $C_{t_j}^+$ that is $\alpha$-noncollapsed, say for $\alpha=\tfrac{1}{100}$. Let $N_{t_j}^+$ be the topological annulus with collars $Z_{t_j}$ and $Z_{t_j}^+$. Let $\{N_t\}_{t\leq t_j}$ be the time-dependent topological annulus which is obtained from $N_{t_j}^+$ by following the points backwards in time along the mean curvature evolution. By the preservation of necks backwards in time (see Section \ref{sec_necks_back_in_time}), the annulus $N_t$ has two necks as collars for all $t\leq t_j$. Moreover, for $t\ll t_j$ the entire domain $N_t$ constitutes a single neck. Hence, by the parabolic maximum principle (for $H$ and $\frac{|A|}{H}$) applied with two collar boundaries, the annulus $N_t$ is mean convex and satisfies $\frac{|A|}{H}\leq 2$ for all $t\leq t_j$.\\

Now, adding the cap $C_{t_j}^+$, we infer that $M_{t_j}^+$ is mean convex and satisfies $|A|/H\leq 100$. Since $t_j\to -\infty$, by applying the parabolic maximum principle again, but this time for the disk $M_t^+$ which has only one collar boundary, the central neck $Z_{t}$, we conclude that $M_t^+$ is mean convex and satisfies $|A|/H\leq 100$ for $t\leq \mathcal{T}$.

The same argument applies to $M_t^-$. Hence, $\mathcal M=\{M_t \}_{t\in (-\infty,T_E(\mathcal{M}))}$ is mean convex and satisfies $|A|/H\leq 100$. Together with the entropy assumption, this implies that $\mathcal M=\{M_t \}_{t\in (-\infty,T_E(\mathcal{M}))}$ is $\alpha$-noncollapsed for some $\alpha>0$. Thus, by \cite{ADS2} it is an ancient oval.
\end{proof}

\bigskip

\section{Applications}

\subsection{Proof of the mean convex neighborhood conjecture}\label{sec_proof_mean_conv_nbd}

The purpose of this section is to prove Theorem \ref{thm_mean_convex_nbd_intro} (mean convex neighborhoods at all times). Since spherical singularities, by compactness, clearly have mean convex neighborhoods, it suffices to prove:

\begin{theorem}[mean convex neighborhood theorem]\label{Mean_convex_thm}
Let $X_0=(x_0,t_0)$ be a (backwardly) singular point of $\{M_t\}_{t\geq 0}$  and suppose that 
\begin{equation}\label{tangent_assumption}
\lim_{\lambda\rightarrow \infty} \lambda (K_{t_0+\lambda^{-2}t}-x_0)=\bar{B}^2(\sqrt{2(t_0-t)})\times \mathbb{R},
\end{equation}
smoothly with multiplicity one. Then there exist an $\varepsilon=\varepsilon(X_0)>0$ such that whenever $t_0-\varepsilon < t_1<t_2 < t_0+\varepsilon$ then
\begin{equation}\label{mean_convex_conc}
K_{t_2}\cap B(x_0,\varepsilon) \subseteq K_{t_1} \setminus M_{t_1}.
\end{equation}
The same result holds with $M$ and $K$ replaced with $M'$ and $K'$.
\end{theorem}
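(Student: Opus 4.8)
The plan is to realize the informal four-step scheme from the introduction, with the classification theorem (Theorem~\ref{thm_ancient_low_entropy}) as the main input. First I would fix the setup. Under \eqref{tangent_assumption} the tangent flow of the outer flow at $X_0$ is a multiplicity one round shrinking solid cylinder, so for every small enough $\delta>0$ the outer flow on $P(X_0,\delta)$ is, for $t\in(t_0-\delta,t_0)$, a smooth graph of small norm over a shrinking round cylinder, with mean curvature vector pointing strictly into $K_t$ (that is, $H>0$ in the convention where ``mean convex'' means moving inward). Using Ilmanen's elliptic regularization \cite{Ilmanen_book} with initial datum $M_{t_0-\delta}$, together with the local regularity theorem \cite{White_regularity} and upper semicontinuity of Huisken's density, I would produce a unit-regular, cyclic, integral Brakke flow $\mathcal M=\{\mu_t\}_{t\ge t_0-\delta}$ whose support equals the outer flow $\{M_t\}$, which has the multiplicity one cylindrical tangent flow at $X_0$, and which --- after shrinking $\delta$ --- is smooth in $P(X_0,\delta)$ at almost every time, with only spherical or cylindrical singular points there. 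The last statement holds because all Gaussian densities in $P(X_0,\delta)$ are $<\sqrt{2\pi/e}+o(1)<2$, so by Bernstein--Wang \cite{BW} every tangent flow in $P(X_0,\delta)$ is a plane, sphere, or cylinder; arguing as in Section~\ref{sec_partial_reg} this forces the singular set in $P(X_0,\delta)$ to be discrete at each time and of parabolic dimension at most one.

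The next step is to show that, after shrinking $\delta$ once more, $H$ does not vanish at any regular point of $\mathcal M\cap P(X_0,\delta)$. I would argue by contradiction: suppose $Y_i\to X_0$ are regular points with $H(Y_i)=0$, and let $d_i$ be the parabolic distance from $Y_i$ to $X_0$ and $R(Y_i)$ the regularity scale. If $R(Y_i)/d_i$ stays bounded below, then rescaling $\mathcal M$ about $X_0$ by $d_i^{-1}$ converges to the tangent flow at $X_0$ --- the round shrinking cylinder --- with the rescaled $Y_i$ converging to a regular point of it at which $H=0$, which is absurd. Otherwise $R(Y_i)\to 0$, and rescaling $\mathcal M$ about $Y_i$ by $R(Y_i)^{-1}$ produces flows converging to a limit flow $\hat{\mathcal M}$ at $X_0$; by Huisken's monotonicity and upper semicontinuity of the Gaussian density at $X_0$, every density ratio of $\hat{\mathcal M}$ is at most $\Theta_{X_0}=\sqrt{2\pi/e}$, so $\hat{\mathcal M}$ is an ancient low entropy flow, hence by Theorem~\ref{thm_ancient_low_entropy} a plane, sphere, cylinder, bowl soliton, or ancient oval. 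A plane is excluded because the regularity scale of the origin of the rescaled flows equals $1$ by construction, whereas it would tend to $\infty$ if the limit were flat; the four remaining models all have $H>0$ everywhere, contradicting $H(0)=0$ at the regular point $0\in\hat{\mathcal M}$.

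Third, I would propagate positivity of $H$. For $t\in(t_0-\delta,t_0)$ the cylindrical piece of $\mathcal M$ near $X_0$ is connected, entirely regular, and has $H>0$; since the singular set in $P(X_0,\delta)$ has parabolic dimension at most one it cannot locally disconnect the regular set, so $H$ has a well-defined sign on each connected component of $\mathrm{Reg}(\mathcal M)\cap P(X_0,\delta)$, equal to $+$ on the component reaching $X_0$. To see that $H>0$ at \emph{every} regular point near $X_0$ I would run a continuity-in-time argument: let $\mathcal G$ be the set of $\tau$ such that $H>0$ at every regular point of $M_t\cap B(x_0,\delta)$ for all $t\le\tau$; then $\mathcal G$ contains $(t_0-\delta,t_0)$, is relatively open, and --- applying the contradiction argument of the previous step now at each spherical or cylindrical singular point of $\mathcal M$ in $P(X_0,\delta)$, which forces the flow to be mean convex at small scales around it --- is also relatively closed. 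Hence, after a final shrinking of $\delta$, $H>0$ at every regular point of $\mathcal M\cap P(X_0,\delta)$.

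Finally, I would convert ``$H>0$ at regular points, benign singular set'' into the inclusion \eqref{mean_convex_conc}. At regular points $H>0$ says precisely that the outer flow moves strictly into $K_t$, which one upgrades to the inclusion $K_{t_2}\cap B(x_0,\varepsilon)\subseteq K_{t_1}\setminus M_{t_1}$ (strict inward motion) by inserting slowly moving barriers and using the avoidance principle; at the (spherical or cylindrical) singular points one argues exactly as in Hershkovits--White \cite{HershkovitsWhite} and White \cite{White_size}, using the smallness of the singular set and the fact that the flow near such a singularity looks like a shrinking sphere or cylinder, to see that the motion is strictly inward there as well. This gives \eqref{mean_convex_conc} for some $\varepsilon=\varepsilon(X_0)>0$, and the statement for $M'$ and $K'$ follows by running the identical argument for the inner flow. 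The steps I expect to be genuinely delicate are, in Step~1, arranging that the support of the constructed Brakke flow is exactly the outer flow $M_t$ (so that one is really proving something about $M_t$ rather than a thinner or fatter set), and, in Steps~3 and~4, the continuity/propagation bookkeeping across singular times and the passage to set-theoretic inclusions at singular points; by contrast Step~2 is short once Theorem~\ref{thm_ancient_low_entropy} and Corollary~\ref{cor_convex} are available, since the classification does the analytic heavy lifting.
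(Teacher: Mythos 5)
Your proposal follows essentially the same route as the paper's proof: construct a unit-regular cyclic integral Brakke flow supported on the outer flow with only multiplicity-one spherical/cylindrical singularities near $X_0$ (the paper's Proposition \ref{basic_reg}), rule out $H=0$ at regular points by blowing up and invoking Theorem \ref{thm_ancient_low_entropy} together with the regularity-scale normalization to exclude the plane, propagate the sign of $H$ by a continuity-in-time argument on space-time connected components (the paper's Lemma \ref{fixed_sign_smooth}), and convert $H>0$ into the inclusion \eqref{mean_convex_conc} by the nearest-point/first-time argument of Hershkovits--White. The one place your sketch is thinnest --- justifying relative closedness of the good time set, i.e.\ ruling out mergers of components of opposite sign and new components entering through the boundary, which the paper handles with a three-case analysis using the density bound $<2$ and the connectivity property (v) --- is exactly the step you flag as delicate, so I see no genuine gap.
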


We recall that given a closed embedded surface $M\subset\mathbb{R}^3$, we denote by $\{M_t\}_{t\geq 0}$ the outer flow. Observe that if $x\in M_t$ is a regular point, then there exists a $\delta>0$ such that $M_t\cap B(x,\delta)$ splits $B(x,\delta)$ into two connected components: one in $\mathrm{Int}(K_t)$ and the other in $\mathbb{R}^3\setminus K_t$.\\

Theorem \ref{Mean_convex_thm} will be proved after the following two auxiliary results.

\begin{proposition}\label{basic_reg}
Under the assumptions of Theorem \ref{Mean_convex_thm}, there exists a constant $\delta=\delta(X_0)>0$ and a unit-regular, cyclic, integral Brakke flow $\mathcal{M}=\{\mu_t\}_{t\geq t_0-\delta}$ whose support is $\{M_t\}_{t\geq t_0-\delta}$ such that 
\begin{enumerate}
\item[(i)] The tangent flow to $\mathcal M$ at $X_0$ is a multiplicity one cylinder. 
\item[(ii)]  The flow $\mathcal M$ has only multiplicity one cylindrical and spherical singularities in $\bar{B}(x_0,2\delta)\times [t_0-\delta,t_0+\delta]$. 
\item[(iii)]  $\mathcal M$ is smooth in $\bar{B}(x_0,2\delta)$ for a.e. $t\in [t_0-\delta,t_0+\delta]$ and is smooth outside of a set of Hausdorff dimension $1$ for every $t\in [t_0-\delta,t_0+\delta]$.
\item[(iv)]  $H\neq 0$ for every regular point of $\mathcal M$ in $\bar{B}(x_0,2\delta)\times [t_0-\delta, t_0+\delta]$.
\item[(v)] There exist $A<\infty$ and $c>0$ such that if $X=(x,t)$ is a point of $\mathcal M$ in $\bar{B}(x_0,2\delta)\times [t_0-\delta, t_0+\delta]$ with $R(X) \leq c$ then $\mathcal{M}$ is smooth and connected in $P(X,AR(X))$ and there is a point $X'=(x',t')\in \mathcal{M}\cap P(X,AR(x))$ with $R(X') \geq 2R(X)$ and with $|x'-x_0|\leq \max\{|x-x_0|-cR(X'),\delta/2\}$.
\end{enumerate} 
\end{proposition}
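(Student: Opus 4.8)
The plan is to establish the five assertions of Proposition \ref{basic_reg} in order, building the Brakke flow $\mathcal M$ first and then upgrading our control of it step by step, using the classification of ancient low entropy flows (Theorem \ref{thm_ancient_low_entropy}) as the crucial input for the no-vanishing-mean-curvature statements. First I would construct $\mathcal M$: by Ilmanen's compactness theorem one can produce a unit-regular, cyclic, integral Brakke flow whose support is the outer flow $\{M_t\}_{t\geq 0}$; restricting to a small time interval $[t_0-\delta,t_0+\delta]$, upper semicontinuity of Gaussian density together with assumption \eqref{tangent_assumption} forces the tangent flow at $X_0$ to be a multiplicity one cylinder, giving (i). For (ii), upper semicontinuity of density again confines all nearby tangent flows to have density below that of the cylinder plus a small amount, so by the Bernstein--Wang classification \cite{BW} and the partial regularity machinery from Section \ref{sec_coarse_properties} (Theorem \ref{partial_regularity} and Lemma \ref{lemma_cones}) every singularity in $\bar B(x_0,2\delta)\times[t_0-\delta,t_0+\delta]$ is a multiplicity one cylinder or sphere, after shrinking $\delta$. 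Assertion (iii) then follows from White's stratification theory: cylindrical singularities form a set of parabolic Hausdorff dimension at most one, spherical singularities occur at countably many times, and a.e. time-slice avoids the $1$-dimensional stratum entirely.

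The heart of the matter is (iv). The strategy is a blowup/contradiction argument exactly as indicated in the paper's outline: if there were a sequence of regular points $X_i=(x_i,t_i)\to X_0$-region with $H(X_i)=0$ and regularity scales $R(X_i)\to 0$, rescale $\mathcal M$ around $X_i$ by $R(X_i)^{-1}$ and pass to a subsequential limit. By the preservation of the integral, unit-regular, cyclic properties under Brakke limits (Section \ref{sec_Brakke_flows}) and the entropy bound coming from Huisken monotonicity applied to the cylindrical tangent flow at $X_0$, this limit is an ancient low entropy flow. By construction it has a regular point at the space-time origin with $H=0$; but Theorem \ref{thm_ancient_low_entropy} says the limit is a plane, sphere, cylinder, bowl, or oval, all of which have $H>0$ at every regular point (a plane would contradict $R(X_i)\to 0$ via unit regularity and the local regularity theorem). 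Contradiction. If instead $\inf R(X_i)>0$ along a sequence with $H\to 0$, one passes to a smooth limit flow, which near $X_0$ must still be a piece of the classified ancient flows by the same reasoning, again ruling out $H=0$; so (iv) holds after a final shrinking of $\delta$.

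For (v), I would use the rough neck structure from Section \ref{sec_necks_back_in_time}: there is a universal $N=N(\varepsilon)$ such that if $R(X)$ is small then $\mathcal M$ is $\varepsilon$-cylindrical (or $\varepsilon$-spherical) around $X$ at scale $\sim R(X)$, hence smooth and connected in a parabolic ball of controlled size $AR(X)$. The improvement $R(X')\geq 2R(X)$ with $X'$ moving toward $x_0$ (or into the region $|x'-x_0|\leq\delta/2$) is obtained by walking along the neck in the direction of increasing cross-sectional radius, as in Proposition \ref{cylind_growth} and Theorem \ref{curv_bound}: the regularity scale and cylindrical scale are comparable (Corollary \ref{R_growth}-type reasoning, valid here because the density bound near $X_0$ prevents spherical bubbling below a fixed scale), and the cylinder has radius shrinking toward the tip and growing away from it, so one can always find a point on the same component with twice the regularity scale a bounded number of neck-lengths away, positioned no farther from $x_0$.

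The main obstacle I expect is assertion (iv), and within it the bookkeeping needed to guarantee that the blowup limit is genuinely a full ancient low entropy flow rather than merely a local limit flow --- i.e.\ that it is ancient and defined on all of $\mathbb R^3$ with the right entropy bound. This requires combining Huisken monotonicity (to pin the entropy below $\sqrt{2\pi/e}+\delta$, or below $\Upsilon$, using the cylindrical tangent flow at $X_0$ and the fact that base points converge to $X_0$), Ilmanen's compactness, and the preservation of unit-regularity/cyclicity under limits; one must also separately handle the dichotomy $R(X_i)\to 0$ versus $R(X_i)$ bounded below, and in the latter case ensure the smooth limit still falls under Theorem \ref{thm_ancient_low_entropy}. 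A secondary technical point is making the constants in (v) uniform, which forces the shrinking of $\delta$ in (ii)--(v) to be carried out in the correct order so that each step's hypotheses remain available for the next.
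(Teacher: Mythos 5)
Your proposal follows essentially the same route as the paper: construct a unit-regular, cyclic, integral Brakke flow supported on the outer flow, obtain (i)--(iii) from density monotonicity/upper semicontinuity, Bernstein--Wang, and stratification, prove (iv) by the rescaling-and-contradiction argument against Theorem \ref{thm_ancient_low_entropy} (with the entropy bound for the blowup coming from Huisken monotonicity near the cylindrical singularity), and deduce (v) from the structure of the classified flows near points of small regularity scale. The only places the paper is more careful are the construction itself --- to ensure the \emph{Brakke flow}, and not merely its support, has a multiplicity one tangent at $X_0$, it restarts the Hershkovits--White construction at a time $t_*$ where the Gaussian density ratio of $\mathcal{H}^2\lfloor M_{t_*}$ at $X_0$ is below $2$, rather than appealing to upper semicontinuity alone --- and item (v), which it proves by inspecting the four non-planar classified flows and running a compactness argument rather than by walking along necks; both are refinements of, not departures from, your approach.
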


We remark that condition (v) will be used below to show -- via a connectedness argument -- that the mean curvature on the regular set does not change sign in a suitable space-time neighborhood of $X_0$.

\begin{proof}
It follows from \cite[Thm. B3]{HershkovitsWhite} that there exists an outer Brakke flow starting from $M$, whose support is $\{M_t\}_{t\geq 0}$. In particular, this, together with monotonicity, implies that $\mathrm{Ent}[\mathcal{H}^n\lfloor M_t]$ is uniformly bounded. Thus, \eqref{tangent_assumption} implies that there exists $t_{\ast}<t_0$ such that  
\begin{equation}\label{low_dens}
\frac{1}{(4\pi(t_0-t_{\ast}))^{n/2}}\int \exp\Big(\frac{-|x-x_0|^2}{4(t_0-t_{\ast})}\Big)d\mathcal{H}^n\lfloor M_{t_{\ast}}<2. 
\end{equation}
Therefore, applying \cite[Thm. B3]{HershkovitsWhite} once more, this time with the initial time $t_{\ast}$, we get a unit-regular, cyclic, integral Brakke flow $\mathcal{M}:=\{\mu_t\}_{t\geq t_{\ast}}$ whose support is $\{M_t\}_{t\geq t_\ast}$. Together with \eqref{tangent_assumption}  and \eqref{low_dens} it follows that $\mathcal M$ has a multiplicity $1$ tangent cylinder at $(x_0,t_0)$. This proves (i). \\

(ii) now follows from the upper semi-continuity of the density (see Section \ref{sec_prel_monotonicity}), the proof of Theorem \ref{partial_regularity} (partial regularity) and the gap result from Bernstein-Wang \cite{BW}.\\

(iii) follows from (ii) and standard stratification, see e.g. \cite{White_stratification}.\\

Suppose towards a contradiction that (iv) does not hold. Then there exists a sequence of smooth point $X_i=(x_i,t_i) \to X_0=(x_0,t_0)$ with  $H(X_i)=0$. Denoting by $r_i=R(X_i)$ the regularity scale, since $X_0$ is singular and $X_i\to X_0$, we have $r_i\rightarrow 0$.

Let $\mathcal{M}^i$ be the flow that is obtained from $\mathcal M$ by shifting $X_i$ to the origin and parabolically rescaling by $1/r_i$, and pass (using Ilmanen's compactness theorem \cite{Ilmanen_book}) to a Brakke flow limit $\mathcal{M}^\infty$.\\

By construction as a blowup limit, $\mathcal M^\infty$ is an ancient, integral Brakke flow. By the local regularity theorem for the mean curvature flow and \cite[Thm. B5]{HershkovitsWhite}, $\mathcal{M}^{\infty}$ is unit regular. By a result of White \cite{White_Currents}, the limit $\mathcal M^\infty$ is cyclic. Furthermore, we have
\begin{equation}\label{entropy_bound_to_show}
\mathrm{Ent}[\mathcal M^\infty]\leq \mathrm{Ent}[ S^1\times\mathbb{R}]. 
\end{equation}
This entropy bound follows from the same reasoning as in the proof of \cite[Thm. 1]{Hershkovits_translators}. For the sake of completeness, we include it here as well. Since the $\mathcal{M}^i$ have uniformly bounded entropy (by monotonicity), and as $\mathcal{M}^i\rightarrow \mathcal{M}^\infty$, it follows from the definition of the entropy of $\mathcal{M}^\infty$  that there exist a sequences of points $(x'_k,t'_k)$ in the original space-time with  $(x'_k,t'_k)\rightarrow (x_0,t_0)$ and scales $r'_k\rightarrow 0$, such that for every $\varepsilon>0$, for $k$ large enough,   
\begin{equation}
\frac{1}{4\pi {r'_k}^2}\int\exp(-|x-x'_k|^2/4{r'_k}^2)d\mu_{t'_k-{r'_k}^2} \geq \mathrm{Ent}[\mathcal{M}^\infty]-\varepsilon.
\end{equation}
On the other hand, since $(x_0,t_0)$ is a cylindrical singularity, there exist $r>0$ such that for every point in $(p,t)\in B(x_0,r)\times [t_0-r,t_0+r]$ we have
\begin{equation}
\frac{1}{4\pi r^2}\int\exp(-|x-p|^2/4r^2)d\mu_{t-r^2} \leq \mathrm{Ent}[\mathbb{S}^1\times\mathbb{R}]+\varepsilon.
\end{equation}
By Huisken's monotonicity formula and by the arbitrariness of $\varepsilon$, this implies \eqref{entropy_bound_to_show}. Hence, $\mathcal M^\infty$ is an ancient low entropy flow.\\

Now, by Theorem \ref{thm_ancient_low_entropy} (classification of ancient low entropy flows) the limit $\mathcal{M}^\infty$ is either a flat plane, round shrinking sphere, round shrinking cylinder, a bowl soliton, or an ancient oval. If the limit is non-flat, then for $i$ large enough that contradicts $H(X_i)=0$. If the limit is a flat plane, then we obtain a contradiction with the fact $X_i$ has regularity scale $r_i\to 0$, by the local regularity theorem. This proves (iv).\\

For (v), observe by inspection of the four non-planar ancient low entropy flows that there exist  $A,C<\infty$ such that if $X=(x,t)$ is a point on such a flow $\mathcal{M}^{\infty}$, then  for every unit vector $v$, there exists a point in $X'\in P(X,AR(X))\cap \{y:(y-x)\cdot v\geq R(X)\}\cap \mathcal{M}^{\infty}$ with 
\begin{equation}
4R(X) \leq R(X')\leq CR(X).
\end{equation}
Using this, (v) follows by a contradiction argument similarly as in the proof of (iv). This finishes the proof of the proposition.
\end{proof}

By wiggling with the constant $\delta$ we can assume that, in addition to (i)-(v) of Proposition \ref{basic_reg} we also have:
\begin{enumerate}
\item[(vi)] $t_0-\delta$ is a smooth time for the flow in $\bar{B}(x_0,2\delta)$, and $M_{t_0-\delta}\cap \bar{B}(x_0,2\delta)$ is $\varepsilon_0$-cylindrical, where $\varepsilon_0>0$ is fixed. 
\end{enumerate}

\bigskip

Let $\mathcal{N}$ the unique (space-time) connected component of $\mathcal{M}\cap (\bar{B}(x_0,\delta)\times [t_0-\delta,t_0+\delta])$ which contains the point $X_0=(x_0,t_0)$.\footnote{In this section $B(x,r)$ always denotes an open ball, and $\bar{B}(x,r)$ a closed ball.} In particular, there exist $\varepsilon>0$ such that  
\begin{equation}\label{N_neigh}
\mathcal{M}\cap \Big(B(x_0,2\varepsilon)\times [t_0-\varepsilon,t_0+\varepsilon]\Big)\subseteq \mathcal{N}.
\end{equation}
We will show that $B(x_0,\varepsilon)\times [t_0-\varepsilon,t_0+\varepsilon]$ is a mean convex neighborhood. 

For any smooth point $X=(x,t)\in\mathcal{M}\cap(\bar{B}(x_0,2\delta)\times [t_0-\delta,t_0+\delta])$, if $\mathbf{H}(X)$ points into $K_t$ we say that $H(X)>0$, and if $\mathbf{H}(X)$ points outside of $K_t$, we say that $H(X)<0$. 

\begin{lemma}\label{fixed_sign_smooth}
$H>0$ for all regular points in $\mathcal{N}$.
\end{lemma}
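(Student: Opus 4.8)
The plan is to run a continuity-in-time argument on the time slices of $\mathcal N$, using that $H$ has a fixed sign at the bottom time $t_0-\delta$ and that the singular set is too small to allow a sign change. First I would fix attention on the slab $\bar B(x_0,\delta)\times[t_0-\delta,t_0+\delta]$ and recall from (vi) that at the initial time $t_0-\delta$ the slice $M_{t_0-\delta}\cap\bar B(x_0,2\delta)$ is $\varepsilon_0$-cylindrical, hence smooth and (after possibly flipping the global orientation once and for all) mean convex there; so $H>0$ on the part of $\mathcal N$ sitting at time $t_0-\delta$. Define
\begin{equation}
I=\{t\in[t_0-\delta,t_0+\delta]\ :\ H>0 \text{ at every regular point of }\mathcal N \text{ in } \bar B(x_0,\delta)\times[t_0-\delta,t]\}.
\end{equation}
By (vi) the time $t_0-\delta$ lies in $I$, so $I\neq\emptyset$; I want to show $I=[t_0-\delta,t_0+\delta]$, which gives the lemma. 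The set $I$ is clearly an interval of the form $[t_0-\delta,T^\ast)$ or $[t_0-\delta,T^\ast]$, so the two things to check are that $I$ is closed (i.e. $T^\ast\in I$) and that it is open to the right (i.e. if $T^\ast\in I$ and $T^\ast<t_0+\delta$ then $T^\ast+\eta\in I$ for small $\eta$).

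For closedness: suppose $H>0$ on all regular points of $\mathcal N$ strictly before $T^\ast$ but $H(X_1)=0$ (or $H(X_1)<0$) at some regular point $X_1=(x_1,T^\ast)\in\mathcal N$. On a small smooth parabolic neighborhood of $X_1$ inside $\mathcal N$, the mean curvature satisfies the strict parabolic maximum principle (the evolution equation $\partial_t H=\Delta H+|A|^2H$), and $H\geq 0$ on the part of this neighborhood with $t<T^\ast$ by hypothesis; the strong maximum principle then forces $H\equiv 0$ on a backward neighborhood, hence (by analyticity / dimension reduction, or simply by connectedness of the smooth part and unique continuation) $H\equiv 0$ on the connected smooth piece, contradicting that $\mathcal N$ is part of an ancient low entropy flow with a cylindrical tangent at $X_0$ and therefore, by the classification, eventually has strictly positive $H$ somewhere in $\mathcal N$ near $X_0$. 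More robustly: $H\equiv 0$ on an open piece of $\mathcal N$ would mean that piece is a minimal surface evolving statically; pushing this to a blowup limit as in the proof of (iv) of Proposition \ref{basic_reg} yields a nonflat ancient low entropy flow with a point of vanishing mean curvature, contradicting Corollary \ref{cor_convex} (convexity) and Theorem \ref{thm_ancient_low_entropy}. Hence no such $X_1$ exists and $T^\ast\in I$.

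For openness to the right: here the point is that the singular set of $\mathcal M$ in the slab has parabolic Hausdorff dimension $\le 1$ by (iii), so for times just past $T^\ast$ the regular part of $\mathcal N$ remains connected to the regular part at times just before $T^\ast$ through a path avoiding the singular set; since $H>0$ along such a path at the earlier end, and $H$ cannot vanish along it (again by the strong maximum principle on the smooth locus, or by the blowup argument against the classification), we get $H>0$ on the regular part of $\mathcal N$ in $\bar B(x_0,\delta)\times[t_0-\delta,T^\ast+\eta]$. The key technical input making the path-connectedness usable is property (v) of Proposition \ref{basic_reg}: near any regular point of small regularity scale one can jump to a point of at least twice the regularity scale in a controlled smooth neighborhood, which lets one "climb out" of high-curvature regions while staying in the regular set and staying in $\mathcal N$; iterating this reaches a region of definite regularity scale where the sign of $H$ is already known. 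I expect the main obstacle to be exactly this connectivity bookkeeping: one must ensure that the chain of controlled jumps stays inside the fixed ball $\bar B(x_0,\delta)$ and inside the single component $\mathcal N$, and does not get stranded at a bad time; this is where the quantitative clauses (iii)--(v) (dimension of the singular set, the climbing estimate, and smoothness in controlled neighborhoods) are used in an essential way, together with the fact — from Theorem \ref{curv_bound} and the structure of the five model flows — that $H$ is strictly positive on all of the nonflat models. Once these two topological steps are in place, $I=[t_0-\delta,t_0+\delta]$ and in particular $H>0$ on all regular points of $\mathcal N$, which is the assertion of Lemma \ref{fixed_sign_smooth}.
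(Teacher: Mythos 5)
There is a genuine gap: your argument only propagates $H>0$ to the part of $\mathcal{N}$ that can be connected \emph{backward in time, inside $\bar B(x_0,\delta)$,} to the cylindrical slice at $t_0-\delta$, but $\mathcal{N}$ need not consist only of such points. Since $\mathcal{N}$ is a space-time connected component over the \emph{whole} interval $[t_0-\delta,t_0+\delta]$, it can contain pieces that enter the ball through the spatial boundary $\partial B(x_0,\delta)$ at some intermediate time $s_0$ and only later merge with the main body. For the regular points of such a piece at times just after $s_0$ there is no path in $\mathcal{N}\cap(\bar B(x_0,\delta)\times[t_0-\delta,s_0+\eta])$ reaching any earlier time, so your openness step (and the climbing argument via Proposition \ref{basic_reg}(v)) says nothing about the sign of $H$ there. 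This is exactly why the paper's proof is organized around sign \emph{constancy} on each component of $\mathcal{A}^s$ rather than direct positivity, and why the bulk of that proof is the case analysis at $s_0=\sup I$: a newly entered boundary component carries a constant (but a priori arbitrary) sign, and one must rule out that it merges with the main component while carrying the opposite sign. That exclusion (Cases 2 and 3) is not a maximum-principle statement; it hinges on the fact that two components with opposite signs meeting at a singular point would, after tracking their spatial components in the larger ball $\bar B(x_0,2\delta)$, produce two disjoint unit-multiplicity Brakke flows reaching a common point, contradicting the Gaussian density bound $\Theta<2$ near $X_0$. None of this appears in your proposal, and without it the lemma is not proved.

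Two smaller remarks. First, your closedness step via the strong maximum principle for $\partial_t H=\Delta H+|A|^2H$ is workable but unnecessary: Proposition \ref{basic_reg}(iv) already gives $H\neq 0$ at every regular point in the slab, so positivity is preserved along any path in the regular set by continuity alone; also, the contradiction you reach should be phrased against property (iv) (or the blowup/classification argument proving it), not against ``$\mathcal{N}$ being an ancient low entropy flow,'' which it is not. Second, you cannot ``flip the orientation once and for all'': the sign convention ($\mathbf{H}$ pointing into $K_t$) is fixed by the statement of Theorem \ref{Mean_convex_thm}, and the positivity at time $t_0-\delta$ comes from property (vi) together with the fact that the tangent flow at $X_0$ is a shrinking cylinder for the outer flow $K_t$.
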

\begin{proof}
For every time $s\in [t_0-\delta,t_0+\delta]$, let $\mathcal{A}^s$ be the set of (space-time) connected components of $\mathcal{N}\cap (\mathbb{R}^3\times [t_0-\delta,s])$, and let $ \mathcal{A}^s_{-}$ be the set of space time connected components of  $\mathcal{N}\cap (\mathbb{R}^3\times [t_0-\delta,s))$. Set
\begin{equation}
\begin{aligned}
I:= \{s\in [t_0-\delta,t_0+\delta]\;|\; &\textrm{For every }\mathcal{F}\in \mathcal{A}^s\; \textrm{and }X_1,X_2\in \mathrm{Reg}\cap \mathcal{F},\\
& \mathrm{Sign}(H(X_1))=\mathrm{Sign}(H(X_2))\}.
\end{aligned}
\end{equation}

Note that $t_0-\delta \in I$ by property (vi). Also note that if $s\notin I$, then $s'\notin I$ for every $s'>s$. Thus $I$ is an interval containing $t_0-\delta$. The assertion of the lemma is therefore  equivalent to the assertion that $t_0+\delta\in I$. Suppose towards a contradiction that $t_0+\delta\notin I$, and set
\begin{equation}
s_0:=\sup I.
\end{equation} 

\begin{claim}\label{claim_not_in_I}
$s_0\notin I$.
\end{claim}
\begin{proof}[{Proof of Claim \ref{claim_not_in_I}}]
If $s_0=t_0+\delta$ this is clear by assumption. Assume that $s_0<t_0+\delta$ were in $I$. Since $\mathcal{N}\cap \bar{B}(x_0,\delta)$ is compact, each two connected components of $\mathcal{F}_1\neq \mathcal{F}_2$ in $\mathcal{A}^{s_0}$ are a positive distance apart. Moreover, there will be no new connected components entering $\bar{B}(x_0,\delta)$ for some time (again, since supports of Brakke flows are closed). Thus, there exists an $s>s_0$ such that each connected components $\mathcal{F}^{s}\in A^{s}$ has a unique connected component $\mathcal{F}^{s_0}\in A^{s_0}$ such that $\mathcal{F}^{s_0} \subseteq \mathcal{F}^s$. 

We claim that if $s-s_0$ is sufficiently small, every smooth point in $\mathcal{F}^s$ can be connected by a path of smooth points in $\bar{B}(x_0,\delta)$ to a point in $\mathcal{F}^{s_0}$. Indeed, by repeated use of Proposition \ref{basic_reg} (v), any regular point $X=(x,t)\in \mathcal{F}^s$, can be connected via regular points to some  $(x',t')=X'\in \mathcal{F}^s$ with $R(X')\geq c$ and with $|x'-x_0|\leq \delta-c^2$. Thus, is $s-s_0$ is sufficiently small, $X'$ can be connected via smooth points to a point in $\mathcal{F}^{s_0}$ (this argument is similar to \cite[Propsition 3.5]{Kleiner-lott-weak-Ricci}).

As $H\neq 0$, this implies that $H$ does not change sign on $\mathcal{F}^s$.  This proves the claim.
\end{proof}

Thus, if $t_0+\delta\notin I$ then $I^c=[s_0,t_0+\delta]$ for some $s_0\geq t_0-\delta$.  As $s_0\in I^c$, there exists some connected component $\mathcal{F}\in \mathcal{A}^{s_0}$ such that $H$ does not have a fixed sign over regular points in $\mathcal{F}$.  Since for each $s<s_0$, we have $s\in I$, there are three potential ways in which this can happen for the first time at time $s_0$. We will rule them all out.

\bigskip

\noindent\emph{1. Appearance of a new connected component}: $\mathcal{F} \subseteq \partial B(x_0,\delta) \times \{s_0\}$, i.e. a new connected component enters through the boundary. In this case, all points of $\mathcal{F}$ are regular, as the tangent flows are contained in half-spaces. Since $\mathcal{F}$ is connected (by definition), and as $H\neq 0$, this implies that the sign of $H$ is constant along $\mathcal{F}$. Thus, this case is excluded.

\bigskip

\noindent\emph{2. Merger of two connected components}: There exist two distinct space-time connected components $\mathcal{F}_{\pm}\in \mathcal{A}^{s_0}_{-}$ such that $H>0$ (resp. $H<0$) on $\mathcal{F}_{+}$ (resp. $\mathcal{F}_{-}$) and such that $\overline{\mathcal{F}_+}\cap \overline{\mathcal{F}_{-}}\neq \emptyset$. Since $H\neq 0$ at regular points, the points of $\overline{\mathcal{F}_+}\cap \overline{\mathcal{F}_{-}}$ are all singular (in particular, $s_0$ is a singular time). Moreover, by avoidance, $\overline{\mathcal{F}_+}\cap \overline{\mathcal{F}_{-}}\subseteq \partial B(x_0,\delta)\times \{s_0\}.$

\begin{claim}\label{claim_case_two}
There exists a time $t<s_0$ such that $M_t\cap B(x_0,2\delta)$ is smooth and some \textit{spatial} connected component of $M_t\cap \bar{B}(x_0,2\delta)$ contains points both from $\mathcal{F}_{+}$ and from $\mathcal{F}_{-}$.
\end{claim}  

\begin{proof}[{Proof of Claim \ref{claim_case_two}}]
Consider the spatial connected components of such $\mathcal{F}_{\pm}$, $\mathcal{F}_{\pm}(t)$, and observe how they extend to $\bar{B}(x_0,2\delta)$. If they never intersect on smooth times, then via a density argument (or, indeed, by the maximum principle), we see that, at singular times as well, if $(x',t')$ is a point reached by both smooth flows then $x'\in \partial B(x_0,2\delta)$. We thus obtain two disjoint  unit multiplicity Brakke flows in $B(x_0,3\delta/2)$, which have a point in  $\overline{\mathcal{F}_+}\cap \overline{\mathcal{F}_{-}}$ as a joint point they reach at time $s_0$, but this is again impossible since the Huisken density near $(x_0,t_0)$ is less than $2$. 
\end{proof}
As $H\neq 0$ on $M_t\cap \bar{B}(x_0,2\delta)$, this gives a contradiction to the definitions of  $\mathcal{F}_{\pm}$. Indeed, at time $t$, the signs of $H$ on $\mathcal{F}_{\pm}$ have to be the same. Thus, this case is excluded as well.

\bigskip

\noindent\emph{3. Merger of an existing component with a new boundary component}: In this case, we may assume without loss of generality that   $\mathcal{F}$ contains $\mathcal{F}_{+}\cup \mathcal{F}_{\partial}$, where on $\mathcal{F}_{+}\in \mathcal{A}^{s_0}_{-}$ we have $H>0$ on regular points and where $\mathcal{F}_{\partial}\subseteq \mathcal{F}\cap (\partial B(x_0,\delta) \times \{s_0\})$ is connected, intersects $\overline{\mathcal{F}_{+}}$, and contains at least one point which is not in 
\begin{equation}
\overline{\mathcal{M}\cap (\bar{B}(x_0,\delta)\times [t_0-\delta,s_0))},
\end{equation}
and all such points have $H<0$ (note that, as in the first case, such points are always regular). This case is excluded similarly to Case 2 above: Taking such a point, and looking at the connected component of the point reaching it at a slightly earlier smooth time $t$ in a slightly bigger ball, we see it intersects with the continuation of $\mathcal{F}_{+}(t)$ in that ball, which cannot be.

\bigskip

We have thus excluded the existence of such an $\mathcal{F}$, so $I^c=\emptyset$. This concludes the proof of the lemma.
\end{proof}

\begin{proof}[Proof of Theorem \ref{Mean_convex_thm}]
Having established Proposition \ref{basic_reg} and Lemma \ref{fixed_sign_smooth}, the final step of the proof of the mean convex neighborhood theorem is now similar to the final step of the proof of Theorem 3.5 in \cite{HershkovitsWhite}. For convenience of the reader, we include the argument here as well. 

\begin{claim}\label{weak-containment}
If $t_0-\varepsilon \leq t_1 < t_2 \leq t_0+\varepsilon$, then
\begin{equation}
    K_{t_2}\cap B(x_0,\varepsilon) \subseteq K_{t_1}.
\end{equation}
\end{claim}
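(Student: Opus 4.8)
The claim is essentially the statement that the outer flow moves monotonically inward in the small parabolic neighborhood established above, and it should follow directly from the sign information in Lemma \ref{fixed_sign_smooth} together with the avoidance principle for the level set flow. The plan is to argue by contradiction: suppose there is some $x \in K_{t_2}\cap B(x_0,\varepsilon)$ with $x \notin K_{t_1}$. Since $\{K_t\}$ is the space-time track of a level set flow and $K_{t_1}$ is closed, there is an open ball $B(x,r)$ disjoint from $K_{t_1}$. Consider the smallest radius $\bar r$ at which a shrinking ball barrier centered appropriately (or, more robustly, a family of suitably placed large spheres) first touches $\mathcal{K}$; by the avoidance principle a sphere cannot cross the level set flow, so any point of $K_{t_2}$ outside $K_{t_1}$ must be connected backward to a point on $M_{t_1}$ or to the boundary of the neighborhood. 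This forces the existence of a regular point $X=(y,t)\in \mathcal{N}$, with $t\in (t_1,t_2)$, at which the surface moves \emph{outward}, i.e. $H(X)<0$, directly contradicting Lemma \ref{fixed_sign_smooth}.

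More concretely, I would run the argument the way it is done in \cite[Theorem 3.5]{HershkovitsWhite}. First use property (vi) and shrinking $\varepsilon$ so that $M_{t_0-\varepsilon}\cap \bar B(x_0,2\varepsilon)$ is a graph over a cylinder and $B(x_0,\varepsilon)\times\{t_0-\varepsilon\}$ already satisfies the desired containment trivially (the claim holds at $t_1=t_0-\varepsilon$ because $K_{t_0-\varepsilon}\cap B(x_0,2\varepsilon)$ is on one definite side). Then consider the function $t\mapsto \sup\{\,d(y,K_{t_1}) : y\in K_t\cap \overline{B(x_0,\varepsilon)}\,\}$ (or a localized variant using a fixed-time exhaustion), and show it cannot become positive without a first time $t^\ast$ at which $M_{t^\ast}$ touches the boundary of the region swept by $K_{t_1}$ from inside, at a \emph{regular} point of $\mathcal{N}$ — here one uses Proposition \ref{basic_reg}(iii) to guarantee that the touching point can be taken regular (the singular set has dimension $1$ and generic slices miss it, or one perturbs the barrier), and Proposition \ref{basic_reg}(v) to connect it inside $\mathcal{N}$. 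At such a regular touching point, the strong maximum principle / Hopf-type comparison forces $H<0$ there, contradicting Lemma \ref{fixed_sign_smooth}. The localization to $B(x_0,\varepsilon)$ (rather than $B(x_0,2\delta)$) is exactly what \eqref{N_neigh} is for: it ensures any such touching point, if it occurs within the inner ball, lies in $\mathcal{N}$ and not in some spurious connected component entering through the boundary.

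The main obstacle, I expect, is handling the singular times and singular points cleanly: the comparison argument wants a classical maximum principle, but $M_t$ is only smooth away from a dimension-$\le 1$ set and only at a.e.\ time is it smooth in $\bar B(x_0,2\delta)$. The way around this is the same device used throughout this section — pass to the level set flow formulation and use the avoidance principle, which holds with no regularity hypotheses (Section \ref{sec_Brakke_flows}), to get a \emph{weak} containment of closed sets; then only invoke smoothness at the single touching point, which can be arranged to be regular by Proposition \ref{basic_reg}(iii) and (v). A secondary technical point is ensuring the barrier one uses actually detects a point of $\mathcal{N}$ and not a far-away sheet of $M_t$; this is controlled by choosing $\varepsilon$ small relative to $\delta$ via \eqref{N_neigh} and by the cap-size / fine-neck structure, so that within $B(x_0,2\varepsilon)$ the flow is a single graphical piece over the cylinder near the singular time. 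Once these two points are dispatched, the contradiction with Lemma \ref{fixed_sign_smooth} closes the claim, and the strict containment $K_{t_2}\cap B(x_0,\varepsilon)\subseteq K_{t_1}\setminus M_{t_1}$ of Theorem \ref{Mean_convex_thm} follows by upgrading the weak containment using $H>0$ (strict inward motion) exactly as in \cite{HershkovitsWhite}.
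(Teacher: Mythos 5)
Your overall strategy is the paper's: argue by contradiction, pick $y\in K_{t_2}\cap B(x_0,\varepsilon)$ with $y\notin K_{t_1}$, fix $\rho<\mathrm{dist}(y,K_{t_1}\cup\partial B(x_0,\varepsilon))$, locate the first time $t\ge t_1$ at which $\mathrm{dist}(y,K_t)=\rho$, show the touching point is regular, and then use $H>0$ from Lemma \ref{fixed_sign_smooth} to conclude that $\mathrm{dist}(y,K_s)<\rho$ for $s<t$ close to $t$, contradicting the choice of $t$. The place where your write-up has a genuine gap is the regularity of the touching point. You propose to get it from Proposition \ref{basic_reg}(iii) by genericity (``generic slices miss'' the one-dimensional singular set) or by ``perturbing the barrier.'' Neither works as stated: the touching point is the nearest point $p\in K_t$ to $y$ at the first touching time, so it is completely determined by the geometry and cannot be moved off a prescribed small set; and a perturbed barrier need not produce a first touching at a smooth point (or inside $B(x_0,\varepsilon)$ at all). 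Genericity arguments of this kind give information about \emph{almost every} slice or \emph{almost every} center, not about the specific extremal point your contradiction hinges on.

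The missing idea, which is the one the paper uses and which is essentially forced here, is this: since $p$ realizes $\mathrm{dist}(y,K_t)=\rho$, the set $K_t$ is contained in the half-space $\{x:\langle x-p,\,y-p\rangle\le 0\}$, hence every tangent flow at $(p,t)$ lies in a closed half-space with the base point on the boundary hyperplane. By Proposition \ref{basic_reg}(ii) the tangent flows at points of $\mathcal{M}$ near $X_0$ are multiplicity-one planes, spheres, or cylinders, and of these only the static plane fits in such a half-space; so the Gaussian density at $(p,t)$ is $1$ and unit-regularity makes $(p,t)$ a regular point. With this substituted for the genericity step, your argument closes exactly as in the paper: $H(p,t)>0$ means the surface moves into $K_t$ at $p$, so at slightly earlier times it was closer to $y$, contradicting that $t$ is the first time the distance drops to $\rho$. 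The rest of your outline (the role of \eqref{N_neigh} in confining the touching to $\mathcal{N}$, the avoidance principle as the regularity-free containment tool, and the final upgrade to $K_{t_2}\cap B(x_0,\varepsilon)\subseteq K_{t_1}\setminus M_{t_1}$ using $H>0$ and Proposition \ref{basic_reg}(ii) at singular points) matches the paper.
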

\begin{proof}[{Proof of Claim \ref{weak-containment}}]
Suppose to the contrary that there is point $y\in K_{t_2}\cap B(x_0,\varepsilon)$ that is not in $K_{t_1}$ for some $t_1<t_2$ with $t_1,t_2\in [t_0-\varepsilon,t_0+\varepsilon]$.
Let
\begin{equation}
  0< \rho < \textrm{dist}(y, K_{t_1} \cup \partial B(x_0,\varepsilon)).
\end{equation}
Let $t\in (t_1,t_0)$ be the first time $\ge t_1$ such that 
\begin{equation}
  \textrm{dist}(y, K_t) = \rho.
\end{equation}
Let $p\in K_t$ be a point such that $\textrm{dist}(y,p)=\textrm{dist}(y,K_t)$.
Note that the tangent flow at $(p,t)$ lies in a half-space (more specifically the half-space $\{x: x\cdot (y-p) \le 0\}$).
Hence $(p,t)$ is a regular point of the flow.
By the previous lemma, the mean curvature at $(p,t)$ is positive and so for $s<t$ very close to $t$, we have $\textrm{dist}(y, K_s) < \rho$; a contradiction. This proves the claim.
\end{proof}

Continuing the proof of the theorem, it only remains to show that for $t_1<t_2$ with $t_1,t_2\in [t_0-\varepsilon,t_0+\varepsilon]$, if 
\begin{equation}
   x\in B(x_0,\varepsilon)\cap K_{t_2},
\end{equation}
then $x$ is in the interior of $K_{t_1}$.
If $x$ is the interior of $K_{t_2}$, then by Claim~\ref{weak-containment} it is in the interior of $K_{t_1}$.
Thus we may assume that $x$ is in the boundary of $K_{t_2}$.
For $s \in [t_1,t_2)$ sufficiently close to $t_2$, the point $x$ is in the interior of $K_s$.
If $(x,t_2)$ is a regular point, this is because the mean curvature is positive.  If $(x,t_2)$ is a singular point, this is true by (ii) of Proposition \ref{basic_reg}. 
Since $x$ is in the interior of $K_s$, and since $K_s\subseteq K_{t_1}$, we conclude that $x$ is in the interior of $K_{t_1}$.  This completes the proof of the mean convex neighborhood theorem.
\end{proof}

\bigskip

\subsection{Proof of the uniqueness conjectures for weak flows}

In this final section, we prove the uniqueness results for mean curvature flow through singularities, namely Theorem \ref{thm_nonfattening_intro} (nonfattening) and Theorem \ref{thm_uniqueness_two_spheres} (evolution of two-spheres).

\begin{proof}[{Proof of Theorem \ref{thm_nonfattening_intro}}]
Observe that if $T\leq T_{\mathrm{disc}}$ then, by definition, the outer flow $\{M_t\}_{t\in [0,T]}$ agrees with the level set flow of $M$ and the inner flow of $M$. In particular, if $(x_0,T)$ is a multiplicity one cylindrical (resp. spherical) singularity of $\{M_t\}$ then \textit{either} $\mathcal{K}_{X,\lambda}$, \textit{or} $\mathcal{K}_{X,\lambda}'$ converges for $\lambda\to 0$ smoothly with multiplicity one to a round shrinking solid cylinder or a round shrinking solid ball (see also item (i) of Proposition \ref{basic_reg} and its proof). 

The result now follows from combining the main theorem of Hershkovits-White \cite{HershkovitsWhite}, which establishes that $T< T_{\mathrm{disc}}$ assuming the existence of mean convex neighborhoods a priori, and Theorem \ref{thm_mean_convex_nbd_intro}, which proves the existence of mean convex neighborhoods.
\end{proof}

\begin{proof}[{Proof of Theorem \ref{thm_uniqueness_two_spheres}}]
Let $M\subset\mathbb{R}^3$ be an embedded two-sphere, and let $\mathcal{M}=\{M_t\}$ be the outer flow starting at $M$. By a result of White \cite{White_topology}, whenever $M_t$ is smooth, then it has genus zero.

Suppose towards a contradiction that there is some singular point with a tangent flow that is not spherical or cylindrical. Let $T$ be the first time when this happens (more precisely, the infimum of times where this happens). Then $T\leq T_{\mathrm{disc}}$ (by Theorem \ref{thm_nonfattening_intro}) and for a.e. $t\in [0,T]$ the flow is smooth. Hence, we can run the argument from Ilmanen \cite{Ilmanen_monotonicity}, and get, assuming the multiplicity one conjecture, that at time $T$ at every singular point there exists a tangent flow that is smooth with multiplicity one.

Such a tangent flow, by smoothness and unit-regularity, cannot be a nontrivial static or quasistatic cone. Hence it is a properly embedded shrinker. Suppose towards a contradiction there are two loops in the shrinker $N$ that have nonvanishing intersection number modulo two. Then for some smooth time $t'<T$ we would see two loops in $M_{t'}$ with nonvanishing intersection number modulo two; a contradiction. Hence, by a result of Brendle \cite{Brendle_sphere} the shrinker $N$ is a sphere, cylinder or a plane. Thus, arguing as in the first paragraph of the proof of Theorem \ref{thm_nonfattening_intro}, by the mean convex neighborhood theorem (Theorem \ref{thm_mean_convex_nbd_intro})  and compactness, there is some $\delta>0$ such that the flow has only cylindrical and spherical singularities on $[0,T+\delta)$; this contradicts the definition of $T$. Hence, the flow $\mathcal M$ has only spherical and cylindrical singularities.
Thus, by Theorem \ref{thm_nonfattening_intro} (nonfattening) the solution is unique.
\end{proof}

\bibliography{CHH_final}

\bibliographystyle{alpha}

\vspace{5mm}

{\sc Kyeongsu Choi, School of Mathematics, Korea Institute for Advanced Study, 85 Hoegiro, Dongdaemun-gu, Seoul, 02455, South Korea}\\

{\sc Robert Haslhofer, Department of Mathematics, University of Toronto,  40 St George Street, Toronto, ON M5S 2E4, Canada}\\

{\sc Or Hershkovits, Institute of Mathematics, Hebrew University, Givat Ram, Jerusalem, 91904, Israel}\\

\emph{E-mail:} choiks@kias.re.kr, roberth@math.toronto.edu, or.hershkovits@mail.huji.ac.il

\end{document}